\numberwithin{equation}{section}
          \renewcommand{\ifcommandkey}[1]{%
            \csname @\expandafter \expandafter \expandafter
            \expandafter \expandafter \expandafter  \expandafter
            \kcmd@nbk \commandkey {#1}//{first}{second}//oftwo\endcsname } }
\newkeycommand\funcU[delta=\delta,m=m,q=q,H=H][1]{
  U_{\commandkey{q},\commandkey{m}} \!\left( #1 ; \commandkey{delta},\commandkey{H} \right) }
\newkeycommand\funcV[delta=\delta,m=m,q=q,H=H][1]{
  V_{\commandkey{q},\commandkey{m}} \!\left( #1 ; \commandkey{delta},\commandkey{H} \right) }
\newkeycommand\funcW[q=q][1]{
  W_{\commandkey{q}}{( #1 )}}
\newkeycommand\funcY[m=m,q=d,R=R,x=x][1]{
  Y_{\commandkey{q},\commandkey{m},\commandkey{R}} \! \left( \commandkey{x},  #1 \right)}
\newkeycommand\funcF[q=\chi,m=m,R=R,H={H_2}][1]{
\newkeycommand\funcG[q=q,m=m,R=R,H=H,Htwo={H_2}][1]{
\newkeycommand\funcHone[q=d,m=m,R=R][1]{
  H^{(1)}_{\commandkey{q},\commandkey{m},\commandkey{R}}\! \left( #1 \right) }
\newkeycommand\funcHtwo[q=d,m=m,R=R,H={H_2}][1]{
\newkeycommand\funcBone[q=d,m=m,R=R,r=r,H=H_2][1]{
  B^{(1)}_{\commandkey{q},\commandkey{m},\commandkey{R}} \! \left(#1; \commandkey{r}, \commandkey{H} \right) }
\newkeycommand\funcBtwo[q=d,m=m,R=R,r=r,H=H][1]{
  B^{(2)}_{\commandkey{q},\commandkey{m},\commandkey{R}} \! \left(#1; \commandkey{r} \right) }
\newkeycommand\funcB[q=d,m=m,R=R,r=r,H={H,H_2}]{
\newkeycommand\funcPhi[c1=c_1,c2=c_2,lambda=\lambda,mu=\mu,u=u]{
  \Phi\!\left(\commandkey{u} ; \commandkey{c1},\commandkey{c2},\commandkey{lambda},\commandkey{mu} \right) }
\g@addto@macro\bfseries{\boldmath}
\newtheorem{theorem}{Theorem}[section]
\newtheorem{lemma}[theorem]{Lemma}
\newtheorem{prop}[theorem]{Proposition}
\newtheorem{cor}[theorem]{Corollary}
\theoremstyle{definition}
\newtheorem{definition}[theorem]{Definition}
\renewcommand{\mod}[1]{{\ifmmode\text{\rm\ (mod~$#1$)}\else\discretionary{}{}{\hbox{ }}\rm(mod~$#1$)\fi}}
\newcommand{\ep}{\varepsilon} \renewcommand{\epsilon}{\varepsilon}
\newcommand{\lcalc}{{\tt lcalc}}
\newcommand{\C}{{\mathbb C}}
\newcommand{\Zchi}{{\mathcal Z(\chi)}}
\newcommand{\Zchistar}{{\mathcal Z(\chi^*)}}
\newcommand{\GRH}[1]{\ensuremath{\text{GRH}(#1)}}
\newcommand{\erfc}{\mathop{\rm erfc}}
\DeclareMathOperator{\Ei}{Ei}
\DeclareMathOperator{\Li}{Li}
\newcommand{\RR}{Ramar\'e--Rumely}
\renewcommand{\phi}{\varphi}
\begin{document}

\title[Explicit bounds for primes in arithmetic progressions]{Explicit bounds for primes in arithmetic progressions \\  \today\ \currenttime}
\author{Michael A. Bennett}
\address{Department of Mathematics \\ University of British Columbia \\ Room 121, 1984 Mathematics Road \\ Vancouver, BC, Canada V6T 1Z2}
\email{bennett@math.ubc.ca}
\author{Greg Martin}
\address{Department of Mathematics \\ University of British Columbia \\ Room 121, 1984 Mathematics Road \\ Vancouver, BC, Canada V6T 1Z2}
\email{gerg@math.ubc.ca}
\author{Kevin O'Bryant}
\address{Department of Mathematics \\ City University of New York,
  College of Staten Island and The Graduate Center \\ 2800 Victory
  Boulevard \\ Staten Island, NY, USA 10314}
\email{truculentmath@icloud.com}
\author{Andrew Rechnitzer}
\address{Department of Mathematics \\ University of British Columbia \\ Room 121, 1984 Mathematics Road \\ Vancouver, BC, Canada V6T 1Z2}
\email{andrewr@math.ubc.ca}
\subjclass[2010]{Primary 11N13, 11N37, 11M20, 11M26; secondary 11Y35, 11Y40}
\begin{abstract}
We derive explicit upper bounds for various counting functions for primes in arithmetic progressions. By way of example, if $q$ and $a$ are integers with $\gcd(a,q)=1$ and $3 \leq q \leq 10^5$, and $\theta(x;q,a)$ denotes the sum of the logarithms of the primes $p \equiv a \mod{q}$ with $p \leq x$, we show that
$$
 \big| \theta (x; q, a) - {x}/{\phi (q)} \big| < \frac1{160} \frac{x}{\log x}
$$
for all $x \geq 8 \cdot 10^9$, with significantly sharper constants obtained for individual moduli~$q$. We establish
inequalities of the same shape for the other standard prime-counting functions $\pi(x;q,a)$ and $\psi(x;q,a)$, as well
as inequalities for the $n$th prime congruent to $a\mod q$ when $q\le 1200$. For moduli $q>10^5$, we find even stronger
explicit inequalities, but only for much larger values of~$x$. Along the way, we also derive an improved explicit lower
bound for $L(1,\chi)$ for quadratic characters $\chi$, and an improved explicit upper bound for exceptional zeros.
\end{abstract}
\maketitle
\tableofcontents

\thispagestyle{empty}

\section{Introduction and statement of results}

The Prime Number Theorem, proved independently by Hadamard \cite{Had} and de la Vall\'ee Poussin \cite{dlV} in 1896, states that
\begin{equation}  \label{pi x}
\pi (x) = \sum_{\substack{p \leq x \\ p \text{ prime}}} 1 \sim \frac{x}{\log x},
\end{equation}
or, equivalently, that
\begin{equation}  \label{theta x and psi x}
\theta (x) = \sum_{\substack{p \leq x \\ p \text{ prime}}} \log p \sim x
\; \; \mbox{ and } \; \; \psi (x) =  \sum_{\substack{p^n \leq x \\ p \text{ prime}}} \log p \sim x,
\end{equation}
where by $f(x) \sim g(x)$ we mean that $\lim_{x \rightarrow \infty} f(x)/g(x)=1$.
Quantifying these statements by deriving explicit bounds upon the error terms
\begin{equation} \label{class}
\left| \pi (x) - \Li(x) \right|, \; \; \left| \theta (x) - x \right| \; \; \mbox{ and } \;
\;
\left| \psi (x) - x \right|
\end{equation}
is a central problem in multiplicative number theory (see for example Ingham~\cite{Ing} for classical work along these lines).
Here, by $\Li(x)$ we mean the function defined by
\begin{equation} \label{Li def}
\Li (x) = \int_2^x \frac{dt}{\log t} \sim \frac{x}{\log x}.
\end{equation}

Our interest in this paper is the consideration of similar questions for primes in arithmetic progressions. Let us define, given relatively prime positive integers $a$ and $q$,
\begin{equation}  \label{theta xqa and psi xqa}
\theta (x; q, a ) = \sum_{\substack{p \leq x\\p \equiv a \mod{q}}} \log p
\; \; \mbox{ and } \; \; \psi (x; q, a ) = \sum_{\substack{p^n \leq x \\ p^n \equiv a \mod{q}}} \log p,
\end{equation}
where the sums are over primes $p$ and prime powers $p^n$, respectively. We further let
\begin{equation}  \label{pi xqa}
\pi (x; q, a ) = \sum_{\substack{p \leq x \\ p \equiv a \mod{q}}} 1
\end{equation}
denote the number of primes up to $x$ that are congruent to $a$ modulo $q$. We are interested in upper bounds, with explicit constants, for the analogues to equation~\eqref{class}, namely the error terms
\begin{equation} \label{class2}
\left| \pi (x; q, a) - \frac{\Li(x)}{\phi (q) } \right|, \; \; \left| \theta (x; q, a) -
\frac{x}{\phi(q)} \right|, \; \; \mbox{ and } \; \;
\left| \psi (x; q, a) - \frac{x}{\phi(q)} \right|.
\end{equation}
Such explicit error bounds can take two shapes. The first, which we will term bounds of {\it Chebyshev-type}, are upper bounds upon the error terms that are small multiples of the main term in size, for example inequalities of the form
\begin{equation} \label{sharkey}
\left| \psi (x; q, a) - \frac{x}{\phi (q)} \right| < \delta_{q,a} \frac{x}{\phi (q)},
\end{equation}
for (small) positive $\delta_{q,a}$ and all suitably large values of $x$.
The second, which we call bounds of {\it de la Vall\'ee Poussin-type}, have the feature that the upper bounds upon the error are of genuinely smaller order than the size of the main term (and hence, in particular, imply the Prime Number Theorem for the corresponding arithmetic progression, something that is not true of inequality~\eqref{sharkey}).

Currently, there are a number of explicit inequalities of Chebyshev-type in the literature. In McCurley~\cite{Mc1}, we find such bounds for ``non-exceptional'' moduli $q$ (which is to say, those $q$ for which the associated Dirichlet $L$-functions have no real zeros near $s=1$), valid for large values of~$x$. McCurley \cite{Mc2} contains analogous bounds in the case $q=3$. Ramar\'e and Rumely \cite{RR} refined these arguments to obtain reasonably sharp bounds of Chebyshev-type for all $q \leq 72$ and various larger composite $q \leq 486$; the first author \cite{Benn} subsequently extended these results to primes $73 \leq q \leq 347$. Very recently, these results have been sharpened further for all moduli  $q \leq 10^5$ by Kadiri and Lumley~\cite{KaLu}.

Bounds of de la Vall\'ee Poussin-type are rather less common, however, other than the classical case where one considers all primes (that is, when $q=1$ or $2$), where such inequalities may be found in famous and oft-cited work of Rosser and Schoenfeld \cite{RS} (see also \cite{RS2,Sc} for subsequent refinements). When $ q\geq 3$, however, the only such result currently in the literature in explicit form may be found in a 2002 paper of Dusart \cite{Du}, which treats the case $q=3$. Our goal in the paper at hand is to deduce explicit error bounds of de la Vall\'ee Poussin-type for all moduli $q \geq 3$, for each of the corresponding functions $\psi (x; q, a), \theta  (x; q, a)$ and $\pi  (x; q, a)$.
In each case with $3 \leq q \leq 10^5$, exact values of the constants $c_\psi(q), c_{\theta}(q), c_{\pi}(q), x_\psi(q), x_\theta(q)$,
and $x_\pi(q)$ defined in our theorems can be found in data files accessible at:
\begin{center}
\texttt{\href{http://www.nt.math.ubc.ca/BeMaObRe/}{\url{http://www.nt.math.ubc.ca/BeMaObRe/}}}
\end{center}
We prove the following results.

\begin{theorem} \label{main psi theorem}
Let $q \ge 3$ be an integer and let $a$ be an integer that is coprime to~$q$. There exist explicit positive constants $c_\psi(q)$ and $x_\psi(q)$ such that
\begin{equation}  \label{ultimate psi bound}
 \bigg| \psi (x; q, a) - \frac{x}{\phi (q)} \bigg| < c_\psi(q) \frac{x}{\log x}  \quad\text{for all } x \ge x_\psi(q).
\end{equation}
Moreover, $c_\psi(q)$ and $x_\psi(q)$ satisfy $c_\psi (q) \le c_0(q)$ and $x_\psi(q) \le x_0(q)$, where
\begin{equation} \label{c_0(q) definition}
c_0(q) = \begin{cases}
\frac{1}{840}, &\text{if } 3 \le q\le 10^4, \\
\frac{1}{160}, &\text{if } q>10^4,
\end{cases}
\end{equation}
and
\begin{equation} \label{x_0(q) definition}
x_0(q) = \begin{cases}
8 \cdot 10^9, &\text{if } 3\le q\le 10^5, \\
\exp(0.03 \sqrt q\log^3q), &\text{if } q>10^5.
\end{cases}
\end{equation}
\end{theorem}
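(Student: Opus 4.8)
The plan is to obtain the bound in equation~\eqref{ultimate psi bound} via an explicit truncated version of the classical ``explicit formula'' for $\psi(x;q,a)$, combined with explicit zero-free regions and explicit zero-density information for the Dirichlet $L$-functions $L(s,\chi)$ with $\chi$ a character mod $q$. First I would write, for any primitive character $\chi$ mod $q^* \mid q$,
\begin{equation*}
\psi(x,\chi) = -\sum_{|\gamma| \le T} \frac{x^\rho}{\rho} + O^*\!\left(\text{error terms in } x, T, q\right),
\end{equation*}
where $\rho = \beta + i\gamma$ runs over the nontrivial zeros, and the error terms (coming from the contour shift, trivial zeros, and the $\chi$ principal part) are bounded completely explicitly in terms of $x$, $T$, and $q$ — this is the standard Landau/McCurley-type truncated explicit formula, and I would either quote or re-derive a sharp version of it. Summing $\frac{1}{\phi(q)}\sum_{\chi} \bar\chi(a)\psi(x,\chi)$ and separating off the principal character (whose contribution is $\psi(x) = x + \text{explicit error}$ from Rosser--Schoenfeld-type results) isolates $\psi(x;q,a) - x/\phi(q)$ as a sum over zeros of non-principal characters, plus controlled error.

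The core of the argument is then to bound $\sum_{|\gamma|\le T} x^{\beta-1}/|\rho|$ uniformly. Here I would split the zeros by height: for zeros with $|\gamma|$ small one uses the explicit zero-free region $\beta < 1 - 1/(R_0 \log(q(|\gamma|+1)))$ (with $R_0$ the best available Dirichlet-type constant, e.g.\ from Kadiri or the improvements this paper claims), which makes $x^{\beta-1} \le \exp(-\log x / (R_0 \log(qT)))$ — this is exactly the mechanism producing the ``de la Vall\'ee Poussin'' saving of order $x\exp(-c\sqrt{\log x})$ rather than a constant multiple of $x$. For higher zeros one instead uses explicit zero-counting bounds $N(\chi,T) \ll T\log(qT)$ together with a zero-density estimate to control the tail, choosing $T$ (typically $T$ a suitable power of $x$ or $T = \exp(c\sqrt{\log x})$) to balance the two regimes. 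The possible Siegel/exceptional zero of a real character mod $q$ must be handled separately: its contribution is bounded using the improved explicit lower bound for $L(1,\chi)$ and the improved explicit upper bound for exceptional zeros mentioned in the abstract, which is precisely why those auxiliary results appear.

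For the quantitative claims $c_\psi(q)\le c_0(q)$ and $x_\psi(q)\le x_0(q)$, the strategy bifurcates. For $q > 10^5$, one is in the asymptotic regime: plugging the zero-free region with the right constant into the above gives an error of the form $x\exp(-c_1\sqrt{\log x})$, and a routine (if delicate) optimization shows this is $< \frac{1}{160}\frac{x}{\log x}$ once $x \ge \exp(0.03\sqrt q \log^3 q)$ — the $\sqrt q$ in the exponent tracks the $q$-dependence in the zero-free region and the exceptional-zero bound. For $3 \le q \le 10^5$, asymptotics alone are too weak at $x = 8\cdot 10^9$, so here one supplements the explicit-formula bound with the explicit Chebyshev-type bounds of Kadiri--Lumley~\cite{KaLu} (valid for all $q \le 10^5$) and with direct computation/verification of $\psi(x;q,a)$ for $x$ in an intermediate range, stitching the ranges together. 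I expect the main obstacle to be making the truncated explicit formula error terms genuinely sharp \emph{uniformly} in $q$ up to $10^5$ while keeping $x_0$ as small as $8\cdot 10^9$: this requires carefully optimized choices of $T$, tight control of the zeros of small imaginary part (possibly using verified numerical zero data or Turing-type bounds for small $q$), and careful bookkeeping of every $O^*$ constant, rather than any single conceptual difficulty.
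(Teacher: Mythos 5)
Your overall architecture does match the paper's: an explicit formula over zeros of Dirichlet $L$-functions, Kadiri's explicit zero-free region, Platt's verification of GRH to height $10^8/q$, Trudgian-type zero-counting, exceptional zeros controlled through lower bounds for $L(1,\chi)$ in the regime $q>10^5$, and heavy computation for small $x$, with the split at $10^5$. The large-modulus half of your sketch is essentially Section~\ref{Sec6} of the paper. But in the small-modulus half there is a genuine gap: the formula you actually write down is the \emph{unsmoothed} truncated explicit formula $\psi(x,\chi)=-\sum_{|\gamma|\le T}x^\rho/\rho+O^*(\cdot)$, balanced in $T$, and with that tool the stated constants are unreachable at any computationally bridgeable threshold for moderate $q$. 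The truncation error forces $T$ to be large (of size at least a few times $\log^2(qx)$ times $840\log x$), but Platt's data only put zeros on the critical line up to height $10^8/q$; every zero with $10^8/q<|\gamma|\le T$ is controlled only by the zero-free region, which at $x\approx 10^{11}$--$10^{13}$ gives $x^{\beta-1}\le\exp\bigl(-\log x/(5.6\log(qT))\bigr)\approx 0.8$, i.e.\ essentially no saving, and there are $\gg\phi(q)$ such zeros per unit of $\log T$. For, say, $q=1000$ this leaves an error many orders of magnitude above $\tfrac1{840}x/\log x$ until $\log x$ is in the thousands, a range that neither exhaustive computation (feasible only to about $4\cdot10^{13}$) nor Chebyshev-type bounds can cover: a bound $|\psi(x;q,a)-x/\phi(q)|\le\varepsilon x/\phi(q)$ implies the theorem only while $\varepsilon\le\phi(q)/(840\log x)$, which for small and moderate $q$ would require $\varepsilon$ of size $10^{-4}$--$10^{-6}$, far beyond Kadiri--Lumley or any known result (and the paper in fact uses no such input).

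The missing idea is the smoothing that is the heart of Sections~\ref{Sec2}--\ref{Sec4}: the Ramar\'e--Rumely/McCurley $m$-fold averaged explicit formula (Proposition~\ref{quoted prop ABC}), in which zeros of height above a \emph{small} cutoff $H\approx 10^2$--$10^3$ are damped by $|\gamma|^{-(m+1)}$ at the price of the term $\tfrac{m\delta}{2}$, with $\delta=\delta(x)\to0$ chosen explicitly (equation~\eqref{minimizing delta}) and $m\in\{6,\dots,9\}$ optimized. This is what brings the analytic threshold down to $x_2(q)\in[10^{11},4\cdot10^{13}]$ (Theorem~\ref{code this funky theorem}), after which the paper bridges downward not with Chebyshev-type inputs but with its own computations of $|\psi(y;q,a)-y/\phi(q)|\le b_\psi(q)\sqrt y$ for $y\le x_2(q)$ and a brute-force search to locate $x_\psi(q)\le 8\cdot10^9$. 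Two smaller corrections: for $q\le10^5$ no Siegel-zero treatment is needed at all, since Platt's verification excludes real zeros off the line up to height $10^8/q$; and the $\sqrt q\log^3q$ in $x_0(q)$ for $q>10^5$ comes specifically from the exceptional-zero bound $\beta_0\le 1-40/(\sqrt q\log^2q)$, not from the zero-free region, whose constant is $q$-independent.
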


Similarly, for $\theta (x; q, a)$ and $\pi (x; q, a)$ we have:

\begin{theorem} \label{main theta theorem}
Let $q \ge 3$ be an integer and let $a$ be an integer that is coprime to~$q$. There exist explicit positive constants $c_\theta(q)$ and $x_\theta(q)$ such that
\begin{equation}  \label{ultimate theta bound}
 \bigg| \theta (x; q, a) - \frac{x}{\phi (q)} \bigg| < c_\theta(q) \frac{x}{\log x}  \quad\text{for all } x \ge x_\theta(q).
\end{equation}
Moreover, $c_\theta (q) \le c_0(q)$ and $x_\theta(q) \le x_0(q)$, where $c_0(q)$ and $x_0(q)$ are as defined in equations~\eqref{c_0(q) definition} and \eqref{x_0(q) definition}, respectively.
\end{theorem}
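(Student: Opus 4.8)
The plan is to deduce Theorem~\ref{main theta theorem} from Theorem~\ref{main psi theorem} by controlling the difference $\psi(x;q,a) - \theta(x;q,a)$, which counts the contribution of proper prime powers. Explicitly,
\[
\psi(x;q,a) - \theta(x;q,a) = \sum_{\substack{n \ge 2 \\ p^n \le x \\ p^n \equiv a \mod{q}}} \log p \le \sum_{\substack{n \ge 2 \\ p^n \le x}} \log p = \sum_{n=2}^{\lfloor \log_2 x \rfloor} \theta(x^{1/n}).
\]
The dominant term here is $\theta(\sqrt x) \le \psi(\sqrt x)$, and the remaining terms contribute $O(x^{1/3}\log x)$; standard explicit bounds for $\theta(t)$ (for instance $\theta(t) < 1.02 t$ for $t>0$, or sharper Rosser--Schoenfeld-type estimates) give a completely explicit bound of the shape $0 \le \psi(x;q,a) - \theta(x;q,a) \le 1.02\sqrt x + C x^{1/3}\log x$ for all $x$ in the relevant range. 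Since $\sqrt x$ is of smaller order than $x/\log x$, this error is negligible compared to the main term when $x \ge 8\cdot 10^9$ (or the larger threshold for $q > 10^5$).

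The key steps, in order, are as follows. First, invoke Theorem~\ref{main psi theorem} to obtain $|\psi(x;q,a) - x/\phi(q)| < c_\psi(q)\, x/\log x$ for $x \ge x_\psi(q)$. Second, apply the triangle inequality:
\[
\bigg| \theta(x;q,a) - \frac{x}{\phi(q)} \bigg| \le \bigg| \psi(x;q,a) - \frac{x}{\phi(q)} \bigg| + \big( \psi(x;q,a) - \theta(x;q,a) \big) < c_\psi(q)\frac{x}{\log x} + 1.02\sqrt x + C x^{1/3}\log x.
\]
Third, absorb the prime-power error into the main error term: choose $c_\theta(q)$ to be $c_\psi(q)$ plus a small slack, and $x_\theta(q) \ge x_\psi(q)$ large enough that $1.02\sqrt x + Cx^{1/3}\log x < (c_\theta(q) - c_\psi(q)) x/\log x$ for all $x \ge x_\theta(q)$. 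Since Theorem~\ref{main psi theorem} already guarantees $c_\psi(q) \le c_0(q)$ and $x_\psi(q) \le x_0(q)$ with $c_0(q) \in \{1/840, 1/160\}$, there is ample room: one verifies numerically that at $x = 8\cdot 10^9$ the quantity $1.02\sqrt x$ is on the order of $10^5$, whereas $x/(840 \log x) $ is on the order of $4 \cdot 10^5$, so in fact one can take $c_\theta(q) \le c_0(q)$ outright, perhaps at the cost of slightly enlarging $x_\theta(q)$, but still keeping $x_\theta(q) \le x_0(q) = 8\cdot 10^9$. (For $q > 10^5$, the threshold $x_0(q) = \exp(0.03\sqrt q \log^3 q)$ is enormous and the $\sqrt x$ term is utterly negligible, so no adjustment to the constants is needed at all.) Finally, confirm that the resulting $c_\theta(q)$ and $x_\theta(q)$ obey the stated bounds $c_\theta(q) \le c_0(q)$ and $x_\theta(q) \le x_0(q)$.

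The main obstacle, such as it is, lies in the bookkeeping for moduli $q$ in the lower range $3 \le q \le 10^5$ where the constant $c_0(q) = 1/840$ is quite small: one must check that subtracting off the $\psi - \theta$ discrepancy does not force $c_\theta(q)$ to exceed $1/840$, which requires that the individual $c_\psi(q)$ produced earlier leave enough headroom, or equivalently that $x_\theta(q)$ can be taken a bit above $x_\psi(q)$ while remaining at or below $8 \cdot 10^9$. Since the per-$q$ data files already record sharp values, this is a finite verification rather than a genuine difficulty, and for the uniform statement it suffices to note that at any $x \ge 8\cdot 10^9$ the prime-power correction is smaller than, say, $10^{-5} \cdot x/\log x$, which is comfortably within the slack between the sharp $c_\psi(q)$ and the crude bound $c_0(q)$. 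The argument for $\pi(x;q,a)$, when treated, will follow the same template but using partial summation to pass from $\theta$ to $\pi$ and comparing $\sum_{p\le x, p\equiv a} 1/\log p$ with $\Li(x)/\phi(q)$; the new feature there is the explicit estimation of the integral error, but the prime-power issue does not recur.
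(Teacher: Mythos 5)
Your high-level route --- deduce the $\theta$ bound from Theorem~\ref{main psi theorem} by the triangle inequality and an explicit bound on $\psi(x;q,a)-\theta(x;q,a)$ --- is indeed the paper's route, but your quantitative implementation has a genuine gap at exactly the point you identify as "bookkeeping." Your prime-power bound $1.02\sqrt x + Cx^{1/3}\log x$ is uniform in the residue class, and at $x = 8\cdot10^9$ it is about $2.6\cdot10^{-4}\,x/\log x$ (since $\log x/\sqrt x \approx 2.5\cdot10^{-4}$ there), not ``smaller than $10^{-5}\,x/\log x$'' as you assert at the end --- that claim is off by a factor of roughly $25$, and the correction only drops below $10^{-5}x/\log x$ around $x\approx 10^{13}$. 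This matters because for some moduli $q\le 10^4$ the computed $c_\psi(q)$ nearly exhausts $c_0(q)=1/840$: e.g.\ $c_\psi(5040)=0.0011204$, leaving headroom only about $7\cdot10^{-5}$, so $c_\psi(q)+2.6\cdot10^{-4}$ overshoots $1/840$, and with the crude bound the inequality would only kick in around $x\approx10^{11}$, well above the required $x_\theta(q)\le x_0(q)=8\cdot10^9$. A smaller version of the same problem occurs for $q>10^5$: there $c_\psi(q)=c_0(q)=\tfrac1{160}$ by definition, so the statement of Theorem~\ref{main psi theorem} alone gives no headroom at all to absorb even a negligible $\sqrt x$ term; one must re-enter the proof (as the paper does in Lemma~\ref{two-prong lemma} and Corollary~\ref{same result as for small q cor}, where spare powers of $\log x$ supply the slack).

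The paper closes the small-moduli gap in two ways you omit. First, it does not bound $\psi-\theta$ by $\theta(\sqrt x)+\cdots$ over all primes, but only over the $k$th-power residue classes $b$ with $b^k\equiv a \mod q$, using Brun--Titchmarsh; this yields the function $\Delta(x;q)$ of Definition~\ref{xi defs}, whose dominant $k=2$ term carries the factor $2\xi_2(q)/\phi(q)$, typically far smaller than $1$ (for $q=5040$ it is about $0.056$), so the correction is an order of magnitude below your uniform $1.02\sqrt x$. Second, the analytic bound (Theorem~\ref{code this funky theta}) is only invoked for $x\ge x_2(q)\ge 10^{11}$, where even crude corrections are harmless; the range $x_\theta(q)\le x\le x_2(q)$, and in particular the verification that $x_\theta(q)\le 8\cdot10^9$, is handled by exhaustive computation of $\theta(x;q,a)$ itself (Appendices~\ref{ssec comp bpsi} and~\ref{ssec xpsithetapi}), not by adding an error term to the $\psi$ bound. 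Without one of these two ingredients --- the residue-class-restricted (Brun--Titchmarsh) bound or direct computation below $\sim10^{11}$ --- your argument cannot deliver $c_\theta(q)\le 1/840$ together with $x_\theta(q)\le 8\cdot10^9$ for the tight moduli.
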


\begin{theorem} \label{main pi theorem}
Let $q \ge 3$ be an integer and let $a$ be an integer that is coprime to~$q$. There exist explicit positive constants $c_\pi(q)$ and $x_\pi(q)$ such that
\begin{equation}  \label{ultimate pi bound}
 \bigg| \pi (x; q, a) - \frac{\Li(x)}{\phi (q)} \bigg| < c_\pi(q) \frac{x}{(\log x)^2}  \quad\text{for all } x \ge x_\pi(q).
\end{equation}
Moreover, $c_\pi (q) \le c_0(q)$ and $x_\pi(q) \le x_0(q)$, where $c_0(q)$ and $x_0(q)$ are as defined in equations~\eqref{c_0(q) definition} and \eqref{x_0(q) definition}, respectively.
\end{theorem}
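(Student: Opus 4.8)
The plan is to deduce Theorem~\ref{main pi theorem} directly from Theorem~\ref{main theta theorem} by partial summation, which is the standard route for transferring an error bound for $\theta(x;q,a)$ to one for $\pi(x;q,a)$. First I would record the Riemann--Stieltjes identity
\begin{equation} \label{partial summation for pi}
\pi(x;q,a) = \frac{\theta(x;q,a)}{\log x} + \int_2^x \frac{\theta(t;q,a)}{t \log^2 t}\, dt,
\end{equation}
valid once $x$ exceeds the smallest prime $\equiv a \mod q$ (for smaller $x$ both sides vanish or are handled trivially). The corresponding identity with $\theta(t;q,a)$ replaced by its main term $t/\phi(q)$ reads $\Li(x)/\phi(q) = \frac{x/\phi(q)}{\log x} + \int_2^x \frac{dt}{\phi(q)\log^2 t} + O(1/\phi(q))$, where the $O(1)$ comes from the lower endpoint $t=2$ in the definition~\eqref{Li def}; so subtracting gives
\begin{equation} \label{pi minus Li}
\pi(x;q,a) - \frac{\Li(x)}{\phi(q)} = \frac{\theta(x;q,a) - x/\phi(q)}{\log x} + \int_2^x \frac{\theta(t;q,a) - t/\phi(q)}{t\log^2 t}\, dt + O\!\left(\frac1{\phi(q)}\right).
\end{equation}

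Next I would bound the right-hand side of~\eqref{pi minus Li}. The first term is at most $c_\theta(q)\, x/\log^2 x$ by Theorem~\ref{main theta theorem}, provided $x \ge x_\theta(q)$. For the integral I would split $\int_2^x = \int_2^{x_\theta(q)} + \int_{x_\theta(q)}^x$. On the upper range, Theorem~\ref{main theta theorem} gives $|\theta(t;q,a) - t/\phi(q)| < c_\theta(q)\, t/\log t$, so that piece of the integral is at most $c_\theta(q) \int_{x_\theta(q)}^x \frac{dt}{\log^3 t}$, which by the usual estimate $\int_2^x \log^{-3} t\, dt = x\log^{-3}x + O(x \log^{-4} x)$ contributes a lower-order term that is $o(x/\log^2 x)$ and in fact absorbed into the constant once $x$ is large enough. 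On the bounded range $2 \le t \le x_\theta(q) \le x_0(q)$, the integrand is bounded by a constant depending only on $q$ (indeed $|\theta(t;q,a)| + t/\phi(q)$ is trivially $O(x_0(q))$ there), so that piece is $O_q(1)$, again negligible against $x/\log^2 x$ for $x$ past some threshold. Collecting these, I would obtain $|\pi(x;q,a) - \Li(x)/\phi(q)| \le (c_\theta(q) + \eta(x))\, x/\log^2 x$ for an explicit $\eta(x) \to 0$, and then choose $c_\pi(q)$ and $x_\pi(q)$ so that $c_\theta(q) + \eta(x) \le c_\pi(q) \le c_0(q)$ holds for all $x \ge x_\pi(q)$, with $x_\pi(q) \le x_0(q)$.

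The delicate point is exactly this last bookkeeping: the passage from $\theta$ to $\pi$ naturally costs a little, both through the $O_q(1)$ and $O(1/\phi(q))$ terms and through the lower-order part of the main integral, so to keep $c_\pi(q) \le c_0(q)$ and $x_\pi(q) \le x_0(q)$ one cannot simply take $c_\pi(q) = c_\theta(q)$ and $x_\pi(q) = x_\theta(q)$. I expect the main obstacle to be verifying, uniformly in $q$, that the slack between $c_\theta(q)$ and the ceiling $c_0(q)$ (together with allowing $x_\pi(q)$ up to $x_0(q)$, which for $q \le 10^5$ is a fixed $8\cdot 10^9$, comfortably larger than the relevant $x_\theta(q)$) is enough to swallow these error terms. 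For $q \le 10^5$ this is a finite, if somewhat involved, computation using the tabulated values of $c_\theta(q)$ and $x_\theta(q)$; for $q > 10^5$ one instead tracks how $\eta(x)$ decays relative to the explicit $c_0(q) = 1/160$ and $x_0(q) = \exp(0.03\sqrt q \log^3 q)$, where the enormous size of $x_0(q)$ makes the lower-order terms utterly negligible and the estimate is cleanest. In both regimes the argument is elementary once Theorem~\ref{main theta theorem} is in hand; the work is entirely in making the constants explicit and checking the inequalities $c_\pi(q) \le c_0(q)$, $x_\pi(q) \le x_0(q)$ hold.
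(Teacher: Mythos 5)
Your small-modulus skeleton is essentially the paper's route: Proposition~\ref{first theta to pi prop} is exactly the partial-summation conversion you describe, except that the paper splits at a single computationally convenient point $x_3=10^{11}$, bounds $\int_{x_3}^x \log^{-3}t\,dt$ by $\frac1{\log x_3-2}\int_{x_3}^x\frac{\log t-2}{\log^3 t}\,dt$ so that the resulting negative boundary term at $x_3$ absorbs the quantity $E(x_3;q,a)$, and verifies the needed inequality on $E(10^{11};q,a)$ numerically. Where your plan has a genuine gap for $q\le10^5$ is the claim that the remaining work is "a finite computation using the tabulated values of $c_\theta(q)$ and $x_\theta(q)$." Those two constants give you no usable bound on $\theta(t;q,a)-t/\phi(q)$ below $x_\theta(q)$, so your low-range piece $\int_2^{x_\theta(q)}$ (and the endpoint term) can only be bounded trivially, at size of order $x_\theta(q)/\log^2 x_\theta(q)\approx 10^{7}$, which dwarfs the permitted bound $c_0(q)\,x/\log^2x\approx 2\times10^{4}$ at $x=8\cdot10^9$; the analytic conversion therefore only yields the inequality from roughly $10^{11}$ onward, and $\eta(x)$ is nowhere near small at $x=x_0(q)$. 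Closing the gap $8\cdot10^9\le x\le 10^{11}$, and checking the hypothesis on $E$, requires fresh large-scale data about $\pi(x;q,a)$ itself (or the $\sqrt x$-type computational bounds), which is exactly the exhaustive computation the paper performs to certify $x_\pi(q)<x_0(q)$.

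The proposal fails outright for $q>10^5$. There $c_\theta(q)=c_0(q)=\frac1{160}$ by definition, so there is \emph{no} slack at all: any conversion from Theorem~\ref{main theta theorem} alone produces a constant of the form $c_\theta(q)(1+\eta)$ with $\eta>0$ (the term $\int dt/\log^3t$ already inflates it), plus a boundary contribution at the splitting point $x_0(q)$ that is comparable to or larger than $\frac1{160}x/\log^2x$ when $x$ is near $x_0(q)$; hence you cannot obtain both $c_\pi(q)\le\frac1{160}$ and $x_\pi(q)\le x_0(q)$ this way. The paper does not deduce the large-$q$ case from Theorem~\ref{main theta theorem}; it returns to the sharper interior estimates (Proposition~\ref{gory pi inequality prop}), where the $\theta$-bound carries an extra power of $\log x$ (constant $\frac14$, exponent $Z+1=2.4$) and holds from the slightly smaller threshold $\exp(0.0295\sqrt q\log^3q)$. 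That surplus power of $\log$ and the multiplicative room between $0.0295$ and $0.03$ are what absorb both the conversion loss and the trivially bounded boundary term $|E|\le 2x_4$, yielding $\frac1{160}x/\log^2x$ for $x\ge\exp(0.03\sqrt q\log^3q)$ (Corollary~\ref{same result as for small q for pi cor}). Your argument needs this extra input; "the lower-order terms are negligible" cannot rescue a leading constant that has zero room to grow.
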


See Appendices~\ref{ssec cpsithetapi} and~\ref{ssec xpsithetapi} for more details on these various constants. We note here that many of our
results, including those stated here, required considerable computations; the relevant computational details are available at 
\begin{center}
\texttt{\href{http://www.nt.math.ubc.ca/BeMaObRe/}{\url{http://www.nt.math.ubc.ca/BeMaObRe/}}}
\end{center}
and are discussed in
 Appendix~\ref{computational appendix}.

 The upper bounds $c_0(q)$ and $x_0(q)$ are, typically, quite far from the actual values of, say, $c_{\theta}(q)$ and $x_{\theta}(q)$. By way of example, for $3 \leq q \leq 10$, we have
 {\small{$$
 \begin{array}{c|c|c|c|c|c|c}
 q & c_{\psi}(q) & c_{\theta}(q) & c_{\pi}(q) & x_{\psi}(q) & x_{\theta}(q) & x_{\pi}(q) \\ \hline
  3  &  0.0003964  &  0.0004015  &  0.0004187 &   576470759   & 7932309757  &  7940618683  \\
   4  &  0.0004770   &  0.0004822  &  0.0005028 &   952930663   & 4800162889  &  5438260589  \\
   5  &  0.0003665  &  0.0003716  &  0.0003876 &   1333804249  & 3374890111  &  3375517771  \\
   6  &  0.0003964  &  0.0004015  &  0.0004187 &   576470831   & 7932309757  &  7940618683  \\
   7  &  0.0004584  &  0.0004657  &  0.0004857 &   686060664   & 1765650541  &  1765715753  \\
   8  &  0.0005742  &  0.0005840   &  0.0006091 &   603874695   & 2261078657  &  2265738169  \\
   9  &  0.0005048  &  0.0005122  &  0.0005342 &   415839496   & 929636413   &  929852953   \\
   10 &  0.0003665  &  0.0003716  &  0.0003876 &   1333804249  & 3374890111  &  3375517771  \\
 \end{array}
 $$}}
For instance, in case $q=3$ and $a\in\{1,2\}$, Theorem \ref{main theta theorem}, using the true values of $c_{\theta}(3)$ and $x_{\theta}(3)$,  rather than their upper bounds $c_0(3)$ and $x_0(3)$, yields the inequality
\begin{equation}  \label{theta q=3}
 \bigg| \theta (x; 3, a) - \frac{x}{2} \bigg| < 4.015 \cdot 10^{-4}  \frac{x}{\log x}  \quad\text{for all } x \ge 7{,}932{,}309{,}757.
 \end{equation}
Here the constant $4.015 \cdot 10^{-4}$ sharpens the corresponding value $0.262$ in Dusart \cite{Du} by a factor of roughly $650$. We remark that $x\ge 7{,}932{,}309{,}757$ is the best-possible range of validity for the error bound~\eqref{theta q=3}; indeed this is true for each $x_\psi(q), x_\theta(q)$, and $x_\pi(q)$, for $3 \leq q \leq 10^5$.

For $3 \leq q \leq 10^5$, we observe that (as a consequence of our proofs), we have
$$
c_\psi(q) \leq  c_\theta(q) \leq  c_\pi(q)  \leq c_0(q).
$$
For larger moduli $q>10^5$, the inequalities
$$
c_\psi(q) \leq c_0(q), \;  c_\theta(q) \leq c_0(q), \; \mbox{ and } \;  c_\pi(q)  \leq c_0(q)
$$
are actual equalities by our definitions of the left-hand sides, and similarly
$$
x_\psi(q) = x_\theta(q) = x_\pi(q) = x_0(q) = \exp(0.03 \sqrt q\log^3q),
$$
for these large moduli. We note that one can obtain a significantly smaller value for $x_0(q)$ if one assumes that Dirichlet $L$-functions modulo $q$ have no exceptional zeros (see Proposition~\ref{no exceptional zero prop}, which sharpens the results of McCurley~\cite{Mc1} mentioned above). Theorems \ref{main psi theorem} and \ref{main theta theorem}, even if one appeals only to the inequalities  $c_\psi(q) \leq c_0(q)$ and $c_\theta(q) \leq c_0(q)$, sharpen Theorem 1 of Ramar\'e and Rumely \cite{RR} for $q \geq 3$ and every other choice of parameter considered therein.

An almost immediate consequence of Theorem \ref{main pi theorem}, just from applying the result for $q=3$ and performing some routine computations (see Appendix \ref{ssec comp tiny theta} for details), is that
\begin{equation} \label{PiLi}
\left| \pi (x) -  \Li (x) \right| < 0.0008375  \frac{x}{\log^2 x} \; \; \mbox{ for all } x \geq 1{,}474{,}279{,}333.
\end{equation}
While, asymptotically, this result is inferior to the state of the art for this problem, it does provide some modest  improvements on results in the recent literature for certain ranges of $x$. By way of example, it provides a stronger error bound than Theorem 2 of Trudgian \cite{Tru16} for all
$1{,}474{,}279{,}333 \leq x < 10^{621}$ (and sharpens corresponding results in \cite{BeDu} and \cite{Du18} in
much smaller ranges).

Exploiting the fact that $\Li(x)$ is predictably close to $x/\log x$, we can readily deduce from Theorem~\ref{main pi
theorem} the following two results, which are proved in Section~\ref{theta to pi section}. We define $p_n(q,a)$ to be the $n$th
smallest prime that is congruent to $a$ modulo~$q$.

\begin{theorem} \label{pi nice lower bound theorem}
Let $q \ge 3$ be an integer, and let $a$ be an integer that is coprime to~$q$. Suppose that $c_\pi(q)\phi(q) < 1$. Then for $x>x_\pi(q)$,
\begin{equation}  \label{pi be big}
\frac{x}{\phi(q)\log x} < \pi(x;q,a) < \frac{x}{\phi(q)\log x} \left(1 + \frac{5}{2\log x} \right)
\end{equation}
\end{theorem}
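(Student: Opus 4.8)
The plan is to combine the estimate of Theorem~\ref{main pi theorem} with an explicit comparison between $\Li(x)$ and $\tfrac{x}{\log x}$; the hypothesis $c_\pi(q)\phi(q)<1$ will supply exactly the room needed to push the main terms to the correct side of $\tfrac{x}{\phi(q)\log x}$. The one preparatory ingredient is a two-sided bound of the form
\begin{equation} \label{Li sandwich plan}
\frac{x}{\log x}+\frac{x}{(\log x)^2}\;\le\;\Li(x)\;\le\;\frac{x}{\log x}+\frac{3}{2}\cdot\frac{x}{(\log x)^2},
\end{equation}
valid for all $x$ beyond some absolute threshold $x^\ast$ that lies below $x_\pi(q)$ for every $q$ in the range of the theorem. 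Integrating by parts twice yields the identity
\[
\Li(x)=\frac{x}{\log x}+\frac{x}{(\log x)^2}+2\int_2^x\frac{dt}{(\log t)^3}-\Bigl(\frac{2}{\log 2}+\frac{2}{(\log 2)^2}\Bigr),
\]
from which the left-hand inequality of \eqref{Li sandwich plan} follows once $x$ is large enough that $2\int_2^x(\log t)^{-3}\,dt$ exceeds the fixed constant being subtracted; for the right-hand inequality I would integrate by parts once more and control $\int_2^x(\log t)^{-4}\,dt$ by splitting the integral at $\sqrt x$. I expect \eqref{Li sandwich plan} to hold comfortably for all $x$ past a point far smaller than any $x_\pi(q)$, so this step is routine; alternatively one could quote an off-the-shelf estimate for $\Li(x)$ from the literature.

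With \eqref{Li sandwich plan} in hand the two halves of \eqref{pi be big} drop out of Theorem~\ref{main pi theorem}. For $x>x_\pi(q)$ we have $x>x^\ast$ and $\bigl|\pi(x;q,a)-\Li(x)/\phi(q)\bigr|<c_\pi(q)x/(\log x)^2$. For the lower bound I would write
\[
\pi(x;q,a)>\frac{\Li(x)}{\phi(q)}-c_\pi(q)\frac{x}{(\log x)^2}\ \ge\ \frac{x}{\phi(q)\log x}+\Bigl(\frac{1}{\phi(q)}-c_\pi(q)\Bigr)\frac{x}{(\log x)^2}\ >\ \frac{x}{\phi(q)\log x},
\]
the last inequality because $\tfrac{1}{\phi(q)}-c_\pi(q)>0$, which is precisely the hypothesis. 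For the upper bound, using the right-hand side of \eqref{Li sandwich plan},
\[
\pi(x;q,a)<\frac{\Li(x)}{\phi(q)}+c_\pi(q)\frac{x}{(\log x)^2}\ \le\ \frac{x}{\phi(q)\log x}+\frac{1}{\phi(q)}\Bigl(\frac{3}{2}+c_\pi(q)\phi(q)\Bigr)\frac{x}{(\log x)^2}\ <\ \frac{x}{\phi(q)\log x}\Bigl(1+\frac{5}{2\log x}\Bigr),
\]
again because $c_\pi(q)\phi(q)<1$; the constant $\tfrac52$ is exactly what is forced by the $\tfrac32$ in \eqref{Li sandwich plan} together with the hypothesis.

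I do not expect a substantial obstacle: this is a soft consequence of Theorem~\ref{main pi theorem}, which is why the introduction bills it as following ``readily.'' The only place calling for a little care is \eqref{Li sandwich plan} --- specifically, making sure its range of validity reaches below every $x_\pi(q)$, and checking that the right-hand coefficient really is $\tfrac32$ (anything strictly below $\tfrac32$ would still suffice, but $\tfrac32$ is what keeps the output in \eqref{pi be big} clean). Both requirements are met with room to spare, since $x_\pi(q)$ is large (well over $10^8$) for every $q$ in the statement, whereas \eqref{Li sandwich plan} can be established for all $x$ past a far smaller point by entirely elementary estimates.
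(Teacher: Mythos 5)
Your overall strategy is exactly the paper's: apply Theorem~\ref{main pi theorem} for $x>x_\pi(q)$, sandwich $\Li(x)$ between $\frac{x}{\log x}+\frac{x}{\log^2x}$ and $\frac{x}{\log x}+\frac{3x}{2\log^2x}$, and let the hypothesis $c_\pi(q)\phi(q)<1$ do the rest; your final algebra (the lower bound from $\frac1{\phi(q)}-c_\pi(q)>0$, the upper bound from $\frac32+c_\pi(q)\phi(q)<\frac52$) coincides with the paper's and is correct. The paper's two ingredients are Lemma~\ref{Li ineq 4 terms lemma} (valid for $x\ge190$) and Lemma~\ref{Li upper bound lemma} (valid for $x\ge1865$), together with the computed fact, recorded in equation~\eqref{xpi lower bound}, that $x_\pi(q)\ge14{,}735$ for all $3\le q\le10^5$.

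The gap is in your range-of-validity claim for the upper $\Li$ estimate. Your justification --- that ``$x_\pi(q)$ is well over $10^8$ for every $q$'' --- is false: the paper's computations give $\min_q x_\pi(q)=x_\pi(99{,}989)=14{,}735$, and many moduli near $10^5$ have $x_\pi(q)$ of order $10^4$. Moreover, your sketched derivation of $\Li(x)\le\frac{x}{\log x}+\frac{3x}{2\log^2x}$ (expand to four terms and split $\int_2^x(\log t)^{-4}\,dt$ at $\sqrt x$) does not reach anywhere near that range: bounding the tail by $16x/\log^4x$ forces $\frac{2}{\log x}+\frac{96}{\log^2x}\le\frac12$, i.e.\ $x\gtrsim4\cdot10^7$, and the head term of order $\sqrt x/\log^42$ pushes the threshold to roughly $4\cdot10^9$; so as written the upper inequality in~\eqref{pi be big} is unproven on $(x_\pi(q),\,{\sim}10^{10})$ for most $q\le10^5$ (even $x_\pi(3)\approx7.9\cdot10^9$ sits below that threshold). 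The repair is exactly the paper's Lemma~\ref{Li upper bound lemma}: show that $f(x)=\big(\Li(x)-\frac{x}{\log x}\big)\big/\frac{x}{\log^2x}$ is decreasing for $x\ge404$ (its derivative is controlled using the four-term lower bound of Lemma~\ref{Li ineq 4 terms lemma}) and check $f(1865)<\frac32$, which gives the needed bound for all $x\ge1865<14{,}735\le x_\pi(q)$. Your lower $\Li$ bound is unproblematic, since it only requires $x$ of order a few hundred. With that one lemma replaced (or quoted), your proof closes.
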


\noindent 
We remark that Dusart~\cite{Du} proved the lower bound in Theorem~\ref{pi nice lower bound theorem} in the case $q=3$.

\begin{theorem}  \label{pnqa bounds theorem}
Let $q \ge 3$ be an integer, and let $a$ be an integer that is coprime to~$q$. Suppose that $c_\pi(q)\phi(q) < 1$. Then either $p_n(q,a) \leq
x_\pi(q)$ or
\begin{equation}
n\phi(q) \log(n\phi(q)) <  p_n(q,a) < n\phi(q) \big( \log(n\phi(q)) + \tfrac43\log\log(n\phi(q))
\big).
\end{equation}
\end{theorem}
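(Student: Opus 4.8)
The plan is to invert the two‑sided estimate of Theorem~\ref{pi nice lower bound theorem}, using the defining property $\pi\big(p_n(q,a);q,a\big)=n$ together with the monotonicity of $x\mapsto\pi(x;q,a)$. The hypothesis $c_\pi(q)\phi(q)<1$ carries over verbatim, since it is precisely what licenses an application of Theorem~\ref{pi nice lower bound theorem}. Write $p=p_n(q,a)$ and $N=n\phi(q)$. If $p\le x_\pi(q)$ we are in the first alternative, so I would assume $p>x_\pi(q)$. The first thing to record is that $N$ is then necessarily large, which is what validates the elementary estimates that follow: applying the lower bound of Theorem~\ref{pi nice lower bound theorem} at $x=p$, and using that $t\mapsto t/\log t$ is increasing for $t>e$, gives
\[
n=\pi(p;q,a)>\frac{p}{\phi(q)\log p}\ge\frac{x_\pi(q)}{\phi(q)\log x_\pi(q)},
\qquad\text{hence}\qquad
N>\frac{x_\pi(q)}{\log x_\pi(q)}.
\]
Since $x_\pi(q)$ is large for every $q\ge 3$ --- for $3\le q\le10^5$ by the computations behind Theorem~\ref{main pi theorem}, and for $q>10^5$ because $x_\pi(q)=\exp(0.03\sqrt q\log^3q)$ --- this forces $N$ to exceed any fixed constant needed below; in particular $\log\log N>\tfrac52$ and $\log N>8$.

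For the lower bound $p>N\log N$ I would argue by contradiction: suppose $p\le N\log N$. Then $N\log N\ge p>x_\pi(q)$, so the upper bound of Theorem~\ref{pi nice lower bound theorem} applies at $x=N\log N$, and by monotonicity
\[
n=\pi(p;q,a)\le\pi(N\log N;q,a)<\frac{N\log N}{\phi(q)\log(N\log N)}\left(1+\frac{5}{2\log(N\log N)}\right).
\]
Writing $\log(N\log N)=\log N+\log\log N$ and dividing through by $n$, this reads
\[
1<\frac{\log N}{\log N+\log\log N}\left(1+\frac{5}{2(\log N+\log\log N)}\right);
\]
since the first factor is less than $1$, a short rearrangement shows that this forces $\log\log N<\tfrac52$, contradicting the previous paragraph. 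Hence $p>N\log N=n\phi(q)\log\big(n\phi(q)\big)$.

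For the upper bound, the lower bound of Theorem~\ref{pi nice lower bound theorem} at $x=p$ gives $n>p/(\phi(q)\log p)$, i.e.\ $p<N\log p$, and I would bootstrap: from $\log t<\sqrt t$ we get $p<N\sqrt p$, so $p<N^2$; hence $\log p<2\log N$ and $p<2N\log N$; hence $\log p<\log N+\log\log N+\log2$ and
\[
p<N\big(\log N+\log\log N+\log2\big).
\]
Because $N$ is large enough that $\log2<\tfrac13\log\log N$, the right‑hand side is strictly less than $N\big(\log N+\tfrac43\log\log N\big)=n\phi(q)\big(\log(n\phi(q))+\tfrac43\log\log(n\phi(q))\big)$, which is the asserted bound. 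Strictness throughout is automatic: the first inequality is strict by Theorem~\ref{pi nice lower bound theorem}, and in the lower‑bound step one uses that $\pi(\cdot;q,a)$ is integer‑valued, with ample slack in the estimates.

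The genuine obstacle here is not any individual inequality but the uniform bookkeeping in the first paragraph: one must certify that $x_\pi(q)$ comfortably exceeds the thresholds $e^{e^{5/2}}\approx1.9\times10^5$ and $e^8$ for \emph{every} $q\ge 3$. For $q>10^5$ this is immediate from the formula for $x_0(q)$ in~\eqref{x_0(q) definition}, while for $3\le q\le10^5$ it amounts to checking the computed values of $x_\pi(q)$, which are all many orders of magnitude larger. Granting this, everything else is routine one‑variable calculus.
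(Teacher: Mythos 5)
Your overall strategy---inverting the two-sided bound of Theorem~\ref{pi nice lower bound theorem} at $x=p_n$ and at $x=n\phi(q)\log(n\phi(q))$---is the same as the paper's, but the ``uniform bookkeeping'' in your first paragraph is wrong, and the lower-bound argument does not survive the error. The paper's computations give only $x_\pi(q)\ge x_\pi(99{,}989)=14{,}735$ (equation~\eqref{xpi lower bound}), with values around $1.5\times10^4$--$1.7\times10^4$ for $q$ near $10^5$; they are emphatically not ``many orders of magnitude'' above $e^{e^{5/2}}\approx1.9\times10^5$. From $N>x_\pi(q)/\log x_\pi(q)$ you therefore get only $N\gtrsim1.5\times10^3$, i.e.\ $\log N>7.3$ and $\log\log N\approx2$, far short of your claimed $\log N>8$ and $\log\log N>\tfrac52$. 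Worse, the threshold $\log\log N>\tfrac52$ is actually false for admissible data: for $q$ near $10^5$ one can have $n=1$ with $p_1>x_\pi(q)$, so that $N=\phi(q)\approx2\times10^4$ and $\log\log N\approx2.3$. Since your contradiction argument, after you discard the quadratic term, yields only $\log\log N<\tfrac52$, it produces no contradiction in precisely these cases, so the lower bound $p_n>N\log N$ is not proved. Your upper-bound bootstrap leans on the same unestablished size claim through $\log2<\tfrac13\log\log N$, i.e.\ $N>e^8$, which your stated bound on $N$ also fails to deliver.

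The repair is essentially what the paper does. Keep the inequality you derived before weakening it, namely $\log\log N\,(\log N+\log\log N)<\tfrac52\log N$, which is equivalent to $g(\log N)>1$ for $g(t)=\frac t{t+\log t}+\frac{5t}{2(t+\log t)^2}$; the paper shows $g(t)<1$ for all $t>7.2$ (it decreases to $t_0\approx21.8$, then increases to the limit $1$, and $g(7.2)<1$), so only the modest bound $\log N>7.2$ is needed. The paper obtains that bound by proving the upper bound on $p_n$ first---there the only size input is $\frac{\log\log p_n}{\log p_n}<\tfrac14$, guaranteed by $p_n>x_\pi(q)\ge14{,}735>6000$---and then feeding $N\big(\log N+\tfrac43\log\log N\big)>p_n>14{,}735$ into the increasing function $e^t\big(t+\tfrac43\log t\big)$, whose value at $t=7.2$ is below $14{,}735$. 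If you wish to keep your bootstrapping for the upper bound, you would likewise need an honest uniform lower bound on $N$ (for instance via $\max\{\phi(q),\,x_\pi(q)/\log x_\pi(q)\}$), which requires consulting the computed data rather than the incorrect assertion you make about it.
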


Thanks to our computations of the constants $c_\pi(q)$, we can produce a very explicit version of the above two results for certain moduli~$q$ (see Appendix~\ref{ssec pipn comp} for details).

\begin{cor}\label{cor nice pi pn bounds}
Let $1\le q \le 1200$
  be an integer, and let $a$ be an integer that is coprime to~$q$.
\begin{itemize}
\item For all $x \ge 50q^2$, we have
$$
\frac{x}{\phi(q)\log x} < \pi(x;q,a) < \frac{x}{\phi(q)\log x} \left(1 + \frac{5}{2\log x} \right).
$$
\item For all positive integers $n$ such that $p_n(q,a) \ge 22q^2$, we have
$$
n\phi(q) \log(n\phi(q)) < p_n(q,a) < n\phi(q) \big( \log(n\phi(q)) + \tfrac43\log\log(n\phi(q)) \big).
$$
\end{itemize}
\end{cor}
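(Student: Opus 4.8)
The plan is to derive Corollary~\ref{cor nice pi pn bounds} from Theorems~\ref{pi nice lower bound theorem} and~\ref{pnqa bounds theorem} together with a finite computation. First, for each integer $q$ with $3\le q\le1200$ one reads off the tabulated value of $c_\pi(q)$ and checks that $c_\pi(q)\phi(q)<1$, so that both theorems apply; the restriction $q\le1200$ is precisely what this requirement dictates, since the crude bound $c_\pi(q)\le c_0(q)$ does not by itself force $c_\pi(q)\phi(q)<1$ once $\phi(q)$ is large (e.g.\ when $q$ is a large prime). With this hypothesis in hand, Theorem~\ref{pi nice lower bound theorem} already gives the first displayed pair of inequalities for all $x>x_\pi(q)$, and Theorem~\ref{pnqa bounds theorem} gives the second for all $n$ with $p_n(q,a)>x_\pi(q)$. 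Since $x_\pi(q)\le x_0(q)\le 8\cdot10^9$ by~\eqref{x_0(q) definition}, what remains is to verify the two statements in the finite windows $50q^2\le x\le x_\pi(q)$ and $22q^2\le p_n(q,a)\le x_\pi(q)$, each of which is empty whenever $x_\pi(q)$ already falls below the stated threshold.

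For those windows I would argue computationally. Using a segmented sieve up to $8\cdot10^9$, I would tabulate, for each modulus $q\le1200$ and each residue $a$ coprime to $q$, the primes $p\equiv a\pmod q$ in the relevant range. The counting function $x\mapsto\pi(x;q,a)$ is a nondecreasing step function whose jumps occur precisely at those primes, while $x\mapsto x/(\phi(q)\log x)$ and $x\mapsto\frac{x}{\phi(q)\log x}\bigl(1+\frac{5}{2\log x}\bigr)$ are both increasing on $[e^2,\infty)$; hence on each interval between consecutive primes of the progression the first pair of inequalities holds throughout as soon as it holds at the relevant endpoint, so the verification reduces to checking, at each prime of the progression in the window, a single scalar inequality for the lower bound and a single one for the upper bound. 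The statement about $p_n(q,a)$ is even more direct: within the finite window the pair $\bigl(n,p_n(q,a)\bigr)$ is read straight off the table, and one checks the two inequalities $n\phi(q)\log(n\phi(q))<p_n(q,a)$ and $p_n(q,a)<n\phi(q)\bigl(\log(n\phi(q))+\tfrac43\log\log(n\phi(q))\bigr)$ for each such pair. The constants $50$ and $22$ are then simply the smallest round multiples of $q^2$ for which these finite checks succeed uniformly over $q\le1200$.

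The moduli $q=1$ and $q=2$ fall outside the hypotheses of Theorems~\ref{pi nice lower bound theorem}--\ref{pnqa bounds theorem}, but here $\phi(q)=1$, $\pi(x;1,1)=\pi(x)$, and $\pi(x;2,1)=\pi(x)-1$ for $x\ge2$, so for these two moduli I would instead invoke the classical explicit bounds of Rosser and Schoenfeld~\cite{RS} and of Dusart~\cite{Du} for $\pi(x)$ and $p_n$, supplemented by a direct check in the small ranges $x\ge50$, $x\ge200$ and the corresponding ranges for $p_n$. The one genuine obstacle is the scale of the computation in the second step: one must sieve to $8\cdot10^9$ and pass over all $\sum_{q\le1200}\phi(q)$ residue classes, so the real work lies in organizing the search efficiently (segmenting the sieve, and testing each $\pi(x;q,a)$ only at its step points) rather than in any analytic subtlety. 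These details are carried out in Appendix~\ref{ssec pipn comp}.
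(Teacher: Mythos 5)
Your proposal is correct and follows essentially the same route as the paper: verify $c_\pi(q)\phi(q)<1$ from the computed constants so that Theorems~\ref{pi nice lower bound theorem} and~\ref{pnqa bounds theorem} cover $x>x_\pi(q)$ and $p_n(q,a)>x_\pi(q)$, handle the remaining finite windows by direct computation over primes in each progression (as in Appendix~\ref{ssec pipn comp}), and treat $q=1,2$ by quoting Rosser--Schoenfeld. The only cosmetic difference is that the paper cites Rosser--Schoenfeld alone for $q\le2$ and checks the inequalities at each prime from the left and right, which matches your monotonicity reduction to the step points.
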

The lower bounds $50q^2$ and $22q^2$ present here have no especially deep meaning; they simply arise from fitting  envelope functions to the results of routine computations for  $x < x_\pi(q)$ and $1 \leq q \leq 1200$.

Bounds like those provided by Theorems~\ref{main psi theorem},  \ref{main theta theorem}, and~\ref{main pi theorem} are of a reasonable size for most purposes, when combined with tractable auxiliary computations for the range up to~$x_0(q)$. We may, however, weaken the error bounds to produce analogous results that are easier still to use, in that they apply for smaller values of~$x$ (see Section~\ref{ssec comp tiny theta} for the details of the computations involved).

\begin{cor} \label{tiny-theta-cor}
Let $a$ and $q$ be integers with $1 \le q \le 10^5$ and $\gcd (a,q) =1$.
If $x \geq 10^3$, then
\begin{align*}
 \bigg| \psi (x; q, a) - \frac{x}{\phi (q)} \bigg| &< 0.19 \frac{x}{\log x} \\
 \bigg| \theta (x; q, a) - \frac{x}{\phi (q)} \bigg| &< 0.40 \frac{x}{\log x} \\
 \bigg| \pi (x; q, a) - \frac{\Li(x)}{\phi (q)} \bigg| &< 0.53 \frac{x}{\log^2 x}.
\end{align*}
Moreover, if $x \geq 10^6$, then
\begin{align*}
 \bigg| \psi (x; q, a) - \frac{x}{\phi (q)} \bigg| &< 0.011 \frac{x}{\log x} \\
 \bigg| \theta (x; q, a) - \frac{x}{\phi (q)} \bigg| &< 0.024 \frac{x}{\log x} \\
 \bigg| \pi (x; q, a) - \frac{\Li(x)}{\phi (q) } \bigg| &< 0.027 \frac{x}{\log^2 x}.
\end{align*}
\end{cor}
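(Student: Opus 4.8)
The plan is to obtain Corollary~\ref{tiny-theta-cor} by combining the three main theorems, which already handle all sufficiently large~$x$, with a single (large) explicit computation covering the bounded range that remains. Throughout, for $q\in\{1,2\}$ the asserted inequalities reduce to classical explicit estimates for $|\psi(x)-x|$, $|\theta(x)-x|$, and $|\pi(x)-\Li(x)|$ of Rosser--Schoenfeld type~\cite{RS}, so we may assume $q\ge 3$. For $x\ge 8\cdot 10^9$ the reduction is immediate: since $q\le 10^5$ we have $x_0(q)=8\cdot10^9$, and Theorems~\ref{main psi theorem}, \ref{main theta theorem}, and~\ref{main pi theorem} give the bounds with constant $c_0(q)\le \tfrac1{160}=0.00625$. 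As $0.00625$ is smaller than each of $0.19,0.40,0.53$ and than each of $0.011,0.024,0.027$, all six inequalities hold once $x\ge 8\cdot10^9$. What remains is to verify them for $10^3\le x<8\cdot10^9$ (the first three) and for $10^6\le x<8\cdot10^9$ (the last three).

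The structural fact that makes this verification finite is that the error functions are monotone between their jumps. Fix $q$, $a$ with $\gcd(a,q)=1$ and put $g(x)=\psi(x;q,a)-x/\phi(q)$. On any interval free of prime powers $p^n\equiv a\pmod{q}$ the function $g$ decreases linearly with slope $-1/\phi(q)$, while at each such $p^n$ it jumps upward by $\log p$. Hence on $[L,8\cdot10^9]$ the supremum of $g$ is attained at $x=L$ or immediately after some $p^n\equiv a\pmod{q}$, and the infimum is attained immediately before some such $p^n$ or at the right endpoint (where $|g|$ is already controlled by the previous paragraph). The same holds, mutatis mutandis, for $\theta(x;q,a)-x/\phi(q)$ (jumps at primes $\equiv a$) and for $\pi(x;q,a)-\Li(x)/\phi(q)$ (jumps of $+1$ at primes $\equiv a$, with $\Li$ increasing). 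Therefore it suffices to evaluate the three normalized errors $\tfrac{\log x}{x}\big(\psi(x;q,a)-\tfrac{x}{\phi(q)}\big)$, $\tfrac{\log x}{x}\big(\theta(x;q,a)-\tfrac{x}{\phi(q)}\big)$, and $\tfrac{\log^2 x}{x}\big(\pi(x;q,a)-\tfrac{\Li(x)}{\phi(q)}\big)$ at $x=L$ and at the prime powers $\le 8\cdot10^9$ (taking $\psi,\theta,\pi$ with and without the jump at that point), for every $q\le10^5$ and every $a$ coprime to~$q$, and to check that each value lies in the required interval. Carrying this out in certified interval arithmetic for $\log x$, $x/\phi(q)$, and $\Li(x)$ turns the numerical check into a rigorous proof.

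The computation itself is a segmented sieve up to $8\cdot10^9$: enumerating prime powers $p^n$ in increasing order, one adds $\log p$ (or $1$, in the $\pi$ case) to the running value of the relevant counting function for the residue class $a\equiv p^n\pmod{q}$, and for each pair $(q,a)$ maintains the running maximum and minimum of each normalized error, comparing the values just before and just after each update as well as at $x=L$. The cutoffs $10^3$ and $10^6$ enter because the normalized errors are largest for comparatively small $x$ and moderate $q$ --- roughly $\phi(q)\approx\sqrt x$, where $x/\phi(q)$ is of the same order of magnitude as $x/\log x$ but only a few primes fall into each residue class, so an atypically sparse class produces a relatively large deficit --- whereas once $x\ge10^6$ the equidistribution has tightened enough that $0.011\cdot x/\log x$ (and correspondingly for $\theta$ and $\pi$) already dominates the error everywhere up to $8\cdot10^9$.

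The main obstacle is scale and bookkeeping rather than any new idea. There are roughly $\pi(8\cdot10^9)\approx 4\cdot10^8$ prime powers and roughly $\sum_{q\le10^5}\phi(q)\approx 3\cdot10^9$ reduced residue classes, so one cannot keep all running sums resident in memory simultaneously; the prime powers must be re-streamed in batches of moduli, and all arithmetic must carry rigorous error bounds so that the final tables of extremal normalized errors genuinely certify the stated inequalities. These details, together with the auxiliary classical estimates needed for $q\in\{1,2\}$ and the small-$x$ bookkeeping, are the content of the computational appendix.
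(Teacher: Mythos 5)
For $3\le q\le 10^5$ your plan is essentially the paper's own proof of Corollary~\ref{tiny-theta-cor}: invoke Theorems~\ref{main psi theorem}--\ref{main pi theorem} (constants at most $c_0(q)\le\frac1{160}$, valid once $x\ge x_0(q)=8\cdot10^9$) for the tail, and dispose of the remaining bounded range by an exhaustive, rigorously rounded computation at the jump points, using the fact that each signed error is decreasing between jumps while the comparison functions $cx/\log x$ and $cx/\log^2x$ are increasing. The only real difference in this range is cosmetic: the paper truncates the brute-force check at the per-modulus crossovers $x_\psi(q)$, $x_\theta(q)$, $x_\pi(q)$ and covers the gap up to $8\cdot10^9$ with the already-computed constants $c_\psi(q),c_\theta(q),c_\pi(q)$ (all well below $0.011$), whereas you sieve uniformly to $8\cdot10^9$; both are fine.

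The genuine gap is your opening sentence, which discards $q\in\{1,2\}$ by appeal to ``classical explicit estimates of Rosser--Schoenfeld type'' and then assumes $q\ge3$. These are exactly the cases that dictate the constants: the paper's computations show that for $x\ge10^3$ the suprema of all three normalized errors over $1\le q\le10^5$ are $0.18997$, $0.3987$ and $0.5261$, each attained at $q=2$, $x=1423^-$, and for $x\ge10^6$ the supremum $0.0269$ in the $\pi$-bound is attained at $q=2$, $x=1090697^-$. The margins against the claimed $0.19$, $0.40$, $0.53$, $0.027$ are fractions of a percent, so no quotable classical bound can supply them on the small-$x$ range (the Rosser--Schoenfeld statements near $x\sim10^3$ are of strength $\pm x/(2\log x)$ for $\theta$, i.e.\ constant $0.5$, not $0.40$); the only way to get them is to run the same exhaustive computation for $q=1,2$ as well, taking care that $\psi(x;2,1)$ and $\theta(x;2,1)$ omit the powers of $2$. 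Classical results do cover the large-$x$ tail for these two moduli, but the paper instead handles it by expressing $\psi(x;2,1)$, $\theta(x;2,1)$, $\pi(x;2,1)$ (and the $q=1$ functions) through the residue classes modulo $3$, obtaining bounds $2c_\psi(3)$, $2c_\theta(3)$, $2c_\pi(3)$ times $x/\log x$ or $x/\log^2x$ plus explicit correction terms for $x\ge x_\psi(3),x_\theta(3),x_\pi(3)$, and then repeating the brute-force verification below those thresholds. So the fix is simply to fold $q=1$ and $q=2$ into your computation (and use either the classical tail bounds or the mod-$3$ trick beyond $8\cdot10^9$); as written, ``we may assume $q\ge3$'' throws away precisely the cases where the stated inequalities are tightest.
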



In another direction, if we want somewhat sharper uniform bounds and are willing
to permit the parameter $x$ to be very large, we have the following corollary (see Appendix~\ref{ssec verify cor18} for
details of the computation). We remark that for $q\ge 58$ we can weaken the restriction on $x$ to $x \ge \exp(0.03
\sqrt q\log^3q)$.

\begin{cor} \label{uniform-cor}
Let $a$ and $q$ be integers with $q \geq 3$ and $\gcd (a,q) =1$. Suppose that $x \geq \exp(8 \sqrt q\log^3q)$. Then
$$
\max \left\{  \bigg| \psi (x; q, a) - \frac{x}{\phi (q)} \bigg|,  \bigg| \theta (x; q, a) - \frac{x}{\phi (q)} \bigg| \right\}
< \frac1{160} \frac{x}{\log x}
$$
and
$$
 \bigg| \pi (x; q, a) - \frac{\Li(x)}{\phi (q)} \bigg| < \frac1{160} \frac{x}{\log^2 x}.
$$

\end{cor}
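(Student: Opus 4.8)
The plan is to obtain Corollary~\ref{uniform-cor} directly from Theorems~\ref{main psi theorem}, \ref{main theta theorem} and~\ref{main pi theorem}, using a single finite computation to cover the one range those theorems do not reach. The point is that the hypothesis $x \ge \exp(8\sqrt q\log^3 q)$ is, for all but the smallest modulus, already strong enough to force $x \ge x_0(q)$, at which point the three theorems yield the stated inequalities with constants bounded by $c_0(q) \le \tfrac1{160}$, the strictness coming for free from the strict bounds in those theorems.

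First I would treat $q > 10^5$: by~\eqref{x_0(q) definition} we have $x_0(q) = \exp(0.03\sqrt q\log^3 q) < \exp(8\sqrt q\log^3 q) \le x$, so $x \ge x_0(q) \ge \max\{x_\psi(q), x_\theta(q), x_\pi(q)\}$ and the three theorems apply with $c_\psi(q), c_\theta(q), c_\pi(q) \le c_0(q) = \tfrac1{160}$. Next, for $3 \le q \le 10^5$ we have $x_0(q) = 8\cdot 10^9$; since $t \mapsto \sqrt t\log^3 t$ is increasing for $t>1$ and $8\sqrt 4\log^3 4 > \log(8\cdot 10^9)$, the hypothesis already gives $x \ge 8\cdot 10^9 = x_0(q)$ whenever $q \ge 4$, and we again conclude from the three theorems (now with the even smaller constant $c_0(q) = \tfrac1{840}$ when $q \le 10^4$).

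The only remaining case is $q = 3$, where $\exp(8\sqrt 3\log^3 3) \approx 9.5\cdot 10^7$ lies below $x_\theta(3)$ and $x_\pi(3)$, both $\approx 7.9\cdot 10^9$. For $x \ge x_\pi(3) = 7{,}940{,}618{,}683$ the three theorems apply as before (with $c_\psi(3), c_\theta(3), c_\pi(3)$ all under $5\cdot 10^{-4}$). To close the remaining window $\exp(8\sqrt 3\log^3 3) \le x < x_\pi(3)$ I would compute directly: sieve the primes and prime powers up to $8\cdot 10^9$ and evaluate $\psi(x;3,a)$, $\theta(x;3,a)$, $\pi(x;3,a)$ for $a\in\{1,2\}$, checking each of the required inequalities. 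Since the envelopes $\tfrac1{160}\,x/\log x$ and $\tfrac1{160}\,x/\log^2 x$ are increasing on this range while the three counting functions are nondecreasing step functions, it suffices to test the two-sided bounds at the finitely many jump points (primes, or prime powers for $\psi$), comparing the pre-jump value against the lower envelope and the post-jump value against the upper one. This verification, recorded in Appendix~\ref{ssec verify cor18}, is the main obstacle; everything else is bookkeeping with the stated theorems. It should pass with ample margin, since over this range the true error is of size $\approx 4\cdot 10^{-4}$, well below $\tfrac1{160}$.

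Finally, the remark that the hypothesis may be relaxed to $x \ge \exp(0.03\sqrt q\log^3 q)$ for $q \ge 58$ is handled the same way: once $q$ is large enough that $\exp(0.03\sqrt q\log^3 q) \ge 8\cdot 10^9 = x_0(q)$ the three theorems apply immediately, and a supplementary computation of exactly the above type—now only over the moderate range from $\exp(0.03\sqrt q\log^3 q)$ up to $8\cdot 10^9$—covers the remaining moduli down to $q=58$.
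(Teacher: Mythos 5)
Your proposal is correct and follows essentially the same route as the paper: invoke Theorems~\ref{main psi theorem}--\ref{main pi theorem} once the hypothesis forces $x$ past the relevant thresholds (which happens automatically for all $q\ge4$, and for $q>10^5$ via $x_0(q)=\exp(0.03\sqrt q\log^3q)$), and close the remaining window at $q=3$ by the direct computation recorded in Appendix~\ref{ssec verify cor18}. The only cosmetic difference is that you bound the thresholds by $x_0(q)=8\cdot10^9$ and use monotonicity of $\sqrt t\log^3t$, whereas the paper compares $\log x_m(q)/(\sqrt q\log^3q)$ against $8$ using its computed table of $x_\psi(q),x_\theta(q),x_\pi(q)$; both reduce the problem to the same single modulus $q=3$.
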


Finally, to complement our main theorems, we should mention one last result, summarizing our computations for ``small'' values of the parameter $x$ (and extending and generalizing Theorem 2 of Ramar\'e and Rumely \cite{RR}) :

\begin{theorem} \label{small-x-theorem}
Let $q$ and $a$ be  integers with $1 \leq q \leq 10^5$ and $\gcd (a,q)=1$, and suppose that $x \leq x_2(q)$, where
\begin{equation}  \label{x2 definition}
  x_2(q) = \begin{cases}
  10^{12} &  \text{if } q=1\\
    x_2(q/2), &\text{if } q \equiv 2 \mod 4 \\
4\cdot 10^{13}, &\text{if } q\in\{3,4,5\}, \\
10^{13}, &\text{if } 5< q\le 100, q \not\equiv 2 \mod 4 \\
10^{12}, &\text{if } 100< q\le 10^4,  q\not \equiv 2 \mod 4\\
10^{11}, &\text{if } 10^4< q\le 10^5, q \not \equiv 2 \mod 4.
\end{cases}
\end{equation}
We have
$$
\max_{1 \leq y \leq x} \left| \psi (y; q, a) - \frac{y}{\phi (q)} \right| \leq 1.745 \sqrt{x},
$$
$$
\max_{1 \leq y \leq x} \left| \theta (y; q, a) - \frac{y}{\phi (q)} \right| \leq 2.072 \sqrt{x}
$$
and
$$
\max_{1 \leq y \leq x} \left| \pi (y; q, a) - \frac{\Li(y)}{\phi (q)} \right| \leq 2.734 \frac{\sqrt{x}}{\log x}.
$$
\end{theorem}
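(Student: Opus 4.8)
The plan is to reduce the three claimed inequalities to a finite verification and then carry it out by direct computation, in the spirit of Theorem~2 of \cite{RR} but on a vastly larger scale. The structural input is that each of the functions $y\mapsto\psi(y;q,a)-y/\phi(q)$, $y\mapsto\theta(y;q,a)-y/\phi(q)$, and $y\mapsto\pi(y;q,a)-\Li(y)/\phi(q)$ is, between two consecutive ``events''---prime powers $p^k$ in the first case, primes $p$ in the other two---a continuous strictly decreasing function (of slope $-1/\phi(q)$, $-1/\phi(q)$, and $-1/(\phi(q)\log y)$ respectively; in particular piecewise linear in the first two cases), interrupted by an upward jump of size $\log p$, $\log p$, or $1$ at each event. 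Consequently its extremes on $[1,x]$ occur at events: a local maximum just after each jump, a local minimum approached just before the next. Since $\sqrt x$ is concave and increasing, the bound $|\psi(y;q,a)-y/\phi(q)|\le1.745\sqrt y$ for all $1\le y\le x_2(q)$ is equivalent---by monotonicity and a chord argument on each inter-event interval---to the pair of inequalities obtained at each event $\le x_2(q)$ by evaluating the function just after the jump and just before the following one; the same holds for $\theta$ and $\pi$, except that in the $\pi$ case the smooth term $\Li$ is itself concave, so one also needs a crude bound on the variation of $\Li$ over each (tiny) inter-prime interval. It therefore suffices to evaluate $\psi(\cdot;q,a)$, $\theta(\cdot;q,a)$, and $\pi(\cdot;q,a)$ at every prime and prime power up to $x_2(q)$, for all $q\le10^5$ and all admissible~$a$, with $\Li$ replaced throughout by a rigorous enclosure.

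Concretely, for each modulus $q$ I would run a segmented sieve of Eratosthenes over $[1,x_2(q)]$, maintaining the $\phi(q)$ partial sums $\theta(\cdot;q,a)$ for all admissible $a$ simultaneously; the $O(\sqrt{x_2(q)})$ prime-power corrections needed to pass to $\psi$, and the evaluations of $\Li$ needed to pass to $\pi$, are cheap. To keep the per-prime cost essentially $O(1)$---so that processing every coprime class for all $\approx10^5$ moduli does not inflate the total---one maintains $\max_a\theta(\cdot;q,a)$ incrementally for the ``just after'' inequalities (it only ever increases, and only in the class just updated) and $\min_a\theta(\cdot;q,a)$, recomputed lazily only when the current minimal class is hit, for the ``just before'' ones, with the analogous bookkeeping for $\psi$ and $\pi$. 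The moduli $q\equiv2\mod{4}$ need no separate run: writing $q=2m$ with $m$ odd, the Chinese Remainder Theorem identifies the reduced residues mod $2m$ with those mod $m$, and a prime power $p^k$ lies in a given class mod $2m$ exactly when $p$ is odd and $p^k$ lies in the corresponding class mod $m$; hence the three functions for modulus $2m$ differ from their mod-$m$ counterparts only by an $O(\log y)$ term coming from powers of $2$, and since $\phi(2m)=\phi(m)$ the mod-$2m$ bounds follow from the mod-$m$ bounds (which are already computed), the correction being absorbed into the slack in the constants once $y$ is not tiny and the finitely many tiny $y$ checked directly---which is exactly why $x_2(2m)=x_2(m)$; likewise $q=2$ reduces to $q=1$. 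Recording the largest values of $|\psi(y;q,a)-y/\phi(q)|/\sqrt y$, $|\theta(y;q,a)-y/\phi(q)|/\sqrt y$, and $|\pi(y;q,a)-\Li(y)/\phi(q)|\log y/\sqrt y$ over all events and all $q\le10^5$ then yields values comfortably below $1.745$, $2.072$, and $2.734$.

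The main obstacle is entirely one of scale rather than of ideas. For $q\le5$ one needs every prime up to $4\cdot10^{13}$ (and up to $10^{13}$ for $q\le100$, $10^{12}$ for $q\le10^4$, and $10^{11}$ for $q\le10^5$), so the total effort is on the order of $10^{14}$--$10^{15}$ elementary operations. Making this feasible requires a carefully engineered segmented sieve, the amortized $O(1)$ per-prime updates above, parallelization across both moduli and sieve blocks, and---crucially for rigour---verified arithmetic: $\sqrt y$, $\log y$, $\Li(y)$, and the accumulated sums $\theta(y;q,a)$ must all be carried with provable error control (interval arithmetic, or exact rational bounds) so that every comparison is certified rather than merely numerically plausible. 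The thresholds $x_2(q)$ in~\eqref{x2 definition} are simply as large as these computations could be pushed within the available resources in each range of~$q$, and the constants $1.745$, $2.072$, $2.734$ are convenient roundings of the observed maxima, with a little room to spare. The implementation and its verification are described in Appendix~\ref{computational appendix}.
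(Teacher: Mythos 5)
Your proposal is correct and is essentially the paper's own proof: the paper establishes this theorem by exactly such an exhaustive sieve computation (Appendix~\ref{ssec comp bpsi}), evaluating $\psi$, $\theta$, $\pi$ at every prime and prime power up to $x_2(q)$ for each modulus, exploiting monotonicity of the error terms between jumps, using interval arithmetic (including a verified evaluation of $\Li$ via $\Ei$), and reducing $q\equiv2\pmod4$ to $q/2$ via Lemma~\ref{2mod4 lemma}, just as you describe with your CRT/powers-of-$2$ argument. No substantive difference.
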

It is worth observing that
the bounds here may be sharpened for (most) individual moduli $q$ (the extremal cases for each function correspond to $q=2$). We provide such bounds and links to related data for moduli $3 \leq q \leq 10^5$ in Appendix~\ref{ssec comp bpsi}.

The outline of the paper is as follows. In Section \ref{Sec2}, we derive an explicit upper bound for
$| \psi (x; q, a) - {x}/{\phi (q)} |$, valid
for the ``small'' moduli $3\le q \le 10^5$.
In Section \ref{Sec4}, this bound is carefully refined into a form which is suitable for explicit calculation; we establish Theorem~\ref{main psi theorem} for these small moduli at the end of Section~\ref{assembly section}. In Section~\ref{Sec5}, we move from bounds for approximating $\psi (x; q, a)$ to analogous bounds for $\theta(x; q, a)$ and $\pi(x; q, a)$. In particular, we establish Theorem~\ref{main theta theorem} for these moduli at the end of Section~\ref{psi to theta section}, and Theorems~\ref{main pi theorem}--\ref{pnqa bounds theorem} for small moduli (as well as Corollary~\ref{cor nice pi pn bounds}) in Section~\ref{theta to pi section}.

Section \ref{Sec6} contains upper bounds for $|\psi(x;q,a) - x/\phi(q)|$, $|\theta(x;q,a) - x/\phi(q)|$, and $|\pi(x;q,a) - \Li(x)/\phi(q)|$ for larger moduli $q>10^5$. We establish Theorems~\ref{main psi theorem} and~\ref{main theta theorem} for these large moduli in Section~\ref{sec53} (see the remark before Corollary~\ref{same result as for small q cor}), and Theorem~\ref{main pi theorem} for these moduli in Section~\ref{sec54}. Indeed, in those sections, we also deduce a number of explicit results with stronger error terms (saving greater powers of $\log x$), as well as analogous results for an improved range of $x$ that hold under the assumption that there are no exceptional zeros for the relevant Dirichlet $L$-functions. Finally, in Appendix~\ref{computational appendix}, we provide details for our explicit computations, with links to files containing all our data. We provide a summary of the notation defined throughout the paper in Appendix~\ref{reference section}.

Before we proceed, a few remarks on our methods are in order. The error terms \eqref{class} depend fundamentally upon the distribution of the zeros of the Riemann zeta function, as evidenced by von Mangoldt's formula:
$$
\lim_{\ep\to0} \frac{\psi(x-\ep)+\psi(x+\ep)}2 = x - \sum_{\rho} \frac{x^{\rho}}{\rho} - \log 2 \pi + \frac{1}{2} \log \left( 1 - \frac{1}{x^2} \right),
$$
where the sum is over the zeros $\rho$ of the Riemann zeta function in the critical strip, in order of increasing $| \mbox{Im } \rho |$. Deriving good approximations for $\psi (x; q, a )$, $\theta (x; q, a )$, and $\pi (x; q, a )$ depends in a similar fashion upon understanding the distribution of the zeros of Dirichlet $L$-functions. Note that, as is traditional in this subject, our approach takes as a starting point von Mangoldt's formula, and hence we are led to initially derive bounds for $\psi(x; q, a )$, from which our estimates for $\theta(x; q, a )$ and $\pi(x; q, a )$ follow. The fundamental arguments providing the connection between zeros of Dirichlet $L$-functions and explicit bounds for error terms in prime counting functions derive from classic work of Rosser and Schoenfeld~\cite{RS}, as extended by McCurley~\cite{Mc1}, and subsequently by Ramar\'e and Rumely~\cite{RR} and Dusart~\cite{Du}. The main ingredients involved include explicit zero-free regions for Dirichlet $L$-functions by Kadiri~\cite{Ka} and McCurley~\cite{Mc3}, explicit estimates for the zero-counting function for Dirichlet $L$-functions by Trudgian~\cite{Tru}, and the results of large-scale computations of Platt~\cite{Pla2}, all of which we cite from the literature. Other necessary results include lower bounds for $L(1,\chi)$ for quadratic characters $\chi$, upper bounds for exceptional zeros of $L$-functions with associated character $\chi$, and explicit inequalities for $b(\chi)$, the constant term in the Laurent expansion of $\frac{L'}L(s,\chi)$ at $s=0$ (see Definition~\ref{mchi bchi def} below). In each of these cases, our results sharpen existing explicit inequalities and thus might be of independent interest:

\begin{prop}  \label{L1chi prop}
If $\chi$ is a primitive quadratic character with conductor $q > 6677$, then
$\displaystyle
L(1,\chi) > \frac{12}{\sqrt q}$.
\end{prop}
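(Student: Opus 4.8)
The plan is to derive the lower bound $L(1,\chi) > 12/\sqrt q$ from the class number formula combined with a lower bound for $h(d)$, the class number of the relevant quadratic field, together with explicit estimates of the analytic ingredients. Concretely, for a primitive quadratic character $\chi$ modulo $q$, the character corresponds to a fundamental discriminant $d$ with $|d| = q$, and Dirichlet's class number formula gives $L(1,\chi)$ as a simple explicit multiple of $h(d)/\sqrt q$ (with the multiple involving $\log\varepsilon_d$ for real fields, or $2\pi/w$ for imaginary fields). For imaginary quadratic fields the formula reads $L(1,\chi) = \pi h(d)/(w\sqrt q)$ with $w \in \{2,4,6\}$, so the inequality $L(1,\chi) > 12/\sqrt q$ is equivalent to $\pi h(d)/w > 12$, i.e.\ to a lower bound on $h(d)$ that holds once $q$ is large enough; for real quadratic fields $L(1,\chi) = 2h(d)\log\varepsilon_d/\sqrt q \ge 2\log\varepsilon_d/\sqrt q$ and one uses $\varepsilon_d \gg \sqrt d$ together with $h(d) \ge 1$.

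First I would split into the real and imaginary cases. In the real case, the fundamental unit satisfies $\varepsilon_d \ge \tfrac12(\sqrt{d-4} + \sqrt d) > \sqrt d/\text{(small constant)}$ for $d$ large, so $L(1,\chi) \ge 2\log\varepsilon_d/\sqrt q$ already exceeds $12/\sqrt q$ once $\log\varepsilon_d > 6$, i.e.\ once $q$ exceeds a modest bound; the threshold $6677$ is far larger than what the real case needs, so the real case is essentially immediate. The imaginary case is the genuine content: here we need $h(-q) > 12w/\pi \ge 12 \cdot 6/\pi \approx 23$ (taking the worst case $w=6$, which only occurs for $q=3$, irrelevant here, so in practice $w=2$ and we need $h(-q) > 24/\pi \approx 7.7$, i.e.\ $h(-q) \ge 8$). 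So the crux is: \emph{every imaginary quadratic field of discriminant $-q < -6677$ has class number at least $8$} (with the precise threshold chosen so that the finitely many exceptions all have $q \le 6677$).

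The key step is therefore to invoke the resolution of the class number problem for small $h$. The complete lists of imaginary quadratic fields with class number $h \le 100$ are known unconditionally (Watkins, building on Goldfeld--Gross--Zagier); in particular the largest $|d|$ with $h(d) \le 7$ is some explicit value, and one checks it is at most $6677$. I would cite Watkins's tables (or the classical determinations for $h \le 7$) to conclude that for all fundamental discriminants $d < 0$ with $|d| > 6677$ one has $h(d) \ge 8$, hence $L(1,\chi) = \pi h(d)/(w\sqrt q) \ge 8\pi/(2\sqrt q) = 4\pi/\sqrt q > 12/\sqrt q$, using $w = 2$ for all $q > 4$. One must be slightly careful that the statement is about characters of conductor $q$, not about fundamental discriminants: a primitive quadratic character of conductor $q$ corresponds to the Kronecker symbol of the unique fundamental discriminant $d$ with $|d| = q$ (which exists precisely when $q$ is a valid conductor), so this is a clean translation.

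The main obstacle is purely bookkeeping: pinning down the exact constant $12$ and the exact threshold $6677$ so that they are consistent, i.e.\ verifying that the finitely many imaginary quadratic discriminants with $h(d) \le 7$ all have $|d| \le 6677$ and that among discriminants with $|d| \le 6677$ that do have large class number (or are real) nothing violates the bound, and then confirming $4\pi > 12$ comfortably (indeed $4\pi \approx 12.566$, so there is only a little room — this explains why one cannot push the constant much past $12$ with this method without excluding more small discriminants). I expect the proof to reduce to a short argument citing the class number tables plus the real-field unit bound, with the only real work being to state the cutoff cleanly; the paper likely handles the small-$q$ residual range $q \le 6677$ by direct computation elsewhere, which is why the proposition is phrased with that explicit lower bound on the conductor.
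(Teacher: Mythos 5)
Your overall skeleton (Kronecker symbol of a fundamental discriminant, Dirichlet's class number formula, split into imaginary and real cases, Watkins's class-number tables for the imaginary case, the trivial bound on the fundamental unit for the real case) is the same as the paper's, but your treatment of the real quadratic case contains a genuine gap, and it is precisely there that the real work and the threshold $6677$ live. With only $h(\sqrt d)\ge 1$ and the trivial bound $\eta_d \ge \tfrac12(\sqrt{d+4}+\sqrt d)$, the class number formula gives $L(1,\chi_d) \gtrsim (\log\sqrt d)/\sqrt d$, so ``$\log(\text{unit})$ large enough'' forces $d$ beyond roughly $e^{12}\approx 1.6\cdot 10^{5}$ even with your normalization $L(1,\chi_d)=2h\log\varepsilon_d/\sqrt d$, and beyond $e^{24}\approx 2.65\cdot 10^{10}$ with the normalization the paper uses ($L(1,\chi_d)=h(\sqrt d)\log\eta_d/\sqrt d$, $\eta_d$ the fundamental solution of $v^2-du^2=4$). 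In either case the range $6677<d\le e^{12}$ (resp.\ $e^{24}$) is not covered: there are real quadratic fields with $h=1$ and fundamental unit essentially at the trivial minimum, so the inequality is not ``essentially immediate'' there. The paper fills exactly this gap computationally: in the proof of Lemma~\ref{L1chi lemma} and in Appendix~\ref{magma sec} it verifies $h(\sqrt d)\log\eta_d>12$ by explicit (Sage) computation over the intermediate range of real fundamental discriminants, up to the point $d\approx 2.65\cdot10^{10}$ where the trivial unit bound takes over. The cutoff $6677$ is dictated by this real case (a real discriminant where the bound fails), not by the imaginary exceptional discriminants, so your plan of ``choose the threshold so the imaginary exceptions fall below it'' misidentifies where the constraint comes from.

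Your imaginary case is essentially right and matches the paper, modulo a factor of $2$: the formula is $L(1,\chi_d)=2\pi h(\sqrt d)/(w_d\sqrt{|d|})$ with $w_d=2$ for $d<-3$, so one only needs $h\ge 4$ (since $4\pi>12$), and the largest $|d|$ with $h\le 3$ is $907$; your dropped factor makes the requirement $h\ge8$, which Watkins's tables still cover (largest $|d|$ with $h\le 7$ is $5923<6677$), so that slip is harmless. But as written your proof does not establish the proposition for real characters with $6677<q$ below the $e^{12}$ (or $e^{24}$) scale; you would need either the explicit computation of $h\log\eta_d$ over that range (as the paper does) or some nontrivial lower bound on the product of class number and regulator, neither of which follows from the unit bound alone.
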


\begin{prop}  \label{FLM beta prop}
Let $q\ge3$ be an integer, and let $\chi$ be a quadratic character modulo~$q$. If $\beta > 0$ is a real number for which $L(\beta,\chi)=0$, then
\[
\beta \le 1 - \frac{40}{\sqrt q\log^2 q}.
\]
\end{prop}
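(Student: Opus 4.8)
The plan is to combine the improved lower bound for $L(1,\chi)$ from Proposition~\ref{L1chi prop} with a classical mean-value-theorem argument bounding the derivative $L'(s,\chi)$ in a neighbourhood of $s=1$. The starting point is the standard observation that if $\beta>0$ satisfies $L(\beta,\chi)=0$, then writing
\[
L(1,\chi) = L(1,\chi) - L(\beta,\chi) = \int_\beta^1 L'(\sigma,\chi)\,d\sigma \le (1-\beta)\max_{\beta \le \sigma \le 1}|L'(\sigma,\chi)|,
\]
so that $1-\beta \ge L(1,\chi)\big/\max_{\sigma}|L'(\sigma,\chi)|$. It therefore suffices to produce a lower bound for $L(1,\chi)$ of the form $c_1/\sqrt q$ and an upper bound for $|L'(\sigma,\chi)|$ on the relevant real segment of the form $c_2\log^2 q$; together these yield $1-\beta \ge (c_1/c_2)/(\sqrt q\log^2 q)$, and one checks that the constants can be arranged so that $c_1/c_2 \ge 40$.

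First I would handle the derivative bound. For $\sigma$ real with, say, $\tfrac12 \le \sigma \le 1$ (a real zero $\beta$ of a quadratic $L$-function is necessarily real and, by existing explicit zero-free regions or even elementary arguments, close to $1$, so restricting to this range is harmless once $q$ is large), one uses the truncated representation of $L(s,\chi)$ together with an explicit bound on the tail via partial summation, or alternatively a Polya--Vinogradov-type estimate for the character sum, differentiated term by term. The key classical fact is that $|L'(\sigma,\chi)| \ll \log^2 q$ uniformly for $\sigma$ in a fixed right neighbourhood of $1$; the factor $\log^2 q$ (rather than $\log q$) arises because differentiating introduces an extra $\log n$ inside the sum, and the optimal truncation point is a power of $q$. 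Making the implied constant explicit is the routine-but-delicate part: one splits the Dirichlet series at $N \asymp q$, bounds $\sum_{n\le N}\frac{\log n}{n^\sigma}$ directly, and controls $\sum_{n>N}\chi(n)\frac{\log n}{n^\sigma}$ by Abel summation using $|\sum_{n\le t}\chi(n)| \le \sqrt q\log q$ (Polya--Vinogradov), keeping careful track of all constants.

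The main obstacle will be ensuring that the numerical constants line up to give exactly the clean value $40$ in the stated range $q\ge 3$, rather than merely \emph{some} admissible constant for \emph{sufficiently large} $q$. Proposition~\ref{L1chi prop} only gives $L(1,\chi)>12/\sqrt q$ for conductors $q>6677$, and it applies to primitive characters; for the remaining small moduli, and for imprimitive characters, one must argue separately --- for small $q$ by direct computation of the (finitely many) quadratic $L$-functions and their real zeros (in fact Siegel zeros simply do not occur in that range, a fact one can verify explicitly, so the bound holds vacuously or with enormous room to spare), and for imprimitive $\chi$ by passing to the inducing primitive character $\chi^*$ of conductor $q^* \mid q$, noting $L(\beta,\chi)=0 \iff L(\beta,\chi^*)=0$ and $q^* \le q$ so $1-\beta \ge 40/(\sqrt{q^*}\log^2 q^*) \ge 40/(\sqrt q\log^2 q)$ since $t\mapsto \sqrt t\log^2 t$ is increasing. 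Finally one must double-check the monotonicity and boundary behaviour at the transition point $q = 6677$ to confirm the two regimes (explicit computation below, Proposition~\ref{L1chi prop} above) overlap with no gap; I expect the derivative estimate's constant to be the binding constraint, and some mild optimization of the truncation parameter --- or invoking a sharper-than-Polya--Vinogradov explicit character-sum bound --- may be needed to close the case for the smallest $q$ not covered by direct computation.
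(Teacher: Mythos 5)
Your proposal follows essentially the same route as the paper: the mean value theorem applied to $L(1,\chi)-L(\beta,\chi)$, the explicit lower bound $L(1,\chi)\ge 12/\sqrt q$ from the class number formula, an explicit P\'olya--Vinogradov-based upper bound for $|L'(\sigma,\chi)|$ obtained by truncating the Dirichlet series, reduction of imprimitive characters to the inducing primitive character via monotonicity of $\sqrt t\log^2 t$, and verified computations for small moduli. So the architecture is right. The one step that fails as written is the claim that $|L'(\sigma,\chi)|\ll\log^2 q$ (with a usable constant) holds uniformly for $\tfrac12\le\sigma\le1$, or on any fixed right neighbourhood of $1$: with the series split at $N\asymp q^{c}$, the head contributes roughly $N^{1-\sigma}\log^2 N$, which is a positive power of $q$ unless $1-\sigma=O(1/\log q)$. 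The constant you need (about $12/40=0.3$) is only attainable for $\sigma$ extremely close to $1$. The paper's fix is a clean case split that you should adopt in place of the vague ``the zero is close to $1$'' remark: if $\beta<1-\frac1{4\sqrt q}$, then for $q\ge4\cdot10^5$ one has $\frac1{4\sqrt q}\ge\frac{40}{\sqrt q\log^2 q}$ and the conclusion is immediate; otherwise $\beta\ge1-\frac1{4\sqrt q}$, so the mean value point $\sigma$ lies in $[1-\frac1{4\sqrt q},1]$, where the factor $y^{1-\sigma}$ in the truncated bound is essentially $1$, and choosing $y=q^{0.655}$ gives $|L'(\sigma,\chi^*)|<0.27356\log^2 q$, whence $1-\beta\ge 12q^{-1/2}/(0.27356\log^2q)>40/(\sqrt q\log^2 q)$.

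Two smaller points. For the small moduli you must rule out \emph{all} real zeros in $(0,1)$, not merely Siegel zeros: for $q\lesssim 37$ one has $1-40/(\sqrt q\log^2 q)<\tfrac12$ (indeed the right-hand side is negative for very small $q$), so even a zero at $\sigma=\tfrac12$ would violate the stated bound; the paper handles this by citing Platt's verification, which shows that no quadratic character of conductor up to $4\cdot10^5$ has a nontrivial real zero, making the proposition vacuous in that entire range (so the crossover is at $4\cdot10^5$, not $6677$). And your final numerical worry is justified but resolvable exactly as you suggest: the binding constraint is the $L'$ constant, and the paper's optimization of the truncation point (together with restricting to $q\ge4\cdot10^5$ in that lemma) is what brings it below $0.3$.
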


\begin{prop}  \label{bchi prop}
Let $q\ge10^5$ be an integer, and let $\chi$ be a Dirichlet character\mod q.
Then $|b(\chi)| \le 0.2515 q \log q$.
\end{prop}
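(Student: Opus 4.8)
The plan is to reduce to primitive characters and then split $b(\chi)$ into a dominant contribution coming from the Euler factors that are dropped in passing to the primitive character inducing $\chi$, plus a lower‑order contribution governed by $\tfrac{L'}{L}$ at the edge of the critical strip. Write $\chi^*$ for the primitive character modulo $q^*\mid q$ that induces $\chi$, so that $L(s,\chi)=L(s,\chi^*)\prod_{p\mid q,\,p\nmid q^*}\bigl(1-\chi^*(p)p^{-s}\bigr)$. Taking logarithmic derivatives gives
\[
\frac{L'}{L}(s,\chi)=\frac{L'}{L}(s,\chi^*)+\sum_{\substack{p\mid q\\ p\nmid q^*}}\frac{\chi^*(p)p^{-s}\log p}{1-\chi^*(p)p^{-s}},
\]
and since $\frac{p^{-s}\log p}{1-p^{-s}}=\frac1s-\frac{\log p}{2}+O(s)$ near $s=0$, extracting the constant term of the Laurent expansion yields
\[
b(\chi)=b(\chi^*)+\sum_{\substack{p\mid q,\ p\nmid q^*\\ \chi^*(p)\neq 1}}\frac{\chi^*(p)\log p}{1-\chi^*(p)}\;-\;\frac12\sum_{\substack{p\mid q,\ p\nmid q^*\\ \chi^*(p)=1}}\log p,
\]
where the primes with $\chi^*(p)=1$ each contribute a further $\tfrac1s$ to the principal part but nothing more to $b(\chi)$.

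Next I would bound the two correction sums elementarily. The last sum has absolute value at most $\tfrac12\log q$. For the first, if $\chi^*(p)$ has multiplicative order $m$ then $m\mid\varphi(q^*)$ and, since $\chi^*(p)$ is then a primitive $m$th root of unity, $|1-\chi^*(p)|\ge 2\sin(\pi/m)\ge 4/m$; hence each summand is at most $\tfrac14 m\log p\le\tfrac14\varphi(q^*)\log p\le\tfrac14 q\log p$, and summing over the distinct primes dividing $q$ bounds this contribution by $\tfrac14 q\log q$. This is the dominant term. It remains to control $|b(\chi^*)|$ for primitive $\chi^*$ of conductor $q^*\le q$, the case $q^*=1$ being immediate from $\tfrac{\zeta'}{\zeta}(0)=\log 2\pi$. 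Differentiating the functional equation for $L(s,\chi^*)$ logarithmically and reading off the constant term at $s=0$ gives an identity of the shape $b(\chi^*)=\gamma+\log 2\pi-\log q^*-\frac{L'}{L}\bigl(1,\overline{\chi^*}\bigr)$, so one is reduced to bounding $\bigl|\tfrac{L'}{L}(1,\overline{\chi^*})\bigr|$. Combining an explicit $O(\log^2 q^*)$ upper bound for $|L'(1,\chi^*)|$ (via partial summation and the P\'olya--Vinogradov inequality) with a lower bound for $|L(1,\chi^*)|$ — the classical effective bound $|L(1,\chi^*)|\gg 1/\log q^*$ for complex $\chi^*$, and Proposition~\ref{L1chi prop} together with the exceptional‑zero estimate of Proposition~\ref{FLM beta prop} (so that $\tfrac{L'}{L}(1,\overline{\chi^*})\ll \sqrt{q^*}\log^2 q^*$ even near a Siegel zero) for real $\chi^*$ — yields $|b(\chi^*)|\ll\sqrt{q^*}\log^2 q^*$, which is $o(q\log q)$. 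Adding the three estimates and checking the resulting inequality at the boundary $q=10^5$ (it only becomes easier for larger $q$) gives $|b(\chi)|\le\bigl(\tfrac14+o(1)\bigr)q\log q\le 0.2515\,q\log q$.

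The one genuinely delicate point is the final step, estimating $\tfrac{L'}{L}(1,\overline{\chi^*})$ for real $\chi^*$ whose $L$‑function may have a zero very close to $s=1$; this is exactly what Propositions~\ref{L1chi prop} and~\ref{FLM beta prop} are for, and because the target constant $0.2515$ is deliberately generous, only crude explicit forms of those estimates are needed. The rest of the work is essentially bookkeeping: keeping careful track of the trivial zeros at $s=0$ — both the one present for even characters and those introduced by primes $p\mid q$ with $\chi^*(p)=1$ — when extracting the constant term of the Laurent expansion, and verifying that the elementary pieces sum to strictly less than the stated constant.
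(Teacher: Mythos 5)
Your proposal is correct in outline, but it takes a genuinely different route from the paper's proof. The paper never decomposes $b(\chi)$ at all: it starts from McCurley's inequality $|b(\chi)|\le|\zeta'(2)/\zeta(2)|+1+\sum_{\rho}2/|\rho(2-\rho)|+\tfrac14q\log q$ (the term $\tfrac14q\log q$ already encoding the imprimitive Euler factors) and spends all its effort on the sum over nontrivial zeros, bounding the zeros with $|\gamma|>1$ via Trudgian's estimate for $N(T,\chi)$ and the low-lying zeros via McCurley's zero-free region (Hypothesis~Z${}_1(R_1)$) together with the exceptional-zero bound of Proposition~\ref{FLM beta prop}, which is what produces its secondary term $\sqrt q\log^2q/40$. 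You instead make the imprimitivity contribution explicit---your Laurent expansion, the estimate $|1-\chi^*(p)|\ge2\sin(\pi/m)\ge4/m$ with $m\le\phi(q^*)<q$, and the resulting bound $\tfrac14q\log q+\tfrac12\log q$ are all correct---and you dispose of the primitive part through the functional-equation identity $b(\chi^*)=\gamma+\log(2\pi/q^*)-\tfrac{L'}{L}(1,\overline{\chi^*})$, which indeed holds for both parities once the trivial zero of even $L(s,\chi^*)$ at $s=0$ is tracked as you indicate; the problem then reduces to an explicit $|L'(1,\chi^*)|=O(\log^2q^*)$ (essentially Lemma~\ref{FLM Lprime lemma with y} at $\sigma=1$) against a lower bound for $|L(1,\chi^*)|$. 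With Proposition~\ref{L1chi prop} this makes your secondary term of order $10^3$ at $q=10^5$, against available slack $(0.2515-0.25)\,q\log q\approx1.7\times10^3$, and the margin only improves as $q$ grows, so your asserted numerical check is sound. (Proposition~\ref{FLM beta prop} is not actually needed on your route, since you evaluate at $s=1$ rather than locate zeros.) As for what each approach buys: the paper recycles machinery it needs elsewhere and never touches the functional equation, while your decomposition is more transparent about the source of the main term; indeed, since $\phi(q^*)\prod_{p\mid q,\,p\nmid q^*}p\le q$, your argument would even yield a sharper bound of shape $Cq+O(\sqrt q\log^2 q)$ with $C\le(\log2)/8$, rather than $q\log q/4$.

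Two loose ends should be tied off before this counts as complete. For real $\chi^*$ of conductor $q^*\le6677$, Proposition~\ref{L1chi prop} does not apply, so you need the trivial class-number-formula bound $L(1,\chi^*)\ge c/\sqrt{q^*}$ coming from $h\ge1$ (which suffices). More importantly, for complex $\chi^*$ the ``classical effective'' bound $|L(1,\chi^*)|\gg1/\log q^*$ is nowhere made explicit in this paper, so you must either cite an explicit version from the literature or replace it by the zero-free-region argument: Hypothesis~Z${}_1(R_1)$ already yields $|\tfrac{L'}{L}(1,\chi^*)|=O(\log^2q^*)$ for characters without an exceptional zero, which is more than enough here.
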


Proposition~\ref{L1chi prop} is established in Section~\ref{magma sec}. For larger values of $q$, we can improve on Proposition~\ref{L1chi prop} by a little more than a factor of~$10$; see Lemma~\ref{L1chi prop} for a more precise statement. Propositions~\ref{FLM beta prop} and~\ref{bchi prop} are established in Sections~\ref{Sec6.1} and~\ref{Sec6.2}, respectively. We also remark that under the assumption that $L(s,\chi)$ has no exceptional zero, our proof would yield a substantially stronger explicit bound of the shape $C\sqrt q\log q$; however, such an improvement is immaterial to our eventual applications. Notice that the conclusion of Proposition~\ref{bchi prop} holds for both primitive and imprimitive
characters $\chi$.

Throughout our work, we have made every effort to avoid specifying many of our ``free'' parameters, such as  a constant $R$ that defines the size of a zero-free region for Dirichlet $L$-functions  (even though, at the end of the day, we do make specific choices of these parameters). The reason for this is to make it easy to sharpen our bounds in the future when one has available stronger zero-free regions (and more computational power). The constants present in, for example, Theorem \ref{main psi theorem}, decrease roughly as a linear function in~$R$. We have chosen to split our ``small $q$'' and ``large $q$'' results at the modulus $q=10^5$ (even though Platt's calculations extend through the modulus $4\cdot10^5$) partially due to limitations of computational time and partially because it is a convenient round number.

\section{Preparation of the upper bound for $|\psi(x;q,a) - x/\phi(q)|$, for $q\le 10^5$} \label{Sec2}

In this section, we will derive our initial upper bound upon $|\psi(x;q,a) - x/\phi(q)|$ for ``small'' moduli~$q$, that is, for $q \leq 10^5$. This bound (given as Proposition \ref{now we see what needs maximizing}) will turn out to be independent of~$x$ except for a single complicated function $\funcF{x}$, defined in Definition~\ref{phim def}, multiplied by various powers of $\log x$. Our starting point is an existing version of the classical explicit formula for $\psi(x;q,a)$ in terms of zeros of Dirichlet $L$-functions; by the end of this section, all dependence on the real parts of these zeros will be removed, and the dependence on their imaginary parts will be confined to the single function $\funcF{x}$.
In this (and, indeed, in subsequent) sections, our operating paradigm
is that any function that can be easily programmed, and whose values can be calculated to arbitrary precision in a negligible amount of time, is suitable for our purposes, even when there remains a layer of notational complexity that we would find difficult to work with analytically. Of course, our choices when we do eventually optimize these various functions are guided by our heuristics (and hindsight) about which pieces of our upper bounds are most significant in the end.

Along the way, we will use as input existing explicit bounds for the number of zeros of  $N(T,\chi)$ (see Proposition~\ref{quoting Trudgian} below), and we will derive an explicit upper bound, contingent on \GRH{1}, for the sum of $1/\sqrt{\beta^2+\gamma^2}$ over all zeros $\beta+i\gamma$ of a given Dirichlet $L$-function (see Lemma~\ref{just the partial summation}). We mention also that the explicit formula we use contains a parameter $\delta$ that can be chosen to be constant to obtain bounds of Chebyshev-type. However, we must choose $\delta$ to be a function of $x$ that decreases to $0$ in order to obtain our bounds of de la Vall\'ee Poussin-type; we make that choice of $\delta$ in displayed equation (\ref{minimizing delta}) (and motivate our choice in the remarks following that equation).

We pause to clarify some terminology and notation.
Throughout this paper, $q$ will be a positive integer (we will usually assume that $q\ge3$), and $a$ will be a positive integer that is relatively prime to~$q$. There are $\phi(q)$ Dirichlet characters with modulus $q$; when we use ``modulus'' or ``$\mod q$'' in this way, we always allow both primitive and imprimitive characters. On the other hand, the conductor of a character is the modulus of the primitive character that induces it, so that the same character can simultaneously have modulus $q$ and conductor $d<q$. For a Dirichlet character $\chi\mod q$, the symbol $q^*$ always denotes the conductor of $\chi$, and $\chi^*$ denotes the primitive character\mod{q^*} that induces~$\chi$.

For any Dirichlet character $\chi\mod q$, the Dirichlet $L$-function is defined as usual by
\begin{equation}  \label{L function def}
L(s,\chi) = \sum_{n=1}^{\infty} \frac{\chi (n)}{n^s}
\end{equation}
when $\Re s>1$, and by analytic continuation for other complex numbers~$s$. We adopt the usual convention of letting $\rho = \beta+i\gamma$ denote a zero of $L(s,\chi)$, so that $\beta=\Re\rho$ and $\gamma=\Im\rho$ by definition; and we define
\begin{equation}  \label{Zchi def}
\Zchi = \{ \rho \in\C \colon 0<\beta < 1, \, L(\rho,\chi)=0 \}
\end{equation}
to be the set of zeros of $L(s,\chi)$ inside the critical strip (technically a multiset, since multiple zeros, if any, are included according to their multiplicity). Notice in particular that the set $\Zchi$ does not include any zeros on the imaginary axis, even when $\chi$ is an imprimitive character; consequently, if $\chi$ is induced by another character $\chi^*$, then $\Zchi=\mathcal{Z}(\chi^*)$.

We recall, by symmetry and the functional equation for Dirichlet $L$-functions, that if $\rho=\beta+i\gamma\in\Zchi$ then also $1-\bar\rho=1-\beta+i\gamma\in\Zchi$.
Finally, we say such an $L$-function satisfies \GRH{H}, the generalized Riemann hypothesis up to height $H$, if
\[
\beta+i\gamma \in \Zchi \text{ and } |\gamma| \le H \implies \beta=\frac12.
\]

\subsection{Previous work based on the explicit formula}

We quote the following proposition from \RR~\cite[Theorem 4.3.1, p.\ 415]{RR}. The proposition, which also appears in Dusart's work~\cite[Theorem 2, pp.\ 1139--40]{Du}, is a modification of McCurley's arguments~\cite[Theorem 3.6]{Mc1} that themselves hearken back to Rosser~\cite{Ro}.

\begin{prop}  \label{quoted prop ABC}
Let $q$ be a positive integer, and let $a$ be an integer that is coprime to~$q$. Let $x>2$ and $H \geq 1$ be real numbers, let $m$ be a positive integer, and let $\delta$ be a real number satisfying $0<\delta<\frac{x-2}{mx}$. Suppose that every Dirichlet $L$-function with modulus $q$ satisfies \GRH{1}. Then
\begin{equation}  \label{two double sums}
\frac{\phi(q)}x \bigg| \psi(x;q,a) - \frac x{\phi(q)} \bigg| <
\funcU{x} + \frac{m\delta}2 + \funcV{x} + \funcW{x},
\end{equation}
where we define
\begin{flalign}
& \label{A def} A_m(\delta) = \frac1{\delta^m} \sum_{j=0}^m \binom mj (1+j\delta)^{m+1} \\
& \label{U def} \funcU{x} = A_m(\delta) \sum_{\chi\mod q} \sum_{\substack{\rho\in\Zchi \\ |\gamma|>H}} \frac{x^{\beta-1}}{|\rho(\rho+1)\cdots(\rho+m)|} \\
& \label{V def} \funcV{x} = \Big(1+\frac{m\delta}2\Big) \sum_{\chi\mod q} \sum_{\substack{\rho\in\Zchi \\ |\gamma|\le H}} \frac{x^{\beta-1}}{|\rho|} \\
& \label{W def} \funcW{x} = \frac{\phi(q)}{x} \bigg( \Big( \frac12 + \sum_{p\mid q} \frac1{p-1} \Big) \log x + 4\log q + 13.4 \bigg).
\end{flalign}
\end{prop}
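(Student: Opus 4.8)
The plan is to take the author's stated attribution at face value: this is a known result, so the proof reduces to citing Ramaré--Rumely~\cite[Theorem 4.3.1]{RR} (equivalently Dusart~\cite[Theorem 2]{Du}) and reconciling notation. But since one should understand the shape of the argument, here is how I would reconstruct it. The starting point is the smoothed explicit formula for $\psi(x;q,a)$ obtained by orthogonality of characters: writing $\psi(x;q,a) = \frac1{\phi(q)}\sum_{\chi\bmod q}\bar\chi(a)\psi(x,\chi)$ with $\psi(x,\chi)=\sum_{p^n\le x}\chi(p^n)\log p$, one applies the $m$-fold averaged (Landau--type) truncated explicit formula to each $\psi(x,\chi)$. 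The averaging over an interval of length $m\delta x$ (with the weights $\binom mj$) is exactly what produces the combinatorial factor $A_m(\delta)$ in~\eqref{A def}; differencing the smoothed sum against the unsmoothed one costs at most $\tfrac{m\delta}2 \cdot \tfrac x{\phi(q)}$, which is the $\tfrac{m\delta}2$ term in~\eqref{two double sums}.

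\medskip

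\noindent
First I would record the classical explicit formula for the $m$-fold integral of $\psi(\cdot,\chi)$: for each primitive character it has the form (main term)~$+$~(sum over nontrivial zeros $\rho$ of $L(s,\chi)$ of $x^{\rho+m}/(\rho(\rho+1)\cdots(\rho+m))$)~$+$~(trivial-zero and polar contributions), and for imprimitive $\chi$ one passes to the inducing character $\chi^*$, the discrepancy being the Euler factors removed at primes $p\mid q$ — this is the source of the $\sum_{p\mid q}\frac1{p-1}$ in~\eqref{W def}. Next I would split the zero sum at height $H$: zeros with $|\gamma|\le H$ are estimated trivially by $x^{\beta-1}/|\rho|$ after dividing through by $x/\phi(q)$ and absorbing the $A_m(\delta)$ versus $(1+\tfrac{m\delta}2)$ discrepancy that arises from undoing the smoothing on the low-lying zeros (giving $\funcV{x}$ in~\eqref{V def}); zeros with $|\gamma|>H$ keep the full weight $1/|\rho(\rho+1)\cdots(\rho+m)|$ and the factor $A_m(\delta)$ (giving $\funcU{x}$ in~\eqref{U def}). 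The hypothesis $\GRH{1}$ is used only to guarantee that the truncation and the various tail estimates converge as stated — it does not force $\beta=\tfrac12$ for the high zeros, whence those still appear with $x^{\beta-1}$.

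\medskip

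\noindent
Finally I would collect the archimedean and elementary terms. The $\log x$ coefficient $\tfrac12+\sum_{p\mid q}\tfrac1{p-1}$ comes from bounding the trivial-zero series $\sum_n x^{-2n}/(\ldots)$ together with the prime-power terms $p^n$ with $p\mid q$ (which are genuinely excluded from $\psi(x;q,a)$ but implicitly present); the $4\log q$ and the absolute constant $13.4$ are the explicit bookkeeping on the Gamma-factor / conductor contributions and the constant $b(\chi)$, carried out in~\cite{RR} and~\cite{Mc1}. The condition $0<\delta<\frac{x-2}{mx}$ is exactly what is needed so that the averaging interval $[x, x(1+m\delta)]$ (or $[x(1-m\delta),x]$, depending on normalization) stays in the region $t>2$ where the formula and the error estimates are valid.

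\medskip

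\noindent
The main obstacle, were one to reprove this from scratch rather than quote it, is the careful explicit accounting of the low-order terms — tracking every constant through the $m$-fold integration of the explicit formula, the passage from imprimitive to primitive characters, and the Gamma-factor estimates — so as to arrive at precisely the clean coefficients $4$ and $13.4$. Since Proposition~\ref{quoted prop ABC} is quoted verbatim from~\cite[Theorem 4.3.1]{RR} (with the cosmetic change that their $\psi(x;q,a)-x/\phi(q)$ is here divided by $x/\phi(q)$), the proof is simply the citation; the contribution of this paper lies entirely in what is done with the four terms $\funcU{x},\,\tfrac{m\delta}2,\,\funcV{x},\,\funcW{x}$ in the sections that follow.
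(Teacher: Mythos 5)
Your proposal is correct and matches the paper exactly: the paper offers no independent proof of this proposition, but quotes it verbatim from Ramar\'e--Rumely \cite[Theorem 4.3.1]{RR} (noting it also appears in Dusart \cite{Du} and descends from McCurley \cite{Mc1} and Rosser \cite{Ro}), followed by a contextual paragraph explaining the provenance of each of the four terms much as your sketch does. Your reconstruction of where $A_m(\delta)$, the $\frac{m\delta}{2}$ term, and $W_q(x)$ (the primitive-versus-imprimitive discrepancy) come from agrees with the paper's own commentary, so no gap remains.
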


To offer some context, the genesis of this upper bound is the classical explicit formula for $\psi(x;q,a)$, smoothed by $m$-fold integration over an interval near $x$ of length~$\delta x$. The term $\funcU{x}$ bounds the contribution of the large zeros to this smoothed explicit formula (in which the factor $A_m(\delta)$ arises from some combinatorics of the multiple integration), while the term $\funcV{x}$ bounds the contribution of the small zeros. The term $\frac{m\delta}2$ arises when recovering the original difference $\psi(x;q,a) - x/{\phi(q)}$ from its smoothed version. Finally, \RR\ work only with primitive characters, in contrast to McCurley, to avoid the zeros of $L(s,\chi)$ on the imaginary axis (see \cite[p.\ 399]{RR}, although their remark on \cite[p.\ 414]{RR} is easy to misconstrue). This choice, which we follow (as evidenced by the definition of $\Zchi$ in equation~\eqref{Zchi def}), simplifies the analytic arguments but results in a mild error on the prime-counting side, which is bounded by $\funcW{x}$. In practice, we will be choosing $\delta$ so that the first term $\funcU{x}$ is almost exactly $\frac\delta2$; for most moduli $q$, that term together with the quantity $\frac{m\delta}2$ will provide the dominant contribution to our eventual upper bound. For very small moduli $q$, however, it is the term $\funcV{x}$ that provides the dominant contribution.

We remark that the aforementioned work of Kadiri and Lumley~\cite{KaLu} incorporates a different smoothing mechanism that is inherently more flexible than simple repeated integration; such an approach would be a promising avenue for possible sharpening of our results.

In this upper bound, which is a function of $x$ for any given modulus $q$, the parameters $m$, $\delta$, and $H$ are at
our disposal to choose. We will, in each case, choose $H \leq 10^8/q$, so that every Dirichlet $L$-function with modulus
$q$ satisfies \GRH{H} by Platt's computations~\cite{Pla2}; this choice allows for a strong bound for $\funcV{x}$.
Without some choice of $\delta$ that tended to $0$ as $x$ tends to infinity, it would be
impossible to achieve a de la Vall\'ee Poussin-type bound, because of the term $\frac{m\delta}2$ in the upper bound; our
choice, as it turns out, will be a specific function of $x$ and the other parameters which decays roughly like $\exp(-c\sqrt{\log x})$ for large~$x$. Finally, after the bulk of the work
done to estimate the above upper bound, we will compute the resulting expression for various integer values of $m$ and
select the minimal such value. It will turn out that we always choose
$m\in\{6,7,8,9\}$, for $q \leq 10^5$, although we have no theoretical explanation for how we could have predicted these
choices to be optimal in practice.

\subsection{Some useful facts about the zeros of $L$-functions}

The quantities defined in equations~\eqref{U def} and~\eqref{V def} are both sums over zeros of Dirichlet $L$-functions, and we
will require some knowledge of the distribution of those zeros. That information is essentially all classical, except
that of course we require explicit constants in every estimate, and we can also take advantage of much more extensive
modern computations. Specifically, we draw information from three sources: Trudgian's work on the zeros of the Riemann
$\zeta$-function and Dirichlet $L$-functions with explicit constants, Platt's computations of many zeros of Dirichlet
$L$-functions, and direct computation using Rubinstein's \lcalc\ program~\cite{Rlcalc}.

\begin{definition} \label{NT def}
We write $N(T,\chi)$ for the standard counting function for zeros of $L(s,\chi)$ with $0 < \beta < 1$ and $|\gamma| \leq T$. In other words,
$$
N(T,\chi) = \#\{ \rho\in\Zchi \colon |\gamma| \le T\},
$$
counted with multiplicity if there are any multiple zeros.
\end{definition}

We turn now to explicit bounds for the zero-counting functions $N(T,\chi)$, beginning with a bound when $\chi$ is the principal character.

\begin{prop}  \label{quoting Trudgian principal}
Let $\chi_0$ be the principal character for any modulus $q$. If $T>e$, then
\begin{equation} \label{mike-stuff2}
  \bigg| N(T,\chi_0) - \bigg( \frac T{\pi} \log \frac{T}{2\pi e} + \frac74 \bigg) \bigg| < 0.34\log T + 3.996 + \frac{25}{24\pi T}.
\end{equation}
\end{prop}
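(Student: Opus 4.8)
The plan is to reduce this statement to the classical explicit zero-counting formula for the Riemann zeta function and then to quote the explicit version of that formula proved by Trudgian~\cite{Tru}.

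First I would remove the dependence on~$q$. Since $L(s,\chi_0)=\zeta(s)\prod_{p\mid q}(1-p^{-s})$ and each factor $1-p^{-s}$ vanishes only on the line $\Re s=0$, the omitted Euler factors contribute no zeros inside the critical strip; equivalently, $\chi_0$ is induced by the trivial character $\chi_0^*$, for which $L(s,\chi_0^*)=\zeta(s)$, so that $\mathcal Z(\chi_0)=\mathcal Z(\chi_0^*)$ and hence $N(T,\chi_0)$ equals $N_\zeta(T):=\#\{\rho\colon\zeta(\rho)=0,\ 0<\Re\rho<1,\ |\Im\rho|\le T\}$, counted with multiplicity. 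It therefore suffices to prove the asserted inequality with $N(T,\chi_0)$ replaced by $N_\zeta(T)$, and the statement is now genuinely independent of~$q$.

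Next I would pin down the main term and invoke Trudgian. The one-sided Riemann--von Mangoldt formula reads $\#\{\rho\colon\zeta(\rho)=0,\ 0<\Im\rho\le T\}=\frac{T}{2\pi}\log\frac{T}{2\pi}-\frac{T}{2\pi}+\frac78+S(T)+E(T)$, where $S(T)=\frac1\pi\arg\zeta(\tfrac12+iT)$ and $E(T)$ is the Stirling remainder in the Riemann--Siegel theta function, of size $O(1/T)$. Because the nontrivial zeros of $\zeta$ are symmetric about the real axis and none lies on it, doubling gives $N_\zeta(T)=\frac{T}{\pi}\log\frac{T}{2\pi e}+\frac74+2S(T)+2E(T)$, which accounts exactly for the main term $\frac{T}{\pi}\log\frac{T}{2\pi e}+\frac74$ in the statement (using the identity $\frac{T}{\pi}\log\frac{T}{2\pi}-\frac{T}{\pi}=\frac{T}{\pi}\log\frac{T}{2\pi e}$). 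Trudgian's explicit bound for $S(T)$, valid for $T\ge e$, together with an explicit Stirling estimate for $E(T)$, then bounds $|2S(T)+2E(T)|$ by an effective expression of the shape $c_1\log T+c_2\log\log T+c_3+c_4/T$.

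Finally I would cast this error term into the asserted form. Since $0\le\log\log T$ for $T\ge e$ and $\log\log T/\log T$ is bounded on $(e,\infty)$, an elementary single-variable estimate gives $c_2\log\log T\le(0.34-c_1)\log T+(3.996-c_3)$ throughout $(e,\infty)$ — the extremum occurring at an interior point, with the endpoint $T=e$ checked directly — while $c_4/T$ is swallowed by the slack against $\frac{25}{24\pi T}$; and for the short initial range $e<T<14$, where $N_\zeta(T)=0$, the inequality is immediate because its right-hand side already exceeds $\bigl|\frac{T}{\pi}\log\frac{T}{2\pi e}+\frac74\bigr|$ there. The one delicate point is precisely this reconciliation of constants: one must verify that the effective constants emerging from Trudgian's $S(T)$-bound really do fit under $0.34$, $3.996$, and $\frac{25}{24\pi}$ uniformly for $T>e$, in particular near $T=e$ where $\log T$ is close to~$1$. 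If \cite{Tru} already records the zeta zero-counting estimate in this two-sided normalisation, this step is vacuous and the proof reduces to the reduction in the second paragraph together with the citation.
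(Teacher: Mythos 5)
Your proposal follows essentially the same route as the paper: it reduces to $N(T,\chi_0)=2N(T)$ for the zeta zeros, invokes the Riemann--von Mangoldt decomposition with $S(T)$ plus an explicit Stirling-type remainder (the paper quotes Trudgian's inequality bounding $\bigl|N(T)-\bigl(\tfrac{T}{2\pi}\log\tfrac{T}{2\pi e}+\tfrac78\bigr)\bigr|$ by $\tfrac1{4\pi}\arctan\tfrac1{2T}+\tfrac{T}{4\pi}\log\bigl(1+\tfrac1{4T^2}\bigr)+\tfrac1{3\pi T}+|S(T)| \le \tfrac{25}{48\pi T}+|S(T)|$), applies Trudgian's bound $|S(T)|\le 0.17\log T+1.998$ for $T>e$, and doubles. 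The one caution is that $0.34$ and $3.996$ are exactly twice Trudgian's constants, so there is zero slack for your fallback of absorbing a $c_2\log\log T$ term into the stated bound; the argument goes through only because the relevant $S(T)$ estimate has no $\log\log T$ term, in which case the ``reconciliation of constants'' you flag is immediate.
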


\begin{proof}
We adopt the standard notation $N(T)$ for the number of zeros of $\zeta(s)$ in the critical strip whose imaginary part lies between $0$ and $T$, as well as $S(T) = \frac1\pi \arg \zeta(\frac12+iT)$ for the normalized argument of the zeta-function on the critical line. Trudgian~\cite[Theorem 1]{Tru12} gives the explicit estimate
\begin{equation} \label{mike-stuff}
|S(T)| \le 0.17\log T + 1.998,
\end{equation}
valid for $T>e$. It is well known that the error term in the asymptotic formula for $N(T)$ is essentially controlled by $S(T)$; for an explicit version of this relationship, Trudgian~\cite[equation (2.5)]{Tru14} gives
  \[
  \bigg| N(T) - \bigg( \frac T{2\pi} \log \frac{T}{2\pi e} + \frac78 \bigg) \bigg| \le \frac1{4\pi}\arctan\frac1{2T} + \frac T{4\pi}\log\bigg( 1+\frac1{4T^2} \bigg) + \frac1{3\pi T} + |S(T)|
  \]
for $T\ge1$.
In our notation, $N(T,\chi_0)$ is exactly equal to $2N(T)$ (since the former counts zeros lying both above and below the imaginary axis). Using the inequalities $\arctan y \le y$ and $\log(1+y)\le y$ which are valid for $y\ge 0$, it follows from (\ref{mike-stuff})  that the quantity on the left-hand-side of inequality (\ref{mike-stuff2}) is bounded above by twice
$$
 \frac1{4\pi}\arctan\frac1{2T} + \frac T{4\pi}\log\bigg( 1+\frac1{4T^2} \bigg) + \frac1{3\pi T} + 0.17\log T + 1.998
$$
and hence
$$
\bigg| N(T,\chi_0) - \bigg( \frac T{\pi} \log \frac{T}{2\pi e} + \frac74 \bigg) \bigg| \le 2\bigg( \frac1{4\pi}\frac1{2T} + \frac T{4\pi}\frac1{4T^2} + \frac1{3\pi T} + 0.17\log T + 1.998 \bigg),
$$
which is equivalent to the asserted bound.
\end{proof}

\begin{definition} \label{C1C2}
Set $C_1=0.399$ and $C_2=5.338$.
\end{definition}

\begin{prop}  \label{quoting Trudgian}
Let $\chi$ be a character with
conductor $q^*$. If $T\ge1$, then
\begin{equation} \label{pumpkin}
\bigg| N(T,\chi) - \frac T\pi \log  \frac{q^*T}{2\pi e}\bigg| < C_1  \log(q^*T) +C_2.
\end{equation}
\end{prop}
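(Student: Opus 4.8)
The statement is the Riemann--von Mangoldt zero-counting formula for $L(s,\chi)$, made fully explicit; the plan is to reduce to primitive characters and then treat the principal case ($q^*=1$) and the non-principal case ($q^*>1$) separately, arranging that the single pair of constants $C_1,C_2$ of Definition~\ref{C1C2} absorbs the $O(1)$ and $O(\log\log)$ discrepancies that arise along the way. Since $\Zchi=\Zchistar$ we have $N(T,\chi)=N(T,\chi^*)$, and $\chi^*$ has conductor $q^*$ as well, so there is no loss in assuming $\chi$ is primitive.

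For $q^*=1$, so that $\chi=\chi_0$ and $N(T,\chi_0)=2N(T)$ with $N(T)$ the zero-counting function for $\zeta$, I would split on the size of~$T$. For $e<T<14$ there are no zeros of $\zeta$ with $|\gamma|\le T$, so $N(T,\chi_0)=0$ and the left side of~\eqref{pumpkin} equals $\tfrac T\pi\log\tfrac{2\pi e}{T}$, which is readily bounded above by $C_2$; for $T$ above an explicit threshold $T_0$ (around $10^3$) Proposition~\ref{quoting Trudgian principal} together with the triangle inequality gives~\eqref{pumpkin}, once one accounts for the fact that its main term is shifted by $\tfrac74$ and notes that the surplus $(C_1-0.34)\log T$ eventually outweighs the excess constant $\tfrac74+3.996+\tfrac{25}{24\pi T}-C_2$; and the remaining bounded range $14\le T\le T_0$ is handled by a direct numerical check, using rigorous bounds for the low-lying zeros of $\zeta$ (on each interval between consecutive zeros the quantity $|2N(T)-\tfrac T\pi\log\tfrac T{2\pi e}|$ has its maximum attained at an endpoint, so only finitely many evaluations are needed).

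For $q^*>1$ I would appeal to Trudgian's explicit zero-counting estimate~\cite{Tru} for primitive non-principal $\chi$, valid for $T\ge1$: it bounds $\big|N(T,\chi)-\tfrac T\pi\log\tfrac{q^*T}{2\pi e}-c_\chi\big|$, with $c_\chi$ a bounded constant depending only on the parity of $\chi$, by an explicit function of $q^*T$ of the form $C_1'\log(q^*T)+C_2'$, possibly with an extra term $C_3'\log\!\big(1+\log(q^*T)\big)$. One then absorbs the iterated logarithm (if present) via the elementary inequality $C_3'\log(1+u)\le\varepsilon u+C(\varepsilon,C_3')$ for $u\ge0$ with $\varepsilon=C_1-C_1'$, and absorbs $|c_\chi|$ into the additive constant, after which it remains to verify numerically that the resulting leading coefficient does not exceed $C_1=0.399$ and the resulting additive constant does not exceed $C_2=5.338$. (If one prefers a self-contained derivation, one would apply the argument principle to the completed $L$-function $\xi(s,\chi)$ on a tall rectangle, evaluate the gamma-factor contribution by Stirling's formula to produce the main term $\tfrac T\pi\log\tfrac{q^*T}{2\pi e}$, and bound the change of $\arg L(s,\chi)$ along the contour by an explicit estimate for $S(T,\chi)=\tfrac1\pi\arg L(\tfrac12+iT,\chi)$; this last bound, obtained by Backlund's method from explicit upper bounds for $|L(s,\chi)|$ near $\Re s=1$, is the genuinely substantive ingredient.) Combining the two cases yields~\eqref{pumpkin}.

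The steps I expect to be most delicate are the numerical reconciliation of Trudgian's constants---especially the $\log\log$ term and the parity constant---with the target values $C_1=0.399$ and $C_2=5.338$, and confirming that his hypotheses really cover the full range $T\ge1$ and all conductors $q^*>1$, any finitely many small excluded pairs $(q^*,T)$ being dispatched by directly computing the relevant low zeros with \lcalc; together with the bounded-range computation needed to close the principal case, where Proposition~\ref{quoting Trudgian principal} alone is too lossy for moderate~$T$.
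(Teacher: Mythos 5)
Your proposal matches the paper's proof: reduce to the primitive character, quote Trudgian's Theorem 1 for nonprincipal $\chi$, and for the principal character combine Proposition~\ref{quoting Trudgian principal} (which suffices once $T\ge1014$) with a finite computational check of the remaining range against the known low-lying zeros of $\zeta$. The only differences are that your hedging in the nonprincipal case is unnecessary---Trudgian's theorem is already stated in exactly the form~\eqref{pumpkin} with $C_1=0.399$, $C_2=5.338$ among his admissible pairs, so no $\log\log$ or parity-constant absorption is required---and your principal-case split should start at $T=1$ rather than $T=e$, the same trivial estimate covering $1\le T\le e$ since there are no zeros below height $14$.
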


\begin{proof}
If $\chi$ is nonprincipal, this follows immediately from Trudgian~\cite[Theorem 1]{Tru} (which sharpens
McCurley~\cite[Theorem 2.1]{Mc1}). For $\chi$ principal, we have $q^*=1$ and the desired inequality is implied by
Proposition~\ref{quoting Trudgian principal}, provided $T\ge1014$. For $1\le T\le 1014$, we
may verify the bound computationally (see Appendix~\ref{zeta comp sec}), completing the proof.
\end{proof}

It is worth mentioning that the main result of \cite{Tru} contains a number of inequalities like equation~\eqref{pumpkin}, with various values for $C_1$ and $C_2$. The one we have quoted here is the best for small values of $q^*T$, but could be improved for larger $q^*T$; the end result of such a modification to our proof is negligible.

\begin{definition} \label{h3}
We define \[h_3(d) = \begin{cases} 30{,}610{,}046{,}000, & \text{if } d = 1, \\ 10^8/d, & \text{if } 1<d\le 10^5. \end{cases}\]
\end{definition}

Platt~\cite{Pla2} has verified computationally that every Dirichlet $L$-function with conductor $q^* \le 4 \cdot 10^5$ satisfies \GRH{10^8/q^*} (see~\cite{Pla} for more details of these computations). Platt~\cite{Pla3} has also checked that $\zeta(s)$ satisfies \GRH{30{,}610{,}046{,}000}, confirming unpublished work of Gourdon~\cite{Gourdon}. Therefore,

\begin{prop}[Platt] \label{GRH}
Let $\chi$ be a character with conductor $d\le 10^5$. If $\rho=\beta+i\gamma$ is a zero of $L(s,\chi)$ and $|\gamma| \leq h_3(d)$, then $\beta=1/2$.
\end{prop}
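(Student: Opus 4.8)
The plan is to recognise that this proposition carries no new mathematical content beyond the two large-scale numerical verifications of Platt that have already been cited in the surrounding text; the work is entirely in translating those verifications into the form demanded here, and in keeping careful track of the distinction between the modulus of~$\chi$ and its conductor~$d$. Accordingly, I would split the argument along exactly the two cases $d=1$ and $1<d\le 10^5$ that appear in the definition of $h_3(d)$, reducing each to a statement about a primitive $L$-function (or about $\zeta$) that Platt has checked, using the fact recorded just before equation~\eqref{Zchi def} that $\Zchi=\Zchistar$ whenever $\chi$ is induced by a primitive character~$\chi^*$.

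First I would dispose of the case $d=1$. Here $\chi$ is induced by the trivial character modulo~$1$, whose associated $L$-function is $\zeta(s)$; since $\Zchi=\Zchistar$, the zeros of $L(s,\chi)$ inside the critical strip are precisely the nontrivial zeros of the Riemann zeta function. Platt~\cite{Pla3}, confirming the unpublished computation of Gourdon~\cite{Gourdon}, has verified that $\zeta(s)$ satisfies $\GRH{30{,}610{,}046{,}000}$; as $h_3(1)=30{,}610{,}046{,}000$, this says exactly that any zero $\rho=\beta+i\gamma$ of $L(s,\chi)$ with $0<\beta<1$ and $|\gamma|\le h_3(1)$ has $\beta=\tfrac12$.

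For $1<d\le 10^5$, I would write $q^*=d$ for the conductor of~$\chi$ and let $\chi^*$ be the primitive character modulo~$q^*$ inducing~$\chi$. Since $q^*\le 10^5\le 4\cdot10^5$, Platt's computations~\cite{Pla2} (with further details in~\cite{Pla}) establish that $L(s,\chi^*)$ satisfies $\GRH{10^8/q^*}$. Invoking $\Zchi=\Zchistar$ once more, every zero $\rho=\beta+i\gamma$ of $L(s,\chi)$ in the critical strip with $|\gamma|\le 10^8/d=h_3(d)$ is a zero of $L(s,\chi^*)$ and hence satisfies $\beta=\tfrac12$. Together the two cases exhaust all $d\le10^5$, proving the proposition.

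Since the substance lies in the cited verifications, there is no genuine obstacle; the only point requiring care — and arguably the one thing a careless write-up could get wrong — is the bookkeeping around imprimitive~$\chi$. One must remember that $L(s,\chi)$ picks up zeros on the imaginary axis from the Euler factors $1-p^{-s}$ with $p\mid q$ but $p\nmid q^*$ (for instance a zero at $s=0$), so that the phrase ``$\rho$ a zero of $L(s,\chi)$'' in the statement has to be understood as $\rho\in\Zchi$, i.e.\ with $0<\beta<1$, in accordance with the convention fixed in~\eqref{Zchi def}; it is precisely this restriction that makes the identification $\Zchi=\Zchistar$ legitimate and the reduction to Platt's primitive-character computation valid.
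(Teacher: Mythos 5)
Your proposal is correct and follows essentially the same route as the paper, which simply cites Platt's verifications (\cite{Pla2} for conductors up to $4\cdot10^5$ and \cite{Pla3}, confirming Gourdon, for $\zeta$) and the identification $\Zchi=\Zchistar$. Your explicit remark that ``zero of $L(s,\chi)$'' must be read as $\rho\in\Zchi$, so that the imaginary-axis zeros of imprimitive $L$-functions are excluded, is exactly the convention the paper fixes in equation~\eqref{Zchi def} and in its definition of \GRH{H}.
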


\subsection{Upper bounds for $\funcV{x}$, exploiting verification of GRH up to bounded height}

We begin by a standard partial summation argument relating the inner sum in $\funcV{x}$ to the zero-counting function $N(T,\chi)$; we state our result in a form that has some flexibility built in.

\begin{definition}\label{Theta(d,t)}
Let $d$ and $t$ be positive real numbers. We set
  \[\Theta(d,t) =\frac{1}{2\pi} \log ^2\left(\frac{d t}{2 \pi  e}\right)-\frac{C_1 \log (e d t)+C_2}{t},\]
which is a convenient antiderivative of the upper bound implicit in Proposition~\ref{quoting Trudgian}:
  \[\frac\partial{\partial t} \Theta(d,t) = \frac1{t^2}\left(\frac{t}{\pi}\log \frac{dt}{2\pi e}+C_1 \log dt + C_2 \right).\]
\end{definition}

\begin{definition} \label{def: phistar}
Let $\phi^*(d)$ denote the number of primitive characters with modulus~$d$. Thus, $\sum_{d|q} \phi^*(d) = \phi(q)$, and we have the exact formula (see~\cite[page 46]{IK})
  $$ \phi^*(d) = d \prod_{p \parallel d} \left(1-\frac 2p\right) \prod_{p^2|d} \left(1-\frac 1p\right)^2.$$
\end{definition}

\begin{definition}\label{Have you heard the good nus?}
Suppose that $\chi$ is a character with conductor $q^*$. For $H_0 \geq 1$, we define
  \[
    \nu_1(\chi,H_0) = -\Theta(q^*,H_0) -\frac{N(H_0,\chi)}{H_0}+\sum_{\substack{\rho\in{\Zchistar} \\ |\gamma|\le H_0}} \frac{1}{\sqrt{\gamma^2+1/4}},
  \]
while for $0\leq H_0<1$ we define
  \begin{multline*}
    \nu_1(\chi,H_0)
        = -\Theta(q^*,1)
            + \sum_{\substack{\rho\in{\Zchistar} \\ |\gamma|\le H_0}} \frac{1}{\sqrt{\gamma^2+1/4}} \\
            + \left(\frac{1}{\sqrt{H_0^2+1/4}}-1\right)  \left\lfloor \frac 1\pi \log \frac{q^*}{2\pi e}+C_1 \log q^*+C_2 \right\rfloor - \frac{N(H_0,\chi)}{\sqrt{H_0^2+1/4}}.
  \end{multline*}
We further define, for each positive integer $q$ and each function $H_0$ from the set of Dirichlet characters\mod q to the nonnegative real numbers, the functions
  \begin{align*}
    \nu_2(q,H_0) &= \sum_{\chi \mod q} \nu_1(\chi,H_0(\chi)), \\
    \nu_3(q,H) &= - \phi(q)\Big(\frac{1}{2\pi} +\frac{C_1}{H}\Big)+\frac{1}{2\pi}\sum_{d | q} \phi^*(d) \log ^2\Big(\frac{dH}{2 \pi }\Big)\\
  \end{align*}
  and set
  $$
   \nu(q,H_0,H) = \nu_2(q,H_0)+\nu_3(q,H).
   $$
We will limit the abuse of notation by using the function $H_0$ involved in $\nu_2$ and $\nu$ only to fill in the $H_0$-arguments of the function $\nu_1$ in sums over characters.
\end{definition}

\begin{lemma}\label{just the partial summation}
Let $\chi$ be a character with conductor $q^*$, and let $H$ and $H_0$ be real numbers satisfying $H\geq 1$ and $0\leq H_0\leq H$. If $\chi$ satisfies \GRH{\max\{H_0,1\}}, then
  \[
  \sum_{\substack{\rho\in{\Zchi} \\ |\gamma|\le H}} \frac{1}{|\rho|} < \nu_1(\chi,H_0)+\frac{1}{2\pi} \log ^2\Big(\frac{q^*H}{2 \pi }\Big) - \frac{1}{2\pi} -\frac{C_1}{H}.
  \]
\end{lemma}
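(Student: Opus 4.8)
The plan is to prove the bound on $\sum_{|\gamma|\le H} 1/|\rho|$ by partial summation against the zero-counting function $N(T,\chi)$, splitting the range of $\gamma$ at the height $H_0$ up to which GRH is assumed, and feeding in the explicit estimate of Proposition~\ref{quoting Trudgian} on the upper part. Since $\chi$ and $\chi^*$ have the same nontrivial zeros, I may replace $\chi$ by $\chi^*$ throughout and assume $\chi$ primitive with conductor $q^*$. Because GRH holds up to $\max\{H_0,1\}$, every zero with $|\gamma|\le\max\{H_0,1\}$ has $\beta=1/2$, so $1/|\rho| = 1/\sqrt{\gamma^2+1/4}$ for those zeros; the contribution of all such zeros is exactly the sum $\sum_{|\gamma|\le H_0} 1/\sqrt{\gamma^2+1/4}$ appearing in the definition of $\nu_1(\chi,H_0)$ (when $H_0\ge1$; the case $0\le H_0<1$ is handled by the correction terms in the second branch of Definition~\ref{Have you heard the good nus?}, bounding the finitely many zeros with $H_0<|\gamma|\le1$ crudely by $1$ times a count coming from Proposition~\ref{quoting Trudgian} at $T=1$).

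For the remaining zeros, with $\max\{H_0,1\}\le|\gamma|\le H$, I would bound $1/|\rho|\le 1/|\gamma|$ and write
\[
\sum_{\substack{\rho\in\Zchi\\ H_0<|\gamma|\le H}} \frac1{|\gamma|} = \int_{H_0^+}^{H} \frac{dN(t,\chi)}{t} = \frac{N(H,\chi)}{H} - \frac{N(H_0,\chi)}{H_0} + \int_{H_0}^H \frac{N(t,\chi)}{t^2}\,dt,
\]
after integration by parts. Now I replace $N(t,\chi)$ in each occurrence by the upper bound from Proposition~\ref{quoting Trudgian}, namely $N(t,\chi) < \frac t\pi\log\frac{q^*t}{2\pi e} + C_1\log(q^*t) + C_2$: using it directly for the $N(H,\chi)/H$ term and for the integrand $N(t,\chi)/t^2$, and using the lower bound $N(H_0,\chi) > \frac{H_0}\pi\log\frac{q^*H_0}{2\pi e} - C_1\log(q^*H_0) - C_2$ in the $-N(H_0,\chi)/H_0$ term (equivalently, I note that the quantity $-\Theta(q^*,H_0) - N(H_0,\chi)/H_0$ is precisely engineered so that $-\Theta(q^*,H_0)$ absorbs the antiderivative contribution at the lower endpoint together with the boundary term). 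The point of Definition~\ref{Theta(d,t)} is exactly that $\partial_t\Theta(q^*,t) = t^{-2}\bigl(\tfrac t\pi\log\tfrac{q^*t}{2\pi e} + C_1\log q^*t + C_2\bigr)$, so $\int_{H_0}^H N(t,\chi)t^{-2}\,dt < \Theta(q^*,H) - \Theta(q^*,H_0)$; and evaluating $\Theta(q^*,H)$ together with the bound on $N(H,\chi)/H$ collapses, after the $\tfrac1{2\pi}\log^2$ terms are combined, to $\tfrac1{2\pi}\log^2\bigl(\tfrac{q^*H}{2\pi}\bigr) - \tfrac1{2\pi} - \tfrac{C_1}{H}$ plus the $-\Theta(q^*,H_0) - N(H_0,\chi)/H_0$ boundary piece, which is exactly the first two summands of $\nu_1(\chi,H_0)$ in the $H_0\ge1$ case.

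Assembling these pieces gives the claimed inequality, with the strict inequality coming from the strict inequalities in Proposition~\ref{quoting Trudgian}. The main bookkeeping obstacle is the algebraic verification that $\Theta(q^*,H)$ plus the explicit upper bound for $N(H,\chi)/H$ simplifies to $\tfrac1{2\pi}\log^2\bigl(\tfrac{q^*H}{2\pi}\bigr) - \tfrac1{2\pi} - \tfrac{C_1}{H}$; this is a direct but slightly fiddly computation with the identity $\log^2(y/e) = \log^2 y - 2\log y + 1$ and cancellation of the $\pm(C_1\log q^*H + C_2)/H$ terms. A secondary subtlety is the small-$H_0$ regime $0\le H_0<1$: here one must be careful that the zeros with $|\gamma|$ between $H_0$ and $1$ are still on the critical line (guaranteed by $\GRH{1}$, since $\max\{H_0,1\}=1$), that their number is at most $N(1,\chi)$, and that each such $1/|\rho| = 1/\sqrt{\gamma^2+1/4}$ is at most $1/\sqrt{H_0^2+1/4}$ — which is precisely what the extra terms in the second branch of Definition~\ref{Have you heard the good nus?} encode, after one also uses $N(1,\chi) \le \lfloor \tfrac1\pi\log\tfrac{q^*}{2\pi e} + C_1\log q^* + C_2 \rfloor$ from Proposition~\ref{quoting Trudgian} at $T=1$ together with integrality of $N(1,\chi)$.
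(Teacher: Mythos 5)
Your proposal is correct and follows essentially the same route as the paper: partial summation of $1/|\gamma|$ against $N(T,\chi^*)$ over $(\max\{H_0,1\},H]$, insertion of Proposition~\ref{quoting Trudgian} with $\Theta(q^*,t)$ as the antiderivative so that $N(H,\chi^*)/H+\Theta(q^*,H)$ collapses to $\tfrac1{2\pi}\log^2\!\big(\tfrac{q^*H}{2\pi}\big)-\tfrac1{2\pi}-\tfrac{C_1}H$, with the boundary terms $-\Theta(q^*,H_0)-N(H_0,\chi)/H_0$ left intact inside $\nu_1$, and the same three-way split with the bound $1/\sqrt{H_0^2+1/4}$ on the $N(1,\chi)-N(H_0,\chi)$ zeros and the floor bound on $N(1,\chi)$ when $0\le H_0<1$. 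Just note that your first-mentioned option of replacing $N(H_0,\chi)$ by a lower bound should be dropped in favor of your parenthetical (keeping those terms exactly, as $\nu_1(\chi,H_0)$ retains $N(H_0,\chi)$ itself), which is what the paper does.
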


\begin{proof}
Let $\chi^*$ be the character that induces $\chi$, so that $\Zchi=\Zchistar$. First, we assume that $1 \leq H_0\leq H$. If $|\gamma|\leq H_0$ then $|\rho|=\sqrt{\gamma^2+(1/2)^2}$ by our assumption of \GRH{H_0}; on the other hand, if $|\gamma| > H_0$, then we have the trivial bound $|\rho| > |\gamma|$. As a result,
  \[\sum_{\substack{\rho\in{\Zchi} \\ |\gamma|\le H}} \frac{1}{|\rho|} \le \sum_{\substack{\rho\in{\Zchistar} \\ |\gamma|\le H_0}} \frac{1}{\sqrt{\gamma^2+1/4}} + \sum_{\substack{\rho\in{\Zchistar} \\ H_0<|\gamma|\le H}} \frac{1}{|\gamma|}. \]
Using partial summation,
\begin{align*}
\sum_{\substack{\rho\in{\Zchistar} \\ H_0<|\gamma|\le H}} \frac{1}{|\gamma|}
&= \int_{H_0}^H \frac{dN(T,\chi^*)}{T} \notag \\
&= \frac{N(T,\chi^*)}{T}\bigg|_{H_0}^H - \int_{H_0}^H N(T,\chi^*) \,d\bigg( \frac1T \bigg) \\
&= \frac{N(H,\chi^*)}{H} - \frac{N(H_0,\chi^*)}{H_0} + \int_{H_0}^H \frac{N(T,\chi^*)}{T^2} \,dT.
\end{align*}
We now use Proposition~\ref{quoting Trudgian} and Definition~\ref{Theta(d,t)}:
  \begin{align*}
  \int_{H_0}^H \frac{N(T,\chi^*)}{T^2} \,dT
    &< \int_{H_0}^H \frac1{T^2}\left(\frac{T}{\pi}\log \frac{q^*T}{2\pi e}+C_1 \log q^*T + C_2 \right)\, dT\\
    &= \Theta(q^*,H) - \Theta(q^*,H_0).
  \end{align*}
Proposition~\ref{quoting Trudgian} also gives us
   \[ \frac{N(H,\chi^*)}H < \frac 1\pi \log \frac{q^*H}{2\pi e} + \frac{C_1\log q^*H + C_2}{H},\]
from which it follows, with Definition~\ref{Theta(d,t)}, that
  \[ \frac{N(H,\chi^*)}H  + \Theta(q^*,H) < \frac{1}{2\pi} \log ^2\Big(\frac{q^*H}{2 \pi }\Big) - \frac{1}{2\pi} -\frac{C_1}{H}.\]
Combining these gives us
  \begin{equation}\label{eq:just the big sum}
    \sum_{\substack{\rho\in{\Zchistar} \\ H_0<|\gamma|\le H}} \frac{1}{|\gamma|}
    < - \frac{N(H_0,\chi^*)}{H_0} - \Theta(q^*,H_0) +\frac{1}{2\pi} \log ^2\Big(\frac{q^*H}{2 \pi }\Big) - \frac{1}{2\pi} -\frac{C_1}{H},
  \end{equation}
which, by the definition of $\nu_1(\chi,H_0)$ for $H_0\geq 1$, concludes this case.

We now consider $0\leq H_0 <1$. We need to bound a sum over zeros $\rho=\beta+i\gamma$ with $|\gamma|\leq H$, which we break into three pieces
  \[
   \sum_{\substack{\rho\in{\Zchistar} \\ |\gamma|\le H}} \frac{1}{|\rho|}
   = \sum_{\substack{\rho\in{\Zchistar} \\ |\gamma|\le H_0}} \frac{1}{|\rho|} + \sum_{\substack{\rho\in{\Zchistar} \\ H_0 < |\gamma|\le 1}} \frac{1}{|\rho|} +
   \sum_{\substack{\rho\in{\Zchistar} \\ 1<|\gamma|\le H}} \frac{1}{|\rho|}.
\]
The second sum on the right-hand side has $N(1,\chi)-N(H_0,\chi)$ terms, each of which is bounded by
  \[\frac{1}{|\rho|} \leq \frac{1}{|\gamma|} \leq \frac{1}{\sqrt{H_0^2+1/4}}\]
thanks to \GRH{1}.
The first and third sums on the right-hand side have already been treated in the argument above; in particular, by equation~\eqref{eq:just the big sum},
  \[\sum_{\substack{\rho\in{\Zchistar} \\ 1< |\gamma|\le H}} \frac{1}{|\rho|}
        \leq -N(1,\chi) -\Theta(q^*,1)+ \frac{1}{2\pi} \log ^2\Big(\frac{q^*H}{2 \pi }\Big) - \frac{1}{2\pi} -\frac{C_1}{H}.
  \]
Therefore
  \begin{multline*}
    \sum_{\substack{\rho\in{\Zchistar} \\ |\gamma|\le H}} \frac{1}{|\rho|} \leq
            \sum_{\substack{\rho\in{\Zchistar} \\ |\gamma|\le H_0}} \frac{1}{\sqrt{\gamma^2+1/4}}
            + \frac{N(1,\chi)-N(H_0,\chi)}{\sqrt{H_0^2+1/4}}
            - N(1,\chi) -\Theta(q^*,1)\\ +\frac{1}{2\pi} \log ^2\Big(\frac{q^*H}{2 \pi }\Big) - \frac{1}{2\pi} -\frac{C_1}{H}.
  \end{multline*}
Now by Proposition~\ref{quoting Trudgian},
  \begin{multline*}
    \frac{N(1,\chi)-N(H_0,\chi)}{\sqrt{H_0^2+1/4}}  - N(1,\chi)
      = \left(\frac{1}{\sqrt{H_0^2+1/4}}-1\right) N(1,\chi) - \frac{N(H_0,\chi)}{\sqrt{H_0^2+1/4}} \\
      \le \left(\frac{1}{\sqrt{H_0^2+1/4}}-1\right)  \left\lfloor \frac 1\pi \log \frac{q^*}{2\pi e}+C_1 \log q^*+C_2 \right\rfloor - \frac{N(H_0,\chi)}{\sqrt{H_0^2+1/4}},
  \end{multline*}
and the proof is complete.
\end{proof}

\begin{lemma}\label{lemma: V bound}
Let $q$ and $m$ be positive integers, and $x,\delta,H$ be real numbers satisifying $x>2$ and $0<\delta < \frac{x-2}{mx}$. Let $H_0$ be a function on the characters modulo $q$ satisfying $0\leq H_0(\chi) \le H$. If every Dirichlet $L$-function with modulus $q$ satisfies \GRH{H}, then
  \[\funcV{x} < \Big(1+\frac{m\delta}{2}\Big) \frac{\nu(q,H_0,H)}{\sqrt x}.\]
\end{lemma}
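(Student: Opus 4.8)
The plan is to reduce the bound on $\funcV{x}$ to the per-character estimate already established in Lemma~\ref{just the partial summation}, the only genuinely new ingredient being that \GRH{H} collapses the factor $x^{\beta-1}$ to $x^{-1/2}$ on exactly the zeros that occur in $\funcV{x}$.

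First I would invoke the hypothesis that every Dirichlet $L$-function modulo~$q$ satisfies \GRH{H}: for each $\chi\mod q$, every $\rho=\beta+i\gamma\in\Zchi$ with $|\gamma|\le H$ has $\beta=\tfrac12$, so $x^{\beta-1}=x^{-1/2}$. Substituting this into the definition~\eqref{V def} gives
\[
\funcV{x} = \Big(1+\frac{m\delta}2\Big)\frac1{\sqrt x}\sum_{\chi\mod q}\sum_{\substack{\rho\in\Zchi\\ |\gamma|\le H}}\frac1{|\rho|},
\]
so it remains to show that the double sum is strictly less than $\nu(q,H_0,H)$.

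Next I would apply Lemma~\ref{just the partial summation} to each character $\chi\mod q$ with parameter $H_0(\chi)$; its hypotheses hold since $H\ge1$, $0\le H_0(\chi)\le H$, and \GRH{H} implies \GRH{\max\{H_0(\chi),1\}} because $\max\{H_0(\chi),1\}\le H$. Writing $q^*$ for the conductor of~$\chi$, this bounds $\sum_{|\gamma|\le H}1/|\rho|$ by $\nu_1(\chi,H_0(\chi))+\tfrac1{2\pi}\log^2(q^*H/2\pi)-\tfrac1{2\pi}-C_1/H$. Summing over the $\phi(q)$ characters, the $\nu_1$-terms assemble into $\nu_2(q,H_0)$ by its very definition. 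For the remaining terms I would reorganize the character sum by conductor: the characters $\chi\mod q$ whose conductor equals a given $d$ are in bijection with the $\phi^*(d)$ primitive characters modulo~$d$, and $d$ ranges over the divisors of~$q$, so $\sum_{\chi\mod q}\log^2(q^*H/2\pi)=\sum_{d\mid q}\phi^*(d)\log^2(dH/2\pi)$ while $\sum_{\chi\mod q}1=\phi(q)$ (Definition~\ref{def: phistar}). Hence the sum of the leftover terms is precisely $\tfrac1{2\pi}\sum_{d\mid q}\phi^*(d)\log^2(dH/2\pi)-\phi(q)\big(\tfrac1{2\pi}+\tfrac{C_1}{H}\big)=\nu_3(q,H)$, and adding the two contributions bounds the double sum above by $\nu_2(q,H_0)+\nu_3(q,H)=\nu(q,H_0,H)$. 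Since $1+m\delta/2>0$, multiplying through gives the claimed inequality; it is strict because $\phi(q)\ge1$ and Lemma~\ref{just the partial summation} is itself strict.

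There is no serious obstacle here — the argument is essentially bookkeeping — but the one point that warrants care is the passage from a sum over characters $\chi\mod q$ (indexed implicitly by the conductor $q^*$) to a sum over divisors $d\mid q$ weighted by $\phi^*(d)$. This relies on the identity $\sum_{d\mid q}\phi^*(d)=\phi(q)$ and on the fact that $\Zchi=\Zchistar$, so that every quantity attached to $\chi$ that enters the estimate depends only on its conductor.
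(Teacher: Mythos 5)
Your proof is correct and follows the paper's own argument essentially verbatim: use \GRH{H} to replace $x^{\beta-1}$ by $x^{-1/2}$, apply Lemma~\ref{just the partial summation} character by character, and recognize the resulting sums as $\nu_2(q,H_0)$ and $\nu_3(q,H)$ via Definition~\ref{Have you heard the good nus?}. Your extra care in regrouping the character sum by conductor (using $\Zchi=\Zchistar$ and $\sum_{d\mid q}\phi^*(d)=\phi(q)$) is exactly what the paper leaves implicit in asserting that the leftover terms sum to $\nu_3(q,H)$.
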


\begin{proof}
By our assumption of \GRH{H}, we have $x^{\beta-1}=x^{-1/2}$, and therefore by Lemma~\ref{just the partial summation},
  \begin{align*}
  \funcV{x}
    &= \left(1+\frac{m\delta}{2}\right) \sum_{\chi \mod q} \sum_{\substack{\rho\in{\Zchi} \\ |\gamma|\le H}} \frac{x^{\beta-1}}{|\rho|} \\
    &= \frac{1+m\delta/2}{\sqrt x}
               \sum_{\chi \mod q} \sum_{\substack{\rho\in{\Zchi} \\ |\gamma|\le H}} \frac{1}{|\rho|} \\
    &< \frac{1+m\delta/2}{\sqrt x}
               \sum_{\chi \mod q} \left( \nu_1(\chi,H_0(\chi))+\frac{1}{2\pi} \log ^2\Big(\frac{q^*H}{2 \pi }\Big) - \frac{1}{2\pi} -\frac{C_1}{H} \right).
  \end{align*}
By Definition~\ref{Have you heard the good nus?},
  \[ \sum_{\chi \mod q} \nu_1(\chi,H_0(\chi)) = \nu_2(q,H_0)\]
and
  \[ \sum_{\chi \mod q} \left(\frac{1}{2\pi} \log ^2\Big(\frac{q^*H}{2 \pi }\Big) - \frac{1}{2\pi} -\frac{C_1}{H} \right) = \nu_3(q,H),\]
concluding this proof, as $\nu(q,H_0,H)=\nu_2(q,H_0)+\nu_3(q,H)$.
\end{proof}

\subsection{Further estimates related to vertical distribution of zeros of Dirichlet $L$-functions}  \label{further estimates section}

We continue by defining certain elementary functions, which we shall use when our analysis calls for upper bounds on the zero-counting functions $N(T,\chi)$ from the previous sections, and establishing some simple inequalities for them.

\begin{definition} \label{LtqH def}
Let $d, u, \ell $ be positive real numbers satisfying $1\leq \ell \le u$.
Define
\[
M_d(\ell,u) = \frac u\pi\log \left( \frac{du}{2\pi e}  \right)- \frac \ell\pi\log \left( \frac{d\ell}{2\pi e} \right) + C_1 \log (d^2 \ell u) + 2C_2,
\]
so that
\begin{equation} \label{ninja}
\frac{\partial}{\partial u} M_d(\ell,u) = \frac1\pi\log \left( \frac{du}{2\pi} \right) + \frac{C_1}{u}.
\end{equation}
Note that for fixed $d$ and $\ell$, we have $M_d(\ell,u) \ll u \log u$.
\end{definition}

Clearly, $N(u,\chi) - N(\ell,\chi)$ counts the number of zeros of $\chi$ with height between $\ell$ and $u$. The following lemma is the reason we have introduced $M_d(\ell,u)$.

\begin{lemma}  \label{N less than L}
Let $\chi$ be a character with conductor $d$, and let $\ell$ and $u$ be real numbers satisfying $1 \le \ell \le u$. Then $N(u,\chi) - N(\ell,\chi) < M_d(\ell,u)$.
\end{lemma}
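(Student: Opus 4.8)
The plan is to derive the bound on $N(u,\chi) - N(\ell,\chi)$ directly from Proposition~\ref{quoting Trudgian}, which controls $N(T,\chi)$ individually at $T = u$ and $T = \ell$, and then to check that the resulting expression is precisely $M_d(\ell,u)$ as defined in Definition~\ref{LtqH def}. Writing $N(u,\chi) - N(\ell,\chi)$ as the difference of the two quantities $N(u,\chi) - \frac u\pi\log\frac{du}{2\pi e}$ and $N(\ell,\chi) - \frac\ell\pi\log\frac{d\ell}{2\pi e}$, plus the smooth main term $\frac u\pi\log\frac{du}{2\pi e} - \frac\ell\pi\log\frac{d\ell}{2\pi e}$, I would apply Proposition~\ref{quoting Trudgian} once with $T = u$ to get an upper bound $N(u,\chi) < \frac u\pi\log\frac{du}{2\pi e} + C_1\log(du) + C_2$, and once with $T = \ell$ to get the lower bound $N(\ell,\chi) > \frac\ell\pi\log\frac{d\ell}{2\pi e} - C_1\log(d\ell) - C_2$.

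Subtracting the lower bound from the upper bound yields
\[
N(u,\chi) - N(\ell,\chi) < \frac u\pi\log\Big(\frac{du}{2\pi e}\Big) - \frac\ell\pi\log\Big(\frac{d\ell}{2\pi e}\Big) + C_1\log(du) + C_1\log(d\ell) + 2C_2,
\]
and since $C_1\log(du) + C_1\log(d\ell) = C_1\log(d^2\ell u)$, the right-hand side is exactly $M_d(\ell,u)$. The hypothesis $1 \le \ell \le u$ guarantees that Proposition~\ref{quoting Trudgian} applies at both endpoints (it requires $T \ge 1$), so no additional case analysis is needed.

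There is essentially no obstacle here: the only subtlety is the bookkeeping to confirm that combining the two one-sided estimates from Proposition~\ref{quoting Trudgian} reproduces the stated closed form for $M_d(\ell,u)$, in particular that the two logarithmic error terms combine cleanly into $C_1\log(d^2\ell u)$ and the constants into $2C_2$. One should also note that the inequality is strict because Proposition~\ref{quoting Trudgian} is stated with a strict inequality, and strictness is preserved under the subtraction.
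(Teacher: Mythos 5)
Your proposal is correct and follows exactly the paper's own argument: apply Proposition~\ref{quoting Trudgian} at $T=u$ for the upper bound and at $T=\ell$ for the lower bound, subtract, and observe that the result is precisely $M_d(\ell,u)$ from Definition~\ref{LtqH def}. Nothing further is needed.
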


\begin{proof}
The assertion follows immediately from subtracting the two inequalities
  \begin{align*}
    N(u,\chi) &< \frac u\pi \log \frac{d u}{2\pi e} + C_1 \log du + C_2 \\
    N(\ell,\chi) &> \frac \ell\pi \log \frac{d \ell}{2\pi e} - C_1 \log d\ell - C_2,
  \end{align*}
each of which is implied by Proposition~\ref{quoting Trudgian}.
\end{proof}

\begin{lemma} \label{counting zeros simpler upper bounds lemma}
Let $d, u$ and $\ell$ be real numbers satisfying $d \ge 1$ and $15 \le \ell \le u$. Then
\(
M_d(\ell,u) <\frac u\pi \log du.
\)
\end{lemma}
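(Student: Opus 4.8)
The plan is to prove the inequality $M_d(\ell,u) < \frac u\pi \log du$ directly from the definition of $M_d(\ell,u)$ in Definition~\ref{LtqH def} by rearranging terms and bounding the leftover pieces. Writing out $M_d(\ell,u) = \frac u\pi\log\frac{du}{2\pi e} - \frac\ell\pi\log\frac{d\ell}{2\pi e} + C_1\log(d^2\ell u) + 2C_2$, we have $\frac u\pi\log\frac{du}{2\pi e} = \frac u\pi\log du - \frac u\pi\log(2\pi e)$, so the claimed inequality is equivalent to
\[
C_1\log(d^2\ell u) + 2C_2 < \frac u\pi\log(2\pi e) + \frac\ell\pi\log\frac{d\ell}{2\pi e}.
\]
So the goal reduces to showing that a logarithmic expression in $d,\ell,u$ is dominated by a linear-in-$u$ (plus $\ell\log\ell$) expression, which is clearly true asymptotically; the work is to make it hold for all $d\ge1$ and $15\le\ell\le u$.

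First I would dispose of the $\ell$-term: since $\ell\ge15$ we have $\frac{d\ell}{2\pi e} \ge \frac{15}{2\pi e} > 1$, so $\frac\ell\pi\log\frac{d\ell}{2\pi e} \ge 0$, and in fact $\frac\ell\pi\log\frac{d\ell}{2\pi e} \ge \frac\ell\pi\log\frac{\ell}{2\pi e}$ since $d\ge1$; for $\ell=15$ this already gives a definite positive lower bound that helps absorb part of $2C_2$. Then I would handle the $C_1$-term by splitting $\log(d^2\ell u) = 2\log d + \log\ell + \log u$. The terms $\log\ell$ and $\log u$ are easily beaten by the linear terms $\frac\ell\pi\log\frac{\ell}{2\pi e}$ and $\frac u\pi\log(2\pi e)$ respectively (using $\ell,u\ge15$, so $\log(2\pi e)\approx 2.84$ and the coefficient $\frac{1}{\pi}\log(2\pi e)\approx 0.904$ comfortably exceeds $C_1/u \cdot u$'s growth rate — more precisely $C_1\log u < \frac{u}{\pi}\log(2\pi e)$ for $u\ge15$, which one checks by noting the function $\frac u\pi\log(2\pi e) - C_1\log u$ is increasing for $u\ge15$ and positive at $u=15$). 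The one genuinely delicate point is the $2C_1\log d$ term, since $d$ is unbounded: here I would observe that $2C_1\log d \le \frac\ell\pi\log d \le \frac\ell\pi\log\frac{d\ell}{2\pi e}$ once $\ell$ is large enough that $\frac{\ell}{2\pi e}\ge e^{2\pi C_1}$, i.e.\ roughly $\ell\ge 2\pi e\cdot e^{2.5}\approx 208$ — which is too strong.

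A cleaner route around the $d$-dependence: group $2C_1\log d$ with $\frac\ell\pi\log\frac{d\ell}{2\pi e}$ and ask when $\frac\ell\pi\log\frac{d\ell}{2\pi e} \ge 2C_1\log d$; rewriting, this is $\log d\,(\frac\ell\pi - 2C_1) \ge -\frac\ell\pi\log\frac{\ell}{2\pi e}$. Since $\ell\ge15 > 2\pi C_1$ the coefficient $\frac\ell\pi - 2C_1$ is positive, and the right-hand side is negative (as $\frac{15}{2\pi e}>1$), so this holds for all $d\ge1$. Thus $2C_1\log d + 2C_2 \le \frac\ell\pi\log\frac{d\ell}{2\pi e} + 2C_2 - (\text{something})$; more directly, it suffices to prove separately (i) $2C_1\log d \le \frac\ell\pi\log\frac{d\ell}{2\pi e}$ for all $d\ge1$, $\ell\ge15$, and (ii) $C_1\log(\ell u) + 2C_2 < \frac u\pi\log(2\pi e)$ for $15\le\ell\le u$. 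Claim (i) follows from the monotonicity computation just sketched (worst case $d=1$, where the RHS is $\frac\ell\pi\log\frac{\ell}{2\pi e}>0$ and the LHS is $0$; for $d>1$ both sides grow but the RHS grows faster in $\log d$ because $\frac\ell\pi\ge\frac{15}\pi > 2C_1$). Claim (ii): using $\ell\le u$, it is enough that $2C_1\log u + 2C_2 < \frac u\pi\log(2\pi e)$, and since $\log(2\pi e) = 1 + \log(2\pi)$ the right side at $u=15$ is about $13.6$ while the left side is about $14.5$ — so $u=15$ is slightly too small and I would instead keep the $\frac\ell\pi\log\frac{\ell}{2\pi e}$ term in reserve rather than discarding it, using it to cover the deficit; concretely, prove $C_1\log(\ell u) + 2C_2 < \frac u\pi\log(2\pi e) + \frac\ell\pi\log\frac{\ell}{2\pi e}$ by checking the single-variable inequality obtained at $\ell=u$ (both sides then functions of $u$, and the difference is increasing and positive at $u=15$) and noting the left side is increasing in $\ell$ more slowly than the right. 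The main obstacle is precisely this bookkeeping at the boundary $\ell=u=15$, where the constants $C_1=0.399$, $C_2=5.338$ are tuned so the inequality is true but not by a wide margin; everything else is routine monotonicity in one variable at a time.
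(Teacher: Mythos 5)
Your overall strategy (rearrange the inequality and push to the extreme case by monotonicity) is sound and close in spirit to the paper's proof, but the execution as written fails on a concrete numerical point that you rely on repeatedly: $2\pi e \approx 17.08 > 15$, so $\tfrac{15}{2\pi e} < 1$ and $\log\tfrac{\ell}{2\pi e} < 0$ for $\ell \in [15, 2\pi e)$. This falsifies your assertion that $\tfrac\ell\pi\log\tfrac{d\ell}{2\pi e} \ge 0$, and it makes your claim (i) false at the worst point: at $d=1$, $\ell=15$ the left side is $0$ while the right side is $\tfrac{15}\pi\log\tfrac{15}{2\pi e} \approx -0.62$. This matters because the lemma is tight there: at $(d,\ell,u)=(1,15,15)$ the quantity $\tfrac u\pi\log du - M_d(\ell,u)$ is only about $0.10$, so you cannot discard or misplace a term of size $0.6$. (Your arithmetic for claim (ii) is also off -- $2C_1\log 15 + 2C_2 \approx 12.84$, not $14.5$, against $\tfrac{15}\pi\log(2\pi e)\approx 13.55$, so (ii) actually holds -- but that does not repair (i).) Your final "keep the $\tfrac\ell\pi\log\tfrac{\ell}{2\pi e}$ term in reserve" fix is the right instinct, but it is left incomplete: if that term is reserved for the $(\ell,u)$-inequality, the $d$-term must be absorbed by $\tfrac\ell\pi\log d$ alone (which works since $\tfrac{15}\pi > 2C_1$, but you never say so), and your closing monotonicity step is backwards: since the difference (RHS minus LHS) of your two-variable inequality is \emph{increasing} in $\ell$, verifying it on the diagonal $\ell=u$ together with "the left side grows more slowly in $\ell$" says nothing about $\ell<u$; what you need is monotonicity in $u$ (go from $(\ell,\ell)$ up to $(\ell,u)$), or equivalently a reduction to the corner $\ell=u=15$.

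For comparison, the paper avoids all of this bookkeeping by treating the whole expression at once: it sets $\epsilon = \pi\bigl(\tfrac u\pi\log(du) - M_d(\ell,u)\bigr)$, checks $\partial\epsilon/\partial u = \log(2\pi e) - C_1\pi/u > 0$ and $\partial\epsilon/\partial d = (\ell - 2C_1\pi)/d > 0$ on the region, hence reduces to $u=\ell$, $d=1$, where $\epsilon = (\ell - 2C_1\pi)\log\ell - 2C_2\pi$ is increasing in $\ell$ and positive at $\ell=15$. If you redo your argument that way -- or repair your splitting exactly as described above and reduce to the corner -- the proof goes through; as submitted, it does not.
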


\begin{proof}
Set
\begin{align*}
\epsilon &= \pi \left( \frac u\pi \log(du)-M_d(\ell,u) \right) \\
&= u \log(2\pi e) - 2C_2 \pi - C_1 \pi \log (d^2 \ell u)
             + \ell \log\left(\tfrac{\ell d}{2\pi e}\right),
\end{align*}
so that we need to prove that $\epsilon>0$. First, we have
  \[ \frac{\partial \epsilon}{\partial u} = \log (2  \pi e)-\frac{C_1 \pi}{u}, \qquad\frac{\partial \epsilon }{\partial d} = \frac{\ell}{d}-\frac{2 C_1 \pi}{d},\]
which are positive for $u>C_1 \pi / \log (2\pi e) \approx 0.44$ and $\ell>2C_1 \pi \approx 2.51$, while by hypothesis $u \ge \ell \ge 15$. Thus, we may assume that $u=\ell$ and $d=1$. We then have
 \[\epsilon = (\ell - 2 C_1\pi) \log \ell -2 C_2 \pi,\]
which is clearly an increasing function of $\ell $ and is already positive at $\ell=15$.
\end{proof}

\subsection{Preliminary statement of the upper bound for $|\psi(x;q,a) - x/\phi(q)|$}

Our remaining goal for this section is to establish Proposition~\ref{now we see what needs maximizing}, which is an upper bound for $|\psi(x;q,a) - x/\phi(q)|$ in which the dependence on $x$ has been confined to functions of a single type (to be defined momentarily). Building upon the work of the previous two sections, we invoke certain hypotheses on the horizontal distribution of the zeros of Dirichlet $L$-functions to estimate many of the terms in the upper bound of Proposition~\ref{quoted prop ABC}. We have left these hypotheses in parametric form for much of this paper, in order to facilitate the incorporation of future improvements; for our present purposes, we shall be citing work of Platt and Kadiri (see Proposition~\ref{R value prop}) to confirm the hypotheses for certain values of the parameters.

\begin{definition} \label{Upsilon and Psi def}
Let $q$ be a positive integer, and let $m$, $r$, $x$, and $H$ be positive real numbers satisfying $x\ge 1$ and $H\geq 1$. Define
\begin{align*}
\Upsilon_{q,m}(x;H) &= \sum_{\chi\mod q} \sum_{\substack{\rho\in\Zchi \\ |\gamma|>H}} \frac{x^{\beta-1}}{|\gamma|^{m+1}} \\
\Psi_{q,m,r}(x;H) &= H^{m+1} \Upsilon_{q,m}(x;H) (\log x)^r.
\end{align*}
\end{definition}

\begin{definition} \label{H_0}
  For integers $m$ with $3\leq m\le 25$, define real numbers $H_1(m)$ according to the following table:
$$
\begin{array}{|c||c|c|c|c|c|c|c|c|} \hline
        m & 3      & 4      & 5     & 6     & 7 & 8 & 9 & \ge 10\\ \hline
H_1(m) & 1011 & 391 & 231 & 168 & 137 & 120 & 109 & 102\\ \hline
\end{array}
$$
\end{definition}
For the values of $m$ we will actually choose, later in this paper, we note that the product  $m H_1(m)$ is roughly constant (and somewhat less than $1000$).

\begin{lemma}\label{lemma: Upsilon bound (easy)}
Let $q$ and $m$ be integers satisfying
$3\le q \le 10^5$ and $3\leq m \le 25$, and let $x$ and $H$ be real
numbers satisfying $x \ge 1000$ and $H \ge H_1(m)$. Then
  \[\Upsilon_{q,m}(x;H) < \left(\frac{x-2}{2mx}\right)^{m+1}.\]
\end{lemma}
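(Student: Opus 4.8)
The plan is to bound the sum $\Upsilon_{q,m}(x;H)$ over zeros with $|\gamma|>H$ by comparison with the zero-counting estimate from Proposition~\ref{quoting Trudgian}, via partial summation, and then to show that the resulting explicit expression is dominated by $((x-2)/(2mx))^{m+1}$ in the stated ranges of parameters. First I would note that by the assumption $x \ge 1000$ and \GRH{1} (which holds here by Proposition~\ref{GRH}, since all conductors $q^*$ divide $q\le 10^5$, so $q^*\le 10^5$), every zero $\rho=\beta+i\gamma$ with $|\gamma|>H\ge H_1(m)\ge 102$ has $x^{\beta-1}\le x^{-1/2}$ anyway — but more to the point, $x^{\beta-1}\le 1$ trivially, so it suffices to bound $\sum_{\chi\bmod q}\sum_{|\gamma|>H}|\gamma|^{-(m+1)}$. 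I would write this as a Stieltjes integral $\int_H^\infty T^{-(m+1)}\,dN(T,\chi)$ for each $\chi$, integrate by parts, and use $N(u,\chi)-N(\ell,\chi) < M_{q^*}(\ell,u)$ from Lemma~\ref{N less than L} together with Lemma~\ref{counting zeros simpler upper bounds lemma} (valid since $H\ge 102\ge 15$) to get $N(u,\chi)-N(H,\chi) < \frac u\pi\log(q^*u)$. Summing over the at most $\phi(q)\le q$ characters and using the crude bound $q^*\le q$ in the logarithm, this reduces the whole double sum to a one-variable tail estimate of the shape $\frac{m+1}{\pi}\sum_{q^*}\int_H^\infty u^{-(m+1)}\log(q^*u)\,\frac{du}{u}$ plus boundary contributions, all explicitly computable.

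The key step is then to show the resulting bound, which has the form $\frac{C}{H^{m}}\big(\log(qH)+\frac1m\big)$ for an absolute constant $C$ (from carrying out the integral $\int_H^\infty u^{-(m+2)}\log(qu)\,du$ and the boundary term), stays below $((x-2)/(2mx))^{m+1}$. Since $x\ge 1000$, we have $\frac{x-2}{2mx}\ge \frac{998}{2000m}$, so $\big(\tfrac{x-2}{2mx}\big)^{m+1}\ge \big(\tfrac{1}{2.004m}\big)^{m+1}$, and it suffices to verify the inequality $\frac{C}{H^m}\big(\log(10^5 H)+\tfrac1m\big) < \big(\tfrac{1}{2.004m}\big)^{m+1}$ for $H=H_1(m)$ and each integer $m$ with $3\le m\le 25$ (monotonicity in $H$ and in $q$ handles the general case since the left side decreases in $H$ and increases in $q$). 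This is a finite check: the values $H_1(m)$ in Definition~\ref{H_0} have been chosen precisely so that $H_1(m)^m$ grows fast enough — roughly $mH_1(m)\approx 1000$, so $H_1(m)^m \approx (1000/m)^m$, which easily dwarfs $(2.004m)^{m+1}$ times the logarithmic factor once $m\ge 3$.

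The main obstacle is organizing the partial summation cleanly enough that the boundary term at $T=H$ (namely $N(H,\chi)/H^{m+1}$, or rather its bound) is absorbed without slack, and keeping the constant $C$ small enough that the finite verification actually goes through at $m=3$, where the margin is tightest (here $H_1(3)=1011$ is much larger than the other entries precisely to compensate). I expect that after the integration by parts one gets, per character, something like $\frac{N(H,\chi)}{H^{m+1}} + (m+1)\int_H^\infty \frac{N(T,\chi)}{T^{m+2}}\,dT$, and bounding $N(T,\chi)\le N(H,\chi)+\frac{T}{\pi}\log(q^*T)$ splits this into a piece proportional to $N(H,\chi)/H^{m+1}$ (itself $\ll \frac{H\log(q^*H)}{\pi H^{m+1}}$ by Proposition~\ref{quoting Trudgian}, absorbing into the same shape) plus the genuine tail $\frac{m+1}{\pi}\int_H^\infty T^{-(m+1)}\log(q^*T)\,dT$. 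Summing over $\chi$ and using $\sum_{\chi\bmod q}1=\phi(q)$ gives the claimed $O\big(\phi(q)H^{-m}\log(qH)\big)$, and then one finishes by the explicit numerical comparison described above, which can be checked by direct computation for each $m\in\{3,\dots,25\}$.
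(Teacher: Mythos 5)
Your outline mirrors the paper's proof (trivial bound $x^{\beta-1}<1$, partial summation against $N(T,\chi)$, the counting bounds of Lemmas~\ref{N less than L} and~\ref{counting zeros simpler upper bounds lemma}, explicit evaluation of the tail integral, then a finite check over $3\le m\le 25$ at $H=H_1(m)$ using monotonicity in $H$ and $q$). The gap is in the step you yourself flag as critical: the finite verification, with the constants your reduction actually produces, does not go through. After the tail integral, your per-character bound is $\frac{m+1}{\pi m H^{m}}\bigl(\log(qH)+\frac1m\bigr)$, and summing over the $\phi(q)$ characters with the honest worst case $q=99991$ (prime, so $\phi(q)=99990$) gives, at $m=3$, $H=H_1(3)=1011$, the value $\approx 7.71\times10^{-4}$, whereas the target at $x=1000$ is $\bigl(\tfrac{998}{6000}\bigr)^{4}\approx 7.65\times10^{-4}$. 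So the inequality you propose to verify is \emph{false} at $m=3$; it also fails (by under $1\%$) at $m=5$ and $m=6$, and passes by only about $0.1\%$--$0.2\%$ at $m=4$ and $m=7$. Your heuristic that $H_1(m)^m\approx(1000/m)^m$ ``easily dwarfs'' $(2.004m)^{m+1}$ ignores the factors $\phi(q)\le 10^5$ and $\log(qH)\approx 18$, which together are of size $\sim 2\times10^6$ at $m=3$ --- precisely the margin you do not have. Two further slips compound this: integrating $\int_H^\infty T^{-(m+1)}\,dN(T,\chi)$ by parts produces the boundary term $-N(H,\chi)/H^{m+1}$ with a \emph{negative} sign (it exactly cancels the $N(H,\chi)$ piece when you insert $N(T,\chi)\le N(H,\chi)+\frac T\pi\log(q^*T)$), so ``absorbing'' a positive copy of it, as you propose, would nearly double your constant and break the check at still more values of $m$; and \GRH{1} does not give $x^{\beta-1}\le x^{-1/2}$ for $|\gamma|>H$, since for $q$ near $10^5$ the verified height $10^8/q\approx 10^3$ lies below $H_1(3)=1011$ (harmless here, as you fall back on the trivial bound, but the parenthetical claim is wrong).

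The missing idea is that you cannot afford to replace $N(u,\chi)-N(H,\chi)$ by the crude $\frac u\pi\log(qu)$ of Lemma~\ref{counting zeros simpler upper bounds lemma} \emph{before} integrating: the integral $(m+1)\int_H^\infty\bigl(N(u,\chi)-N(H,\chi)\bigr)u^{-m-2}\,du$ is dominated by $u$ near $H$, exactly where that simplification overshoots by essentially $\frac H\pi\log\frac{qH}{2\pi e}$. Keeping the full $M_q(H,u)$ of Definition~\ref{LtqH def} (from Lemma~\ref{N less than L}) in the integrand gives, per character, roughly $\frac{1}{\pi H^{m}}\bigl(\frac1m\log\frac{qH}{2\pi e}+\frac{m+1}{m^{2}}\bigr)$ plus terms of size $O\bigl(H^{-m-1}\log q\bigr)$; at $m=3$, $H=1011$, $q=10^5$ this is about $1.8\times10^{-9}$ per character, hence about $1.8\times10^{-4}$ in total --- comfortably below $7.65\times10^{-4}$, with similar factor-of-$(m+1)$ savings at the other tight values of $m$. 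With that sharper kernel (and the boundary term handled with its correct sign, e.g.\ by doing the partial summation on $N(u,\chi)-N(H,\chi)$ so it vanishes identically), your finite check over $m\in\{3,\dots,25\}$ does succeed; without it, the argument as written collapses at $m=3$.
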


\begin{proof}
Since $\beta<1$ for every $\rho=\beta+i\gamma \in \Zchi$, we have by partial summation
  \begin{align*}
\sum_{\substack{\rho\in\Zchi \\ |\gamma|>H}} \frac{x^{\beta-1}}{|\gamma|^{m+1}}
    &< \sum_{\substack{\rho\in\Zchi \\ |\gamma|>H}} \frac{1}{|\gamma|^{m+1}} \\
    &= \int_H^{\infty} \frac{d(N(u,\chi)-N(H,\chi))}{u^{m+1}}\,du \\
    &= \left. \frac{N(u,\chi)-N(H,\chi)}{u^{m+1}} \right|_H^\infty
                 + (m+1) \int_H^\infty \frac{N(u,\chi)-N(H,\chi)}{u^{m+2}} \, du \\
    &= (m+1) \int_H^\infty \frac{N(u,\chi)-N(H,\chi)}{u^{m+2}} \, du,
  \end{align*}
since $N(u,\chi)-N(H,\chi) \leq N(u,\chi) \ll u \log u$.
From the assumption that $H\ge100>15$, Lemmas~\ref{N less than L} and~\ref{counting zeros simpler upper bounds lemma} thus imply the inequalities
$$
N(u,\chi)-N(H,\chi) < \frac{u}{\pi} \log (q^*u) \le \frac{u}{\pi} \log (qu)
$$
(where $q^*$ is the conductor of $\chi$), whereby
  \begin{align}
    \Upsilon_{q,m}(x;H)
      &< \sum_{\chi\mod q} \frac{m+1}\pi \int_H^\infty \frac{u \log (qu)}{u^{m+2}}\, du \notag\\
      &= \frac{\phi(q)}{H^m} \frac{m+1}\pi \frac{m\log qH+1}{m^2} \notag\\
      &\le \frac{10^5}{H^m} \frac{m+1}\pi \frac{m\log(10^2H)+1}{m^2} \label{eq:qgone}\\
      &< \frac{10^5}{100^m} \frac{m+1}\pi \frac{m\log(10^7)+1}{m^2} \notag
   \end{align}
by monotonicity in $H$ and~$q$. On the other hand, monotonicity also implies that
  \[\left(\frac{x-2}{2mx}\right)^{m+1} \ge \left(\frac{499}{1000m}\right)^{m+1}\]
for $x\ge1000$. It therefore suffices to check that
  \[
  \frac{10^5}{400^m} \frac{m+1}\pi \frac{m\log(10^7)+1}{m^2} < \left(\frac{499}{1000m}\right)^{m+1}
  \]
for $11 \leq m \leq 25$, which is a simple exercise.

For each $m$ between $3$ and $10$, we carry on from line~\eqref{eq:qgone}, using $H\geq H_1(m)$, but otherwise continuing in the same way.
\end{proof}

At this point, we rewrite Proposition~\ref{quoted prop ABC}, with a particular choice for $\delta$ and some other manipulations that, with foresight, are helpful.
\begin{definition}  \label{alpha def}
Let $m$ be a positive integer and $\delta$ a positive real number. We set $\alpha_{m,0} = 2^m$ and, for $1\le k\le m+1$,
\[
\alpha_{m,k} = \binom{m+1}k \sum_{j=0}^m \binom mj j^k.
\]
We note that
\[
A_m(\delta) = \sum_{k=0}^{m+1} \alpha_{m,k} \delta^{k-m}.
\]
\end{definition}

\begin{prop}\label{now we see what needs maximizing}
Let $q$ and $m$ be integers satisfying $3\le q \le 10^5$ and $3\le m \le 25$, and let $a$ be an integer that is coprime to~$q$. Let $x$, $x_2$, and $H$ be real numbers with $x\ge x_2 \ge 1000$ and $H \ge H_1(m)$.
Let $H_0$ be a function on the characters modulo $q$ with $0\leq H_0(\chi) \leq H$ for every such character. If every Dirichlet $L$-function with modulus $q$ satisfies \GRH{H}, then
\begin{flalign}  
& \frac{\phi(q)}x \bigg| \psi(x;q,a) - \frac x{\phi(q)} \bigg| \log x \notag\\
   & < \funcW{x_2}\log x_2 + \nu(q,H_0,H) \frac{\log x_2}{ \sqrt{x_2}} \label{eq:take 2, line2}\\
   & + \frac mH \Psi_{q,m,m+1}(x;H)^{\frac 1{m+1}} \left(1+\frac{\nu(q,H_0,H)}{\sqrt {x_2}}\right)^{\frac{m}{m+1}} \label{eq:take 2, line3} \\
   & + \sum_{k=0}^m \frac{\alpha_{m,k}}{2^{m-k}H^{k+1}}\Psi_{q,m,\frac{m+1}{k+1}}(x;H)^{\frac{k+1}{m+1}}
            \left(1+\frac{\nu(q,H_0,H)}{\sqrt {x_2}}\right)^{\frac{m-k}{m+1}} \label{eq:take 2, line4}\\
   & +\frac{2\alpha_{m,m+1}}{H^{m+2}} \Psi_{q,m,\frac{m+1}{m+2}}(x;H)^{\frac{m+2}{m+1}}. \label{eq:take 2, line5}
\end{flalign}
\end{prop}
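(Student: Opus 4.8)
The plan is to apply Proposition~\ref{quoted prop ABC} with a value of~$\delta$ (depending on $q,m,x,H,H_0,x_2$) chosen so that, once the two sums over zeros $\funcU{x}$ and $\funcV{x}$ are estimated, every term of the resulting bound collapses---term by term---into the expression displayed in \eqref{eq:take 2, line2}--\eqref{eq:take 2, line5}. Two preliminary estimates are needed. First, for any zero $\rho=\beta+i\gamma$ and any integer $0\le j\le m$ one has $|\rho+j|=\sqrt{(\beta+j)^2+\gamma^2}\ge|\gamma|$, so the definition~\eqref{U def} gives at once
\[
\funcU{x}\le A_m(\delta)\sum_{\chi\mod q}\sum_{\substack{\rho\in\Zchi\\|\gamma|>H}}\frac{x^{\beta-1}}{|\gamma|^{m+1}}=A_m(\delta)\,\Upsilon_{q,m}(x;H).
\]
Second, since the proposition assumes \GRH{H} for every $L$-function\mod q and $H\ge H_1(m)\ge1$, $0\le H_0(\chi)\le H$, Lemma~\ref{lemma: V bound} applies and yields $\funcV{x}<(1+\tfrac{m\delta}2)\nu(q,H_0,H)/\sqrt x$; as $\funcV{x}$ is a sum of strictly positive terms, this inequality also forces $\nu:=\nu(q,H_0,H)>0$.

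The choice I would make is
\[
\delta=2\left(\frac{\Upsilon_{q,m}(x;H)}{1+\nu/\sqrt{x_2}}\right)^{1/(m+1)}.
\]
This is admissible for Proposition~\ref{quoted prop ABC}: it is positive (Dirichlet $L$-functions have infinitely many nontrivial zeros, so $\Upsilon_{q,m}(x;H)>0$), and since $1+\nu/\sqrt{x_2}\ge1$ while Lemma~\ref{lemma: Upsilon bound (easy)} gives $\Upsilon_{q,m}(x;H)^{1/(m+1)}<\frac{x-2}{2mx}$ (its hypotheses $3\le q\le10^5$, $3\le m\le25$, $x\ge x_2\ge1000$, $H\ge H_1(m)$ all hold), we get $0<\delta\le2\,\Upsilon_{q,m}(x;H)^{1/(m+1)}<\frac{x-2}{mx}$. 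Feeding the two preliminary estimates into Proposition~\ref{quoted prop ABC}, writing $(1+\tfrac{m\delta}2)\tfrac\nu{\sqrt x}=\tfrac\nu{\sqrt x}+\tfrac{m\delta}2\cdot\tfrac\nu{\sqrt x}$, and multiplying through by $\log x$ gives
\[
\frac{\phi(q)}x\Big|\psi(x;q,a)-\frac x{\phi(q)}\Big|\log x<\funcW{x}\log x+\frac{\nu\log x}{\sqrt x}+\frac{m\delta}2\Big(1+\frac\nu{\sqrt x}\Big)\log x+A_m(\delta)\,\Upsilon_{q,m}(x;H)\log x.
\]
The first two summands are handled by monotonicity: $\tfrac{(\log x)^2}x$, $\tfrac{\log x}x$, and $\tfrac{\log x}{\sqrt x}$ are all decreasing on $[1000,\infty)$, so $\funcW{x}\log x\le\funcW{x_2}\log x_2$ and $\nu\log x/\sqrt x\le\nu\log x_2/\sqrt{x_2}$, which together give line~\eqref{eq:take 2, line2}. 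For the third summand, bound $\nu/\sqrt x\le\nu/\sqrt{x_2}$ and substitute~$\delta$: since $\tfrac{m\delta}2=m\,\Upsilon_{q,m}(x;H)^{1/(m+1)}(1+\nu/\sqrt{x_2})^{-1/(m+1)}$, the summand becomes $m\,\Upsilon_{q,m}(x;H)^{1/(m+1)}(1+\nu/\sqrt{x_2})^{m/(m+1)}\log x$, which is exactly line~\eqref{eq:take 2, line3} once one recognizes $H\,\Upsilon_{q,m}(x;H)^{1/(m+1)}\log x=\Psi_{q,m,m+1}(x;H)^{1/(m+1)}$.

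It remains to treat $A_m(\delta)\,\Upsilon_{q,m}(x;H)\log x$. Expanding $A_m(\delta)=\sum_{k=0}^{m+1}\alpha_{m,k}\delta^{k-m}$ via Definition~\ref{alpha def} and substituting~$\delta$, a one-line exponent computation gives
\[
\delta^{k-m}\,\Upsilon_{q,m}(x;H)=2^{k-m}\,\Upsilon_{q,m}(x;H)^{(k+1)/(m+1)}\Big(1+\frac\nu{\sqrt{x_2}}\Big)^{(m-k)/(m+1)}.
\]
For $0\le k\le m$, using $H^{k+1}\,\Upsilon_{q,m}(x;H)^{(k+1)/(m+1)}\log x=\Psi_{q,m,\frac{m+1}{k+1}}(x;H)^{(k+1)/(m+1)}$, the $k$th term of $A_m(\delta)\,\Upsilon_{q,m}(x;H)\log x$ is precisely the $k$th summand of line~\eqref{eq:take 2, line4}; for $k=m+1$ the factor $(1+\nu/\sqrt{x_2})^{-1/(m+1)}\le1$ may be discarded, leaving $2\alpha_{m,m+1}\,\Upsilon_{q,m}(x;H)^{(m+2)/(m+1)}\log x$, which equals line~\eqref{eq:take 2, line5}. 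Adding the four contributions finishes the proof. No step here is genuinely deep; the only non-mechanical ingredient is finding the right~$\delta$, and the only points requiring care are checking the admissibility of~$\delta$ against Lemma~\ref{lemma: Upsilon bound (easy)} and correctly matching the fractional powers of $\Psi_{q,m,r}(x;H)$ to powers of $\Upsilon_{q,m}(x;H)$.
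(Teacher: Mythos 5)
Your proposal is correct and follows essentially the same route as the paper: start from Proposition~\ref{quoted prop ABC}, bound $\funcU{x}$ trivially by $A_m(\delta)\Upsilon_{q,m}(x;H)$, invoke Lemma~\ref{lemma: V bound}, choose $\delta$ of the form $2\big(\Upsilon_{q,m}(x;H)/(1+\nu/\sqrt{\cdot})\big)^{1/(m+1)}$ with admissibility checked via Lemma~\ref{lemma: Upsilon bound (easy)}, and expand $A_m(\delta)$ term by term into the $\Psi_{q,m,r}$ expressions. The only (harmless) deviation is that you put $\sqrt{x_2}$ rather than $\sqrt{x}$ in the definition of $\delta$, which makes lines~\eqref{eq:take 2, line3}--\eqref{eq:take 2, line5} come out exactly instead of via the paper's final monotonicity step $\big(1+\nu/\sqrt{x}\big)^{(m-k)/(m+1)}\le\big(1+\nu/\sqrt{x_2}\big)^{(m-k)/(m+1)}$.
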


\noindent
We note in passing that since $\alpha_{m,0}=2^m$, the term on line~\eqref{eq:take 2, line3} is identical to the $k=0$ term on line~\eqref{eq:take 2, line4} except for the factor of $m$ on the former line. We will combine these terms together in the analogous Definition~\ref{D def} below.

\begin{proof}
Our starting point is Proposition~\ref{quoted prop ABC}: for any real number $0<\delta<\frac{x-2}{mx}$,
\[
\frac{\phi(q)}x \bigg| \psi(x;q,a) - \frac x{\phi(q)} \bigg| < \funcU{x} + \frac{m\delta}2 + \funcV{x} + \funcW{x},
\]
where the notation is defined in equations~\eqref{A def}--\eqref{W def}.
Since trivially
\[
\sum_{\substack{\rho\in\Zchi \\ |\gamma|>H}} \frac{x^{\beta-1}}{|\rho(\rho+1)\cdots(\rho+m)|} < \sum_{\substack{\rho\in\Zchi \\ |\gamma|>H}} \frac{x^{\beta-1}}{|\gamma|^{m+1}},
\]
a comparison of equation~\eqref{U def} and Definition~\ref{Upsilon and Psi def} shows that
$$
\funcU{x} < A_m(\delta) \Upsilon_{q,m}(x;H).
$$
Using Lemma~\ref{lemma: V bound} to bound $\funcV{x}$, we therefore have
\begin{multline*}
  \frac{\phi(q)}x \bigg| \psi(x;q,a) - \frac x{\phi(q)} \bigg| \log x < A_m(\delta) \Upsilon_{q,m}(x;H) \log x+ \frac{m\delta}2\log x \\
  + \left(1+\frac{m\delta}{2}\right) \frac{\nu(q,H_0,H)}{\sqrt x}\log x + \funcW{x}\log x,
\end{multline*}
which we rewrite as
\begin{multline}
\frac{\phi(q)}x \bigg| \psi(x;q,a) - \frac x{\phi(q)} \bigg| \log x < \funcW{x}\log x + \nu(q,H_0,H) \frac{\log x}{\sqrt x} \\
+ m \left(1+\frac{\nu(q,H_0,H)}{\sqrt x}\right)\frac{\delta\log x}{2} + A_m(\delta) \Upsilon_{q,m}(x;H) \log x. \label{rewritten second line}
\end{multline}

It is easily seen from its definition~\eqref{W def} that $\funcW{x}\log x$, much like the function $(\log x)^2/x$, is decreasing for $x\ge1000>e^2$, and the same is true for $(\log x)/\sqrt x$. Therefore
$$
\funcW{x}\log x + \nu(q,H_0,H) \log x / \sqrt x \le \funcW{x_2}\log x_2 + \nu(q,H_0,H) (\log x_2)/\sqrt{x_2},
$$
which yields the terms on line~\eqref{eq:take 2, line2}.

We now set
\begin{equation}  \label{minimizing delta}
\delta = 2 \left( \frac{ \Upsilon_{q,m}(x;H)}{1+\nu(q,H_0,H)/\sqrt x}\right)^{\frac{1}{m+1}}.
\end{equation}
Our motivation for this choice is as follows. To achieve a de la Vall\'ee Poussin-type bound, we must choose $\delta$ tending to $0$ as $x$ increases. Since $A_m(\delta) \sim (2/\delta)^m$ when $\delta\to0$, we choose the value of $\delta$ that minimizes
  \[ m \left(1+\frac{\nu(q,H_0,H)}{\sqrt x}\right)\frac{\delta\log x}{2}+\left(\frac 2\delta \right)^m \Upsilon_{q,m}(x;H) \log x,\]
which is easily checked to be the right-hand side of equation~\eqref{minimizing delta}.
This value of $\delta$ is clearly positive, and Lemma~\ref{lemma: Upsilon bound (easy)} implies that $\delta<\frac{x-2}{mx}$; hence this $\delta$ is a valid choice. We now have
  \begin{flalign*}
&   m  \left(1+\frac{\nu(q,H_0,H)}{\sqrt x}\right)\frac{\delta\log x}{2} \\
   &= m \left(1+\frac{\nu(q,H_0,H)}{\sqrt x}\right) \left( \frac{ \Upsilon_{q,m}(x;H)}{1+\nu(q,H_0,H)/\sqrt x}\right)^{\frac{1}{m+1}} \log x \\
   &=\frac mH \big( H^{m+1} \Upsilon_{q,m}(x;H) \log^{m+1} x \big)^{\frac 1{m+1}}\left(1+\frac{\nu(q,H_0,H)}{\sqrt x}\right)^{\frac{m}{m+1}} \\
   &=\frac mH \Psi_{q,m,r}(x;H)^{\frac 1{m+1}}\left(1+\frac{\nu(q,H_0,H)}{\sqrt x}\right)^{\frac{m}{m+1}}
  \end{flalign*}
by Definition~\ref{Upsilon and Psi def}.
Certainly
$$
1+\nu(q,H_0,H)/\sqrt{x} \le 1+\nu(q,H_0,H)/\sqrt{x_2}
$$
for $x\ge x_2$, and therefore the first term on line~\eqref{rewritten second line} can be bounded above by the term on line~\eqref{eq:take 2, line3}.

Lastly, from Definition~\ref{alpha def},
  \begin{flalign*}
  & A_m(\delta)  \Upsilon_{q,m}(x;H) \log x \\
    &= \left( \sum_{k=0}^{m+1} \alpha_{m,k} \delta^{k-m} \right) \Upsilon_{q,m}(x;H) \log x  \\
    &= \left( \sum_{k=0}^{m+1} \alpha_{m,k} \left(2 \left( \frac{\Upsilon_{q,m}(x;H)}{1+\nu(q,H_0,H)/\sqrt x}\right)^{\frac{1}{m+1}} \right)^{k-m} \right) \Upsilon_{q,m}(x;H) \log x \\
    &= \sum_{k=0}^{m+1} \frac{\alpha_{m,k}}{2^{m-k}} \big( \Upsilon_{q,m}(x;H) (\log x)^{\frac{m+1}{k+1}} \big)^{\frac{k+1}{m+1}}
            \left(1+\frac{\nu(q,H_0,H)}{\sqrt x}\right)^{\frac{m-k}{m+1}}\\
    &=  \sum_{k=0}^{m+1} \frac{\alpha_{m,k}}{2^{m-k}H^{k+1}} \Psi_{q,m,\frac{m+1}{k+1}}(x;H)^{\frac{k+1}{m+1}}
            \left(1+\frac{\nu(q,H_0,H)}{\sqrt x}\right)^{\frac{m-k}{m+1}}
  \end{flalign*}
by Definition~\ref{Upsilon and Psi def}. For $0\le k \le m$, the factor $\left(1+{\nu(q,H_0,H)}/{\sqrt x}\right)^{\frac{m-k}{m+1}}$ is nonincreasing, hence is bounded by $\left(1+{\nu(q,H_0,H)}/{\sqrt{x_2}}\right)^{\frac{m-k}{m+1}}$, which accounts for the terms on line~\eqref{eq:take 2, line4}. Finally, when $k=m+1$, this factor is increasing but is bounded by $1$, which accounts for the term on line~\eqref{eq:take 2, line5}, thus completing the proof.
\end{proof}

Of note in Proposition~\ref{now we see what needs maximizing} is that the bound is independent  of $x$ except in the form of the terms $\Psi_{q,m,r}(x;H)$ for various values $\frac 45 \leq r \le m+1$. The next two sections are devoted to bounding functions of this form; those bounds will be inserted into the conclusion of Proposition~\ref{now we see what needs maximizing} at the end of Section~\ref{Sec4}, at which point we will be able to prove Theorem~\ref{main psi theorem} for moduli $q$ up to~$10^5$.

\section{Elimination of explicit dependence on zeros of Dirichlet $L$-functions}  \label{eliminate zeros sec}

From the work of the preceding section, it remains to establish an upper bound for the function $\Psi_{q,m,r}(x;H)$ that does not depend upon specific knowledge of the zeros of a given Dirichlet $L$-function. To achieve this, we will appeal to a zero-free region for such functions, together with estimates for $N(T,\chi)$.

\subsection{Estimates using a zero-free region for $L(s,\chi)$}

\begin{definition}  \label{hypothesis z}

Given positive real numbers $H_2$ and $R$, we say that a character $\chi$ with conductor $q^*$ satisfies Hypothesis Z$(H_2,R)$ if
every nontrivial zero $\beta+i\gamma$ of $L(s,\chi)$ satisfies either
\[
|\gamma| \le H_2 \text{ and } \beta=\tfrac12,    \qquad\text{or}\qquad  |\gamma| > H_2 \text{ and } \beta \le 1 - \frac1{R\log(q^* |\gamma|)}.
\]
In other words, zeros with small imaginary part (less than $H_2$ in absolute value) lie on the critical line, while zeros with large imaginary part lie outside an explicit zero-free region.

We say that a modulus $q$ satisfies Hypothesis Z${}_1(R)$ if every nontrivial zero $\beta+i\gamma$ of every Dirichlet $L$-function modulo $q$ satisfies
\[
\beta \le 1 - \frac1{R\log(q\max\{1,|\gamma|\})},
\]
except possibly for a single ``exceptional'' zero (which, as usual, will necessarily be a real zero of an $L$-function corresponding to a quadratic character---see~\cite[Sections 11.1--11.2]{MV}).
\end{definition}

\begin{definition} \label{phim def}
Let $m$ and $d$ be positive integers, and let $R, H, H_2, x$ and $u$ be positive real numbers satisfying $1 \le H \le H_2$. Let $\chi$ be a character with conductor $q^*$. Define the functions
\begin{align*}
g_{d,m}^{(1)}(H,H_2) &= \frac{H}{\pi  m^2}  \left((1+m \log \frac{d H}{2 \pi })-\big(\tfrac{H}{H_2}\big)^{m} (1+m \log\frac{d H_2}{2\pi})\right) \\
        &\qquad +\left( 2 \log (d H)+\frac{1}{m+1}\left(1-\big(\tfrac{H}{H_2}\big)^{m+1}\right) \right) C_1 + 2C_2 \\
g_{d,m}^{(2)}(H,H_2) &= \big(\tfrac{H}{H_2}\big)^m \frac{H}{2\pi m^2} \left(1+m\log \frac{d H_2}{2\pi}\right)  \\
        &\qquad +\big(\tfrac{H}{H_2}\big)^{m+1} \left(\frac{1}{2(m+1)}+ \log d H_2  \right) C_1 + \big(\tfrac{H}{H_2}\big)^{m+1} C_2 \\
g_{d,m,R}^{(3)}(x;H ,H_2) &= g_{d,m}^{(1)}(H,H_2) \cdot \frac{1}{x^{1/2}}+ g_{d,m}^{(2)}(H,H_2) \cdot \frac{x^{1/(R\log dH_2)}}{x} .
  \end{align*}
Further define
$$
\funcY{u} = u^{-(m+1)} x^{-1/(R \log du)} =\frac1{u^{m+1}} \exp\bigg( {-}\frac{\log x}{R\log du} \bigg) 
$$
and
$$
\funcF[q=\chi]{x} = \sum_{\substack{\rho\in\Zchi \\ |\gamma|>H_2}} \funcY[q=q^*]{|\gamma|}.
$$
Note that all of these functions are strictly positive.
\end{definition}

\begin{definition} \label{def: G}
Let $q$ and $m$ be positive integers, let $R, H, x$ and $u$ be positive real numbers with $H\ge1$, and let $H_2$ be a function on the divisors of $q$ satisfying $1 \leq H \le H_2(d)$ for $d\mid q$. Define
$$
\funcF[q=d]{x} = H^{m+1} \sum_{\substack{\chi \mod q \\ q^*=d}} \funcF[q=\chi,H={H_2(d)}]{x}
$$
and
  \begin{align*}
    \funcG{x} &= \sum_{\chi\mod q} \bigg( g_{q^*,m,R}^{(3)}(x;H ,H_2(q^*)) + \frac{H^{m+1}}2 \funcF[q=\chi,H={H_2(q^*)}]{x} \bigg) \\
    &= \sum_{d|q} \bigg( \phi^*(d) g_{d,m,R}^{(3)}(x;H ,H_2(d)) + \frac12 \funcF[q=d,H={H_2(d)}]{x} \bigg).
  \end{align*}
As before, we will use the function $H_2$ involved in $F_{d,m,R}$ and $G_{q,m,R}$ only to fill in the $H_2$-arguments of the functions defined earlier in this section.
\end{definition}

\begin{lemma} \label{Upsilon breakdown}
Let $q$ and $m$ be positive integers. Let $x,H$ and $R$ be real numbers satisfying $x>1$ and $H\geq 1$, and let $H_2$ be a function on the divisors of $q$ satisfying $H \le H_2(d)$ for $d\mid q$. Suppose that every character $\chi$ with modulus $q$ satisfies Hypothesis Z$(H_2(q^*),R)$, where $q^*$ is the conductor of~$\chi$. Then
  \[H^{m+1} \Upsilon_{q,m}(x;H) < \funcG{x}.\]
\end{lemma}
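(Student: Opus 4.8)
The plan is to bound $H^{m+1}\Upsilon_{q,m}(x;H)$ by splitting each inner sum over zeros $\rho=\beta+i\gamma\in\Zchi$ with $|\gamma|>H$ into the range $H<|\gamma|\le H_2(q^*)$, where Hypothesis~Z$(H_2(q^*),R)$ forces $\beta=\tfrac12$, and the range $|\gamma|>H_2(q^*)$, where the same hypothesis gives $\beta\le 1-\tfrac1{R\log(q^*|\gamma|)}$. In the first range we have $x^{\beta-1}=x^{-1/2}$, so the contribution is $x^{-1/2}\sum_{H<|\gamma|\le H_2(q^*)}|\gamma|^{-(m+1)}$, which we estimate by partial summation against $N(u,\chi)-N(H,\chi)$ using Lemma~\ref{N less than L} (i.e.\ the bound $M_{q^*}(H,u)$ from Definition~\ref{LtqH def}) together with the derivative formula~\eqref{ninja}. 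In the second range, $x^{\beta-1}\le x^{-1/(R\log(q^*|\gamma|))}/x$, so the contribution is at most $x^{-1}\sum_{|\gamma|>H_2(q^*)}|\gamma|^{-(m+1)}\exp\!\bigl(-\tfrac{\log x}{R\log(q^*|\gamma|)}\bigr)$; the summand of this sum is exactly $\funcY[q=q^*]{|\gamma|}$ from Definition~\ref{phim def}, so this piece is precisely $x^{-1}\,\funcF[q=\chi,H={H_2(q^*)}]{x}$ in that notation (once we restore the $x^{1/(R\log(q^*u))}/x$ splitting — note $\funcY{u}$ already contains the $x^{-1/(R\log du)}$ factor but not the outer $1/x$, so I should double-check the bookkeeping of where the extra $1/x$ versus the $x^{1/(R\log du)}$ lives when comparing to $g^{(2)}$).

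Next I would carry out the partial-summation estimates explicitly. For the critical-line piece: since $N(u,\chi)-N(H,\chi)<M_{q^*}(H,u)$ and $M_{q^*}(H,u)\ll u\log u$, the boundary term at $\infty$ vanishes and
\[
\sum_{\substack{\rho\in\Zchi\\ H<|\gamma|\le H_2(q^*)}}\frac1{|\gamma|^{m+1}}
<(m+1)\int_H^{H_2(q^*)}\frac{M_{q^*}(H,u)}{u^{m+2}}\,du
\le (m+1)\int_H^{\infty}\frac{M_{q^*}(H,u)}{u^{m+2}}\,du,
\]
and this last integral, using~\eqref{ninja} and integration by parts (or just plugging in $M_{q^*}(H,u)=\frac u\pi\log\frac{q^*u}{2\pi e}-\cdots$ termwise), evaluates in closed form. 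Multiplying by $H^{m+1}x^{-1/2}$ and comparing with the explicit formula for $g_{d,m}^{(1)}$ in Definition~\ref{phim def} should reproduce the $g_{q^*,m}^{(1)}(H,H_2(q^*))\cdot x^{-1/2}$ term (and the ``subtracted'' $(H/H_2)^m$ and $(H/H_2)^{m+1}$ corrections in $g^{(1)}$ are exactly the boundary contributions at $u=H_2(q^*)$ that I would otherwise have discarded but am keeping to sharpen). For the zero-free-region piece, I split $|\gamma|>H_2(q^*)$ similarly: the portion that would come from $N(u,\chi)$ near $u=H_2(q^*)$ via partial summation gives the $g_{d,m}^{(2)}$ term, and the genuinely tail part — weighted by $\funcY{|\gamma|}$ — is what stays as $\tfrac12 H^{m+1}\funcF[q=\chi]{x}$ after accounting for the factor from the derivative of $u^{-(m+1)}x^{-1/(R\log q^*u)}$; this is where the $\tfrac12$ and the $H^{m+1}$ prefactor in the definition of $\funcG{x}$ come from.

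Finally I would sum over all $\chi\bmod q$, grouping characters by their conductor $d\mid q$ (there are $\phi^*(d)$ primitive characters mod $d$, and an imprimitive $\chi$ with $\chi^*$ of conductor $d$ contributes $\Zchi=\Zchistar$), to convert $\sum_{\chi\bmod q}$ into $\sum_{d\mid q}\phi^*(d)(\cdots)$ for the $g^{(3)}$ terms and into $\sum_{d\mid q}(\cdots)$ for the $\funcF[q=d]{x}$ terms, matching the second displayed formula for $\funcG{x}$ in Definition~\ref{def: G}. The main obstacle I anticipate is purely bookkeeping rather than conceptual: correctly tracking the several boundary terms at $u=H_2(q^*)$ so that the ``wasteful'' steps (replacing $\int_H^{H_2}$ by $\int_H^\infty$, bounding $N(u,\chi)-N(H,\chi)\le N(u,\chi)$, etc.) are compensated by precisely the $(H/H_2)^m$- and $(H/H_2)^{m+1}$-weighted subtractions built into $g_{d,m}^{(1)}$ and $g_{d,m}^{(2)}$ — in other words, verifying that the explicit closed forms in Definition~\ref{phim def} are exactly the antiderivatives that fall out, with no sign errors or off-by-$(m+1)$ slips. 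Everything else is a routine application of Lemmas~\ref{N less than L} and~\ref{counting zeros simpler upper bounds lemma} combined with partial summation.
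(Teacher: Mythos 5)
Your overall skeleton — splitting each character's sum at height $H_2(q^*)$, using Lemma~\ref{N less than L} with partial summation against $M_{q^*}$, and regrouping the characters by conductor at the end — is the same as the paper's. But the treatment of the zeros with $|\gamma|>H_2(q^*)$ has a genuine gap, precisely at the spot you flagged for a ``double-check.'' Hypothesis Z gives $\beta\le 1-\frac{1}{R\log(q^*|\gamma|)}$, hence only $x^{\beta-1}\le x^{-1/(R\log(q^*|\gamma|))}$; your claimed bound $x^{\beta-1}\le x^{-1/(R\log(q^*|\gamma|))}/x$ is off by a factor of $x$. After correcting this, the direct estimate yields $\sum_{|\gamma|>H_2(q^*)}x^{\beta-1}|\gamma|^{-(m+1)}\le F_{\chi,m,R}(x;H_2(q^*))$ with coefficient $1$, and that is not sufficient: Definition~\ref{def: G} allots only $\tfrac12 F_{\chi,m,R}(x;H_2(q^*))$ per character plus a term $g^{(2)}_{q^*,m}(H,H_2(q^*))\,x^{1/(R\log(q^*H_2(q^*)))-1}/H^{m+1}$, which decays like a power of $x$, far faster than $F$ itself; so with coefficient $1$ the asserted inequality $H^{m+1}\Upsilon_{q,m}(x;H)<G_{q,m,R}(x;H,H_2)$ fails for large $x$. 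The missing idea is the functional-equation symmetrization: since $\beta+i\gamma\in\Zchi$ if and only if $1-\beta+i\gamma\in\Zchi$, one writes the tail (after factoring out $1/x$) as $\tfrac12\sum_{|\gamma|>H_2(q^*)}\big(x^{\beta}+x^{1-\beta}\big)|\gamma|^{-(m+1)}$, observes that $x^{\beta}+x^{1-\beta}$ increases as $\beta$ moves away from $\tfrac12$, and uses the two-sided bound $\frac{1}{R\log(q^*|\gamma|)}\le\beta\le 1-\frac{1}{R\log(q^*|\gamma|)}$. The piece $x^{1-1/(R\log(q^*|\gamma|))}$ produces exactly $\tfrac x2\,F_{\chi,m,R}(x;H_2(q^*))$ — this, and not ``the derivative of $u^{-(m+1)}x^{-1/(R\log q^*u)}$,'' is where the $\tfrac12$ comes from — while the piece $x^{1/(R\log(q^*|\gamma|))}\le x^{1/(R\log(q^*H_2(q^*)))}$ is handled by a second partial summation of $\sum_{|\gamma|>H_2(q^*)}|\gamma|^{-(m+1)}$, which is what generates $g^{(2)}_{q^*,m}$.

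A secondary, fixable slip: in the range $H<|\gamma|\le H_2(q^*)$, partial summation gives the sum as the boundary term $\big(N(H_2(q^*),\chi)-N(H,\chi)\big)/H_2(q^*)^{m+1}$ plus $(m+1)\int_H^{H_2(q^*)}\big(N(u,\chi)-N(H,\chi)\big)u^{-(m+2)}\,du$; the boundary term is nonnegative, so your displayed inequality that drops it is not justified, and replacing $\int_H^{H_2(q^*)}$ by $\int_H^{\infty}$ produces a quantity strictly larger than $g^{(1)}_{q^*,m}(H,H_2(q^*))/H^{m+1}$ (the $(H/H_2)^m$ and $(H/H_2)^{m+1}$ terms in $g^{(1)}$ are exactly the evaluations at the upper limit $H_2(q^*)$, so they do not reappear if you integrate to infinity). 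The repair is simply to keep the boundary term, bound it by $M_{q^*}(H,H_2(q^*))/H_2(q^*)^{m+1}$, and integrate only up to $H_2(q^*)$, which reproduces $g^{(1)}$ exactly as in the paper.
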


\begin{proof}
Note that it suffices, for a fixed character $\chi$ with conductor $d$, to establish the upper bound
 \begin{equation}\label{eq:U guts}
  \sum_{\substack{\rho\in\Zchi \\ |\gamma|>H}} \frac{x^{\beta-1}}{|\gamma|^{m+1}}
    < \frac{g_{d,m,R}^{(3)}(x;H ,H_2(d))}{H^{m+1}} + \frac 12 \funcF[H=H_2(d)]{x},
  \end{equation}
since multiplying by $H^{m+1}$ and summing this bound over all characters modulo~$q$ yields the statement of the proposition, by comparison to Definition~\ref{def: G}. We begin by using Hypothesis Z$(H_2(d),R)$ to write
  \begin{equation} \label{above and below H2d}
\sum_{\substack{\rho\in\Zchi \\ |\gamma|>H}} \frac{x^{\beta-1}}{|\gamma|^{m+1}}
= \frac{1}{\sqrt x} \sum_{\substack{\rho\in\Zchi \\ H <  |\gamma| \leq H_2(d)}} \frac{1}{|\gamma|^{m+1}}+ \frac 1x \sum_{\substack{\rho\in\Zchi \\ |\gamma| > H_2(d)}} \frac{x^{\beta}}{|\gamma|^{m+1}}.
  \end{equation}
By partial summation, integration by parts, and Lemma~\ref{N less than L}, we find that
  \begin{flalign} 
  &  \sum_{\substack{\rho\in\Zchi \\ H <  |\gamma|\leq H_2(d)}} \frac{1}{|\gamma|^{m+1}} \notag \\
  &  = \int_H^{H_2(d)} \frac{d(N(t,\chi) - N(H,\chi))}{t^{m+1}} \notag \\ 
  &  = \frac{N(H_2(d),\chi) - N(H,\chi)}{H_2(d)^{m+1}}+(m+1)\int_H^{H_2(d)} \frac{N(t,\chi)-N(H,\chi) }{ t^{m+2} } \,dt \notag \\ 
  &  < \frac{M_{d}(H,H_2(d))}{H_2(d)^{m+1}} + (m+1) \int_H^{H_2(d)} \frac{M_{d}(H,t)}{t^{m+2}} \,dt \notag \\ 
  &  = \frac{g_{d,m}^{(1)}(H,H_2(d))}{H^{m+1}}, \label{emus and llamas} 
  \end{flalign}
where the last equality follows from Definitions~\ref{LtqH def} and~\ref{phim def} and tedious but straightforward calculus.

We now turn to the zeros with height above $H_2(d)$, making use of the fact that $\beta+i\gamma$ is a nontrivial zero of $L(s,\chi)$ if and only if $1-\beta+i\gamma$ is such a zero, by the functional equation.
Consequently,
\begin{flalign*}
 \sum_{\substack{\rho\in\Zchi \\ |\gamma| > H_2(d)}} \frac{x^{\beta}}{|\gamma|^{m+1}} & = \frac12 \bigg( \sum_{\substack{\rho\in\Zchi \\ |\gamma| > H_2(d)}} \frac{x^{\beta}}{|\gamma|^{m+1}} + \sum_{\substack{\rho\in\Zchi \\ |\gamma| > H_2(d)}} \frac{x^{1-\beta}}{|\gamma|^{m+1}} \bigg) \\ 
 & = \frac 12 \sum_{\substack{\rho\in\Zchi \\ |\gamma| > H_2(d)}} \frac{x^{\beta}+x^{1-\beta}}{|\gamma|^{m+1}},
 \end{flalign*} 
since the two sums inside the parentheses are equal to each other.
For a fixed $x>1$, the function $x^\beta+x^{1-\beta}$ increases as $\beta$ moves away from $\frac12$ in either direction; and by Hypothesis Z$(H_2(d),R)$,
  \[\frac{1}{R \log d |\gamma|} \leq \min\{\beta,1-\beta\} \leq \max \{\beta,1-\beta\} \leq 1-\frac{1}{R \log d |\gamma|}.\]
Therefore,
  \begin{align*}
    \frac12 \sum_{\substack{\rho\in\Zchi \\ |\gamma| > H_2(d)}} \frac{x^{\beta}+x^{1-\beta}}{|\gamma|^{m+1}}
      \leq \frac 12  \sum_{\substack{\rho\in\Zchi \\ |\gamma| > H_2(d)}} \frac{x^{1/(R\log d|\gamma|)}+{x^{1-1/(R\log d|\gamma|)}}}{|\gamma|^{m+1}} \\
    = \frac{x^{1/(R\log dH_2(d))}}2 \sum_{\substack{\rho\in\Zchi \\ |\gamma| > H_2(d)}} \frac{1}{|\gamma|^{m+1}} + \frac x2 \funcF[q=\chi,H=H_2(d)]{x}.
  \end{align*}
Again by partial summation and some tedious calculus,
  \[
   \frac12 \sum_{\substack{\rho\in\Zchi \\ |\gamma| > H_2(d)}} \frac{1}{|\gamma|^{m+1}}
    < \frac{m+1}2 \int_{H_2(d)}^\infty \frac{ M_d(H_2(d),t) }{ t^{m+2} }\,dt = \frac{g_{d,m}^{(2)}(H,H_2(d))}{H^{m+1}},
    \]
from which we conclude that
\[
\frac 1x \sum_{\substack{\rho\in\Zchi \\ |\gamma| > H_2(d)}} \frac{x^{\beta}}{|\gamma|^{m+1}} < \frac{x^{1/(R\log dH_2(d))}}{x} \frac{g_{d,m}^{(2)}(H,H_2(d))}{H^{m+1}} + \frac 12 \funcF[q=\chi,H=H_2(d)]{x}.
\]
Combining this upper bound with equation~\eqref{above and below H2d} and inequality~\eqref{emus and llamas} establishes inequality~\eqref{eq:U guts}, thanks to Definition~\ref{phim def}, and thus completes the proof of the lemma.
\end{proof}

To turn Proposition \ref{now we see what needs maximizing} into something amenable to computation, in light of Lemma~\ref{Upsilon breakdown}, we are left with the problem of deriving an absolute upper bound for the quantity
  \[\Psi_{q,m,r}(x;H) = H^{m+1} \Upsilon_{q,m}(x;H) (\log x)^r\]
for various positive $r$; we will eventually obtain this in Proposition~\ref{Upsilon resovled}.
As
$$
g_{d,m,R}^{(3)}(x;H ,H_2(d)) = O(1/\sqrt x),
$$
it is an easy matter to majorize $g^{(3)} (\log x)^r$ for any $r$. The problem that remains, therefore,  is to deduce a bound upon
$$
\funcF{x} (\log x)^r,
$$
for various $r$.
Our bounds for this function consist of several pieces, each of which can be optimized using calculus; we simply add the individual maxima together to deduce a uniform upper bound for $\funcF{x}(\log x)^r$. That optimization, however, can only take place once we have provided bounds of a simpler form for these pieces.

\subsection{Conversion to integrals involving bounds for $N(T,\chi)$}  \label{elimination sec}

As we see in Definition~\ref{phim def}, the function $\funcF{x}$ still depends on the vertical distribution of zeros of Dirichlet $L$-functions\mod q. A standard partial summation argument, combined with the bounds on $N(T,\chi)$ we established in Section~\ref{further estimates section}, allows us to remove that dependence on zeros of $L$-functions in favor of more elementary functions.

\begin{definition} \label{H1 and H2 def}
Let $d$ and $m$ be positive integers, and suppose that $R>0$, $x\ge1$ and $H_2 \ge 1$ are real numbers. Define
\[
\funcHone{x} = \frac{1}d \exp\bigg( \sqrt{\frac{\log x}{R(m+1)}} \bigg)
\]
and
  \begin{align*}\funcHtwo{x} &= \max\{H_2,\funcHone{x}\} \\
 &= \begin{cases}
  H_2, & \text{if } 1 \leq x \leq \exp\left(R(m+1)\log^2(dH_2)\right), \\
  \funcHone{x}, & \text{if } x \geq \exp\left(R(m+1)\log^2(d H_2)\right).
\end{cases}\end{align*}
Straightforward calculus demonstrates that the function $\funcY{u}$ from Definition~\ref{phim def} is, as a function of $u$, increasing for $1/q<u<\funcHone{x}$ and decreasing for $u>\funcHone{x}$,
\end{definition}

\begin{prop} \label{integral bound for S tilde prop}
Let $m$ and $d$ be positive integers, let $H$, $H_2$, and $R$ be positive real numbers satisfying $1 \le H \le H_2$, and let $\chi$ be a character with conductor $d$ satisfying Hypothesis~Z$(H_2,R)$. Then
\begin{multline}  \label{first two terms}
\funcF{x} \le M_d(H_2,\funcHtwo{x}) \funcY{\funcHtwo{x}} \\
+ \int_{H_2}^\infty \left(\frac{\partial}{\partial u} M_d(H_2,u)\right) \funcY{u} \,du,
\end{multline}
where $\funcF{x}$ and $\funcY{u}$ are as in Definition~\ref{phim def} and $M_d(\ell,u)$ is as in Definition~\ref{LtqH def}.
\end{prop}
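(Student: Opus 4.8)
The plan is a Riemann--Stieltjes partial summation against the zero-counting function $N(\cdot,\chi)$, exploiting the fact recorded in Definition~\ref{H1 and H2 def} that $\funcY{u}$, as a function of $u$, is increasing for $1/d<u<\funcHone{x}$ and decreasing for $u>\funcHone{x}$, together with the bound $N(u,\chi)-N(\ell,\chi)<M_d(\ell,u)$ from Lemma~\ref{N less than L}. Writing $\widetilde N(u):=N(u,\chi)-N(H_2,\chi)$ for the number of zeros $\rho=\beta+i\gamma\in\Zchi$ with $H_2<|\gamma|\le u$, so that $\widetilde N(H_2)=0$, I would first record
\[
\funcF{x}=\sum_{\substack{\rho\in\Zchi\\|\gamma|>H_2}}\funcY{|\gamma|}=\int_{H_2}^\infty\funcY{u}\,d\widetilde N(u)=-\int_{H_2}^\infty\widetilde N(u)\,\frac{\partial}{\partial u}\funcY{u}\,du,
\]
the last equality being integration by parts; the boundary term at $u=H_2$ vanishes since $\widetilde N(H_2)=0$, and the one at infinity vanishes because $\funcY{u}=O(u^{-(m+1)})$ decays faster than $\widetilde N(u)=O(u\log u)$ grows (Proposition~\ref{quoting Trudgian}), using $m\ge1$; in particular the sum defining $\funcF{x}$ is absolutely convergent.

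Next I would split the integral at $\funcHtwo{x}=\max\{H_2,\funcHone{x}\}$. On $(H_2,\funcHtwo{x})$ — a non-empty interval only when $\funcHone{x}>H_2$, in which case it equals $(H_2,\funcHone{x})$ — the function $\funcY{u}$ is increasing, so $-\widetilde N(u)\,\partial_u\funcY{u}\le0$ and this portion may simply be discarded. On $(\funcHtwo{x},\infty)$ the function $\funcY{u}$ is decreasing, so $-\partial_u\funcY{u}\ge0$; since also $0\le\widetilde N(u)<M_d(H_2,u)$ by Lemma~\ref{N less than L} (valid because $1\le H_2\le u$ there), I may replace $\widetilde N(u)$ by $M_d(H_2,u)$, obtaining
\[
\funcF{x}<-\int_{\funcHtwo{x}}^\infty M_d(H_2,u)\,\frac{\partial}{\partial u}\funcY{u}\,du.
\]
Integrating by parts once more, and recalling from equation~\eqref{ninja} that $\frac{\partial}{\partial u}M_d(H_2,u)=\frac1\pi\log\frac{du}{2\pi}+\frac{C_1}{u}$, the boundary term at $\funcHtwo{x}$ contributes exactly $M_d(H_2,\funcHtwo{x})\funcY{\funcHtwo{x}}$ (the boundary term at infinity again vanishes), so that
\[
\funcF{x}<M_d(H_2,\funcHtwo{x})\funcY{\funcHtwo{x}}+\int_{\funcHtwo{x}}^\infty\Big(\frac{\partial}{\partial u}M_d(H_2,u)\Big)\funcY{u}\,du.
\]
Finally I would extend the lower limit of the integral back down to $H_2$: this is legitimate since $\funcY{u}>0$ and $\frac{\partial}{\partial u}M_d(H_2,u)\ge0$ on $[H_2,\funcHtwo{x}]$, which is immediate from the displayed formula for that partial derivative once $d\ge2$ (its minimum over all $u>0$ is positive), and for $d=1$ holds as soon as $H_2$ is not too small — as it is in every application. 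The residual case of the principal character with very small $H_2$ can be dispatched directly, since then $L(s,\chi_0)$ has no zeros of small imaginary part and $\funcF{x}$ falls far below the asserted bound.

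The main obstacle is not analytic but clerical: the ``tedious but straightforward calculus'' of checking that $M_d$, $\funcY{u}$, $\funcHone{x}$, and $\funcHtwo{x}$ from Definitions~\ref{LtqH def}, \ref{phim def}, and~\ref{H1 and H2 def} are arranged so that the second integration by parts yields \emph{exactly} the term $M_d(H_2,\funcHtwo{x})\funcY{\funcHtwo{x}}$, with the evaluation naturally landing at $u=\funcHtwo{x}$ precisely because that is where $\funcY{u}$ turns over from increasing to decreasing (when $\funcHone{x}>H_2$) or is simply the left endpoint (when $\funcHone{x}\le H_2$). Alongside this, one must take care to justify the vanishing of every boundary term at infinity and to keep the signs straight in the splitting argument above. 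None of this is deep, but getting the endpoints, the antiderivatives, and the inequalities to match the definitions verbatim is where the attention is required.
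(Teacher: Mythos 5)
Your proof is correct and takes essentially the same route as the paper's: Riemann--Stieltjes partial summation against $N(\cdot,\chi)-N(H_2,\chi)$, discarding the interval $(H_2,\max\{H_2,H^{(1)}\})$ where $Y$ is increasing, replacing the zero count by $M_d(H_2,u)$ where $-Y'\ge0$, a second integration by parts producing the boundary term $M_d(H_2,H^{(2)})\,Y(H^{(2)})$, and finally extending the lower limit of integration back to $H_2$. Your extra caution in the last step about the sign of $\frac{\partial}{\partial u}M_d(H_2,u)$ when $d=1$ and $H_2$ is small is a point the paper simply asserts away ("the integrand is positive"); it is harmless in practice since $dH_2\ge10^8$ in every application, so your argument is, if anything, slightly more careful than the published one.
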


\begin{proof}
For this proof, write $Y(u)$ for $\funcY{u}$ and $H^{(2)}$ for $\funcHtwo{x}$.
Then, from Definition~\ref{phim def} and integration by parts,
\begin{flalign*}
&  \funcF{x} = \int_{H_2}^\infty Y(u) \,d \left( N(u,\chi) - N(H_2,\chi) \right) \\
 &  = \lim_{u\to\infty} \left( N(u,\chi) - N(H_2,\chi) \right)Y(u) - \left( N(H_2,\chi) - N(H_2,\chi) \right) Y(H_2) \\
  & \hskip12ex - \int_{H_2}^\infty \left( N(u,\chi) - N(H_2,\chi) \right) Y'(u)\,du \\
  & = \int_{H_2}^\infty \left( N(u,\chi) - N(H_2,\chi) \right) (-Y'(u))\,du,
 \end{flalign*}
where the limit equals $0$ because
$$
N(u,\chi) - N(H_2,\chi) < M_d(H_2,u) \ll u \log u,
$$
by Lemmas~\ref{N less than L} and~\ref{counting zeros simpler upper bounds lemma}, while $Y(u)< u^{-m-1}\leq u^{-2}$.
By the remarks in Definition~\ref{H1 and H2 def}, the $-Y'(u)$ factor is negative when $u<\funcHone{x}$ and positive when $u>\funcHone{x}$. Therefore, by Lemma~\ref{N less than L},
\begin{align*}
\funcF{x} &< \int_{H^{(2)}}^\infty \left( N(u,\chi) - N(H,\chi) \right) (-Y'(u))\,du \\
&< \int_{H^{(2)}}^\infty M_d(H_2,u) (-Y'(u))\,du. \\
\end{align*}
Via integration by parts, this last quantity is equal to
{\small{$$
-  \lim_{u\to\infty} M_d(H_2,u) Y(u) + M_d(H_2, H^{(2)}) Y(H^{(2)}) + \int_{H^{(2)}}^\infty \left(\frac{\partial}{\partial u} M_d(H_2,u) \right) Y(u) \,du.
$$}}
The limit here again equals $0$, yielding
\[
\funcF{x} \le M_d(H_2, H^{(2)}) Y(H^{(2)}) + \int_{H^{(2)}}^\infty \left(\frac{\partial}{\partial u} M_d(H_2,u) \right)Y(u)\,du.
\]
Since this last integrand is positive, we may extend the lower limit of integration from $H^{(2)}$ down to $H_2$ and still have a valid upper bound.
\end{proof}

The remainder of this section is devoted to finding an upper bound for the boundary term in equation~\eqref{first two terms}. Other than dealing with two cases depending on the size of $x$ relative to $H$, this optimization is simply a matter of calculus and notation.

\begin{definition} \label{boundary functions to maximize}
Let $d$ and $m$ be positive integers, and let $x,r,H,H_2$ and $R$ be real numbers satisfying $x>1$, $\frac14 < r \leq m+1$, and $x>1$. We define the functions
\begin{align*}
\funcBone{x}
  &= M_d(H_2,H_2) \cdot \funcY{H_2} (\log x)^r \\
  &= 2\left(C_1 \log(d H_2)+C_2\right) \cdot \frac{1}{H_2^{m+1}} \exp\bigg( {-}\frac{\log x}{R\log(dH_2)} \bigg) (\log x)^r, \\
  \end{align*}
$$
\funcBtwo{x}
  = \frac{d^m}{\pi} \bigg( \frac{\log^{r+1/2} x}{\sqrt{R(m+1)}}  \bigg) \exp\bigg( {-}\frac{2m+1}{\sqrt{R(m+1)}}\sqrt{\log x} \bigg)
  $$
  and
\begin{flalign*}
\funcB &= \big(\tfrac H{H_2} \big)^{m+1}  R^r (\log dH_2)^r  \\
 &\times  \max \bigg\{ M_d(H_2,H_2)  \bigg( \frac re \bigg)^r , \frac{ (m+1)^r \log^{r+1}(dH_2)}{\pi d^{m+1}H_2^{m}} \bigg\}.
  \end{flalign*}
\end{definition}

\begin{prop} \label{boundary term to maximize}
Let $d$ and $m$ be positive integers, and let $x$, $r$, $H$ and $H_2$ be real numbers satisfying $15\leq H \leq H_2$ and $\frac14 < r \leq m+1$.
If
$$
0 < \log x \le R(m+1) \log^2(d H_2),
$$
then
\begin{equation}\label{boundary small}
  M_d(H,\funcHtwo{x})  \funcY{ \funcHtwo{x} } (\log x)^r = \funcBone{x},
\end{equation}
while if $\log x > R(m+1) \log^2(d H_2)$, then
\begin{equation}\label{boundary large}
M_d(H,\funcHtwo{x}) \funcY{ \funcHtwo{x} } (\log x)^r < \funcBtwo{x}.
\end{equation}
\end{prop}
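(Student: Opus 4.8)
\emph{Proof strategy.} The two hypotheses on $\log x$ in the statement are, by Definition~\ref{H1 and H2 def}, exactly the two branches of the piecewise description of $\funcHtwo{x}$, so the plan is to treat each branch in turn. The quantity under analysis is the boundary term $M_d(H_2,\funcHtwo{x})\,\funcY{\funcHtwo{x}}$ of Proposition~\ref{integral bound for S tilde prop} (equation~\eqref{first two terms}), scaled by $(\log x)^r$. In the first regime $0<\log x\le R(m+1)\log^2(dH_2)$ we have $\funcHtwo{x}=H_2$, so there is nothing to estimate: feeding $\ell=u=H_2$ into the formula for $M_d(\ell,u)$ in Definition~\ref{LtqH def} cancels the two leading terms and leaves $M_d(H_2,H_2)=2C_1\log(dH_2)+2C_2$, while $\funcY{H_2}=H_2^{-(m+1)}\exp\!\big({-}\tfrac{\log x}{R\log(dH_2)}\big)$ directly from Definition~\ref{phim def}. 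Multiplying these and the factor $(\log x)^r$ together reproduces, term for term, the closed form of $\funcBone{x}$ recorded in Definition~\ref{boundary functions to maximize}, which establishes \eqref{boundary small} as an identity.

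For the second regime $\log x>R(m+1)\log^2(dH_2)$, Definition~\ref{H1 and H2 def} gives $\funcHtwo{x}=\funcHone{x}=\tfrac1d\exp\!\big(\sqrt{\log x/(R(m+1))}\big)$, and I would first unclutter the nested exponentials via the substitution $t=\sqrt{\log x/(R(m+1))}$, under which $\funcHone{x}=e^t/d$, $\log(d\funcHone{x})=t$, and $\log x=R(m+1)t^2$. A one-line computation then shows $\funcY{\funcHone{x}}=d^{m+1}e^{-2(m+1)t}$ --- the exponential factor $\exp\!\big({-}\log x/(R\log d\funcHone{x})\big)$ collapses to $e^{-(m+1)t}$, matching the power from $\funcHone{x}^{-(m+1)}$ --- while the same substitution rewrites $\funcBtwo{x}$ as $\tfrac{d^m}{\pi}(R(m+1))^r\,t^{2r+1}e^{-(2m+1)t}$. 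Substituting everything into \eqref{boundary large} and cancelling the common positive factor $d^m(R(m+1))^r\,t^{2r}e^{-2(m+1)t}$ (noting $e^{-(2m+1)t}/e^{-2(m+1)t}=e^{t}$) collapses the desired inequality, after using $e^t=d\funcHone{x}$, down to
\[
M_d(H_2,\funcHone{x}) < \frac{\funcHone{x}}{\pi}\log\!\big(d\,\funcHone{x}\big).
\]

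This final inequality is precisely Lemma~\ref{counting zeros simpler upper bounds lemma} applied with $\ell=H_2$ and $u=\funcHone{x}$: its hypotheses $d\ge1$ and $15\le H_2\le\funcHone{x}$ hold because $H_2\ge H\ge15$ by assumption and because $\funcHone{x}=\funcHtwo{x}\ge H_2$ throughout this regime. The lemma delivers a strict inequality, so the strict bound \eqref{boundary large} follows, completing the proof. I do not anticipate a genuine obstacle --- everything is calculus and bookkeeping --- but the step that most repays care is the substitution $t=\sqrt{\log x/(R(m+1))}$: one must keep straight which power of $t$ and which factor of $e^{-t}$ come from $\funcY{\funcHone{x}}$, from $(\log x)^r$, and from $\funcBtwo{x}$, so as to confirm that the residual inequality is exactly the one Lemma~\ref{counting zeros simpler upper bounds lemma} supplies.
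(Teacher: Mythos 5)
Your proof is correct and follows essentially the same route as the paper's: the first case is a definitional identity once $\funcHtwo{x}=H_2$, and the second case reduces to the bound $M_d(\cdot,\funcHone{x})<\frac{\funcHone{x}}{\pi}\log(d\funcHone{x})$ from Lemma~\ref{counting zeros simpler upper bounds lemma}, your substitution $t=\sqrt{\log x/(R(m+1))}$ being merely a cosmetic reorganization of the paper's direct computation. Your reading of the first argument of $M_d$ as $H_2$ (matching Definition~\ref{boundary functions to maximize} and Proposition~\ref{integral bound for S tilde prop}) is the right one, and since the lemma's bound is independent of that argument the second case is unaffected either way.
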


\begin{proof}
When $0<\log x \le R(m+1) \log^2(d H_2)$, we have $\funcHtwo{x} = H_2$ and so equation~\eqref{boundary small} follows.

On the other hand, when $\log x \geq  R(m+1) \log^2(dH_2)$, we have
$$
\funcHtwo{x} = \funcHone{x} \geq H_2 \geq 15,
$$
and so by Lemma~\ref{counting zeros simpler upper bounds lemma},
\begin{align*}
M_d(H,\funcHtwo{x}) &< \frac{\funcHone{x}}{\pi} \log \big( d\funcHone{x} \big) \\
&= \frac{1}{\pi d} \sqrt{\frac{\log x}{R(m+1)}} \cdot \exp\bigg( \sqrt{\frac{\log x}{R(m+1)}} \bigg)
\end{align*}
and
\begin{flalign*}
&\funcY{ \funcHtwo{x} } = \frac1{(\funcHone{x})^{m+1}} \cdot \exp\bigg( {-}\frac{\log x}{R\log(d\funcHone{x})} \bigg) \\
  &= d^{m+1} \exp\bigg( {-}\sqrt{\frac{(m+1)\log x}R} \bigg) \cdot \exp\bigg( {-}\sqrt{\frac{(m+1)\log x}R} \bigg)\\
  &= d^{m+1} \exp\bigg( {-}2\sqrt{\frac{(m+1)\log x}R} \bigg) .
\end{flalign*}
Therefore, as $2\sqrt{\frac{m+1}R} - \sqrt{\frac1{R(m+1)}} = \frac{2m+1}{\sqrt{R(m+1)}}$, we have
\begin{multline*}
M_d(H,\funcHtwo{x}) \funcY{ \funcHtwo{x} } (\log x)^r \\
  < \frac{d^m}{\pi} \bigg( \sqrt{\frac{\log x}{R(m+1)}} \bigg) \exp\bigg( {-}\frac{2m+1}{\sqrt{R(m+1)}}\sqrt{\log x} \bigg) (\log x)^r
  = \funcBtwo{x}.
\end{multline*}
\end{proof}

\begin{lemma} \label{simple calculus lemma}
Let $c_1$, $c_2$, $\lambda$, and $\mu$ be positive real numbers, and define
  \[ \funcPhi = c_1 \exp( -c_2\log^\lambda u ) \log^\mu u.\]
Then $\funcPhi$, as a function of $u$, is increasing for $1<u<u_0$ and decreasing for $u>u_0$, where
 $$u_0 = \exp\big( (\tfrac{\mu}{\lambda c_2})^{1/\lambda} \big).$$
In particular, $\funcPhi \le \funcPhi[u=u_0] = c_1 \big(\tfrac{\mu}{e\lambda c_2}\big)^{\mu/\lambda}$ for all $u\ge1$.
\end{lemma}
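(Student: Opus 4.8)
The plan is to treat this as a routine single-variable calculus optimization: differentiate $\Phi(u;c_1,c_2,\lambda,\mu)$ with respect to $u$, find the unique critical point, and check the sign of the derivative on either side of it. First I would set $L = \log u$ (so $L$ ranges over $(0,\infty)$ as $u$ ranges over $(1,\infty)$) and write $\Phi = c_1 e^{-c_2 L^\lambda} L^\mu$. Since $u\mapsto L=\log u$ is an increasing bijection from $(1,\infty)$ to $(0,\infty)$, monotonicity of $\Phi$ in $u$ is equivalent to monotonicity of the function $\widetilde\Phi(L) = c_1 e^{-c_2 L^\lambda} L^\mu$ in $L$, which slightly simplifies the bookkeeping.

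Next I would compute the logarithmic derivative: for $L>0$,
\[
\frac{d}{dL}\log\widetilde\Phi(L) = -c_2\lambda L^{\lambda-1} + \frac{\mu}{L} = \frac{1}{L}\bigl(\mu - c_2\lambda L^{\lambda}\bigr).
\]
The factor $1/L$ is positive, so the sign of $\widetilde\Phi'$ agrees with the sign of $\mu - c_2\lambda L^\lambda$. Since $c_2,\lambda>0$, the map $L\mapsto c_2\lambda L^\lambda$ is strictly increasing from $0$ to $\infty$ on $(0,\infty)$, so $\mu - c_2\lambda L^\lambda$ is strictly positive for $L^\lambda < \mu/(\lambda c_2)$ and strictly negative for $L^\lambda > \mu/(\lambda c_2)$; equality holds exactly at $L_0 := (\mu/(\lambda c_2))^{1/\lambda}$, i.e. at $u_0 = \exp(L_0) = \exp\bigl((\tfrac{\mu}{\lambda c_2})^{1/\lambda}\bigr)$. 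Translating back, $\Phi(u)$ is increasing for $1<u<u_0$ and decreasing for $u>u_0$, which is the first assertion.

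Finally, for the numerical value at the maximum I would substitute $L = L_0$ into $\widetilde\Phi$. We have $L_0^\lambda = \mu/(\lambda c_2)$, so $c_2 L_0^\lambda = \mu/\lambda$ and hence $e^{-c_2 L_0^\lambda} = e^{-\mu/\lambda}$, while $L_0^\mu = (\mu/(\lambda c_2))^{\mu/\lambda}$. Therefore
\[
\Phi(u_0) = c_1\, e^{-\mu/\lambda}\Bigl(\frac{\mu}{\lambda c_2}\Bigr)^{\mu/\lambda} = c_1 \Bigl(\frac{\mu}{e\lambda c_2}\Bigr)^{\mu/\lambda},
\]
and since this is the unique interior maximum (and $\Phi\to 0$ as $u\to 1^+$ and as $u\to\infty$, as all exponents are positive), we get $\Phi(u)\le c_1(\tfrac{\mu}{e\lambda c_2})^{\mu/\lambda}$ for all $u\ge 1$. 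There is essentially no obstacle here — the only mild subtlety is verifying that the critical point is genuinely a global maximum on $[1,\infty)$ rather than a local feature, which follows immediately from the sign analysis of the derivative (positive then negative), so no second-derivative test or boundary comparison is really needed.
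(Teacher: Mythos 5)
Your proof is correct, and it is essentially the argument the paper has in mind: the paper simply dismisses this lemma as "a straightforward calculus exercise," and your substitution $L=\log u$ followed by the logarithmic-derivative sign analysis and evaluation at the critical point is exactly that exercise carried out in full.
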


\begin{proof}
This is a straightforward calculus exercise.
\end{proof}

\begin{lemma} \label{boundary term maximized}
Let $d$ and $m$ be positive integers, and let $u$, $\mu$, $H$, $H_2$, and $R$ be positive real numbers satisfying $u>1$, $\mu\le m+1$, and $15\leq H \leq H_2$.
Then with $B^{(1)}$, $B^{(2)}$, and $B$ as in Definition~\ref{boundary functions to maximize}, we have the following inequalities:
\begin{enumerate}[label=(\roman*)] 
\item\label{item1} $H^{m+1}  \funcBone[r=\mu]{u} \leq \funcB[r=\mu]$;
\item\label{item2} If
$\log u \ge R(m+1) \log^2(dH_2)$,
then
$$
H^{m+1} \funcBtwo[r=\mu]{u} \leq \funcB[r=\mu].
$$
\end{enumerate}
\end{lemma}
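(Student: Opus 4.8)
The plan is to recognize both $\funcBone[r=\mu]{u}$ and $\funcBtwo[r=\mu]{u}$ as functions of the single variable $u$ (through $\log u$) of exactly the shape treated in Lemma~\ref{simple calculus lemma}, namely $c_1 \exp(-c_2 \log^\lambda u) \log^\mu u$, and then apply that lemma to obtain their suprema over $u>1$. Multiplying the resulting constant bound by $H^{m+1}$ and comparing with the definition of $\funcB[r=\mu]$ in Definition~\ref{boundary functions to maximize} will give the claimed inequalities. The factor $\big(\tfrac H{H_2}\big)^{m+1}$ that appears in $\funcB$, together with the $1/H_2^{m+1}$ and $d^m/\pi$ prefactors inside $B^{(1)}$ and $B^{(2)}$, is precisely what will make the arithmetic match up after multiplying by $H^{m+1}$.

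\textbf{Part \ref{item1}.} Here I would read off from Definition~\ref{boundary functions to maximize} that
\[
H^{m+1}\funcBone[r=\mu]{u}
  = H^{m+1}\cdot 2\bigl(C_1\log(dH_2)+C_2\bigr)\cdot\frac1{H_2^{m+1}}
     \exp\!\Bigl(-\tfrac{\log u}{R\log(dH_2)}\Bigr)(\log u)^\mu,
\]
which is $c_1\exp(-c_2\log^\lambda u)\log^\mu u$ with $\lambda=1$, $c_2 = 1/\bigl(R\log(dH_2)\bigr)$, and $c_1 = 2\bigl(H/H_2\bigr)^{m+1}\bigl(C_1\log(dH_2)+C_2\bigr)$. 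Since $\mu\le m+1$ and $u>1$, Lemma~\ref{simple calculus lemma} applies and gives
\[
H^{m+1}\funcBone[r=\mu]{u} \le c_1\Bigl(\tfrac{\mu}{e\lambda c_2}\Bigr)^{\mu/\lambda}
  = 2\bigl(\tfrac H{H_2}\bigr)^{m+1}\bigl(C_1\log(dH_2)+C_2\bigr)\Bigl(\tfrac{\mu R\log(dH_2)}{e}\Bigr)^{\mu}.
\]
Recalling from Definition~\ref{LtqH def} that $M_d(H_2,H_2) = 2\bigl(C_1\log(dH_2)+C_2\bigr)$ (the first two terms cancel when $\ell=u=H_2$), the right-hand side is exactly $\bigl(\tfrac H{H_2}\bigr)^{m+1} R^\mu(\log dH_2)^\mu\cdot M_d(H_2,H_2)(\mu/e)^\mu$, which is bounded by the maximum defining $\funcB[r=\mu]$. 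This proves \ref{item1}.

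\textbf{Part \ref{item2}.} Similarly I would write
\[
H^{m+1}\funcBtwo[r=\mu]{u}
  = \frac{H^{m+1}d^m}{\pi\sqrt{R(m+1)}}\,(\log u)^{\mu+1/2}
     \exp\!\Bigl(-\tfrac{2m+1}{\sqrt{R(m+1)}}\sqrt{\log u}\Bigr),
\]
which is again of the Lemma~\ref{simple calculus lemma} form, this time with $\lambda=\tfrac12$, $c_2 = (2m+1)/\sqrt{R(m+1)}$, $\mu$ replaced by $\mu+\tfrac12$, and $c_1 = H^{m+1}d^m/\bigl(\pi\sqrt{R(m+1)}\bigr)$. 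Applying the lemma yields
\[
H^{m+1}\funcBtwo[r=\mu]{u} \le \frac{H^{m+1}d^m}{\pi\sqrt{R(m+1)}}
  \Bigl(\tfrac{(\mu+1/2)\cdot 2\sqrt{R(m+1)}}{e(2m+1)}\Bigr)^{2\mu+1}.
\]
Some algebraic simplification using $2(\mu+\tfrac12) = 2\mu+1$ collapses the $(2m+1)$'s and the powers of $\sqrt{R(m+1)}$, leaving an expression of the form $\bigl(\tfrac H{H_2}\bigr)^{m+1} R^\mu(\log dH_2)^\mu \cdot \dfrac{(m+1)^\mu\log^{\mu+1}(dH_2)}{\pi d^{m+1}H_2^m}$ after inserting the factors $H_2^{m+1}/H_2^{m+1}$ and $(\log dH_2)^{\mu+1}/(\log dH_2)^{\mu+1}$ to match the second term of the maximum in $\funcB[r=\mu]$; so it is bounded by $\funcB[r=\mu]$. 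The hypothesis $\log u\ge R(m+1)\log^2(dH_2)$ is not actually needed to make Lemma~\ref{simple calculus lemma} apply (which holds for all $u>1$), but it is the regime in which $\funcBtwo{}$ is the relevant bound coming from Proposition~\ref{boundary term to maximize}, so it is natural to carry it along.

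\textbf{Main obstacle.} There is no real analytic difficulty here: the only work is the bookkeeping in Part \ref{item2}, where one must carefully track the powers of $R$, $(m+1)$, $H_2$, $d$, and $\log(dH_2)$ through the substitution $u_0 = \exp\bigl((\tfrac{\mu+1/2}{(1/2)c_2})^{2}\bigr)$ and confirm that the constant produced by Lemma~\ref{simple calculus lemma} is exactly (or is dominated by) the second entry of the maximum in the definition of $\funcB$. The choice to phrase $\funcB$ as a maximum of two terms, one matching the "small $x$" boundary contribution $B^{(1)}$ and one matching the "large $x$" contribution $B^{(2)}$, is precisely engineered so that both inequalities \ref{item1} and \ref{item2} hold with the \emph{same} right-hand side; verifying that the exponent arithmetic in each case lands on the intended term is the one place where care is required.
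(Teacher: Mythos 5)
Your part~(i) is correct and coincides with the paper's argument: apply Lemma~\ref{simple calculus lemma} with $\lambda=1$, $c_2=1/(R\log(dH_2))$, and note $M_d(H_2,H_2)=2(C_1\log(dH_2)+C_2)$, landing on the first entry of the maximum defining $B$.

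Part~(ii), however, has a genuine gap. You bound $H^{m+1}B^{(2)}$ by its \emph{global} maximum over all $u>1$ supplied by Lemma~\ref{simple calculus lemma}, namely
\[
\frac{H^{m+1}d^{m}}{\pi\sqrt{R(m+1)}}\left(\frac{(2\mu+1)\sqrt{R(m+1)}}{e\,(2m+1)}\right)^{2\mu+1},
\]
and then assert that ``algebraic simplification'' turns this into the second entry of the maximum in $B$, which equals
\[
\Big(\tfrac{H}{H_2}\Big)^{m+1}R^{\mu}\log^{\mu}(dH_2)\cdot\frac{(m+1)^{\mu}\log^{\mu+1}(dH_2)}{\pi d^{m+1}H_2^{m}}
=\frac{H^{m+1}}{\pi d^{m+1}H_2^{2m+1}}\,R^{\mu}(m+1)^{\mu}\log^{2\mu+1}(dH_2).
\]
These two quantities are not equal, and the first does not dominate into the second: the global maximum contains no negative powers of $d$ or $H_2$ at all, while the target carries $d^{-(m+1)}H_2^{-(2m+1)}$, so for large $dH_2$ the global maximum exceeds \emph{both} entries of the maximum in $B$ by an enormous factor (for instance, with $d=1$, $H_2=10^{8}$, $m=7$, $\mu=8$, $R=5.6$, the global maximum is of order $10^{6}H^{8}$ while the second entry of $B$ is of order $10^{-86}H^{8}$); inserting factors like $H_2^{m+1}/H_2^{m+1}$ cannot create the missing decay. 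Relatedly, your remark that the hypothesis $\log u\ge R(m+1)\log^{2}(dH_2)$ ``is not actually needed'' is exactly backwards: without it the claimed inequality is false, since near the maximizing point $u_0=\exp\big(R(m+1)((2\mu+1)/(2m+1))^{2}\big)$ the left-hand side exceeds $B$. The correct route, which is the paper's, uses the monotonicity part of Lemma~\ref{simple calculus lemma}: since $\mu\le m+1$ gives $(2\mu+1)/(2m+1)\le\tfrac53<\log 15\le\log(dH_2)$, the hypothesis forces $u\ge\exp\big(R(m+1)\log^{2}(dH_2)\big)\ge u_0$, so $H^{m+1}B^{(2)}(u)$ is bounded by its value at $u_2=\exp\big(R(m+1)\log^{2}(dH_2)\big)$, and a direct evaluation at $u_2$ (where $\sqrt{\log u_2}=\sqrt{R(m+1)}\log(dH_2)$, producing the factor $(dH_2)^{-(2m+1)}$) yields exactly the displayed second entry of the maximum in $B$.
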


\begin{proof}
Using the notation and final conclusion of Lemma~\ref{simple calculus lemma}, we find that
\begin{align*}
  H^{m+1}\funcBone[r=\mu]{u}
    &= \funcPhi[c1= H^{m+1}\cdot \frac{M_d(H_2,H_2)}{H_2^{m+1}}, c2 = \frac{1}{R \log (dH_2)},lambda=1] \\
    &\leq  H^{m+1}\cdot \frac{M_d(H_2,H_2)}{H_2^{m+1}} \bigg( \frac{\mu R\log(dH_2)}e \bigg)^\mu \\
    &= \big( \tfrac H{H_2} \big)^{m+1} R^\mu (\log dH_2)^\mu \cdot M_d(H_2,H_2) \big(\tfrac \mu e\big)^\mu \\
    &\leq \funcB[r=\mu],
\end{align*}
which establishes claim~\ref{item1}.

Next, observe that
\begin{equation} \label{Phi will be decreasing}
H^{m+1} \funcBtwo[r=\mu]{u} = \funcPhi[c1=\frac{H^{m+1}d^m}{\pi\sqrt{R(m+1)}}, c2=\frac{2m+1}{\sqrt{R(m+1)}},lambda=\frac12,mu=\mu+\frac12],
\end{equation}
which by Lemma~\ref{simple calculus lemma} is decreasing for
  \[ u > \exp\bigg( \bigg( \frac{\mu+1/2}{\frac 12 \cdot \frac{2m+1}{\sqrt{R(m+1)}}} \bigg)^{1/(1/2)} \bigg)
                     = \exp \bigg( R(m+1) \bigg( \frac{2\mu+1}{2m+1} \bigg)^2 \bigg) .\]
As $\log(dH_2) \ge \log15 > \frac53 \ge \frac{2\mu+1}{2m+1}$ under the hypotheses of this lemma, we know by the hypothesis of claim~\ref{item2} that $\log u > R(m+1) \big( \frac{2\mu+1}{2m+1} \big)^2$. It follows that the right-hand side of equation~\eqref{Phi will be decreasing} is indeed decreasing. Therefore,
  \begin{align*}
  H^{m+1}&\funcBtwo[r=\mu]{u} \\
  &\leq \funcPhi[c1=\frac{H^{m+1}d^m}{\pi\sqrt{R(m+1)}},  c2=\frac{2m+1}{\sqrt{R(m+1)}}, lambda=\frac12, mu=\mu+\frac12, u=\exp\left(R(m+1) \log^2(dH_2)\right)] \\
  &=\frac{H^{m+1}}{\pi d^{m+1}H_2^{2m+1}} R^\mu(m+1)^\mu \log^{2\mu+1}(dH_2) \\
  &= \big( \tfrac H{H_2} \big)^{m+1} R^\mu (\log dH_2)^\mu \cdot \frac{(m+1)^\mu \log^{\mu+1}(dH_2)}{\pi d^{m+1} H_2^m}\\
  &\le \funcB[r=\mu],
  \end{align*}
as claimed.
\end{proof}

We have thus bounded the first term on the right-hand side of equation~\eqref{first two terms}; it remains to treat the second term
\begin{equation}  \label{second term going Bessel}
 \int_{H_2}^\infty \left(\frac{\partial}{\partial u} M_d(H_2,u)\right) \funcY{u} \,du,
\end{equation}
which is the subject of Section~\ref{Sec4}.

\section{Optimization of the upper bound for $|\psi(x;q,a) - x/\phi(q)|$, for $q\le 10^5$} \label{Sec4}

\subsection{Estimation of integrals using incomplete modified Bessel functions}

We follow the strategy of previous work on explicit error bounds for prime counting functions, going back to Rosser and Schoenfeld~\cite{RS2}, of bounding integrals with the form given in equation~\eqref{second term going Bessel}. After some well-chosen changes of variables, we use two Taylor approximations of algebraic functions to construct a bounding integral whose antiderivative we can write down explicitly.

\begin{definition} \label{Inuk def}
Given positive real numbers $n,m,\alpha,\beta,\ell$, define an incomplete modified Bessel function of the first kind as
\[
I_{n,m}(\alpha,\beta;\ell) = \int_\ell^\infty \frac{(\log\beta u)^{n-1}}{u^{m+1}} \exp\bigg( {-}\frac\alpha{\log \beta u} \bigg) \,du.
\]
\end{definition}

\begin{prop} \label{convert to I notation prop}
Let $d$ and $m$ be positive integers, and let $x,H_2,R$ be positive real numbers.
Then
\begin{flalign*}
&\int_{H_2}^\infty \left(\frac{\partial}{\partial u} M_d(H_2,u) \right) \funcY{u} \,du \\
&\leq \frac1\pi I_{2,m}\bigg( \frac{\log x}R,q;H_2 \bigg)
+ \bigg( \frac1\pi \log\frac{1}{2\pi} + \frac{C_1}{H_2} \bigg) I_{1,m}\bigg( \frac{\log x}R,q;H_2 \bigg). \\
\end{flalign*}
\end{prop}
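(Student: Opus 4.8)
The plan is to start from the integrand on the left-hand side and bound the logarithmic factor appearing in $\frac{\partial}{\partial u}M_d(H_2,u)$ crudely in terms of a single logarithm, converting the whole integral into a linear combination of the incomplete modified Bessel functions $I_{n,m}$ from Definition~\ref{Inuk def}. Recall from Definition~\ref{LtqH def} that
\[
\frac{\partial}{\partial u} M_d(H_2,u) = \frac1\pi \log\Bigl( \frac{du}{2\pi} \Bigr) + \frac{C_1}{u},
\]
while from Definition~\ref{phim def},
\[
\funcY{u} = \frac1{u^{m+1}} \exp\Bigl( {-}\frac{\log x}{R\log du} \Bigr).
\]
So the integrand is exactly
\[
\Bigl( \frac1\pi \log(du) + \frac1\pi\log\tfrac1{2\pi} + \frac{C_1}{u} \Bigr) \cdot \frac1{u^{m+1}} \exp\Bigl( {-}\frac{\log x}{R\log du} \Bigr).
\]

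First I would compare this integrand termwise with the definition of $I_{n,m}(\alpha,\beta;\ell)$, taking $\alpha = (\log x)/R$, $\beta = d$, and $\ell = H_2$. The term $\frac1\pi\log(du)\cdot\funcY{u}$ is precisely the integrand of $\frac1\pi I_{2,m}((\log x)/R, d; H_2)$, since $(\log\beta u)^{n-1}$ with $n=2$ gives $\log(du)$. The term $\frac1\pi\log\tfrac1{2\pi}\cdot\funcY{u}$ is the integrand of $\frac1\pi\log\tfrac1{2\pi}\,I_{1,m}((\log x)/R,d;H_2)$, using $n=1$ so that $(\log\beta u)^{0}=1$. The remaining term $\frac{C_1}{u}\cdot\funcY{u}$ is where the only genuine estimate is needed: here I would use that on the range of integration $u\ge H_2$, so $\frac1u \le \frac1{H_2}$, which turns this term into something bounded by $\frac{C_1}{H_2}\cdot\funcY{u}$, i.e. the integrand of $\frac{C_1}{H_2} I_{1,m}((\log x)/R, d; H_2)$. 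Collecting the $I_{1,m}$ contributions gives the coefficient $\frac1\pi\log\tfrac1{2\pi} + \frac{C_1}{H_2}$, matching the claimed bound.

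The main subtlety — really the only place one must be a little careful rather than purely mechanical — is checking that all the terms being combined are nonnegative on $[H_2,\infty)$, so that the termwise inequality $\frac{C_1}{u}\le\frac{C_1}{H_2}$ can be applied inside the integral without sign issues, and so that the resulting integrals converge (the $u^{-(m+1)}$ decay with $m\ge1$ handles convergence, and the exponential factor is bounded by $1$). One should note that $\log(du)\ge0$ requires $du\ge1$; since $H_2\ge1$ and $d\ge1$ this holds throughout the range, so the $I_{2,m}$ integrand is genuinely nonnegative, as is the $\frac1\pi\log\tfrac1{2\pi}$ term's sign (negative, but it is an exact equality contribution, not an inequality, so no sign care is needed there — it simply passes through). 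After these observations the proposition follows by integrating the termwise bound over $[H_2,\infty)$ and recognizing each piece as the stated Bessel integral; I expect no serious obstacle beyond this bookkeeping.
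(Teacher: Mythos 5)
Your proposal is correct and follows essentially the same route as the paper's own proof: split $\log\frac{du}{2\pi}=\log(du)+\log\frac1{2\pi}$ via equation~\eqref{ninja}, bound $\frac{C_1}{u}\le\frac{C_1}{H_2}$ on $u\ge H_2$ (using only that $\funcY{u}\ge0$), and recognize the resulting pieces as $\frac1\pi I_{2,m}$ and $\bigl(\frac1\pi\log\frac1{2\pi}+\frac{C_1}{H_2}\bigr)I_{1,m}$. The extra worry about nonnegativity of the individual integrands is unnecessary but harmless.
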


\begin{proof}
For this proof, write $Y(u)$ for $\funcY{u}$. If we put $\alpha=(\log x)/R$ and $\beta=d$, we see from Definition~\ref{phim def} that
  \[ Y(u) = \frac1{u^{m+1}} \exp\bigg( {-}\frac\alpha{\log\beta u} \bigg). \]
Using equation~\eqref{ninja}, and writing $\log \tfrac{du}{2\pi}  = \log\beta u + \log\tfrac{1}{2\pi}$,
\begin{flalign*}
& \int_{H_2}^\infty \left(\frac{\partial}{\partial u} M_d(H_2,u)\right) Y(u)\,du \\
 & = \int_{H_2}^\infty \bigg( \frac1\pi\log\frac{du}{2\pi} + \frac{C_1}u \bigg) \frac1{u^{m+1}} \exp\bigg( {-}\frac\alpha{\log\beta u} \bigg)\,du \\
 &  \leq \frac1\pi \int_{H_2}^\infty \frac{\log\beta u}{u^{m+1}} \exp\bigg( {-}\frac\alpha{\log\beta u} \bigg) \,du \\
  & + \bigg( \frac1\pi \log\frac{1}{2\pi} + \frac{C_1}{H_2} \bigg) \int_{H_2}^\infty \frac1{u^{m+1}} \exp\bigg( {-}\frac\alpha{\log\beta u} \bigg) \,du, \\
\end{flalign*}
since $u \geq H_2$, as required.
\end{proof}

\begin{definition} \label{Knu def}
Given positive constants $n$, $z$, and $y$, define the incomplete modified Bessel function
of the second kind (see for example \cite[page 376, equation 9.6.24]{AS})
  \[ K_n(z;y) = \frac12 \int_y^\infty u^{n-1} \exp\bigg( {-}\frac z2\bigg( u+\frac1u \bigg) \bigg) \,du. \]
\end{definition}

\begin{lemma}  \label{I to K cov lemma}
Given positive constants $n$, $m$, $\alpha$, $\beta$, and $\ell$,
\begin{equation*}
I_{n,m}(\alpha,\beta;\ell) = 2\beta^m \bigg( \frac\alpha m\bigg)^{n/2} K_n\bigg( 2\sqrt{\alpha m}; \sqrt{\frac m\alpha} \log(\beta \ell) \bigg).
\end{equation*}
In particular, if $n$, $m$, $x$, $R$, $d$, and $H_2$ are positive real numbers with $x>1$, then
\begin{equation*}
I_{n,m}\bigg( \frac{\log x}R,d;H_2 \bigg) = 2d^m \bigg( \frac{\log x}{mR} \bigg)^{n/2} K_n\bigg( 2\sqrt{\frac{m\log x}R}; \sqrt{\frac{mR}{\log x}} \log(dH_2) \bigg).
\end{equation*}
\end{lemma}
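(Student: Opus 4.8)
The plan is to prove Lemma~\ref{I to K cov lemma} by an explicit substitution that converts the integral defining $I_{n,m}(\alpha,\beta;\ell)$ into the integral defining $K_n(z;y)$. The natural substitution, dictated by the exponent $-\alpha/\log(\beta u)$ in $I_{n,m}$ and the exponent $-\tfrac z2(v+1/v)$ in $K_n$, is to set $v = c\log(\beta u)$ for a constant $c$ to be chosen, so that $\log(\beta u) = v/c$ and $u = \beta^{-1}e^{v/c}$. Under this change of variables, $du = (\beta c)^{-1} e^{v/c}\,dv$, the factor $u^{-(m+1)}$ becomes $\beta^{m+1} e^{-(m+1)v/c}$, the factor $(\log\beta u)^{n-1}$ becomes $(v/c)^{n-1}$, and the exponential $\exp(-\alpha/\log\beta u)$ becomes $\exp(-\alpha c/v)$.

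\textbf{Choosing the constant.} Collecting the powers of $e^{v/c}$, the integrand acquires the factor $\exp\big({-}\tfrac{m}{c}v - \tfrac{\alpha c}{v}\big)$ (the $+1$ in $m+1$ cancelling against the $du$ Jacobian). For this to match $\exp\big({-}\tfrac z2(v + 1/v)\big)$ up to a constant rescaling of $v$, we need the coefficients of $v$ and $1/v$ to be equal, i.e. $m/c = \alpha c$, which forces $c = \sqrt{m/\alpha}$. With this choice the exponent becomes $-\sqrt{\alpha m}\,(v + 1/v)$, so $z = 2\sqrt{\alpha m}$, exactly as claimed. The lower limit $u = \ell$ transforms to $v = c\log(\beta\ell) = \sqrt{m/\alpha}\,\log(\beta\ell)$, which is the quantity $y$ in the statement, and $u\to\infty$ corresponds to $v\to\infty$. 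Tracking the remaining prefactors: we pick up $\beta^{m+1}\cdot(\beta c)^{-1} = \beta^{m}/c = \beta^m(\alpha/m)^{1/2}$ from the substitution, a factor $c^{-(n-1)} = (\alpha/m)^{(n-1)/2}$ from $(v/c)^{n-1}$, and a factor $\tfrac12$ is supplied by the definition of $K_n$; assembling these gives the constant $2\beta^m(\alpha/m)^{n/2}$ in front of $K_n\big(2\sqrt{\alpha m};\sqrt{m/\alpha}\log(\beta\ell)\big)$, completing the general identity.

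\textbf{The specialization.} The second displayed formula is the special case $\alpha = (\log x)/R$, $\beta = d$, $\ell = H_2$, obtained by direct substitution into the general identity: $\alpha m = m\log x/R$ so $2\sqrt{\alpha m} = 2\sqrt{m\log x/R}$; $(\alpha/m)^{n/2} = (\log x/(mR))^{n/2}$; $\sqrt{m/\alpha}\log(\beta\ell) = \sqrt{mR/\log x}\,\log(dH_2)$; and $\beta^m = d^m$. The hypothesis $x>1$ guarantees $\alpha>0$ so that all the square roots and the argument of $K_n$ are well-defined and positive.

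\textbf{Main obstacle.} There is no genuine analytic difficulty here — the result is a change-of-variables identity and the only real work is bookkeeping of the exponents and constant factors. The one point requiring a word of care is the convergence of the integrals at both endpoints: at $u = \ell$ (equivalently $v = y$) the integrand is continuous provided $\beta\ell > 1$ so that $\log\beta\ell \neq 0$; in the application $\ell = H_2 \ge 1$ and $d \ge 1$ with $dH_2$ bounded away from $1$, and more generally one should note that $I_{n,m}$ is only being applied in regimes where $\beta\ell>1$ (otherwise the defining integrand of $I_{n,m}$ is singular at the lower endpoint). At $u\to\infty$ the factor $u^{-(m+1)}$ with $m>0$ ensures absolute convergence, matching the convergence of $K_n(z;y)$ for $z>0$. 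Since everything is positive and the substitution is a smooth increasing bijection of $(\ell,\infty)$ onto $(y,\infty)$, Tonelli/monotone considerations make the manipulation rigorous, and the proof reduces to the displayed chain of equalities.
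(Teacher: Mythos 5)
Your proof is correct and uses essentially the same argument as the paper: the identity is obtained by the change of variables $v=\sqrt{m/\alpha}\,\log(\beta u)$ in the defining integral of $I_{n,m}$, which is exactly the substitution the paper invokes, followed by direct specialization of the parameters. Your bookkeeping of the exponent, the limits, and the prefactor $2\beta^m(\alpha/m)^{n/2}$ all checks out.
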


\begin{proof}
The first identity follows easily from the change of variables $u = \sqrt{\frac m\alpha} \log\beta t$ in Definition~\ref{Inuk def} of $I_{n,m}(\alpha,\beta;\ell)$; the second identity is immediate upon substitution.
\end{proof}

\begin{definition}  \label{erfc def}
For any real number $u$, define the complementary error function
$$
\displaystyle\erfc(u) = \frac2{\sqrt\pi} \int_u^\infty e^{-t^2}\,dt.
$$
\end{definition}

\begin{definition}  \label{K pieces def}
For positive real numbers $y$ and $z$, define
\begin{align*}
J_{1a}(z;y) &= \frac{3\sqrt{y}+8}{16 ze^{z(y+1/y)/2}}, \\
J_{1b}(z;y) &= \sqrt\pi \erfc\bigg( \sqrt{\frac z2} \bigg( \sqrt{y}-\frac1{\sqrt {y}} \bigg) \bigg) \frac{8z+3}{16\sqrt2\, z^{3/2}e^z}, \\
J_{2a}(z;y) &= \frac{(35 y^{3/2}+128 y+135y^{1/2}+128y^{-1}) z+105y^{1/2}+256}{256 z^2e^{z(y+1/y)/2}}, \\
J_{2b}(z;y) &= \sqrt\pi \erfc\bigg( \sqrt{\frac z2} \bigg( \sqrt{y}-\frac1{\sqrt {y}} \bigg) \bigg) \frac{128 z^2+240 z+105}{256\sqrt2\, z^{5/2}e^z}.
\end{align*}
\end{definition}

The next proposition is essentially \cite[equations (2.30) and (2.31)]{RS2}.

\begin{prop} \label{bounds for incomplete Bessels prop}
For $z,y>0$, we have $K_1(z;y) \le J_{1a}(z;y) + J_{1b}(z;y)$ and $K_2(z;y) \le J_{2a}(z;y) + J_{2b}(z;y)$.
\end{prop}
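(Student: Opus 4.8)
The statement is, by design, a transcription of Rosser and Schoenfeld's bounds \cite[equations (2.30) and (2.31)]{RS2}: the functions $J_{1a},J_{1b},J_{2a},J_{2b}$ of Definition~\ref{K pieces def} are precisely the pieces of their estimates, once one identifies the $K_n(z;y)$ of Definition~\ref{Knu def} with their incomplete Bessel functions. So the quickest route is to check the matching of normalizations and quote their inequalities. For a self-contained treatment I would instead argue along the same lines, as follows.

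First I would substitute $v=\sqrt{z/2}\,(\sqrt u-1/\sqrt u)$. This is a strictly increasing bijection from $(y,\infty)$ onto $(v_0,\infty)$, where $v_0=\sqrt{z/2}\,(\sqrt y-1/\sqrt y)$; note that as $y$ ranges over $(0,\infty)$ the number $v_0$ ranges over all of $\mathbb R$, so in particular $v_0<0$ occurs (when $y<1$). Since $\tfrac z2(u+1/u)=v^2+z$ and $\sqrt u=(v+\sqrt{v^2+2z})/\sqrt{2z}$, a direct computation transforms Definition~\ref{Knu def} into
\[
K_n(z;y)=e^{-z}\int_{v_0}^{\infty}\frac{u(v)^n}{\sqrt{v^2+2z}}\,e^{-v^2}\,dv,\qquad u(v)=\frac{1}{2z}\bigl(v+\sqrt{v^2+2z}\bigr)^2 .
\]
For $n=1,2$ the factor $u(v)^n/\sqrt{v^2+2z}$ splits as an odd polynomial in $v$ with coefficients that are powers of $1/z$ (namely $v/z$ when $n=1$, and $(2v^3+2vz)/z^2$ when $n=2$) plus a ``symmetric'' remainder which, after writing $t=v^2/(2z)\ge 0$, becomes $\sqrt{z/2}\,(2t+1)/\bigl(z\sqrt{1+t}\bigr)$ for $n=1$ and an analogous expression assembled from $(1+t)^{3/2}$, $(1+t)^{1/2}$, and $(1+t)^{-1/2}$ for $n=2$.

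Next I would bound that remainder from above by a polynomial in $t$ (hence in $v^2$), using elementary truncated-Taylor inequalities valid for all $t\ge0$ — bounds above and below for $(1+t)^{\pm 1/2}$ (and for $(1+t)^{3/2}$ in the $n=2$ case), chosen exactly as in \cite{RS2} so as to keep the resulting polynomial of low degree. The bounding integrand is then a polynomial in $v$ times $e^{-v^2}$, and each $\int_{v_0}^{\infty}v^k e^{-v^2}\,dv$ reduces, by repeated integration by parts, to a polynomial in $v_0$ times $e^{-v_0^2}$ plus, when $k$ is even, a rational multiple of $\tfrac{\sqrt\pi}{2}\erfc(v_0)$. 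Substituting $v_0=\sqrt{z/2}\,(\sqrt y-1/\sqrt y)$ back in and using $e^{-z}e^{-v_0^2}=e^{-z(y+1/y)/2}$, the terms carrying $e^{-z(y+1/y)/2}$ collect into $J_{na}(z;y)$ while the terms carrying $\erfc\bigl(\sqrt{z/2}(\sqrt y-1/\sqrt y)\bigr)$ collect into $J_{nb}(z;y)$; the $J_{na}$ in Definition~\ref{K pieces def} may in fact be slightly larger than the exact outcome of this computation (for instance, a term like $3/\sqrt y$ may be dropped from a numerator), but such overestimation only weakens the bound, so the asserted inequality still holds.

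The work here is entirely calculus, but the delicate point is getting it exactly right: the truncated-Taylor polynomials must be valid on all of $[0,\infty)$ — which is genuinely required, since the lower limit $v_0$ of integration can be an arbitrary real number — and must have precisely the degree and coefficients needed to reproduce the constants in $J_{1a},J_{1b},J_{2a},J_{2b}$ after the termwise Gaussian integrations and the resubstitution for $v_0$. I would mirror Rosser and Schoenfeld's choices step by step and, as a safeguard against sign or arithmetic slips in the bookkeeping, numerically compare $J_{na}(z;y)+J_{nb}(z;y)$ with direct quadrature of $K_n(z;y)$ from Definition~\ref{Knu def} at several sample pairs $(z,y)$, including some with $y<1$.
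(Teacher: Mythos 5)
Your outline matches the paper's own proof essentially step for step: the paper makes the same change of variables (with $w=\frac1{\sqrt2}(\sqrt u-1/\sqrt u)$, the $\sqrt z$ kept in the Gaussian rather than absorbed into the variable), bounds the square-root factors $\frac{w^2+1}{\sqrt{w^2+2}}$ and $\frac{2w^4+4w^2+1}{\sqrt{w^2+2}}$ by the same even polynomials taken from Rosser--Schoenfeld (their (2.27)--(2.28), valid for all real $w$), integrates the resulting polynomial-times-Gaussian termwise into exponential and $\erfc$ pieces, and finally drops negative terms to arrive at $J_{1a},J_{1b},J_{2a},J_{2b}$, exactly as you describe. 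Your proposal is correct and takes essentially the same route, including the observation that the statement is in substance a transcription of \cite[equations (2.30) and (2.31)]{RS2}.
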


\begin{proof}
In Definition~\ref{Knu def}, make the change of variables
  \[u = 1+w^2+w\sqrt{w^2+2}, \quad du = 2 \bigg( w+\frac{w^2+1}{\sqrt{w^2+2}}\bigg) \,dw,\]
so that
$w = \frac1{\sqrt2}(\sqrt u-\frac1{\sqrt u})$ and hence $w^2 = \frac12(u+\frac1u)-1$. We obtain
\[
K_n(z;y) = e^{-z} \int_{v}^\infty (1+w^2+w\sqrt{w^2+2})^{n-1}\bigg( w+ \frac{w^2+1}{\sqrt{w^2+2}} \bigg) e^{-zw^2} \,dw,
\]
where $v = \frac1{\sqrt2}(\sqrt{y}-\frac1{\sqrt {y}})$. In particular,
\begin{align*}
K_1(z;y) &= e^{-z} \int_{v}^\infty \bigg( w+ \frac{w^2+1}{\sqrt{w^2+2}} \bigg) e^{-zw^2} \,dw \\ 
K_2(z;y) &= e^{-z} \int_{v}^\infty \bigg( 2w^3 + 2w + \frac{2w^4+4w^2+1}{\sqrt{w^2+2}} \bigg) e^{-zw^2} \,dw. 
\end{align*}
The inequalities
\begin{align*}
\frac{w^2+1}{\sqrt{w^2+2}} &\le \frac{3 w^2}{4\sqrt2}+\frac{1}{\sqrt2} \\ 
\frac{2w^4+4w^2+1}{\sqrt{w^2+2}} &\le \frac{35 w^4}{32 \sqrt2}+\frac{15 w^2}{4 \sqrt2}+\frac{1}{\sqrt2}, 
\end{align*}
(which are identical to \cite[equations (2.27) and (2.28)]{RS2}) can be verified by squaring both sides; consequently,
\begin{align*}
K_1(z;y) &\le e^{-z} \int_{v}^\infty \bigg( w+ \frac{3 w^2}{4\sqrt2}+\frac{1}{\sqrt2} \bigg) e^{-zw^2} \,dw \\ 
K_2(z;y) &\le e^{-z} \int_{v}^\infty \bigg( 2w^3 + 2w + \frac{35 w^4}{32 \sqrt2}+\frac{15 w^2}{4 \sqrt2}+\frac{1}{\sqrt2} \bigg) e^{-zw^2} \,dw. 
\end{align*}
Routine integration of the right-hand sides now gives
$$
K_1(z;y) \le e^{-z} \bigg( \frac{3 \sqrt{2}\, v+8 }{16 ze^{v^2 z}} + \sqrt\pi \erfc(v \sqrt{z}) \frac{8z+3}{16\sqrt2\, z^{3/2}} \bigg)
$$
and, similarly, $e^z K_2(z;y)$ is bounded above by
\begin{flalign*}
&  \frac{70 \sqrt{2}\, v^3 z+256 v^2 z+15 \sqrt{2}\, v (16 z+7)+256(z+1)}{256 z^2e^{v^2 z} } \\
&+ \sqrt\pi \erfc(v \sqrt{z}) \frac{128 z^2+240 z+105}{256\sqrt2\, z^{5/2}}. \\
\end{flalign*}
Substituting in $v = \frac1{\sqrt2}(\sqrt{y}-\frac1{\sqrt {y}})$, so that $v^2+1=(y+1/y)/2$, yields
$$
K_1(z;y) \le \frac{3y+8\sqrt{y}-3}{16 ze^{z(y+1/y)/2}\sqrt y} + \sqrt\pi \erfc\bigg( \sqrt{\frac z2} \bigg( \sqrt{y}-\frac1{\sqrt {y}} \bigg) \bigg) \frac{8z+3}{16\sqrt2\, z^{3/2}e^z},
$$
while $K_2(z;y)$ its bounded above by
\begin{flalign*}
& \frac{(35 y^3+128 y^{5/2}+135y^2-135y+128\sqrt y-35) z+105y^2+256y^{3/2}-105y}{256 z^2e^{z(y+1/y)/2}y^{3/2}}  \\
& + \sqrt\pi \erfc\bigg( \sqrt{\frac z2} \bigg( \sqrt{y}-\frac1{\sqrt {y}} \bigg) \bigg) \frac{128 z^2+240 z+105}{256\sqrt2\, z^{5/2}e^z}.
\end{flalign*}
The lemma now follows upon simply omitting the negative terms from the numerators in these upper bounds (and comparing with Definition~\ref{K pieces def}).
\end{proof}

\subsection{Elementary estimation of the complementary error function $\erfc(u)$}  \label{erfy section}

Some of the bounding functions in the previous section contain factors of the complementary error function $\erfc(u)$ evaluated at complicated arguments involving fractional powers of $\log x$. In this section, we establish simpler and reasonably tight upper bounds for factors of this type. Our first task, which culminates in Lemma~\ref{erfc decreasing lemma}, is to provide a general structure for the type of argument we will need. (We caution the reader that the temporary parameters $y$ and $z$ do not fill the same role that they did in the previous section.) Then in the rest of the section, leading up to Proposition~\ref{good riddance erfc}, we implement that argument with some specific numerical choices motivated by our ultimate invocation of the proposition.

\begin{lemma} \label{deal with decreasing}
Let $v$, $w$, $y$, $z$, $\mu$, and $\tau$ be positive constants with $v>\tau$ and $yz>w$.
Let $f(u)$ be a positive, differentiable function, and define
$$
g(u) = f\left(v-\frac uy \right) u^{2\mu} e^{-zu}.
$$
Suppose that
\begin{equation}  \label{log-d upper bound assumption}
{-}\frac{f'(u)}{f(u)} \le w \quad\text{for } u\le \tau.
\end{equation}
Then $g(u)$ is a decreasing function of $u$ for
\[
u \ge \max\bigg\{ y(v-\tau), \frac {2\mu}{z-w/y} \bigg\}.
\]
\end{lemma}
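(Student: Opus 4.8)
The plan is to pass to the logarithmic derivative of $g$, which turns the product into a sum whose three pieces can be bounded separately. Since $f$ is positive and $u^{2\mu}e^{-zu}>0$ for $u>0$, the function $g$ is positive, so it suffices to prove $g'(u)<0$ (or at least $g'(u)\le 0$) on the stated range. Taking $\log$ and differentiating,
\[
\log g(u) = \log f\!\left(v-\frac uy\right) + 2\mu\log u - zu,
\qquad
\frac{g'(u)}{g(u)} = -\frac1y\cdot\frac{f'\!\left(v-u/y\right)}{f\!\left(v-u/y\right)} + \frac{2\mu}u - z.
\]

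The key observation is that the two quantities appearing inside the $\max$ control the two problematic terms on the right. First, $u\ge y(v-\tau)$ is equivalent to $v-u/y\le\tau$, so hypothesis~\eqref{log-d upper bound assumption}, applied with argument $v-u/y$, gives $-f'(v-u/y)/f(v-u/y)\le w$; hence the first term is at most $w/y$. Substituting,
\[
\frac{g'(u)}{g(u)} \le \frac wy + \frac{2\mu}u - z = \frac{2\mu}u - \left(z-\frac wy\right).
\]
Since $yz>w$ by hypothesis, $z-w/y>0$, so the right-hand side is $\le 0$ once $u\ge 2\mu/(z-w/y)$ and is strictly negative for larger $u$. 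Thus $g'\le 0$ on $[\,\max\{y(v-\tau),\,2\mu/(z-w/y)\},\infty)$, with $g'<0$ in the interior, which gives the claimed monotonicity.

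There is no real obstacle here; this is a short calculus argument. The only points requiring a little care are (i) checking that $f$ is only ever evaluated at arguments $\le\tau$ on the range in question, so that the one-sided hypothesis on $f$ suffices, and (ii) noting that the standing assumptions $v>\tau$ and $yz>w$ make both entries of the $\max$ well-defined and positive, so the interval of monotonicity is a genuine half-line.
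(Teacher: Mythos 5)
Your argument is correct and is essentially identical to the paper's proof: both pass to $\frac{d}{du}\log g(u) = -\frac{f'(v-u/y)}{yf(v-u/y)} + \frac{2\mu}{u} - z$, use $u\ge y(v-\tau)$ to invoke the hypothesis on $-f'/f$, and then use $u\ge 2\mu/(z-w/y)$ to conclude the derivative is nonpositive. No differences worth noting.
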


\begin{proof}
It suffices to show that $\log g(u)$ is decreasing. We have
\begin{flalign*}
\frac d{du} (\log g(u)) & = \frac d{du} \bigg( \log f\bigg(v-\frac uy\bigg) + {2\mu}\log u - zu \bigg) \\
& = -\frac{f'(v-u/y)}{yf(v-u/y)} + \frac {2\mu}u - z. \\
\end{flalign*}
Since $u\ge y(v-\tau)$, we have $v-u/y \le \tau$, and so by the assumption~\eqref{log-d upper bound assumption},
\[
\frac d{du} (\log g(u)) \le \frac wy + \frac {2\mu}u - z \le 0
\]
since $u \ge {2\mu}/(z-\frac wy)$.
\end{proof}

\begin{lemma}  \label{erfc log-d bound}
Given $\tau\ge0$, if we have $u\le \tau$, then
$$
-\frac{\erfc'(u)}{\erfc(u)} \le \tau+\sqrt{\tau^2+2}.
$$
\end{lemma}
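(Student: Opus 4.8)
The plan is to reduce the claim to the classical lower bound for the Gaussian Mills ratio. First I would make the quantity explicit: since $\erfc(u)=\frac2{\sqrt\pi}\int_u^\infty e^{-t^2}\,dt$, we have $\erfc'(u)=-\frac2{\sqrt\pi}e^{-u^2}$, so
\[
-\frac{\erfc'(u)}{\erfc(u)}=\frac{e^{-u^2}}{\int_u^\infty e^{-t^2}\,dt}.
\]
Since the map $t\mapsto t+\sqrt{t^2+2}$ is strictly increasing, it is enough to prove the pointwise inequality
\[
\frac{e^{-u^2}}{\int_u^\infty e^{-t^2}\,dt}\le u+\sqrt{u^2+2}
\]
for every real $u$ (equivalently $\int_u^\infty e^{-t^2}\,dt\ge e^{-u^2}/(u+\sqrt{u^2+2})$); applying this with the hypothesis $u\le\tau$ and the monotonicity just noted then yields the stated bound with $\tau$ in place of $u$.

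To prove this pointwise inequality I would run a one-variable comparison argument. Put $c(t)=t+\sqrt{t^2+2}$ and
\[
\Delta(t)=\frac{e^{-t^2}}{c(t)}-\int_t^\infty e^{-s^2}\,ds.
\]
The elementary identities $c(t)^2-2t\,c(t)=2$ and $c'(t)=1+t/\sqrt{t^2+2}\le 2$ give
\[
\frac{d}{dt}\Bigl(\frac{e^{-t^2}}{c(t)}\Bigr)=-e^{-t^2}\,\frac{2t\,c(t)+c'(t)}{c(t)^2}\ge -e^{-t^2},
\]
so $\Delta'(t)\ge 0$ for all $t$. Since both terms defining $\Delta$ tend to $0$ as $t\to+\infty$, a non-decreasing function with limit $0$ at $+\infty$ must be $\le 0$ everywhere, which is precisely the desired inequality.

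The only computation that looks like work—the derivative bound on $e^{-t^2}/c(t)$—collapses to the trivial facts $c(t)^2-2t\,c(t)=2$ and $t\le\sqrt{t^2+2}$, so there is no genuine obstacle; the one point that needs care is getting the direction of the comparison right (that $\Delta$ increases up to the limit $0$, not away from it). If one preferred to avoid the auxiliary function $\Delta$, an alternative is to show directly that $R(u):=e^{-u^2}/\int_u^\infty e^{-t^2}\,dt$ is increasing in $u$—using $2u\int_u^\infty e^{-t^2}\,dt<\int_u^\infty 2t\,e^{-t^2}\,dt=e^{-u^2}$—and then bound $R(\tau)$ by the same Mills-ratio inequality; but the $\Delta$ argument delivers the pointwise bound in a single step, so that is the route I would write up.
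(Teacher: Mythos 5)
Your proof is correct. It rests on the same key inequality as the paper's proof, namely the Mills-ratio lower bound $e^{u^2}\tfrac{\sqrt\pi}{2}\erfc(u) \ge \bigl(u+\sqrt{u^2+2}\bigr)^{-1}$, followed by monotonicity of $t\mapsto t+\sqrt{t^2+2}$; the difference is in how that bound is obtained. The paper simply quotes it from the DLMF (equation 7.8.2), where it is stated only for $u\ge0$, and therefore has to dispose of $u\le0$ by a separate (easy) case using $\erfc(u)\ge1$ and $\tfrac2{\sqrt\pi}<\sqrt2$. You instead prove the bound from scratch via the comparison function $\Delta(t)=e^{-t^2}/c(t)-\int_t^\infty e^{-s^2}\,ds$ with $c(t)=t+\sqrt{t^2+2}$: the identity $c(t)^2-2t\,c(t)=2$ together with $c'(t)<2$ gives $\Delta'\ge0$, and $\Delta\to0$ at $+\infty$ forces $\Delta\le0$. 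This argument is valid for every real $t$ (since $c(t)>0$ and $c'(t)\in(0,2)$ hold for negative $t$ as well), so you get the inequality uniformly and avoid the case split at $u=0$. What your route buys is self-containedness and uniformity in $u$; what the paper's route buys is brevity, since the needed estimate is a standard cited fact. Your derivative computation and the direction of the comparison (nondecreasing with limit $0$, hence nonpositive) both check out, so there is no gap.
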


\begin{proof}
Note that
\begin{equation}  \label{erfc log-d expression}
{-}\frac{\erfc'(u)}{\erfc(u)} = \frac2{\sqrt\pi} \frac1{e^{u^2}\erfc(u)}.
\end{equation}
When $u\le0$, since $\erfc(u) \ge 1$ we have
$$-\frac{\erfc'(u)}{\erfc(u)} \le \frac2{\sqrt\pi} < \sqrt2 \le
\tau+\sqrt{\tau^2+2}
$$
for all $\tau\ge0$. On the other hand, when $u\ge0$, we have \cite[equation 7.8.2]{NIST:DLMF}
\begin{equation}  \label{AS erfc bounds}
\frac1{u+\sqrt{u^2+2}} < e^{u^2} \frac{\sqrt\pi}2 \erfc(u) \le \frac1{u+\sqrt{u^2+4/\pi}}.
\end{equation}
In light of the identity~\eqref{erfc log-d expression}, the first inequality is equivalent to
$$
-\frac{\erfc'(u)}{\erfc(u)} \le u+\sqrt{u^2+2},
$$
which establishes the lemma as this function is increasing in~$u$.
\end{proof}

\begin{definition} \label{e def}
Given an integer $m\ge2$ and positive constants $\lambda$, $\mu$, and $R$, define for $x>1$ the function
\begin{equation*}  \label{h related to erfc def}
\Xi_{m,\lambda,\mu,R}(x) = \sqrt\pi \erfc \bigg( \sqrt{m\lambda} - \sqrt{\frac{\log x}{R\lambda}} \bigg) \exp\bigg( {-}2\sqrt{\frac{m\log x}R} \bigg) \log^{\mu} x,
\end{equation*}
where $\erfc$ is as given in Definition \ref{erfc def}.
\end{definition}

\begin{lemma}  \label{erfc decreasing lemma}
Let $m$, $\lambda$, $\mu$ and $R$ be positive constants. Choose $\tau\ge0$ and set $w=\tau+\sqrt{\tau^2+2}$. Suppose that $m\lambda>w^2/4$ and
\[
\sqrt{R\lambda} (\sqrt{m\lambda} - \tau) \ge \frac{2\mu}{2\sqrt{m/R} - w/\sqrt{R\lambda}},
\]
or equivalently that
\[
\mu \le (\sqrt{m\lambda} - w/2) (\sqrt{m\lambda} - \tau).
\]
Then the function $\Xi_{m,\lambda,\mu,R}(x)$ in Definition~\ref{e def} is a decreasing function of $x$ for $x \ge \exp \big( R\lambda (\sqrt{m\lambda} - \tau)^2 \big)$.
\end{lemma}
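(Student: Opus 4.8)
The plan is to recognize this as a direct application of Lemma~\ref{deal with decreasing} with $f=\sqrt\pi\,\erfc$, after the single substitution $u=\sqrt{\log x}$. Under that substitution we have $\log^{\mu}x=u^{2\mu}$, $\sqrt{\log x/(R\lambda)}=u/\sqrt{R\lambda}$, and $\exp(-2\sqrt{m\log x/R})=\exp(-2\sqrt{m/R}\,u)$, so that
\[
\Xi_{m,\lambda,\mu,R}(x)=\sqrt\pi\,\erfc\Bigl(\sqrt{m\lambda}-\tfrac{u}{\sqrt{R\lambda}}\Bigr)\,u^{2\mu}\,e^{-2\sqrt{m/R}\,u}.
\]
Since $u=\sqrt{\log x}$ is an increasing function of $x$, it suffices to prove that the right-hand side, regarded as a function $g(u)$, is decreasing for $u\ge\sqrt{R\lambda}(\sqrt{m\lambda}-\tau)$. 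This is exactly the shape $g(u)=f(v-u/y)\,u^{2\mu}e^{-zu}$ from Lemma~\ref{deal with decreasing}, with $f=\sqrt\pi\,\erfc$, $v=\sqrt{m\lambda}$, $y=\sqrt{R\lambda}$, and $z=2\sqrt{m/R}$.

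Next I would verify the hypotheses of Lemma~\ref{deal with decreasing}. The log-derivative assumption~\eqref{log-d upper bound assumption}, that $-f'(u)/f(u)\le w$ for $u\le\tau$, is precisely Lemma~\ref{erfc log-d bound} with $w=\tau+\sqrt{\tau^2+2}$, since the constant $\sqrt\pi$ cancels in the logarithmic derivative. The condition $yz>w$ becomes $2\sqrt{R\lambda}\cdot\sqrt{m/R}=2\sqrt{m\lambda}>w$, which is equivalent to the standing assumption $m\lambda>w^2/4$; that same inequality also gives $v=\sqrt{m\lambda}>w/2>\tau$, so the condition $v>\tau$ holds too (here $w/2>\tau$ because $\sqrt{\tau^2+2}>\tau$). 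Moreover $z-w/y=(2\sqrt{m\lambda}-w)/\sqrt{R\lambda}>0$, again by $m\lambda>w^2/4$. Lemma~\ref{deal with decreasing} then tells us $g(u)$ is decreasing for
\[
u\ge\max\Bigl\{\,\sqrt{R\lambda}(\sqrt{m\lambda}-\tau),\ \frac{2\mu}{2\sqrt{m/R}-w/\sqrt{R\lambda}}\,\Bigr\}.
\]

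The last step is to see that the hypothesis on $\mu$ makes the first term in this maximum dominate, so that the threshold is just $\sqrt{R\lambda}(\sqrt{m\lambda}-\tau)$. Multiplying the inequality $\sqrt{R\lambda}(\sqrt{m\lambda}-\tau)\ge 2\mu/(2\sqrt{m/R}-w/\sqrt{R\lambda})$ through by the positive quantity $2\sqrt{m/R}-w/\sqrt{R\lambda}$ and simplifying, the left side collapses to $(\sqrt{m\lambda}-\tau)(2\sqrt{m\lambda}-w)=2(\sqrt{m\lambda}-\tau)(\sqrt{m\lambda}-w/2)$, so the inequality is equivalent to $\mu\le(\sqrt{m\lambda}-w/2)(\sqrt{m\lambda}-\tau)$ — which both confirms the ``equivalently'' clause and shows the maximum is $\sqrt{R\lambda}(\sqrt{m\lambda}-\tau)$. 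Translating $u\ge\sqrt{R\lambda}(\sqrt{m\lambda}-\tau)$ back through $u=\sqrt{\log x}$ yields $x\ge\exp\bigl(R\lambda(\sqrt{m\lambda}-\tau)^2\bigr)$, completing the argument. I do not anticipate any genuine obstacle: the only care needed is in tracking the change of variables and confirming positivity of the quantities $v-\tau$ and $z-w/y$ that appear in denominators, all of which are consequences of $m\lambda>w^2/4$.
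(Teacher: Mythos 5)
Your proposal is correct and follows essentially the same route as the paper: an application of Lemma~\ref{deal with decreasing} with $f=\sqrt\pi\,\erfc$, $v=\sqrt{m\lambda}$, $y=\sqrt{R\lambda}$, $z=2\sqrt{m/R}$, with the log-derivative hypothesis supplied by Lemma~\ref{erfc log-d bound} and the $\mu$-hypothesis used to collapse the maximum to $\sqrt{R\lambda}(\sqrt{m\lambda}-\tau)$. Your extra checks (positivity of $v-\tau$ and $z-w/y$, and the algebraic equivalence of the two stated hypotheses) are just a more explicit rendering of what the paper leaves implicit.
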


\begin{proof}
In Lemma~\ref{deal with decreasing} we let $f(u) = \sqrt\pi \erfc(u)$, and we set $v=\sqrt{m\lambda}$, $y=\sqrt{R\lambda}$, and $z=2\sqrt{m/R}$, so that $-\frac{f'(u)}{f(u)} \le w$ for $u\le \tau$ by Lemma~\ref{erfc log-d bound}. As $m\lambda>\tau^2$, we have $v>\tau$ and $yz>w$. By Lemma~\ref{erfc log-d bound}, condition~\eqref{log-d upper bound assumption} is satisifed. Then $g(\sqrt{\log x}) = \Xi_{m,\lambda,\mu,R}(x)$, and Lemma~\ref{deal with decreasing} guarantees that $\Xi_{m,\lambda,\mu,R}(x)$ is decreasing provided that
  \[\sqrt{\log x} \geq \max \bigg\{ \sqrt{R\lambda} (\sqrt{m\lambda} - \tau), \frac{2\mu}{2\sqrt{m/R} - w/\sqrt{R\lambda}} \bigg\}= \sqrt{R\lambda} (\sqrt{m\lambda} - \tau),\]
where the last equality is a hypothesis of this lemma.
\end{proof}

We now choose some specific values of the parameters that correspond to the range of exponents $\mu$, depending on $m$, for which we want to apply the previous lemma.

\begin{definition} \label{tau and omega def}
For integers $m\ge2$, define real numbers $\tau_m$ according to the following table:
$$
\begin{array}{|c||c|c|c|c|c|c|} \hline
m & 2 & 3 & 4 & 5 & 6 & 7 \\
\tau_m & 4.0726 & 5.2067 & 6.1454 & 6.9631 & 7.6967 & 8.3675 \\ \hline
m & 8 & 9 & 10 & 11 & 12 & \geq 13 \\
\tau_m & 8.9891 & 9.5709 & 10.1197 & 10.6405 & 11.1371 & 11.6126 \\ \hline
\end{array}
$$
Then, for any $m\ge2$, define
$\displaystyle
\omega_m = \frac2{\tau_m + \sqrt{\tau_m^2+4/\pi}}
$.
\end{definition}

\begin{lemma} \label{erfc better bound}
For a given $m\ge2$:
\begin{enumerate}
\item $m+\frac74 \le (\sqrt{m\lambda}-\tau_m)\big(\sqrt{m\lambda}-(\tau_m+\sqrt{\tau_m^2+2})/2 \big)$ holds for all
$\lambda\ge \log(10^8)$;
\item $\sqrt\pi \erfc(u) \le \omega_m e^{-u^2}$ when $u\ge \tau_m$.
\end{enumerate}
\end{lemma}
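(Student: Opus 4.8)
The plan is to obtain part (2) directly from the standard estimate for $\erfc$ already quoted in the excerpt, and to reduce part (1) to a finite numerical verification by an elementary monotonicity argument.

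For part (2): inequality~\eqref{AS erfc bounds} gives $\tfrac{\sqrt\pi}2 e^{u^2}\erfc(u)\le\big(u+\sqrt{u^2+4/\pi}\big)^{-1}$ for every $u\ge0$. Since $\tau_m\ge\tau_2>0$, for $u\ge\tau_m$ the denominator $u+\sqrt{u^2+4/\pi}$ is increasing in $u$ and hence at least $\tau_m+\sqrt{\tau_m^2+4/\pi}$; multiplying through yields $\sqrt\pi\,\erfc(u)\le 2\big(\tau_m+\sqrt{\tau_m^2+4/\pi}\big)^{-1}e^{-u^2}=\omega_m e^{-u^2}$ by the definition of $\omega_m$ in Definition~\ref{tau and omega def}. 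That is the entire argument.

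For part (1): fix $m\ge2$, write $\tau=\tau_m$ and $w=\tau+\sqrt{\tau^2+2}$, and view the right-hand side of the asserted inequality as the function $P(s)=(s-\tau)(s-\tfrac w2)=s^2-(\tau+\tfrac w2)s+\tfrac{\tau w}2$ of the variable $s=\sqrt{m\lambda}$. This quadratic has positive leading coefficient and is increasing for $s\ge(\tau+\tfrac w2)/2$. On the range $\lambda\ge\log(10^8)$ we have $s\ge\sqrt{m\log(10^8)}$, and since $m\mapsto\sqrt{m\log(10^8)}$ is increasing while the tabulated $\tau_m$ are bounded, a quick inspection of the finitely many relevant values confirms $\sqrt{m\log(10^8)}\ge(\tau_m+\tfrac w2)/2$ for all $m\ge2$. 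Hence $P$ is increasing throughout the relevant range, so it suffices to prove $P\big(\sqrt{m\log(10^8)}\big)\ge m+\tfrac74$. For $2\le m\le13$ this is a finite numerical verification using the values of $\tau_m$ from Definition~\ref{tau and omega def}. For $m\ge14$ we have $\tau_m=\tau_{13}$, so with $L=\log(10^8)$ and $t=\sqrt m$ the claim reads $(L-1)t^2-(\tau+\tfrac w2)\sqrt L\,t+\big(\tfrac{\tau w}2-\tfrac74\big)\ge0$; this quadratic in $t$ has positive leading coefficient, and $t=\sqrt m\ge\sqrt{13}$ lies beyond its vertex, so these cases follow from the already-verified $m=13$ case by monotonicity in $m$.

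The one point that requires attention is that the constants $\tau_m$ appear to have been chosen so that the inequality in part (1) is essentially sharp at $\lambda=\log(10^8)$ (and, in the tail regime $m\ge13$, at $m=13$); the finite numerical checks must therefore be carried out with enough precision — for instance via interval arithmetic — to confirm these nearly tight inequalities. Beyond that bookkeeping, no genuine obstacle arises.
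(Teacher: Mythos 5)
Your argument is correct and essentially the paper's: part (2) is verbatim the paper's use of the upper bound in~\eqref{AS erfc bounds} together with the definition of $\omega_m$, and part (1) is the same reduction, by elementary monotonicity, to a nearly sharp numerical check at $\lambda=\log(10^8)$ --- the paper invokes convexity in $\lambda$ plus positivity and increase of the difference at that endpoint, while you use that the parabola in $s=\sqrt{m\lambda}$ is increasing past its vertex. Your explicit treatment of the uniformity in $m$ (finite checks for $2\le m\le 13$ and the quadratic-in-$\sqrt m$ tail argument exploiting $\tau_m=\tau_{13}$ for $m\ge13$) is a welcome elaboration of what the paper leaves implicit in its check ``for any given $m$''.
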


\begin{proof}
For part (a), since the right-hand side of the inequality is a convex function of $\lambda$, it suffices to check that for any given $m$, the right-hand side minus the left-hand side is positive and increasing at $\lambda=\log(10^8)$. Part (b) then follows from the upper bound in equation~\eqref{AS erfc bounds} in the form
\begin{equation*}
\sqrt\pi \erfc(u) \le e^{-u^2} \frac2{u+\sqrt{u^2+4/\pi}} \le e^{-u^2} \frac2{\tau_m+\sqrt{\tau_m^2+4/\pi}}= e^{-u^2} \omega_m.
\end{equation*}
\end{proof}

\begin{prop}  \label{good riddance erfc}
Let $m\ge2$ be given, let $\mu \le m+\frac74$ and $\lambda\ge \log(10^8)$, let $R$ be positive, and let $\Xi_{m,\lambda,\mu,R}(x)$ be as in Definition~\ref{e def}. Then:
\begin{enumerate}
\item $\Xi_{m,\lambda,\mu,R}(x)$ is a decreasing function of $x$ for $x \ge \exp \big( R\lambda (\sqrt{m\lambda} - \tau_m)^2 \big)$.
\item For $1 \le x \le \exp \big( R\lambda (\sqrt{m\lambda} - \tau_m)^2 \big)$, we have
$$
\Xi_{m,\lambda,\mu,R}(x) \le \omega_m e^{-m\lambda} \exp \bigg( {-} \frac{\log x}{R\lambda} \bigg) \log^{\mu} x.
$$
\end{enumerate}
\end{prop}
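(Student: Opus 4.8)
The plan is to obtain both parts directly from the two lemmas immediately preceding the proposition: Lemma~\ref{erfc decreasing lemma} handles part (a), while Lemma~\ref{erfc better bound} feeds into both parts.

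For part (a), I would invoke Lemma~\ref{erfc decreasing lemma} with the choice $\tau = \tau_m$, so that $w = \tau_m + \sqrt{\tau_m^2+2}$. That lemma then asserts that $\Xi_{m,\lambda,\mu,R}(x)$ is decreasing for $x \ge \exp\big(R\lambda(\sqrt{m\lambda}-\tau_m)^2\big)$ — exactly the range in part (a) — provided that $m\lambda > w^2/4$ and that $\mu \le (\sqrt{m\lambda}-w/2)(\sqrt{m\lambda}-\tau_m)$. The second condition is precisely what Lemma~\ref{erfc better bound}(a) delivers: it gives $m+\tfrac74 \le (\sqrt{m\lambda}-\tau_m)\big(\sqrt{m\lambda}-(\tau_m+\sqrt{\tau_m^2+2})/2\big)$ for $\lambda \ge \log(10^8)$, and since $\mu \le m+\tfrac74$ by hypothesis and $(\tau_m+\sqrt{\tau_m^2+2})/2 = w/2$, this is exactly the needed inequality. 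For the first condition I would argue that $w/2 > \tau_m$, so positivity of the right-hand side of the Lemma~\ref{erfc better bound}(a) inequality forces $\sqrt{m\lambda} > \tau_m$ once the degenerate case $\sqrt{m\lambda} < \tau_m$ is excluded by the explicit bound $m\lambda \ge m\log(10^8) > \tau_m^2$ (checked against the table in Definition~\ref{tau and omega def}); and then the product being $\ge m+\tfrac74 > 0$ with a positive first factor forces the second factor $\sqrt{m\lambda}-w/2$ to be positive as well, i.e.\ $m\lambda > w^2/4$.

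For part (b) I would compute directly. Writing $u = \sqrt{m\lambda} - \sqrt{(\log x)/(R\lambda)}$ for the argument of $\erfc$ in Definition~\ref{e def}, the hypothesis $x \le \exp\big(R\lambda(\sqrt{m\lambda}-\tau_m)^2\big)$ rearranges to $\sqrt{(\log x)/(R\lambda)} \le \sqrt{m\lambda}-\tau_m$, i.e.\ $u \ge \tau_m$, so Lemma~\ref{erfc better bound}(b) applies and yields $\sqrt\pi\,\erfc(u) \le \omega_m e^{-u^2}$. Expanding the square gives $u^2 = m\lambda - 2\sqrt{(m\log x)/R} + (\log x)/(R\lambda)$, so $e^{-u^2} = e^{-m\lambda}\exp\!\big(2\sqrt{(m\log x)/R}\big)\exp\!\big({-}(\log x)/(R\lambda)\big)$. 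Substituting into Definition~\ref{e def}, the factor $\exp\!\big(2\sqrt{(m\log x)/R}\big)$ cancels the factor $\exp\!\big({-}2\sqrt{(m\log x)/R}\big)$ already present, leaving exactly $\omega_m e^{-m\lambda}\exp\!\big({-}(\log x)/(R\lambda)\big)\log^\mu x$, as claimed.

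The only mildly delicate point is the verification in part (a) that the two hypotheses of Lemma~\ref{erfc decreasing lemma} hold — in particular disentangling $m\lambda > w^2/4$ from the quantitative inequality of Lemma~\ref{erfc better bound}(a) — but this is a short elementary argument about signs of quadratics combined with the explicit values $\tau_m$. Part (b) is purely mechanical once the range hypothesis is recognized as the statement $u \ge \tau_m$.
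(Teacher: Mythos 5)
Your proposal is correct and follows essentially the same route as the paper: part (a) by applying Lemma~\ref{erfc decreasing lemma} with $\tau=\tau_m$, whose hypotheses are supplied by Lemma~\ref{erfc better bound}(a), and part (b) by applying Lemma~\ref{erfc better bound}(b) with $u=\sqrt{m\lambda}-\sqrt{(\log x)/(R\lambda)}\ge\tau_m$ and expanding $e^{-u^2}$ so the factors $\exp\!\big(\pm2\sqrt{(m\log x)/R}\big)$ cancel. Your only addition is to spell out the sign argument giving $m\lambda>w^2/4$ (and $\sqrt{m\lambda}>\tau_m$), which the paper leaves implicit; that verification is sound.
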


\begin{proof}
By Lemma~\ref{erfc better bound}(a), the hypotheses of Lemma~\ref{erfc decreasing lemma} are satisfied with $\tau=\tau_m$, which immediately establishes the proposition's first claim. We apply Lemma~\ref{erfc better bound}(b) with $u=\sqrt{m\lambda} - \sqrt{(\log x)/R\lambda}$, which is at least $\tau_m$ when
$$
x \le \exp \big( R\lambda (\sqrt{m\lambda} - \tau_m)^2 \big);
$$
the result is
\begin{align*}
\Xi_{m,\lambda,\mu,R}(x) \le \omega_m \exp \bigg( {-} \bigg( \sqrt{m\lambda} - \sqrt{\frac{\log x}{R\lambda}} \bigg)^2 \bigg) \exp\bigg( {-}2\sqrt{\frac{m\log x}R} \bigg) \log^{\mu} x \\
= \omega_m \exp \bigg( {-} m\lambda + 2\sqrt{\frac{m\log x}R} -\frac{\log x}{R\lambda} \bigg) \exp\bigg( {-}2\sqrt{\frac{m\log x}R} \bigg) \log^{\mu} x,
\end{align*}
which establishes the second claim.
\end{proof}

\subsection{Identification of maximum values of bounding functions via calculus}

As we move towards our upper bound for $|\psi(x;q,a) - x/\phi(q)|$, we will need to find the maximum values of various decreasing functions (of the type addressed in the previous two sections) multiplied by powers of $\log x$. Each individual such product can be bounded by elementary calculus that is straightforward---especially given our existing bounds on functions related to $\erfc(x)$ from Section~\ref{erfy section}---but notationally extremely unwieldy. We therefore encourage the reader to regard this section only as a necessary evil.

We can, however, make one possibly insightful remark before getting underway. The upper bound currently being derived for $\big| \psi(x;q,a) - x/\phi(q) \big| / (x/\log x)$ has several pieces, some of which we have already seen decay like a power of~$x$. The remaining pieces of the upper bound will be bounded by the functions in Definition~\ref{J def} below; and the sharp-eyed reader will notice that these functions too decay like $\exp( -\frac{\log x}{R\lambda} )$, which is to say, like a power of~$x$. (Of course, the functions do start off increasing for small values of $x$, so that there is a maximum value which we seek to identify.) This rate of decay seems too good to be true, since it would correspond to a zero-free strip of constant width (that is, a quasi-GRH). This apparent paradox can be resolved by noting that the functions in Definition~\ref{J def} are involved in the upper bound for the function $\funcU{x}$ (see Definition~\ref{U def}), which is a sum over only the zeros of the $L(s,\chi)$ with large imaginary part. It seems that such a function actually does decay like a power of $x$ initially, before slowing down to decay only like $\exp(-c\sqrt{\log x})$ as is consistent with the classical zero-free region; but, as it happens, the maxima of these functions occur for moderately sized $x$, for which the functions' envelopes are still decaying like a power of~$x$. (One can contrast this observation with Lemma~\ref{two-prong lemma}, in which we see (for large moduli~$q$) the expected rate of decay in the error term.)

\begin{definition}  \label{J def}
Given an integer $m\ge2$ and positive constants $r$, $x$, $\lambda$, $H_2$, and $R$, define
\begin{flalign*}
& P_{1a}(x;m,r,\lambda,H_2,R) = \frac1{H_2^m} \bigg( \frac{3R^{1/4}\lambda^{1/2}\log^{r-1/4}x}{16m^{3/4}} + \frac{(\log x)^r}{2m} \bigg) \exp\bigg( {-}\frac{\log x}{R\lambda} \bigg), \\
& P_{1b}(x;m,r,\lambda,H_2,R) = \frac{\omega_m}{H_2^m} \bigg( \frac{\log^{r+1/4} x}{2m^{3/4}R^{1/4}} + \frac{3R^{1/4}\log^{r-1/4}x}{32m^{5/4}} \bigg) \exp\bigg( {-}\frac{\log x}{R\lambda} \bigg), \\
&P_1(x;m,r,\lambda,H_2,R) = P_{1a}(x;m,r,\lambda,H_2,R) + P_{1b}(x;m,r,\lambda,H_2,R); \\
& P_{2a}(x;m,r,\lambda,H_2,R) = \frac1{H_2^m} \exp\bigg( {-}\frac{\log x}{R\lambda} \bigg) \bigg( \frac{\log^{r+1}x}{2 \lambda  m^2 R} + \frac{135\lambda^{1/2} \log^{r+1/4}x}{256 m^{5/4} R^{1/4}} \notag \\
&\hskip12ex + \frac{(m\lambda+1)(\log x)^r}{2 m^2} + \frac{35(2m \lambda+3) \lambda^{1/2}R^{1/4}\log^{r-1/4}x}{512 m^{7/4}} \bigg), \\
& P_{2b}(x;m,r,\lambda,H_2,R) = \frac{\omega_m}{H_2^m} \bigg( \frac{\log^{r+3/4}x}{2m^{5/4}R^{3/4}} + \frac{15\log^{r+1/4}x}{32m^{7/4}R^{1/4}} \\
 & \hskip12ex+ \frac{105R^{1/4}\log^{r-1/4}x}{1024m^{9/4}} \bigg) \exp\bigg( {-}\frac{\log x}{R\lambda} \bigg), \\
& P_2(x;m,r,\lambda,H_2,R) = P_{2a}(x;m,r,\lambda,H_2,R) + P_{2b}(x;m,r,\lambda,H_2,R).
\end{flalign*}
\end{definition}

\begin{definition}  \label{M def}
Given an integer $m\ge2$ and positive constants $r$, $\lambda$, $H_2$, and $R$, define
\begin{flalign*}
& Q_{1a}(m,r,\lambda,H_2,R) = \frac{R^r}{e^r H_2^m} \bigg( \frac{3e^{1/4}(r-1/4)^{r-1/4} \lambda^{r+1/4}}{16m^{3/4}} + \frac{r^r \lambda^r}{2m} \bigg), \\
& Q_{1b}(m,r,\lambda,H_2,R) = \frac{\omega_mR^r}{e^r H_2^m} \bigg( \frac{(r+1/4)^{r+1/4}\lambda^{r+1/4}}{2e^{1/4}m^{3/4}}  + \frac{3e^{1/4}(r-1/4)^{r-1/4}\lambda^{r-1/4}}{32m^{5/4}} \bigg), \\
& Q_1(m,r,\lambda,H_2,R) = Q_{1a}(m,r,\lambda,H_2,R) + Q_{1b}(m,r,\lambda,H_2,R); \\
& Q_{2a}(m,r,\lambda,H_2,R) = \frac{R^r}{e^r H_2^m} \bigg( \frac{(r+1)^{r+1}\lambda^r}{2em^2} + \frac{135(r+1/4)^{r+1/4} \lambda^{r+3/4}}{256e^{1/4} m^{5/4}} \\
&\qquad{}+ \frac{(m\lambda+1)r^r \lambda^r}{2m^2} + \frac{35e^{1/4}(2m \lambda+3)(r-1/4)^{r-1/4} \lambda^{r+1/4}}{512m^{7/4}} \bigg), \\
& Q_{2b}(m,r,\lambda,H_2,R) = \frac{\omega_mR^r}{e^r H_2^m} \bigg( \frac{(r+3/4)^{r+3/4}\lambda^{r+3/4}}{2e^{3/4}m^{5/4}} + \frac{15(r+1/4)^{r+1/4}\lambda^{r+1/4}}{32e^{1/4}m^{7/4}} \\
&\qquad{}+ \frac{105e^{1/4}(r-1/4)^{r-1/4}\lambda^{r-1/4}}{1024m^{9/4}} \bigg), \\
& Q_2(m,r,\lambda,H_2,R) = Q_{2a}(m,r,\lambda,H_2,R) + Q_{2b}(m,r,\lambda,H_2,R).
\end{flalign*}
\end{definition}

\begin{definition}  \label{z and y def}
Let $d$ and $m$ be positive integers with $m\ge 2$, and let $H_2,R$ and $x$ be positive real numbers with $x>1$. Define
\[
z_{m,R}(x) = 2\sqrt{\frac{m\log x}R} \quad\text{and}\quad  y_{d,m,R}(x;H_2) = \sqrt{\frac{mR}{\log x}} \log(dH_2).
\]
\end{definition}

\begin{lemma}  \label{algebra with z and y}
Let $m$, $R$, $x$, $d$, and $H_2$ be positive real numbers with $x>1$.
Then
\begin{flalign}
& \exp\bigg( {-}\frac{z_{m,R}(x)}2 \bigg( y_{d,m,R}(x;H_2)+\frac1{y_{d,m,R}(x;H_2)} \bigg) \bigg) \notag \\
&= \bigg( \frac{1}{dH_2} \bigg)^{m} \exp\bigg( {-}\frac{\log x}{R\log(dH_2)} \bigg) \label{ezy denominator identity}
\end{flalign}
and
\begin{flalign}
&\sqrt{\frac{z_{m,R}(x)}2} \bigg( \sqrt{y_{d,m,R}(x;H_2)}-\frac1{\sqrt {y_{d,m,R}(x;H_2)}} \bigg) \notag \\
& = \sqrt{m\log(dH_2)} - \sqrt{\frac{\log x}{R\log(dH_2)}}.  \label{erf argument identity}
\end{flalign}
\end{lemma}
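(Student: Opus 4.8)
The plan is to observe that Lemma~\ref{algebra with z and y} consists of two purely algebraic identities, both of which reduce to computing the two elementary products $\tfrac12 z_{m,R}(x)\,y_{d,m,R}(x;H_2)$ and $\tfrac{z_{m,R}(x)}{2\,y_{d,m,R}(x;H_2)}$ directly from Definition~\ref{z and y def}. To keep the bookkeeping transparent I would abbreviate $L=\log x$ and $\ell=\log(dH_2)$, so that $z_{m,R}(x)=2\sqrt{mL/R}$ and $y_{d,m,R}(x;H_2)=\ell\sqrt{mR/L}$; the point is that the two awkward factors $\sqrt{mL/R}$ and $\sqrt{mR/L}$ are mutual reciprocals, so they cancel cleanly in both products.

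Carrying this out, multiplication gives $\tfrac12 z_{m,R}(x)\,y_{d,m,R}(x;H_2) = \ell\,\sqrt{mL/R}\cdot\sqrt{mR/L} = m\ell = m\log(dH_2)$, while division gives $\tfrac{z_{m,R}(x)}{2\,y_{d,m,R}(x;H_2)} = \tfrac1\ell\,\sqrt{mL/R}\cdot\sqrt{L/(mR)} = \tfrac{L}{R\ell} = \tfrac{\log x}{R\log(dH_2)}$. These two computations are essentially the entire content of the lemma. For identity~\eqref{ezy denominator identity} I would add them to obtain $\tfrac12 z_{m,R}(x)\bigl(y_{d,m,R}(x;H_2)+1/y_{d,m,R}(x;H_2)\bigr) = m\log(dH_2) + \tfrac{\log x}{R\log(dH_2)}$, then exponentiate the negative of this and use $e^{-m\log(dH_2)} = (dH_2)^{-m}$. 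For identity~\eqref{erf argument identity} I would instead take square roots of the two products: $\sqrt{\tfrac12 z_{m,R}(x)}\cdot\sqrt{y_{d,m,R}(x;H_2)} = \sqrt{m\log(dH_2)}$ and $\sqrt{\tfrac12 z_{m,R}(x)}\big/\sqrt{y_{d,m,R}(x;H_2)} = \sqrt{\tfrac{\log x}{R\log(dH_2)}}$, and then subtract.

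I do not expect any genuine obstacle here: since $x>1$ we have $L>0$, and with $d,m,R,H_2$ positive (and $dH_2>1$, so that $\ell>0$) every quantity in sight is positive, so all the square roots are unambiguously defined and no sign issues arise. The only mild nuisance is notational, and the substitutions $L=\log x$, $\ell=\log(dH_2)$ dispose of it entirely.
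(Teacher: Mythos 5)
Your proposal is correct and is essentially the paper's own proof: the paper likewise reduces both identities to the two evaluations $\tfrac12 z_{m,R}(x)\,y_{d,m,R}(x;H_2)=m\log(dH_2)$ and $\tfrac{z_{m,R}(x)}{2\,y_{d,m,R}(x;H_2)}=\tfrac{\log x}{R\log(dH_2)}$, together with $e^{-m\log(dH_2)}=(dH_2)^{-m}$. Nothing further is needed.
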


\begin{proof}
Both identities follow quickly from $e^{-m\log(dH_2)} = (dH_2)^{-m}$ and the evaluations
$$
\frac{z_{m,R}(x)}2 \cdot y_{d,m,R}(x;H_2) = m \log(dH_2)
$$
and
$$
 \frac {z_{m,R}(x)}2\cdot\frac1{y_{d,m,R}(x;H_2)} = \frac{\log x}{R\log(dH_2)}.
$$
\end{proof}

\begin{lemma}  \label{1ax}
Let $r$, $m$, $R$, $x$, $d$, and $H_2$ be positive real numbers with $x>1$. Then
\begin{flalign*}
& (\log x)^r \cdot 2d^m \bigg( \frac{\log x}{mR} \bigg)^{1/2} J_{1a}\big( z_{m,R}(x);y_{d,m,R}(x;H_2) \big) \\
& \hskip16ex = P_{1a}\big(x;m,r,\log(dH_2),H_2,R\big). \\
\end{flalign*}
\end{lemma}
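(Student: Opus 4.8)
The plan is to prove the claimed identity by a direct unwinding of the definitions, reducing both sides to explicit expressions in the single variable $\log x$ (with $r,m,R,d,H_2$ regarded as fixed parameters) so that the statement becomes a routine algebraic verification. Writing $L=\log x$ for brevity, I would first recall from Definition~\ref{z and y def} that $z_{m,R}(x)=2\sqrt{mL/R}$ and $y_{d,m,R}(x;H_2)=\sqrt{mR/L}\,\log(dH_2)$, and then substitute the formula for $J_{1a}$ from Definition~\ref{K pieces def}. This expresses the left-hand side as
\[
L^r\cdot 2d^m\Big(\tfrac{L}{mR}\Big)^{1/2}\cdot\frac{3\sqrt{y_{d,m,R}(x;H_2)}+8}{16\,z_{m,R}(x)}\,\exp\!\Big({-}\tfrac{z_{m,R}(x)}2\big(y_{d,m,R}(x;H_2)+\tfrac1{y_{d,m,R}(x;H_2)}\big)\Big).
\]

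The key structural input is Lemma~\ref{algebra with z and y}: its identity~\eqref{ezy denominator identity} rewrites the exponential factor above as $(dH_2)^{-m}\exp({-}L/(R\log(dH_2)))$. Substituting this, the prefactor $d^m(dH_2)^{-m}$ collapses to $H_2^{-m}$, and the surviving exponential $\exp({-}L/(R\log(dH_2)))$ is exactly the exponential appearing in $P_{1a}(x;m,r,\log(dH_2),H_2,R)$ as given in Definition~\ref{J def} (with $\lambda=\log(dH_2)$). It then remains only to simplify the algebraic coefficient. I would use the elementary evaluation $\big(L/(mR)\big)^{1/2}/z_{m,R}(x)=1/(2m)$, which follows at once from $z_{m,R}(x)=2\sqrt{mL/R}$, to turn $2\big(L/(mR)\big)^{1/2}/(16\,z_{m,R}(x))$ into $1/(16m)$; what remains at this stage is $L^r\,H_2^{-m}\,\tfrac{3\sqrt{y_{d,m,R}(x;H_2)}+8}{16m}\,\exp({-}L/(R\log(dH_2)))$.

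Finally I would insert $\sqrt{y_{d,m,R}(x;H_2)}=(mR/L)^{1/4}\sqrt{\log(dH_2)}$ and distribute the bracket $3\sqrt{y_{d,m,R}(x;H_2)}+8$ over the two resulting summands: the ``$8$'' contributes $L^r/(2m)$, while the ``$3\sqrt{y}$'' contributes $\tfrac{3R^{1/4}\sqrt{\log(dH_2)}\,L^{r-1/4}}{16\,m^{3/4}}$ once the powers of $m$ are collected. Comparing term by term with the definition of $P_{1a}$ completes the proof. The only real obstacle is bookkeeping: one must be careful tracking fractional powers of $m$, $R$, and $\log x$ (for instance that $m^{1/4}/m=m^{-3/4}$), and confirm that the $d^m$ on the left is cancelled precisely by the $(dH_2)^{-m}$ produced by Lemma~\ref{algebra with z and y}. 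There is no conceptual difficulty — the lemma is purely a normalization statement packaging the change of variables of Lemma~\ref{I to K cov lemma} together with the definitions of the bounding functions.
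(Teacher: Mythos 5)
Your proposal is correct and follows essentially the same route as the paper: substitute the definitions of $z_{m,R}$, $y_{d,m,R}$, and $J_{1a}$, invoke identity~\eqref{ezy denominator identity} to convert the exponential into $(dH_2)^{-m}\exp(-\log x/(R\log(dH_2)))$, and then collect powers of $m$, $R$, and $\log x$ to match Definition~\ref{J def}. The only difference is trivial ordering of the algebraic simplifications.
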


\begin{proof}
In this proof, we write $y$ for $y_{d,m,R}(x;H_2)$ and $z$ for $z_{m,R}(x)$. Using Definition~\ref{K pieces def} and the identity~\eqref{ezy denominator identity}:
\begin{flalign*}
&(\log x)^r  \cdot 2d^m \bigg( \frac{\log x}{mR} \bigg)^{1/2} J_{1a} (z; y) \\
&= (\log x)^r \cdot 2d^m \bigg( \frac{\log x}{mR} \bigg)^{1/2} \frac{3y+8\sqrt{y}}{16 ze^{z(y+1/y)/2}\sqrt y} \\
&= (\log x)^r \cdot 2d^m \bigg( \frac{\log x}{mR} \bigg)^{1/2} \frac{3y+8\sqrt{y}}{16 z\sqrt y} \bigg( \frac{1}{dH_2} \bigg)^{m} \exp\bigg( {-}\frac{\log x}{R\log(dH_2)} \bigg) \\
&= \frac{\log^{r+1/2} x}{H_2^m} \frac1{8\sqrt{mR}} (3\sqrt y+8) z^{-1} \exp\bigg( {-}\frac{\log x}{R\log(dH_2)} \bigg) \\
&= \frac{\log^{r+1/2} x}{H_2^m} \frac1{8\sqrt{mR}} \bigg( \frac{3m^{1/4}R^{1/4}\sqrt{\log(dH_2)}}{\log^{1/4}x} + 8 \bigg) \\
& \; \; \; \; \times \frac{\sqrt R}{2\sqrt {m\log x}} \exp\bigg( {-}\frac{\log x}{R\log(dH_2)} \bigg) \\
&= \frac1{H_2^m} \bigg( \frac{3R^{1/4}\sqrt{\log(dH_2)}\log^{r-1/4}x}{16m^{3/4}} + \frac{(\log x)^r}{2m} \bigg) \exp\bigg( {-}\frac{\log x}{R\log(dH_2)} \bigg),
\end{flalign*}
which establishes the lemma thanks to Definition~\ref{J def}.
\end{proof}

\begin{lemma}  \label{1a}
Let $r$, $m$, $R$, $x$, $\lambda$, and $H_2$ be positive real numbers with $x>1$ and $r>\frac14$. Then
$$
P_{1a}(x;m,r,\lambda,H_2,R) \le Q_{1a}(m,r,\lambda,H_2,R).
$$
\end{lemma}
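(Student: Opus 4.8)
The plan is to regard $P_{1a}(x;m,r,\lambda,H_2,R)$, as a function of $x$, as a sum of two terms each matching the template $\Phi(u;c_1,c_2,\lambda,\mu)=c_1\exp(-c_2\log^\lambda u)\log^\mu u$ analyzed in Lemma~\ref{simple calculus lemma}, with the lemma's exponent parameter equal to~$1$ (this is distinct from the parameter~$\lambda$ occurring in $P_{1a}$, a clash of notation one must keep straight). Spelling out Definition~\ref{J def},
$$
P_{1a}(x;m,r,\lambda,H_2,R) = \frac{3R^{1/4}\lambda^{1/2}}{16m^{3/4}H_2^m}\log^{r-1/4}x\cdot\exp\bigg({-}\frac{\log x}{R\lambda}\bigg) + \frac1{2mH_2^m}(\log x)^r\cdot\exp\bigg({-}\frac{\log x}{R\lambda}\bigg),
$$
so the first summand is $\Phi$ with $c_1=\frac{3R^{1/4}\lambda^{1/2}}{16m^{3/4}H_2^m}$, $c_2=\frac1{R\lambda}$, $\lambda$-exponent $1$, and $\mu=r-\frac14$; the second is $\Phi$ with $c_1=\frac1{2mH_2^m}$, $c_2=\frac1{R\lambda}$, $\lambda$-exponent $1$, and $\mu=r$. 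The positivity hypotheses of Lemma~\ref{simple calculus lemma} are met because $m,R,\lambda,H_2>0$ and $r>\frac14$ (ensuring $\mu>0$ in the first case).

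First I would invoke Lemma~\ref{simple calculus lemma} on each summand to get, for every $x\ge1$,
$$
P_{1a}(x;m,r,\lambda,H_2,R) \le \frac{3R^{1/4}\lambda^{1/2}}{16m^{3/4}H_2^m}\bigg(\frac{(r-1/4)R\lambda}{e}\bigg)^{r-1/4} + \frac1{2mH_2^m}\bigg(\frac{rR\lambda}{e}\bigg)^{r}.
$$
Then I would carry out the elementary algebraic simplification identifying the right-hand side with $Q_{1a}(m,r,\lambda,H_2,R)$ from Definition~\ref{M def}: for the first term, using $R^{1/4}R^{r-1/4}=R^r$, $\lambda^{1/2}\lambda^{r-1/4}=\lambda^{r+1/4}$, and $e^{-(r-1/4)}=e^{1/4}/e^r$, it becomes $\frac{R^r}{e^rH_2^m}\cdot\frac{3e^{1/4}(r-1/4)^{r-1/4}\lambda^{r+1/4}}{16m^{3/4}}$; for the second term, $(rR\lambda/e)^r=r^rR^r\lambda^r/e^r$ gives $\frac{R^r}{e^rH_2^m}\cdot\frac{r^r\lambda^r}{2m}$. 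Summing these two expressions reproduces the definition of $Q_{1a}$ exactly.

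The main obstacle here is purely bookkeeping: correctly slotting the (somewhat baroque) coefficients of $P_{1a}$ into the parameters of $\Phi$ while keeping the lemma's exponent fixed at $1$, and then tracking the powers of $R$, $\lambda$, and $e$ through the simplification. No genuinely analytic input is needed beyond the maximum-value formula supplied by Lemma~\ref{simple calculus lemma}.
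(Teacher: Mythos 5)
Your proposal is correct and is essentially identical to the paper's own proof: the paper likewise applies Lemma~\ref{simple calculus lemma} to each of the two summands of $P_{1a}$ (equivalently, maximizing them at $\log x=(r-\tfrac14)R\lambda$ and $\log x=rR\lambda$) and then simplifies the resulting expression to match $Q_{1a}$ from Definition~\ref{M def}. Your parameter identifications and the algebraic bookkeeping with the powers of $R$, $\lambda$, and $e$ are all accurate, including the observation that $r>\tfrac14$ is what makes the exponent $\mu=r-\tfrac14$ positive.
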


\begin{proof}
By Lemma~\ref{simple calculus lemma}, the two summands in Definition~\ref{J def} for $P_{1a}$ are maximized at $\log x = (r-\frac14)R\lambda$ and $\log x = r R\lambda$, respectively. Inserting these respective values of $x$ into the two summands yields the upper bound
\begin{align*}
P_{1a}(x;m,r,\lambda,H_2,R) &\le \frac1{H_2^m} \bigg( \frac{3R^{1/4}\sqrt{\lambda}}{16m^{3/4}} \bigg( \frac{(r-1/4)R\lambda}e \bigg)^{r-1/4} + \frac1{2m} \bigg( \frac{r R\lambda}e \bigg)^r \bigg) \\
&= \frac{R^r}{e^r H_2^m} \bigg( \frac{3e^{1/4}(r-1/4)^{r-1/4} \lambda^{r+1/4}}{16m^{3/4}} + \frac{r^r \lambda^r}{2m} \bigg),
\end{align*}
which establishes the lemma thanks to Definition~\ref{M def}.
\end{proof}

\begin{lemma}  \label{2ax}
Let $r$, $m$, $R$, $x$, $d$, and $H_2$ be positive real numbers with $x>1$. Then
\begin{equation*}
(\log x)^r \cdot 2d^m \frac{\log x}{mR} J_{2a}\big( z_{m,R}(x);y_{d,m,R}(x;H_2) \big) = P_{2a}\big(x;m,r,\log(dH_2),H_2,R\big).
\end{equation*}
\end{lemma}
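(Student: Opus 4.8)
The plan is to carry out the same computation as in the proof of Lemma~\ref{1ax}, with $K_1$ and $J_{1a}$ replaced by $K_2$ and $J_{2a}$; the only structural change is that the prefactor coming from Lemma~\ref{I to K cov lemma} is now $\frac{\log x}{mR}$ rather than its square root. Throughout I abbreviate $y = y_{d,m,R}(x;H_2)$, $z = z_{m,R}(x)$, and $\lambda = \log(dH_2)$, so that the target is simply $(\log x)^r\cdot 2d^m\frac{\log x}{mR}J_{2a}(z;y) = P_{2a}(x;m,r,\lambda,H_2,R)$.

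First I would insert the explicit formula for $J_{2a}$ from Definition~\ref{K pieces def}. The factor $e^{-z(y+1/y)/2}$ in its denominator is rewritten, via identity~\eqref{ezy denominator identity} of Lemma~\ref{algebra with z and y}, as $(dH_2)^{-m}\exp(-\tfrac{\log x}{R\lambda})$, so that the $d^m$ in the statement is absorbed into $H_2^{-m}$ and the required decaying exponential appears. The denominator of $J_{2a}$ also carries a factor $z^{-2}$; since $z^2 = 4m\log x/R$ by Definition~\ref{z and y def}, the constant prefactor $2d^m\cdot\frac{\log x}{mR}\cdot\frac1{256 z^2}$ collapses to $\frac{1}{512 m^2 H_2^m}$ times the exponential. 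At this stage the left-hand side equals
\[
\frac{(\log x)^r}{512 m^2 H_2^m}\exp\Big({-}\frac{\log x}{R\lambda}\Big)\Big[\big(35y^{3/2}+128y+135y^{1/2}+128y^{-1}\big)z + 105y^{1/2}+256\Big].
\]

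Next I would expand the bracketed expression using $y^{1/2}=m^{1/4}R^{1/4}\lambda^{1/2}\log^{-1/4}x$, $y=m^{1/2}R^{1/2}\lambda\log^{-1/2}x$, $y^{3/2}=m^{3/4}R^{3/4}\lambda^{3/2}\log^{-3/4}x$, $y^{-1}=m^{-1/2}R^{-1/2}\lambda^{-1}\log^{1/2}x$, and $z=2m^{1/2}R^{-1/2}\log^{1/2}x$. A direct computation produces the six monomials $70m^{5/4}R^{1/4}\lambda^{3/2}\log^{-1/4}x$, $256m\lambda$, $270m^{3/4}R^{-1/4}\lambda^{1/2}\log^{1/4}x$, $256R^{-1}\lambda^{-1}\log x$, $105m^{1/4}R^{1/4}\lambda^{1/2}\log^{-1/4}x$, and $256$. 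Multiplying through by $\frac{(\log x)^r}{512 m^2 H_2^m}$ and the exponential, then grouping the $256R^{-1}\lambda^{-1}\log x$ contribution, the $256(m\lambda+1)$ contribution, the $270m^{3/4}R^{-1/4}\lambda^{1/2}\log^{1/4}x$ contribution, and combining the two $\log^{r-1/4}x$ terms via the identity $70m^{5/4}\lambda^{3/2}+105m^{1/4}\lambda^{1/2}=35m^{1/4}\lambda^{1/2}(2m\lambda+3)$, yields precisely the four summands of $P_{2a}(x;m,r,\lambda,H_2,R)$ given in Definition~\ref{J def}.

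The argument is entirely mechanical; the only care required is the bookkeeping of the fractional powers of $m$, $R$, $\lambda$, and $\log x$ when simplifying the six terms of the bracket, together with the final factoring step that collapses the two $\log^{r-1/4}x$ contributions into a single summand. No genuine obstacle arises.
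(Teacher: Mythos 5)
Your computation is correct and is essentially the paper's own proof: both insert the formula for $J_{2a}$ from Definition~\ref{K pieces def}, use identity~\eqref{ezy denominator identity} to convert $e^{-z(y+1/y)/2}$ into $(dH_2)^{-m}\exp\big({-}\tfrac{\log x}{R\log(dH_2)}\big)$, substitute the explicit values of $y$ and $z$ from Definition~\ref{z and y def}, and regroup the resulting monomials (including the factoring $70m^{5/4}\lambda^{3/2}+105m^{1/4}\lambda^{1/2}=35m^{1/4}\lambda^{1/2}(2m\lambda+3)$) into the four summands of $P_{2a}$ in Definition~\ref{J def}. The only difference is cosmetic bookkeeping of when the prefactor $1/z^2$ is simplified, so no further comment is needed.
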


\begin{proof}
For this proof, write $y=y_{d,m,R}(x;H_2)$ and $z=z_{m,R}(x)$. Using Definition~\ref{K pieces def} and the identity~\eqref{ezy denominator identity}:
\begin{align*}
& (\log x)^r \cdot 2d^m \frac{\log x}{mR} J_{2a}(z,y) \\
&= (\log x)^r \cdot 2d^m \frac{\log x}{mR} \frac{(35 y^3+128 y^{5/2}+135y^2+128\sqrt y) z+105y^2+256y^{3/2}}{256 z^2e^{z(y+1/y)/2}y^{3/2}} \\
&= \frac{\log^{r+1}x}{128mRH_2^m} \bigg( \frac{35 y^{3/2}+128 y+135y^{1/2}+128y^{-1}}z + \frac{105y^{1/2}+256}{z^2} \bigg) \exp\bigg( {-}\frac{\log x}{R\log(dH_2)} \bigg) \\
&= \frac{\log^{r+1}x}{128mRH_2^m} \exp\bigg( {-}\frac{\log x}{R\log(dH_2)} \bigg) \times \bigg\{ \frac{R^{1/2}}{2m^{1/2}\log^{1/2} x} \bigg( \frac{35m^{3/4}R^{3/4} \log^{3/2}(dH_2)}{\log^{3/4} x}  \\
& \qquad + \frac{128m^{1/2}R^{1/2} \log(dH_2)}{\log^{1/2} x} + \frac{135m^{1/4}R^{1/4} \log^{1/2}(dH_2)}{\log^{1/4} x} \\
& \qquad {}+ \frac{128\log^{1/2} x}{m^{1/2}R^{1/2} \log(dH_2)} \bigg) + \frac R{4m\log x} \bigg( \frac{105m^{1/4}R^{1/4} \log^{1/2}(dH_2)}{\log^{1/4} x} +256 \bigg) \bigg\},
\end{align*}
which can be written as
\begin{align*}
& \frac1{H_2^m} \exp\bigg( {-}\frac{\log x}{R\log(dH_2)} \bigg) \times {} \\
& \qquad \bigg\{ \bigg( \frac{35R^{1/4} \log^{3/2}(dH_2)\log^{r-1/4}x}{256m^{3/4}} + \frac{\log(dH_2)(\log x)^r}{2m} + \frac{135\log^{1/2}(dH_2)\log^{r+1/4}x}{256m^{5/4}R^{1/4}} \\
&\qquad\qquad{}+ \frac{\log^{r+1} x}{2m^2R \log(dH_2)} \bigg) + \bigg( \frac{105R^{1/4} \log^{1/2}(dH_2)\log^{r-1/4}x}{512m^{7/4}} + \frac{(\log x)^r}{2m^2} \bigg) \bigg\} \\
&= \frac1{H_2^m} \exp\bigg( {-}\frac{\log x}{R\log(dH_2)} \bigg) \bigg( \frac{\log^{r+1}x}{2 \log(dH_2)  m^2 R} + \frac{135\log^{1/2}(dH_2) \log^{r+1/4}x}{256 m^{5/4} R^{1/4}} \\
&\qquad{}+ \frac{(m\log(dH_2)+1)(\log x)^r}{2 m^2} + \frac{35(2m \log(dH_2)+3) \log^{1/2}(dH_2)R^{1/4}\log^{r-1/4}x}{512 m^{7/4}} \bigg).
\end{align*}
\end{proof}

\begin{lemma}  \label{2a}
Let $r$, $m$, $R$, $x$, $\lambda$, and $H_2$ be positive real numbers with $x>1$ and $r>\frac14$. Then
$$
P_{2a}(x;m,r,\lambda,H_2,R) \le Q_{2a}(m,r,\lambda,H_2,R).
$$
\end{lemma}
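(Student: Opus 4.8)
The plan is to follow the proof of Lemma~\ref{1a} essentially verbatim, now applied to the four summands appearing in the definition of $P_{2a}$ in Definition~\ref{J def}. Each of these summands has the shape $c\,(\log x)^{s}\exp\!\big({-}\tfrac{\log x}{R\lambda}\big)$, where the exponent $s$ runs over $r+1$, $r+\tfrac14$, $r$, and $r-\tfrac14$ (in the order the terms are written), and $c$ is a positive constant depending only on $m$, $\lambda$, $H_2$, and $R$. Since $r>\tfrac14$ by hypothesis, all four exponents $s$ are positive, so Lemma~\ref{simple calculus lemma} applies to each summand with (in the notation of that lemma) $c_1=c$, $\lambda=1$, $\mu=s$, and $c_2=\tfrac1{R\lambda}$: the summand, viewed as a function of $x>1$, is maximized where $\log x = s R\lambda$, at which point it equals $c\big(\tfrac{s R\lambda}{e}\big)^{s}$. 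Because $P_{2a}$ is the sum of its four summands, it is bounded above by the sum of these four individual maxima.

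Carrying this out term by term, I would record the four maxima explicitly. The first summand, $\tfrac{(\log x)^{r+1}}{2\lambda m^2 R H_2^m}\exp({-}\tfrac{\log x}{R\lambda})$, attains maximum $\tfrac{1}{2\lambda m^2 R H_2^m}\big(\tfrac{(r+1)R\lambda}{e}\big)^{r+1}=\tfrac{R^r}{e^r H_2^m}\cdot\tfrac{(r+1)^{r+1}\lambda^r}{2em^2}$; the second gives $\tfrac{R^r}{e^r H_2^m}\cdot\tfrac{135(r+1/4)^{r+1/4}\lambda^{r+3/4}}{256e^{1/4}m^{5/4}}$; the third gives $\tfrac{R^r}{e^r H_2^m}\cdot\tfrac{(m\lambda+1)r^r\lambda^r}{2m^2}$; and the fourth gives $\tfrac{R^r}{e^r H_2^m}\cdot\tfrac{35e^{1/4}(2m\lambda+3)(r-1/4)^{r-1/4}\lambda^{r+1/4}}{512m^{7/4}}$. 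Adding these and comparing with Definition~\ref{M def} shows that the sum is precisely $Q_{2a}(m,r,\lambda,H_2,R)$, which completes the proof.

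The only real work is the algebra of extracting the powers of $R$, $\lambda$, $e$, and $m$ from each factor $\big(\tfrac{sR\lambda}{e}\big)^{s}$ and verifying that it collapses to the matching summand of $Q_{2a}$; one must keep careful track of the fractional exponents $\tfrac14,\tfrac34$ on $\lambda$ and of the resulting $e^{1/4},e^{3/4}$ factors, exactly as in the corresponding computation for $J_{2a}$ leading to Lemma~\ref{2ax}. There is no conceptual obstacle here — the main risk is simply a bookkeeping slip in one of the four parallel computations.
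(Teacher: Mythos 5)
Your proposal is correct and matches the paper's own proof: the paper likewise applies Lemma~\ref{simple calculus lemma} to each of the four summands of $P_{2a}$ (maximized at $\log x=(r+\ep)R\lambda$ for $\ep\in\{1,\tfrac14,0,-\tfrac14\}$), sums the individual maxima, and checks that the result is exactly $Q_{2a}$ from Definition~\ref{M def}. Your term-by-term evaluations agree with the paper's displayed computation, and your use of $r>\tfrac14$ to make the last exponent positive is the same (implicit) point the paper relies on.
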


\begin{proof}
By Lemma~\ref{simple calculus lemma}, the four summands in Definition~\ref{J def} for $P_{2a}$ are maximized at $\log x = (r+\ep)R\lambda$ for $\ep\in\{1,\frac14,0,-\frac14\}$. Inserting these respective values of $x$ into the two summands yields the upper bound
\begin{flalign*}
& P_{2a} (x;m,r,\lambda,H_2,R) \le \frac1{H_2^m} \bigg( \frac{((r+1)R\lambda)^{r+1}}{2e^{r+1} \lambda  m^2 R} + \frac{135\lambda^{1/2} ((r+1/4)R\lambda)^{r+1/4}}{256e^{r+1/4} m^{5/4} R^{1/4}} \\
& \hskip8ex + \frac{(m\lambda+1)(r R\lambda)^r}{2e^r m^2} + \frac{35(2m \lambda+3) \lambda^{1/2}R^{1/4}((r-1/4)R\lambda)^{r-1/4}}{512e^{r-1/4} m^{7/4}} \bigg) \\
&\hskip8ex= \frac{R^r}{e^r H_2^m} \bigg( \frac{(r+1)^{r+1}\lambda^r}{2em^2} + \frac{135(r+1/4)^{r+1/4} \lambda^{r+3/4}}{256e^{1/4} m^{5/4}} \\
&\hskip8ex+ \frac{(m\lambda+1)r^r \lambda^r}{2m^2} + \frac{35e^{1/4}(2m \lambda+3)(r-1/4)^{r-1/4} \lambda^{r+1/4}}{512m^{7/4}} \bigg),
\end{flalign*}
which establishes the lemma thanks to Definition~\ref{M def}.
\end{proof}

\begin{definition}  \label{x3 def}
Given integers $m\ge2$ and $d\ge3$ and positive constants $H_2$ and $R$, if $\tau_m$ is as given in Definition \ref{tau and omega def}, define
\[
x_3(m,d,H_2,R) = \exp\big( R\log(dH_2) \big(\sqrt{m\log(dH_2)} - \tau_m\big)^2 \big).
\]
\end{definition}

\begin{lemma}  \label{1bx}
Let $m\ge2$ be an integer, and let $r$, $R$, $x$, $d$, and $H_2$ be positive real numbers with $x>1$, $r\le m+1$, and $dH_2\ge10^8$. Then
\begin{multline*}
(\log x)^r \cdot 2d^m \bigg( \frac{\log x}{mR} \bigg)^{1/2} J_{1b}\big( z_{m,R}(x);y_{d,m,R}(x;H_2) \big) \\
\le \max \big\{ P_{1b}\big(x;m,r,\log(dH_2),H_2,R\big), P_{1b}\big( x_3(m,d,H_2,R); m,r, \log(dH_2),H_2,R\big) \big\}.
\end{multline*}
\end{lemma}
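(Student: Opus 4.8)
The plan is to rewrite the left-hand side as an explicit positive linear combination of the functions $\Xi_{m,\lambda,\mu,R}$ from Definition~\ref{e def} (with $\lambda:=\log(dH_2)$), and then to read off the conclusion from the monotonicity and size estimates in Proposition~\ref{good riddance erfc}. First I would substitute $z=z_{m,R}(x)$ and $y=y_{d,m,R}(x;H_2)$ into the defining formula for $J_{1b}$ in Definition~\ref{K pieces def}. Using the evaluations $z^{3/2}=2\sqrt2\,(m\log x/R)^{3/4}$, $8z+3=16\sqrt{m\log x/R}+3$, and $e^{-z}=\exp(-2\sqrt{m\log x/R})$, together with the $\erfc$-argument identity~\eqref{erf argument identity} of Lemma~\ref{algebra with z and y}, and then collecting the fractional powers of $\log x$, $m$, and $R$, I expect the identity
\begin{multline*}
(\log x)^r \cdot 2d^m \Big( \frac{\log x}{mR} \Big)^{1/2} J_{1b}\big( z_{m,R}(x);y_{d,m,R}(x;H_2) \big) \\
= \frac{d^m}{2m^{3/4}R^{1/4}}\,\Xi_{m,\lambda,r+1/4,R}(x) + \frac{3d^mR^{1/4}}{32m^{5/4}}\,\Xi_{m,\lambda,r-1/4,R}(x),
\end{multline*}
valid for every $x>1$. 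This mirrors the computation in the proof of Lemma~\ref{1ax}, and is the only genuinely calculational step.

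Next I would verify that the hypotheses of Proposition~\ref{good riddance erfc} hold for both exponents $\mu=r\pm\tfrac14$: this is immediate, since $m\ge2$, the assumption $dH_2\ge10^8$ forces $\lambda\ge\log(10^8)$, and $r\le m+1$ gives $r+\tfrac14\le m+\tfrac54\le m+\tfrac74$. I would also note that, in the present notation, $x_3(m,d,H_2,R)=\exp\big(R\lambda(\sqrt{m\lambda}-\tau_m)^2\big)$, so Proposition~\ref{good riddance erfc} describes each of the two $\Xi$ functions exactly on either side of the threshold $x=x_3(m,d,H_2,R)$.

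Then I would split into two cases according to whether $x$ lies at or below $x_3(m,d,H_2,R)$ or above it. When $1<x\le x_3(m,d,H_2,R)$, Proposition~\ref{good riddance erfc}(b) bounds each $\Xi_{m,\lambda,r\pm1/4,R}(x)$ by $\omega_m e^{-m\lambda}\exp(-\log x/(R\lambda))(\log x)^{r\pm1/4}$; feeding these into the displayed identity and using $d^m e^{-m\lambda}=d^m(dH_2)^{-m}=H_2^{-m}$, the right-hand side collapses to precisely $P_{1b}(x;m,r,\log(dH_2),H_2,R)$ from Definition~\ref{J def}. When $x\ge x_3(m,d,H_2,R)$, Proposition~\ref{good riddance erfc}(a) tells us each $\Xi_{m,\lambda,r\pm1/4,R}$ is decreasing on $[x_3(m,d,H_2,R),\infty)$, hence so is the right-hand side of the identity, being a sum of positive multiples of them; thus its value at $x$ is at most its value at $x_3(m,d,H_2,R)$, which the first case, applied at the point $x_3(m,d,H_2,R)$ itself, bounds by $P_{1b}\big(x_3(m,d,H_2,R);m,r,\log(dH_2),H_2,R\big)$. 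Either way, the left-hand side is at most the stated maximum.

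The only part requiring care is the bookkeeping in the first step: the two summands of $J_{1b}$ must land on the $\mu=r+\tfrac14$ and $\mu=r-\tfrac14$ instances of $\Xi$ with exactly the coefficients $\tfrac{d^m}{2m^{3/4}R^{1/4}}$ and $\tfrac{3d^mR^{1/4}}{32m^{5/4}}$, because these have to match the constants appearing in $P_{1b}$ on the nose for the two cases to dovetail. Everything after that is a short deduction from Proposition~\ref{good riddance erfc}.
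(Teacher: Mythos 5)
Your proof is correct and is essentially the paper's own argument: both rewrite the left-hand side via the identities of Lemma~\ref{algebra with z and y} in terms of $\Xi_{m,\lambda,\mu,R}$ with $\lambda=\log(dH_2)$, split at the threshold $x_3(m,d,H_2,R)$, and apply Proposition~\ref{good riddance erfc}(b) below it and the monotonicity of part (a) above it, recovering $P_{1b}$ in each case. The only difference is cosmetic --- you absorb the powers of $z$ into the $\log$-exponents to get an exact two-term identity with $\Xi_{m,\lambda,r\pm1/4,R}$, whereas the paper keeps the decreasing prefactor $(8z+3)/(16\sqrt2\,z^{3/2})$ outside a single $\Xi_{m,\lambda,r+1/2,R}$ --- with the minor caveat that your exponent $\mu=r-\tfrac14$ lies within the stated hypotheses of Definition~\ref{e def} and Proposition~\ref{good riddance erfc} (positive $\mu$) only when $r>\tfrac14$, which holds in every application of the lemma (cf.\ Proposition~\ref{F bounded by S prop}) and is easily seen to be harmless otherwise.
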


\begin{proof}
In this proof we write $y=y_{d,m,R}(x;H_2)$ and $z=z_{m,R}(x)$. We start with Definition~\ref{K pieces def}:
$$
\begin{array}{l}
(\log x)^r  \cdot 2d^m \bigg( \frac{\log x}{mR} \bigg)^{1/2} J_{1b}(z;y) \\
= (\log x)^r \cdot 2d^m \bigg( \frac{\log x}{mR} \bigg)^{1/2} \sqrt\pi \erfc\bigg( \sqrt{\frac z2} \bigg( \sqrt{y}-\frac1{\sqrt {y}} \bigg) \bigg) \frac{8z+3}{16\sqrt2\, z^{3/2}e^z} \\
= (\log x)^r \cdot 2d^m \bigg( \frac{\log x}{mR} \bigg)^{1/2} \sqrt\pi \erfc\bigg( \sqrt{m\log(dH_2)} - \sqrt{\frac{\log x}{R\log(dH_2)}} \bigg) \frac{8z+3}{16\sqrt2\, z^{3/2}e^z}
\end{array}
$$
by the identity~\eqref{erf argument identity}. Since $e^{-z} = \exp\big( {-}2\sqrt{(m\log x)/R} \big)$, we can express the right-hand side in terms of the function $\Xi_{m,\lambda,\mu,R}(x)$ defined in Definition~\ref{e def}, with $\mu=r+\frac12$ and $\lambda=\log(dH_2)$:
\begin{flalign}  
& (\log x)^r \cdot 2d^m \bigg( \frac{\log x}{mR} \bigg)^{1/2} J_{1b}\big( z_{m,R}(x);y_{d,m,R}(x;H_2) \big) \notag \\
& \hskip19ex = \frac{2d^m}{\sqrt{mR}} \frac{8z+3}{16\sqrt2\, z^{3/2}} \Xi_{m,\lambda,\mu,R}(x). \label{1bx come back to max} 
\end{flalign}

Suppose first that we have
$$
x \le x_3 = \exp \big( R\log(dH_2) (\sqrt{m\log(dH_2)} - \tau_m)^2 \big).
$$
Then by Proposition~\ref{good riddance erfc}(b),
\begin{flalign}
& 2d^m  \frac1{\sqrt{mR}} \frac{8z+3}{16\sqrt2\, z^{3/2}} \Xi_{m,\lambda,\mu,R}(x) \notag \\
&\le 2d^m \frac1{\sqrt{mR}} \frac{8z+3}{16\sqrt2\, z^{3/2}} \omega_m e^{-m\lambda} \exp \bigg( {-} \frac{\log x}{R\log(dH_2)} \bigg) \log^{r+1/2} x \notag \\
&= \frac{\omega_m}{8\sqrt2} \frac{\log^{r+1/2} x}{H_2^m\sqrt{mR}} ( 8z^{-1/2}+3z^{-3/2} ) \exp\bigg( {-}\frac{\log x}{R\log(dH_2)} \bigg) \notag \\
&= \frac{\omega_m}{8\sqrt2} \frac{\log^{r+1/2} x}{H_2^m\sqrt{mR}} \bigg( \frac{8R^{1/4}}{\sqrt 2 (m\log x)^{1/4}} + \frac{3R^{3/4}}{2\sqrt 2 (m\log x)^{3/4}} \bigg) \exp\bigg( {-}\frac{\log x}{R\log(dH_2)} \bigg) \notag \\
&= \frac{\omega_m}{H_2^m} \bigg( \frac{\log^{r+1/4} x}{2m^{3/4}R^{1/4}} + \frac{3R^{1/4}\log^{r-1/4}x}{32m^{5/4}} \bigg) \exp\bigg( {-}\frac{\log x}{R\log(dH_2)} \bigg) \\
&= P_{1b}\big(x;m,r,\log(dH_2),H_2,R\big)  \label{1bx came back to max}
\end{flalign}
by Definition~\ref{J def}. Combining the last two equations establishes the lemma in this range of~$x$.

Now suppose that $x \ge x_3$. By Proposition~\ref{good riddance erfc}(a), the function $\Xi_{m,\lambda,\mu,R}(x)$ is a decreasing function of $x$ in this range, while the function $(8z+3)/16\sqrt2 z^{3/2}$ is also a decreasing function of $x$. Therefore
\[
\frac{2d^m}{\sqrt{mR}} \frac{8z+3}{16\sqrt2\, z^{3/2}} \Xi_{m,\lambda,\mu,R}(x) \le \frac{2d^m}{\sqrt{mR}} \frac{8z(x_3)+3}{16\sqrt2\, z(x_3)^{3/2}} \Xi_{m,\lambda,\mu,R}(x_3);
\]
and then the calculation leading to~\eqref{1bx came back to max} shows that $P_{1b}\big(x_3;m,r,\log(dH_2),H_2,R\big)$ is an upper bound for the latter quantity, which establishes the lemma for this complementary range of~$x$ thanks to equation~\eqref{1bx come back to max}.
\end{proof}

\begin{lemma}  \label{1b}
Let $r$, $m$, $R$, $x$, $\lambda$, and $H_2$ be positive real numbers with $x>1$ and $r>\frac14$. Then
$$
P_{1b}(x;m,r,\lambda,H_2,R) \le Q_{1b}(m,r,\lambda,H_2,R).
$$
\end{lemma}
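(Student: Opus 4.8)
The plan is to mimic the proof of Lemma~\ref{1a} almost verbatim. From Definition~\ref{J def}, the quantity $P_{1b}(x;m,r,\lambda,H_2,R)$ is a sum of exactly two summands, and each one, viewed as a function of~$x$, has precisely the shape of the function $\Phi$ in Lemma~\ref{simple calculus lemma} with that lemma's exponent parameter set equal to~$1$ (here the parameter called $\lambda$ in the present statement plays the role of a fixed constant, not of the lemma's exponent variable, which is also written $\lambda$; this notational clash is the one point that requires a moment's care). Explicitly, the first summand is
$$
\frac{\omega_m}{2m^{3/4}R^{1/4}H_2^m}\,(\log x)^{r+1/4}\exp\!\Big({-}\tfrac{\log x}{R\lambda}\Big),
$$
which matches $c_1\exp(-c_2\log u)\log^{\mu}u$ with $c_2 = 1/(R\lambda)$ and $\mu = r+\tfrac14$, while the second summand has the same form with $\mu = r-\tfrac14$. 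The hypothesis $r>\tfrac14$ is exactly what is needed to ensure $\mu = r-\tfrac14>0$, so that Lemma~\ref{simple calculus lemma} applies to the second summand (it applies trivially to the first).

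First I would apply Lemma~\ref{simple calculus lemma} to each summand separately: the first is maximized over $x>1$ at $\log x = (r+\tfrac14)R\lambda$ and the second at $\log x = (r-\tfrac14)R\lambda$, with respective maximal values $c_1\big(\mu/(ec_2)\big)^{\mu}$. Since $P_{1b}$ is the pointwise sum of these two summands, it is bounded above (uniformly in $x$) by the sum of the two maxima, namely
$$
\frac{\omega_m}{2m^{3/4}R^{1/4}H_2^m}\Big(\tfrac{(r+1/4)R\lambda}{e}\Big)^{r+1/4}+\frac{3\omega_m R^{1/4}}{32m^{5/4}H_2^m}\Big(\tfrac{(r-1/4)R\lambda}{e}\Big)^{r-1/4}.
$$
It then remains only to simplify each term: in both, pulling out the common factor $\omega_m R^r/(e^r H_2^m)$ --- using $R^{-1/4}\cdot R^{r+1/4}=R^r$ in the first term, $R^{1/4}\cdot R^{r-1/4}=R^r$ in the second, and $e^{-(r\pm 1/4)}=e^{-r}e^{\mp 1/4}$ --- leaves exactly $\frac{(r+1/4)^{r+1/4}\lambda^{r+1/4}}{2e^{1/4}m^{3/4}}$ and $\frac{3e^{1/4}(r-1/4)^{r-1/4}\lambda^{r-1/4}}{32m^{5/4}}$ respectively. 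Comparing with Definition~\ref{M def}, the sum of these is precisely $Q_{1b}(m,r,\lambda,H_2,R)$, which completes the proof.

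There is essentially no genuine obstacle here: the argument is pure bookkeeping, and the constants in $Q_{1b}$ were evidently defined precisely so that they coincide with the output of Lemma~\ref{simple calculus lemma}. The only thing to be vigilant about is the exponent arithmetic in collecting the powers of $R$, $e$, and $m$, together with keeping straight that the maximizing value of $\log x$ differs between the two summands (which is harmless, since we bound each summand by its own maximum before summing).
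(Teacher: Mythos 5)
Your proposal is correct and follows exactly the paper's proof: apply Lemma~\ref{simple calculus lemma} to the two summands of $P_{1b}$ separately, maximizing them at $\log x = (r+\tfrac14)R\lambda$ and $\log x = (r-\tfrac14)R\lambda$ respectively, and then sum the resulting maxima to recover $Q_{1b}$ from Definition~\ref{M def}. The only addition is your (accurate) observation about the notational clash with $\lambda$ and the role of $r>\tfrac14$, which is fine.
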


\begin{proof}
By Lemma~\ref{simple calculus lemma}, the two summands in Definition~\ref{J def} for $P_{1b}$ are maximized at $\log x = (r+\frac14)R\lambda$ and $\log x = (r-\frac14) R\lambda$, respectively. Inserting these respective values of $x$ into the two summands yields the following upper bound for $P_{1b}(x;m,r,d,H_2,R)$ :
\begin{align*}
& \frac{\omega_m}{H_2^m} \bigg( \frac1{2m^{3/4}R^{1/4}} \bigg( \frac{(r+1/4)R\lambda}e \bigg)^{r+1/4} + \frac{3R^{1/4}}{32m^{5/4}} \bigg( \frac{(r-1/4)R\lambda}e \bigg)^{r-1/4} \bigg) \\
&= \frac{\omega_mR^r}{e^r H_2^m} \bigg( \frac{(r+1/4)^{r+1/4}\lambda^{r+1/4}}{2e^{1/4}m^{3/4}}  + \frac{3e^{1/4}(r-1/4)^{r-1/4}\lambda^{r-1/4}}{32m^{5/4}} \bigg),
\end{align*}
which establishes the lemma, upon appealing to Definition~\ref{M def}.
\end{proof}

\begin{lemma}  \label{2bx}
Let $m\ge2$ be an integer, and let $r$, $R$, $x$, $d$, and $H_2$ be positive real numbers with $x>1$, $r\le m+1$, and $dH_2\ge10^8$. Then
\begin{multline*}
(\log x)^r \cdot 2d^m \frac{\log x}{mR} J_{2b}\big(z_{m,R}(x),y_{d,m,R}(x;H_2)\big) \\
\le \max\big\{ P_{2b}\big(x;m,r,\log(dH_2),H_2,R\big), P_{2b}\big( x_3(m,d,H_2,R);m,r,\log(dH_2),H_2,R\big) \big\}.
\end{multline*}
\end{lemma}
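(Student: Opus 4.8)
The plan is to follow the proof of Lemma~\ref{1bx} essentially verbatim, with $J_{1b}$ replaced by $J_{2b}$. Write $y = y_{d,m,R}(x;H_2)$ and $z = z_{m,R}(x)$. First I would use Definition~\ref{K pieces def} together with the identities~\eqref{ezy denominator identity} and~\eqref{erf argument identity} of Lemma~\ref{algebra with z and y} to rewrite the argument of $\erfc$ in $J_{2b}(z;y)$ as $\sqrt{m\log(dH_2)} - \sqrt{(\log x)/(R\log(dH_2))}$ and the factor $e^{-z}$ as $\exp\big({-}2\sqrt{(m\log x)/R}\,\big)$. Then, using the elementary relation $z^2 = 4m(\log x)/R$ to trade one factor of $(\log x)^{1/4}$ for a factor $z^{1/2}$, the left-hand side of the claimed inequality becomes
\[
(\log x)^r\cdot 2d^m\frac{\log x}{mR}J_{2b}\big(z_{m,R}(x);y_{d,m,R}(x;H_2)\big)
 = \frac{d^m}{256\,m^{5/4}R^{3/4}}\big(128 + 240z^{-1} + 105z^{-2}\big)\,\Xi_{m,\lambda,\mu,R}(x),
\]
where $\lambda = \log(dH_2)$, $\mu = r + \tfrac34$, and $\Xi$ is as in Definition~\ref{e def}. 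The constraint $\mu = r + \tfrac34 \le m + \tfrac74$ needed to invoke Proposition~\ref{good riddance erfc} is exactly the hypothesis $r \le m+1$, and $\lambda = \log(dH_2) \ge \log(10^8)$ holds because $dH_2 \ge 10^8$.

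Now split according to whether $x$ is at most or at least $x_3 = x_3(m,d,H_2,R)$ from Definition~\ref{x3 def}. For $1 < x \le x_3$, apply Proposition~\ref{good riddance erfc}(b) to bound $\Xi_{m,\lambda,\mu,R}(x) \le \omega_m e^{-m\lambda}\exp\big({-}(\log x)/(R\lambda)\big)(\log x)^{\mu}$; since $e^{-m\lambda} = (dH_2)^{-m}$ the factor $d^m e^{-m\lambda}$ collapses to $H_2^{-m}$, and expanding $128 + 240z^{-1} + 105z^{-2}$ in powers of $\log x$ (via $z = 2\sqrt{(m\log x)/R}$) produces exactly the three monomials $\log^{r+3/4}x$, $\log^{r+1/4}x$, $\log^{r-1/4}x$ with the coefficients appearing in Definition~\ref{J def} of $P_{2b}$; the upshot is that the left-hand side is at most $P_{2b}(x;m,r,\log(dH_2),H_2,R)$. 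For $x \ge x_3$, Proposition~\ref{good riddance erfc}(a) shows $\Xi_{m,\lambda,\mu,R}(x)$ is decreasing, while $128 + 240z^{-1} + 105z^{-2}$ is manifestly a decreasing function of $z = z_{m,R}(x)$, which is itself increasing in~$x$; hence the whole product is decreasing on $[x_3,\infty)$ and is bounded above by its value at $x = x_3$, which the computation of the first range (valid at the endpoint as well) identifies with $P_{2b}\big(x_3(m,d,H_2,R);m,r,\log(dH_2),H_2,R\big)$. Taking the larger of the two cases gives the asserted bound.

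I expect the only real obstacle to be bookkeeping: carrying the explicit constant and the fractional powers of $z$ (equivalently of $\log x$, $R$, and $m$) through the substitution so that the result lands precisely on the definition of $P_{2b}$ rather than on a harmless constant multiple of it---exactly the kind of tedious-but-routine manipulation already carried out in Lemmas~\ref{1ax}, \ref{2ax}, and~\ref{1bx}. A minor secondary point is to record that $128 + 240z^{-1} + 105z^{-2}$ really is decreasing in $z$ (each negative power of $z$ is), so that multiplying it by the decreasing function $\Xi_{m,\lambda,\mu,R}$ preserves monotonicity and the supremum on $[x_3,\infty)$ is attained at the endpoint.
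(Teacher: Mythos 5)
Your proof is correct and follows essentially the same route as the paper's: rewrite the left-hand side in terms of $\Xi_{m,\lambda,\mu,R}(x)$ via Lemma~\ref{algebra with z and y}, bound it by $P_{2b}(x;m,r,\log(dH_2),H_2,R)$ for $x\le x_3$ using Proposition~\ref{good riddance erfc}(b), and for $x\ge x_3$ combine Proposition~\ref{good riddance erfc}(a) with the monotonicity of the leftover $z$-factor to reduce to the value at $x_3$. The only cosmetic difference is that you absorb an extra $\log^{1/4}x$ into $\Xi$, taking $\mu=r+\tfrac34$ with the decreasing factor $128+240z^{-1}+105z^{-2}$, whereas the paper takes $\mu=r+1$ with the factor $(128z^2+240z+105)/(256\sqrt2\,z^{5/2})$; your choice even has the mild advantage that the condition $\mu\le m+\tfrac74$ in Proposition~\ref{good riddance erfc} becomes exactly the hypothesis $r\le m+1$.
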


\begin{proof}
In this proof, for concision, we write $y$ for $y_{d,m,R}(x;H_2)$ and $z$ for $z_{m,R}(x)$. We start with Definition~\ref{K pieces def}:
\begin{flalign*}
& (\log x)^r  \cdot 2d^m \frac{\log x}{mR} J_{2b}\big( z_{m,R}(x);y_{d,m,R}(x;H_2) \big) \\
&= (\log x)^r \cdot 2d^m \frac{\log x}{mR} \sqrt\pi \erfc\bigg( \sqrt{\frac z2} \bigg( \sqrt{y}-\frac1{\sqrt {y}} \bigg) \bigg) \frac{128 z^2+240 z+105}{256\sqrt2\, z^{5/2}e^z} \\
&= (\log x)^r \cdot 2d^m \frac{\log x}{mR} \sqrt\pi \erfc\bigg( \sqrt{m\log(dH_2)} - \sqrt{\frac{\log x}{R\log(dH_2)}} \bigg) \\
& \times  \frac{128 z^2+240 z+105}{256\sqrt2\, z^{5/2}e^z},
\end{flalign*}
by identity~\eqref{erf argument identity}. Since $e^{-z} = \exp\big( {-}2\sqrt{(m\log x)/R} \big)$, we can write the last quantity here in terms of the function $\Xi_{m,\lambda,\mu,R}(x)$ defined in Definition~\ref{e def}, with $\mu=r+1$ and $\lambda=\log(dH_2)$:
\begin{flalign}
& (\log x)^r \cdot 2d^m \frac{\log x}{mR} J_{2b}\big( z_{m,R}(x);y_{d,m,R}(x;H_2) \big) \notag \\
& = \frac{2d^m}{mR} \frac{128 z^2+240 z+105}{256\sqrt2\, z^{5/2}} \Xi_{m,\lambda,\mu,R}(x).
\label{2bx come back to max} \\ \notag
\end{flalign}

Suppose first that $x \le x_3 = \exp \big( R\log(dH_2) (\sqrt{m\log(dH_2)} - \tau_m)^2 \big)$. Then by Proposition~\ref{good riddance erfc}(b),
\begin{flalign}
& 2d^m  \frac1{mR} \frac{128 z^2+240 z+105}{256\sqrt2\, z^{5/2}} \Xi_{m,\lambda,\mu,R}(x) \notag \\
&\le \frac{2d^m}{mR} \frac{128 z^2+240 z+105}{256\sqrt2\, z^{5/2}} \omega_m e^{-m\lambda} \exp \bigg( {-} \frac{\log x}{R\log(dH_2)} \bigg) \log^{r+1} x \notag \\
&= \frac{\omega_m}{128\sqrt2} \frac{\log^{r+1} x}{H_2^mmR} ( 128z^{-1/2}+240z^{-3/2}+105z^{-5/2} ) \exp\bigg( {-}\frac{\log x}{R\log(dH_2)} \bigg) \notag \\
&= \frac{\omega_m}{128\sqrt2} \frac{\log^{r+1} x}{H_2^mmR} \bigg( \frac{128R^{1/4}}{\sqrt 2 (m\log x)^{1/4}} + \frac{240R^{3/4}}{2\sqrt 2 (m\log x)^{3/4}} + \notag\\
& \hskip22ex \frac{105R^{5/4}}{4\sqrt 2 (m\log x)^{5/4}} \bigg) \exp\bigg( {-}\frac{\log x}{R\log(dH_2)} \bigg) \notag \\
&= \frac{\omega_m}{H_2^m} \bigg( \frac{\log^{r+3/4}x}{2m^{5/4}R^{3/4}} + \frac{15\log^{r+1/4}x}{32m^{7/4}R^{1/4}} + \frac{105R^{1/4}\log^{r-1/4}x}{1024m^{9/4}} \bigg) \exp\bigg( {-}\frac{\log x}{R\log(dH_2)} \bigg) \notag \\
&= P_{2b}\big(x;m,r,\log(dH_2),H_2,R\big)  \label{2bx came back to max}
\end{flalign}
by Definition~\ref{J def}. Combining the last two equations establishes the lemma in this range of~$x$.

Now suppose that $x \ge x_3$. By Proposition~\ref{good riddance erfc}(a), the function $\Xi_{m,\lambda,\mu,R}(x)$ is decreasing in this range, while the function $(128 z^2+240 z+105)/256\sqrt2\, z^{5/2}$ is also a decreasing function of $x$. Therefore
\begin{flalign*}
&\frac{2d^m}{\sqrt{mR}} \frac{128 z^2+240 z+105}{256\sqrt2\, z^{5/2}} \Xi_{m,\lambda,\mu,R}(x) \\
&\le \frac{2d^m}{\sqrt{mR}} \frac{128 z(x_3)^2+240 z(x_3)+105}{256\sqrt2\, z(x_3)^{5/2}} \Xi_{m,\lambda,\mu,R}(x_3);
\end{flalign*}
 and then the calculation~\eqref{2bx came back to max} shows that $P_{2b}\big(x_3;m,r,\log(dH_2),H_2,R\big)$ is an upper bound for the latter quantity, which establishes the lemma for this complementary range of~$x$, via  equation~\eqref{2bx come back to max}.
\end{proof}

\begin{lemma}  \label{2b}
Let $r$, $m$, $R$, $x$, $\lambda$, and $H_2$ be positive real numbers with $x>1$ and $r>\frac14$. Then
$$
P_{2b}(x;m,r,\lambda,H_2,R) \le Q_{2b}(m,r,\lambda,H_2,R).
$$
\end{lemma}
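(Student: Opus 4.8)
The plan is to reuse, in spirit verbatim, the argument that proves Lemmas~\ref{1a}, \ref{2a}, and~\ref{1b}. Unwinding Definition~\ref{J def}, the function $P_{2b}(x;m,r,\lambda,H_2,R)$ is a sum of three terms, each of the shape
\[
c\cdot(\log x)^{\mu}\exp\!\Big({-}\tfrac{\log x}{R\lambda}\Big),
\]
where the three values of the exponent are $\mu=r+\tfrac34$, $\mu=r+\tfrac14$, and $\mu=r-\tfrac14$, and where $c$ is a positive constant depending only on $m$, $R$, $\lambda$, and $H_2$ (not on $x$). Each such term is a function of $x$ of the type covered by Lemma~\ref{simple calculus lemma} — take the exponent on $\log u$ there to equal $1$ and the coefficient $c_2$ there to equal $1/(R\lambda)$ — so it increases and then decreases in $x$, attaining its maximum over all $x>1$ at $\log x=\mu R\lambda$, with maximum value $c\,(\mu R\lambda/e)^{\mu}$. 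The hypothesis $r>\tfrac14$ is exactly what makes all three exponents $r+\tfrac34,\ r+\tfrac14,\ r-\tfrac14$ positive, so that Lemma~\ref{simple calculus lemma} genuinely applies to every term.

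Carrying this out term by term, I would bound $P_{2b}(x;m,r,\lambda,H_2,R)$ above by the sum of the three individual maxima, namely
\[
\frac{\omega_m}{H_2^m}\bigg(\frac{1}{2m^{5/4}R^{3/4}}\Big(\tfrac{(r+3/4)R\lambda}{e}\Big)^{r+3/4}+\frac{15}{32m^{7/4}R^{1/4}}\Big(\tfrac{(r+1/4)R\lambda}{e}\Big)^{r+1/4}+\frac{105R^{1/4}}{1024m^{9/4}}\Big(\tfrac{(r-1/4)R\lambda}{e}\Big)^{r-1/4}\bigg).
\]
The remaining work is purely algebraic: in each term the explicit power of $R$ combines with the $(R\lambda)^{\mu}$ factor to leave exactly $R^{r}$ times a power of $\lambda$, while $e^{-\mu}$ contributes $e^{-r}$ times one of $e^{-3/4}$, $e^{-1/4}$, $e^{1/4}$. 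Collecting these, the displayed quantity becomes precisely $Q_{2b}(m,r,\lambda,H_2,R)$ as defined in Definition~\ref{M def}, which finishes the proof.

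I do not expect any real obstacle here; as with Lemmas~\ref{1a}, \ref{2a}, and~\ref{1b}, the only point requiring any care is the bookkeeping of the fractional powers of $R$, $\lambda$, and $e$, together with the (trivial) check that $r>\tfrac14$ keeps all the exponents in range so that Lemma~\ref{simple calculus lemma} is applicable.
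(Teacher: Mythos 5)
Your proposal is correct and follows essentially the same route as the paper: the paper's proof also applies Lemma~\ref{simple calculus lemma} to each of the three summands of $P_{2b}$, maximizing them at $\log x = (r+\epsilon)R\lambda$ for $\epsilon\in\{\tfrac34,\tfrac14,-\tfrac14\}$, and then collects the powers of $R$, $\lambda$, and $e$ to recognize $Q_{2b}$ from Definition~\ref{M def}. Your bookkeeping of the factors matches the paper's computation exactly.
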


\begin{proof}
By Lemma~\ref{simple calculus lemma}, the three summands in Definition~\ref{J def} for $P_{2b}$ are maximized at $\log x = (r+\ep)R\log(dH_2)$ for $\ep\in\{\frac34,\frac14,-\frac14\}$. We therefore have the upper bound
\begin{flalign*}
&P_{2b}  (x;m,r,\lambda,H_2,R) \\
&\le \frac{\omega_m}{H_2^m} \bigg( \frac1{2m^{5/4}R^{3/4}} \bigg( \frac{(r+3/4)R\log(dH_2)}e \bigg)^{r+3/4} \\
&\qquad{} + \frac{15}{32m^{7/4}R^{1/4}} \bigg( \frac{(r+1/4)R\log(dH_2)}e \bigg)^{r+1/4} \\
&\qquad{} + \frac{105R^{1/4}}{1024m^{9/4}} \bigg( \frac{(r-1/4)R\log(dH_2)}e \bigg)^{r-1/4} \bigg) \\
&= \frac{\omega_mR^r}{e^r H_2^m} \bigg( \frac{(r+3/4)^{r+3/4}\log^{r+3/4}(dH_2)}{2e^{3/4}m^{5/4}} + \frac{15(r+1/4)^{r+1/4}\log^{r+1/4}(dH_2)}{32e^{1/4}m^{7/4}} \\
&\qquad{}+ \frac{105e^{1/4}(r-1/4)^{r-1/4}\log^{r-1/4}(dH_2)}{1024m^{9/4}} \bigg),
\end{flalign*}
which establishes the lemma thanks to Definition~\ref{M def}.
\end{proof}

\subsection{Assembly of the final upper bound for $|\psi(x;q,a) - x/\phi(q)|$}  \label{assembly section}

Finally, after the work of the preceding four sections, we have all of the tools necessary to assemble an explicit upper bound for $\funcF{x} (\log x)^r$. This goal, in turn, was the last step required to convert Proposition~\ref{now we see what needs maximizing} into an explicit upper bound for $\big| \psi(x;q,a) - x/\phi(q) \big|$ (see Theorem~\ref{code this funky theorem} below). The upper bound is rather complicated, but again our paradigm is that any function that can be easily programmed and computed essentially instantly is sufficient for our purposes. At the end of this section, we describe how we derive Theorem~\ref{main psi theorem} from the resulting upper bound.

\begin{definition}  \label{collect bounds def}
Let $d$ and $m$ be positive integers with $m\ge 2$, and let $r,H_2,R$ be positive real numbers. Define
\begin{multline*}
S_{d,m,R}(r,H,H_2) = \funcB + \frac 1\pi Q_2(m,r,\log(dH_2),H_2,R) H^{m+1}  \\ + \bigg( \frac1\pi \log\frac{1}{2\pi} + \frac{C_1}{H_2} \bigg) Q_1(m,r,\log(dH_2),H_2,R) H^{m+1},
\end{multline*}
where $\funcB$ is as in Definition \ref{boundary functions to maximize} and the $Q_j(m,r,\lambda,H_2,R)$ are as in Definition \ref{M def}.
\end{definition}

\begin{prop}  \label{F bounded by S prop}
Let $d$ and $m$ be positive integers with $m\ge2$, and let $r,R,H,H_2$ be positive real numbers such that $\frac14<r\le m+1$, $15 \leq H \leq H_2$, $dH_2 \ge 10^8$, and $\chi$ a character satisfying Hypothesis Z$(H_2,R)$. Then for all $x>1$, we have
\[H^{m+1}\funcF{x} (\log x)^r \le S_{d,m,R}(r,H,H_2).\]
\end{prop}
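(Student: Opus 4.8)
The plan is to trace, from the bottom up, the chain of estimates assembled in Sections~\ref{eliminate zeros sec} and~\ref{Sec4}, feeding the standing hypotheses $m\ge2$, $\frac14<r\le m+1$, $15\le H\le H_2$, and $dH_2\ge10^8$ into each cited result in turn. First I would apply Proposition~\ref{integral bound for S tilde prop}, whose hypotheses ($\chi$ has conductor $d$, satisfies Hypothesis~Z$(H_2,R)$, and $1\le H\le H_2$) are all in force, to obtain
\[
\funcF{x} \le M_d(H_2,\funcHtwo{x})\,\funcY{\funcHtwo{x}} + \int_{H_2}^\infty \Big(\tfrac{\partial}{\partial u}M_d(H_2,u)\Big)\funcY{u}\,du .
\]
Multiplying through by $H^{m+1}(\log x)^r$, it suffices to show that $H^{m+1}(\log x)^r$ times the boundary term is at most $\funcB$, and that $H^{m+1}(\log x)^r$ times the integral is at most the remaining two summands of $S_{d,m,R}(r,H,H_2)$ in Definition~\ref{collect bounds def}; the proposition then follows by addition.

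For the boundary term, I would invoke Proposition~\ref{boundary term to maximize} (valid since $15\le H\le H_2$ and $\frac14<r\le m+1$): when $0<\log x\le R(m+1)\log^2(dH_2)$ we have $\funcHtwo{x}=H_2$, so $M_d(H_2,\funcHtwo{x})\funcY{\funcHtwo{x}}(\log x)^r$ equals $\funcBone{x}$ by Definition~\ref{boundary functions to maximize}; when $\log x>R(m+1)\log^2(dH_2)$, the monotonicity of $M_d(\ell,u)$ in $\ell$ for $\ell\ge15$ together with Proposition~\ref{boundary term to maximize} bounds it strictly above by $\funcBtwo{x}$. Then Lemma~\ref{boundary term maximized}, with $\mu=r$ (permissible as $r\le m+1$), gives $H^{m+1}\funcBone{x}\le\funcB$ by part~\ref{item1} and, in the second range where $\log x\ge R(m+1)\log^2(dH_2)$, $H^{m+1}\funcBtwo{x}\le\funcB$ by part~\ref{item2}. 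Thus the boundary contribution is $\le\funcB$ for all $x>1$.

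For the integral term, I would first apply Proposition~\ref{convert to I notation prop} to bound the integral by $\frac1\pi I_{2,m}\big(\tfrac{\log x}R,d;H_2\big)+\big(\tfrac1\pi\log\tfrac1{2\pi}+\tfrac{C_1}{H_2}\big)I_{1,m}\big(\tfrac{\log x}R,d;H_2\big)$, then Lemma~\ref{I to K cov lemma} to rewrite each $I_{n,m}\big(\tfrac{\log x}R,d;H_2\big)$ as $2d^m\big(\tfrac{\log x}{mR}\big)^{n/2}K_n\big(z_{m,R}(x);y_{d,m,R}(x;H_2)\big)$ in the notation of Definition~\ref{z and y def}, and then Proposition~\ref{bounds for incomplete Bessels prop} to replace $K_n$ by $J_{na}+J_{nb}$. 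Multiplying by $(\log x)^r$, Lemmas~\ref{1ax} and~\ref{2ax} identify the $J_{na}$-terms with $P_{1a}$ and $P_{2a}$ (with parameter $\lambda=\log(dH_2)$), while Lemmas~\ref{1bx} and~\ref{2bx} (whose hypotheses $r\le m+1$ and $dH_2\ge10^8$ are satisfied) bound the $J_{nb}$-terms by $\max\{P_{nb}(x;\dots),\,P_{nb}(x_3;\dots)\}$, where $x_3=x_3(m,d,H_2,R)$ is as in Definition~\ref{x3 def}. Since Lemmas~\ref{1a}, \ref{1b}, \ref{2a}, and~\ref{2b} bound each $P$-function by the corresponding $Q$-function uniformly for all $x>1$, in particular at $x=x_3$, the two maxima also collapse to $Q_{1b}$ and $Q_{2b}$; summing gives $(\log x)^r I_{n,m}\big(\tfrac{\log x}R,d;H_2\big)\le Q_n(m,r,\log(dH_2),H_2,R)$ for $n=1,2$. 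Multiplying by $H^{m+1}$ and recombining with the coefficients $\tfrac1\pi$ and $\tfrac1\pi\log\tfrac1{2\pi}+\tfrac{C_1}{H_2}$ produces precisely the last two summands of $S_{d,m,R}(r,H,H_2)$.

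Adding the boundary and integral bounds yields $H^{m+1}\funcF{x}(\log x)^r\le S_{d,m,R}(r,H,H_2)$ for all $x>1$, completing the proof. I expect no genuine obstacle here: every inequality used has already been established in the preceding two sections, so the only real work is organizational — propagating the hypotheses correctly into each cited lemma, keeping the parameter $\lambda$ identically equal to $\log(dH_2)$ throughout, and disposing of the $\max\{\cdot,\cdot\}$ produced by Lemmas~\ref{1bx} and~\ref{2bx} (which collapses because Lemmas~\ref{1b} and~\ref{2b} hold at every $x>1$). The one point requiring a little care is the passage from $M_d(H_2,\cdot)$ (as delivered by Proposition~\ref{integral bound for S tilde prop}) to the $M_d(H,\cdot)$ appearing in Proposition~\ref{boundary term to maximize}, which is harmless by the elementary observation that $M_d(\ell,u)$ is nonincreasing in $\ell$ once $\ell\ge15$, and is in any case irrelevant in the range $\log x\le R(m+1)\log^2(dH_2)$ where $\funcHtwo{x}=H_2$.
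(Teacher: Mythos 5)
Your proposal is correct and follows essentially the same route as the paper's own proof: Proposition~\ref{integral bound for S tilde prop} to split off the boundary term, Proposition~\ref{boundary term to maximize} plus Lemma~\ref{boundary term maximized} to reach $\funcB$, and Proposition~\ref{convert to I notation prop}, Lemma~\ref{I to K cov lemma}, Proposition~\ref{bounds for incomplete Bessels prop}, and Lemmas~\ref{1ax}--\ref{2b} to reduce the integral term to the $Q_1$, $Q_2$ summands of $S_{d,m,R}(r,H,H_2)$. Your remark on the harmless $M_d(H_2,\cdot)$ versus $M_d(H,\cdot)$ discrepancy is a fair observation and does not affect the argument.
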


\begin{proof}
We proceed first under the assumption that $\log x \le R(m+1) \log^2(dH_2)$. Starting from Proposition~\ref{integral bound for S tilde prop}, we apply Proposition~\ref{boundary term to maximize} to conclude that necessarily $\funcF{x} (\log x)^r$ is bounded above by
\begin{equation}  \label{first B1}
\funcBone{x} + (\log x)^r \int_{H_2}^\infty \left(\frac{\partial}{\partial u} M_d(H_2,u)\right) \funcY{u} \,du.
\end{equation}
We then apply Proposition~\ref{convert to I notation prop}, Lemma~\ref{I to K cov lemma}, and Proposition~\ref{bounds for incomplete Bessels prop} to get
\begin{multline}  \label{second B1}
\funcF{x} (\log x)^r \le \funcBone{x} + (\log x)^r \cdot \frac1\pi 2d^m \frac{\log x}{mR} \times \\
\times \bigg( J_{2a}\bigg( 2\sqrt{\frac{m\log x}R}; \sqrt{\frac{mR}{\log x}} \log(dH_2) \bigg) + J_{2b}\bigg( 2\sqrt{\frac{m\log x}R}; \sqrt{\frac{mR}{\log x}} \log(dH_2) \bigg) \bigg) \\
+ (\log x)^r \bigg( \frac1\pi \log\frac{1}{2\pi} + \frac{C_1}{H_2} \bigg) 2d^m\bigg( \frac{\log x}{mR} \bigg)^{1/2} \times \\
\times \bigg( J_{1a}\bigg( 2\sqrt{\frac{m\log x}R}; \sqrt{\frac{mR}{\log x}} \log(dH_2) \bigg) + J_{1b}\bigg( 2\sqrt{\frac{m\log x}R}; \sqrt{\frac{mR}{\log x}} \log(dH_2) \bigg) \bigg).
\end{multline}
Now Lemmas~\ref{1ax},~\ref{2ax},~\ref{1bx}, and~\ref{2bx} yield
\begin{equation}  \label{still fn of x}
\begin{aligned}
\funcF{x} (\log x)^r \le & \funcBone{x} + \frac1\pi \left( P_{2a}(x;m,r,\log(dH_2),H_2,R) + M_2 \right) \\
& + \bigg( \frac1\pi \log\frac{1}{2\pi} + \frac{C_1}{H_2} \bigg) \left( P_{1a}(x;m,r,\log(dH_2),H_2,R) + M_1 \right),\\
\end{aligned}
\end{equation}
where $M_1$ and $M_2$ are
$$
\max\{P_{1b}(x;m,r,\log(dH_2),H_2,R),P_{1b}(x_3(m,d,H_2,R);m,r,\log(dH_2),H_2,R)\}
$$
and
$$
\max\{P_{2b}(x;m,r,\log(dH_2),H_2,R),P_{2b}(x_3(m,d,H_2,R);m,r,\log(dH_2),H_2,R)\},
$$
respectively.
Finally, Lemmas~\ref{boundary term maximized},~\ref{1a},~\ref{2a},~\ref{1b}, and~\ref{2b} give
\begin{align}
&H^{m+1} \funcF{x} (\log x)^r \le \funcB  \notag\\
&+ \frac1\pi \big( Q_{2a}(m,r,\log(dH_2),H_2,R) + Q_{2b}(m,r,\log(dH_2),H_2,R) \big) H^{m+1}
\notag\\
&+ \bigg( \frac1\pi \log\frac{1}{2\pi} + \frac{C_1}{H_2} \bigg) \big( Q_{1a}(m,r,\log(dH_2),H_2,R)
\notag \\
&+ Q_{1b}(m,r,\log(dH_2),H_2,R) \big) H^{m+1}, \label{Bs become B}
\end{align}
which establishes the proposition under the assumption $\log x \le R(m+1) \log^2(dH_2)$.

If, instead, $\log x > R(m+1) \log^2(dH_2)$, then the application of Proposition~\ref{boundary term to maximize} requires us to replace $\funcBone{x}$ by $\funcBtwo{x}$ in the expressions \eqref{first B1}, \eqref{second B1}, and \eqref{still fn of x}, but then Lemma~\ref{boundary term maximized} allows us to replace $\funcBtwo{x}$ by the term $\funcB$ in the transition from equation~\eqref{still fn of x} to equation~\eqref{Bs become B}, and so the end result is the same.
\end{proof}

\begin{definition}\label{def:G with log}
Let $q$ and $m$ be positive integers with $m\geq 2$, and let $x_2,r,H$ be positive real numbers satisfying $x_2>1$ and $H \ge 1$. Let $H_2$ be a function on the divisors of $q$ satisfying $H \le H_2(d)$ for $d\mid q$. We define
\begin{multline*}
\funcG{x_2,r} \\
= \sum_{d|q} \phi^*(d) \left( g_{d,m,R}^{(3)}(x_2;H ,H_2(d))(\log x_2)^r + \frac12 S_{d,m,R}(r,H,H_2(d))\right),
\end{multline*}
where $g^{(3)}_{d,m,R}$ is as in Definition~\ref{phim def} and $S_{d,m,R}$ is as in Definition~\ref{collect bounds def}.
\end{definition}

\begin{prop} \label{Upsilon resovled}
Let $q$ and $m$ be positive integers with $3 \le m \le 25$, and let
$x$, $x_2$, $r$, $R$, and $H$ be positive real numbers with $x \ge x_2 \ge e^{2m+2}$ and $\frac 14 < r \le m+1$ and $R\ge0.435$ and $H\ge H_1(m)$. Let $H_2$ be a function on the divisors of $q$ with $H_2(d) \ge \max\{H,10^8/d\}$ for all $d\mid q$, such that every character $\chi$ with modulus $q$ satisfies Hypothesis Z$(H_2(q^*),R)$, where $q^*$ is the conductor of~$\chi$. Then
  \[ \Psi_{q,m,r}(x;H) < \funcG{x_2,r}.\]
\end{prop}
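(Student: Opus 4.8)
The proof is essentially an assembly of the results established so far in this section together with Lemma~\ref{Upsilon breakdown}. First I would verify that the hypotheses of Lemma~\ref{Upsilon breakdown} are met: we have $x\ge x_2\ge e^{2m+2}>1$ and $H\ge H_1(m)\ge 1$, while $H\le H_2(d)$ for every $d\mid q$ by hypothesis, and every character modulo $q$ satisfies Hypothesis~Z$(H_2(q^*),R)$ by hypothesis. Lemma~\ref{Upsilon breakdown} thus gives $H^{m+1}\Upsilon_{q,m}(x;H)<\funcG{x}$; multiplying by $(\log x)^r$ and expanding $\funcG{x}$ according to Definition~\ref{def: G} yields
\[
\Psi_{q,m,r}(x;H) < \sum_{d\mid q}\Bigl( \phi^*(d)\, g_{d,m,R}^{(3)}(x;H,H_2(d))(\log x)^r + \tfrac12\, \funcF[q=d,H={H_2(d)}]{x}\,(\log x)^r \Bigr).
\]
It then remains to bound each summand by the corresponding term of $\funcG{x_2,r}$ as given in Definition~\ref{def:G with log}.

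For the $g^{(3)}$ terms, recall from Definition~\ref{phim def} that $g_{d,m,R}^{(3)}(x;H,H_2(d))$ is a positive constant times $x^{-1/2}$ plus a positive constant times $x^{1/(R\log(dH_2(d)))-1}$. Since $dH_2(d)\ge 10^8$ and $R\ge 0.435$, we have $R\log(dH_2(d))\ge 0.435\log(10^8)>8$, so both exponents are negative; an elementary calculus check shows that $x^{-1/2}(\log x)^r$ is non-increasing for $\log x\ge 2r$ and that $x^{1/(R\log(dH_2(d)))-1}(\log x)^r$ is non-increasing for $\log x\ge \tfrac87 r$. Because $r\le m+1$, both thresholds are at most $2m+2\le\log x_2$, so $g_{d,m,R}^{(3)}(x;H,H_2(d))(\log x)^r$ is non-increasing on $[x_2,\infty)$ and is therefore bounded above by $g_{d,m,R}^{(3)}(x_2;H,H_2(d))(\log x_2)^r$.

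For the $\funcF[q=d]{x}$ terms, I would unfold the definition $\funcF[q=d,H={H_2(d)}]{x}=H^{m+1}\sum_{\substack{\chi \mod q \\ q^*=d}}\funcF[q=\chi,H={H_2(d)}]{x}$ and apply Proposition~\ref{F bounded by S prop} to each of the $\phi^*(d)$ characters $\chi$ modulo $q$ of conductor $d$. Its hypotheses hold: $\tfrac14<r\le m+1$ by hypothesis, $15\le H\le H_2(d)$ since $H\ge H_1(m)\ge 102$, $dH_2(d)\ge 10^8$, and $\chi$ satisfies Hypothesis~Z$(H_2(d),R)$. Hence $H^{m+1}\funcF[q=\chi,H={H_2(d)}]{x}(\log x)^r\le S_{d,m,R}(r,H,H_2(d))$, and summing over those $\phi^*(d)$ characters gives $\funcF[q=d,H={H_2(d)}]{x}(\log x)^r\le \phi^*(d)\,S_{d,m,R}(r,H,H_2(d))$. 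Substituting both bounds into the displayed inequality produces
\[
\Psi_{q,m,r}(x;H) < \sum_{d\mid q}\phi^*(d)\Bigl( g_{d,m,R}^{(3)}(x_2;H,H_2(d))(\log x_2)^r + \tfrac12\, S_{d,m,R}(r,H,H_2(d)) \Bigr),
\]
which is exactly $\funcG{x_2,r}$ by Definition~\ref{def:G with log}, completing the proof.

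I expect the only genuinely non-formal step to be the monotonicity verification for the $g^{(3)}$ terms, which leans on the hypotheses $x_2\ge e^{2m+2}$, $r\le m+1$, and $R\log(dH_2(d))>1$; everything else is careful bookkeeping to confirm that the hypotheses of Lemma~\ref{Upsilon breakdown} and Proposition~\ref{F bounded by S prop} are in force and then reading off the definitions.
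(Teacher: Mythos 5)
Your proposal is correct and follows essentially the same route as the paper: invoke Lemma~\ref{Upsilon breakdown}, bound the $g^{(3)}_{d,m,R}$ contribution by monotonicity of $(\log x)^r/x^{\lambda}$ on $[x_2,\infty)$ using $x_2\ge e^{2m+2}$, and bound the $\funcF[q=d]{x}$ contribution character-by-character via Proposition~\ref{F bounded by S prop}, then read off Definition~\ref{def:G with log}. The only cosmetic difference is your monotonicity check, which uses $dH_2(d)\ge 10^8$ to get exponents below $-7/8$, whereas the paper uses $R\ge0.435$ and $dH_2(d)\ge H_1(m)\ge102$ to get $\lambda\ge\tfrac12$; both are valid under the stated hypotheses.
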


\begin{proof}
By Definition~\ref{Upsilon and Psi def}, Lemma~\ref{Upsilon breakdown}, and Definition~\ref{def: G},
  \begin{align}
\Psi_{q,m,r}(x;H) &= H^{m+1}\Upsilon_{q,m}(x;H) (\log x)^r \notag \\
      &< \funcG{x} (\log x)^r \notag \\
      &= \sum_{d|q} \phi^*(d) g_{d,m,R}^{(3)}(x;H ,H_2(d))(\log x)^r \notag \\
      &+ \frac 12 \sum_{d|q} \funcF[q=d,H={H_2(d)}]{x} (\log x)^r. \label{eq: some of tthe way}
  \end{align}
The terms in the first summation are straightforward: by hypothesis,
$$
x\geq x_2 \geq e^{2m+2}\geq e^{2r},
$$
and so $(\log x)^r / x^\lambda$ is decreasing for any $\lambda\ge\frac12$. Consequently, by Definition~\ref{phim def},
  \begin{flalign*}
    & g_{d,m,R}^{(3)}  (x;H ,H_2(d))(\log x)^r \\
      &= g_{d,m}^{(1)}(H,H_2(d)) \cdot \frac{(\log x)^r}{x^{1/2}}+ g_{d,m}^{(2)}(H,H_2(d)) \cdot \frac{x^{1/(R\log dH_2(d))} (\log x)^r}{x } \\
      &\le  g_{d,m}^{(1)}(H,H_2(d)) \cdot \frac{(\log x_2)^r}{x_2^{1/2}}+ g_{d,m}^{(2)}(H,H_2(d)) \cdot \frac{x_2^{1/(R\log dH_2(d))} (\log x_2)^r}{x_2} \\
      &= g_{d,m,R}^{(3)}(x_2;H ,H_2(d))(\log x_2)^r.
  \end{flalign*}
(The hypotheses $R\ge0.435$ and $H_2(d) \ge H\ge H_1(m)\geq 102$, combined with $d\ge1$, ensure that the fraction at the end of the second line is of the form $(\log x)^r / x^\lambda$ with $\lambda\ge\frac12$.)

The terms in the second summation of \eqref{eq: some of tthe way} have been addressed, in essence, in Proposition~\ref{F bounded by S prop}. In particular, beginning with Definition~\ref{def: G},
  \begin{align*}
   \funcF[q=d,H={H_2(d)}]{x} (\log x)^r
      &=\sum_{\substack{\chi \mod q \\ q^*=d}} H^{m+1} \funcF[q=\chi,H={H_2(d)}]{x} (\log x)^r \\
      &\le \sum_{\substack{\chi \mod q \\ q^*=d}} S_{d,m,R}(r,H,H_2(d)) \\
      &= \phi^*(d) S_{d,m,R}(r,H,H_2(d)).
  \end{align*}
A comparison to Definition~\ref{def:G with log}  confirms that the last line of \eqref{eq: some of tthe way} is now seen to be bounded by $\funcG{x,r}$.
\end{proof}

The function we now define is ultimately what we compute to obtain our upper bounds for $|\psi(x;q,a) - x/\phi(q)|$ and hence is the main function we program into our code, although (of course) several auxiliary functions from earlier in this paper must also be programmed.

\begin{definition}  \label{D def}
Let $H_0$ be a function on the characters modulo $q$, and let $H_2$ be a function on the divisors of $q$.
Let $W_q(x)$ be as in Definition~\ref{W def}, $\nu(q,H_0,H)$ as in Definition~\ref{Have you heard the good nus?}, $\funcG{x,r}$ as in Definition~\ref{def:G with log}, and $\alpha_{m,k}$ as in Definition~\ref{alpha def}. Then define $D_{q,m,R}(x_2;H_0,H,H_2)$ by
$$
D_{q,m,R}(x_2;H_0,H,H_2) =\frac{1}{\phi(q)} \left( T_1 + T_2 + T_3 + T_4 \right),
$$
where
\begin{align*}
T_1 &= \nu(q,H_0,H) \frac{\log x_2}{\sqrt{x_2}} \\
 T_2  & = \frac{m+1}H \funcG{x_2,m+1}^{\frac 1{m+1}} \left(1+\frac{\nu(q,H_0,H)}{\sqrt {x_2}}\right)^{\frac{m}{m+1}} \\
 T_3  & = \sum_{k=1}^m \frac{\alpha_{m,k}}{2^{m-k}H^{k+1}} \funcG{x_2,\frac{m+1}{k+1}}^{\frac{k+1}{m+1}}
            \left(1+\frac{\nu(q,H_0,H)}{\sqrt {x_2}}\right)^{\frac{m-k}{m+1}} \\
 T_4  & = \frac{2\alpha_{m,m+1}}{H^{m+2}} \funcG{x_2,\frac{m+1}{m+2}}^{\frac{m+2}{m+1}} + \funcW{x_2}\log x_2.
\end{align*}

See Appendix~\ref{analysis of constants section} for an indication of which terms $T_i$ in this expression contribute the most to its value for the ranges of parameters most important for our purposes.
\end{definition}

\begin{theorem}  \label{code this funky theorem}
Let $3\le q\le 10^5$ be an integer, and let $a$ be an integer that is coprime to~$q$.
Let $3\le m\le 25$ be an integer, and let $x_2\ge e^{2m+2}$ and $H\ge H_1(m)$ and $R\ge0.435$ be real numbers.
Let $H_0$ be a function on the characters modulo $q$ with $0\leq H_0(\chi) \leq H$ for every such character.
Let $H_2$ be a function on the divisors of $q$ with $H_2(d) \ge \max\{H,10^8/d\}$ for all $d\mid q$, such that every character $\chi$ with modulus $q$ satisfies Hypothesis Z$(H_2(q^*),R)$, where $q^*$ is the conductor of~$\chi$.
Then for all $x\ge x_2$,
\[
\bigg| \psi(x;q,a) - \frac x{\phi(q)} \bigg| \,\bigg/\! \frac x{\log x} \le D_{q,m,R}(x_2;H_0,H,H_2),
\]
where $D_{q,m,R}(x_2;H_0,H,H_2)$ is as in Definition~\ref{D def}.
\end{theorem}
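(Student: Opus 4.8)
The plan is to chain together the two culminating results of the preceding sections, Proposition~\ref{now we see what needs maximizing} and Proposition~\ref{Upsilon resovled}, and then match the resulting terms against Definition~\ref{D def}; all of the analytic content has already been extracted, so what remains is essentially bookkeeping.

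First I would check that the hypotheses of Proposition~\ref{now we see what needs maximizing} are met. The conditions $3\le q\le 10^5$, $3\le m\le 25$, $x\ge x_2$, and $H\ge H_1(m)$ are assumed directly, and $x_2\ge e^{2m+2}\ge e^8>1000$ gives $x\ge x_2\ge 1000$. Since $H_2(d)\ge\max\{H,10^8/d\}\ge H$ for every $d\mid q$, the assumption that each character $\chi\bmod q$ satisfies Hypothesis Z$(H_2(q^*),R)$ forces every zero of $L(s,\chi)$ with $|\gamma|\le H$ onto the critical line; that is, every Dirichlet $L$-function modulo $q$ satisfies \GRH{H}. Hence Proposition~\ref{now we see what needs maximizing} applies and bounds $\tfrac{\phi(q)}{x}\bigl|\psi(x;q,a)-x/\phi(q)\bigr|\log x$ by the sum of the four expressions on lines~\eqref{eq:take 2, line2}--\eqref{eq:take 2, line5}.

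Next I would apply Proposition~\ref{Upsilon resovled} to replace each factor $\Psi_{q,m,r}(x;H)$ in that bound by $G_{q,m,R}(x_2,r)$ (Definition~\ref{def:G with log}). The exponents $r$ that occur are $r=m+1$ (on line~\eqref{eq:take 2, line3} and in the $k=0$ summand of line~\eqref{eq:take 2, line4}), $r=\tfrac{m+1}{k+1}$ for $1\le k\le m$ (the remaining summands of line~\eqref{eq:take 2, line4}), and $r=\tfrac{m+1}{m+2}$ (on line~\eqref{eq:take 2, line5}); each satisfies $\tfrac14<\tfrac{m+1}{m+2}\le r\le m+1$, and the other hypotheses of Proposition~\ref{Upsilon resovled} ($x\ge x_2\ge e^{2m+2}$, $R\ge0.435$, $H\ge H_1(m)$, the stated lower bounds on $H_2$, and Hypothesis Z) are precisely those assumed here. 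Because $\Psi_{q,m,r}(x;H)$ enters only through positive powers multiplied by the nonnegative factors $\bigl(1+\nu(q,H_0,H)/\sqrt{x_2}\bigr)^{(m-k)/(m+1)}$ (and, on line~\eqref{eq:take 2, line5}, by nothing), the substitution preserves the direction of the inequality.

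Finally I would collect terms. Using $\alpha_{m,0}=2^m$ from Definition~\ref{alpha def}, the term on line~\eqref{eq:take 2, line3} merges with the $k=0$ summand of line~\eqref{eq:take 2, line4} into $\tfrac{m+1}{H}\,G_{q,m,R}(x_2,m+1)^{1/(m+1)}\bigl(1+\nu(q,H_0,H)/\sqrt{x_2}\bigr)^{m/(m+1)}$, which is $T_2$; the $k=1,\dots,m$ summands of line~\eqref{eq:take 2, line4} assemble into $T_3$; line~\eqref{eq:take 2, line5} together with the $W_q(x_2)\log x_2$ term on line~\eqref{eq:take 2, line2} form $T_4$; and the surviving term $\nu(q,H_0,H)(\log x_2)/\sqrt{x_2}$ on line~\eqref{eq:take 2, line2} is $T_1$. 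Dividing the resulting inequality $\tfrac{\phi(q)}{x}\bigl|\psi(x;q,a)-x/\phi(q)\bigr|\log x< T_1+T_2+T_3+T_4$ through by $\phi(q)$ and recognizing the right-hand side as $D_{q,m,R}(x_2;H_0,H,H_2)$ from Definition~\ref{D def} yields the claim (with strict inequality, hence a fortiori the stated $\le$). I do not anticipate any genuine obstacle here: the only subtlety is keeping the parameter ranges and the term-by-term identification straight, since the hard estimates were all carried out in Propositions~\ref{now we see what needs maximizing} and~\ref{Upsilon resovled}.
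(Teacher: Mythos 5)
Your proposal is correct and is exactly the paper's argument: the paper's proof of Theorem~\ref{code this funky theorem} simply combines Proposition~\ref{now we see what needs maximizing} (together with the remark after it about merging line~\eqref{eq:take 2, line3} with the $k=0$ term of line~\eqref{eq:take 2, line4}) with Proposition~\ref{Upsilon resovled} and Definition~\ref{D def}. Your write-up just makes explicit the hypothesis checks (e.g.\ deducing \GRH{H} from Hypothesis Z$(H_2(q^*),R)$ and $H_2\ge H$) and the term-by-term identification with $T_1,\dots,T_4$, which the paper leaves implicit.
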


\begin{proof}
Combine Proposition~\ref{now we see what needs maximizing} (taking note of the remark following its statement) with Proposition~\ref{Upsilon resovled} and Definition~\ref{D def}.
\end{proof}

%

To apply Theorem~\ref{code this funky theorem}, we must use a value of $R$ for which it is guaranteed that Hypothesis Z$(10^8/q,R)$ is satisfied;
fortunately, suitable results are present in the literature, as we record in the following proposition. Once we do so, we will be able to complete the proof of Theorem~\ref{main psi theorem}.

\begin{prop}[Platt, Kadiri, Mossinghoff-Trudgian] \label{R value prop}
Let $1\le q\le 10^5$. Then $q$ satisfies Hypothesis Z$(10^8/q,5.6)$.
\end{prop}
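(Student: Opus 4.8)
The plan is to obtain Proposition~\ref{R value prop} by combining two results already in the literature: Platt's computational verification of GRH up to an explicit height (recorded as Proposition~\ref{GRH}), which controls the zeros low in the critical strip, and an explicit classical zero-free region for Dirichlet $L$-functions, which controls the zeros high in the critical strip. Since $\Zchi=\Zchistar$, it suffices to fix a primitive character $\chi$ of conductor $q^*\le q\le10^5$ and a nontrivial zero $\rho=\beta+i\gamma$ of $L(s,\chi)$, and to check that $\rho$ satisfies one of the two alternatives in Hypothesis~Z (Definition~\ref{hypothesis z}). The natural quantity on which to split is the product $q^*|\gamma|$: the regime $q^*|\gamma|\le10^8$ will be settled entirely by Platt, and the regime $q^*|\gamma|>10^8$ entirely by the zero-free region. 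Carried out with a little care, this split should in fact establish the (formally stronger) assertion that each character of conductor $q^*$ satisfies Hypothesis~Z$(10^8/q^*,5.6)$, which is the version actually fed into Theorem~\ref{code this funky theorem}.

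Suppose first that $q^*|\gamma|\le10^8$. When $q^*>1$ this gives $|\gamma|\le10^8/q^*=h_3(q^*)$, and when $\chi$ is principal it gives $|\gamma|\le10^8<30{,}610{,}046{,}000=h_3(1)$; either way Proposition~\ref{GRH} forces $\beta=\tfrac12$. If moreover $|\gamma|\le10^8/q$, this is exactly the first alternative of Hypothesis~Z$(10^8/q,5.6)$. If instead $10^8/q<|\gamma|\le10^8/q^*$ (which can occur only when $q^*<q$), one must instead verify $\tfrac12\le1-1/(5.6\log(q^*|\gamma|))$, and this holds because $q^*|\gamma|\ge|\gamma|>10^8/q\ge10^3$, so that $5.6\log(q^*|\gamma|)>5.6\log10^3>2$. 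I note that this case also shows there is no exceptional zero for $q\le10^5$: a real zero with $\beta\ne\tfrac12$ has $\gamma=0$ and would contradict Proposition~\ref{GRH}; this is why Hypothesis~Z needs no exceptional-zero clause in this range.

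Now suppose $q^*|\gamma|>10^8$. Then $|\gamma|>10^8/q^*\ge10^8/q$, so only the second alternative is available, and we must produce $\beta\le1-1/(5.6\log(q^*|\gamma|))$. This is precisely an explicit zero-free region statement, and here I would invoke the zero-free region of Kadiri~\cite{Ka} and McCurley~\cite{Mc3}, in the sharpened form due to Mossinghoff and Trudgian (the principal character being covered by the corresponding zero-free region for $\zeta(s)$): these give $L(s,\chi)\ne0$ whenever $\beta>1-1/(5.6\log(q^*\max\{1,|\gamma|\}))$ for all $q^*\ge1$, the only possible exception being a real zero, which is irrelevant here since $|\gamma|>10^8/q^*>0$. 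Some routine bookkeeping is needed to pass from whatever normalization the cited theorems use (typically $\log(q(|t|+2))$ in place of $\log(q|t|)$, or a mild restriction $|t|\ge t_0$) to the clean form above, but the very generous inequality $q^*|\gamma|>10^8$ makes every such adjustment negligible. Combining the two regimes completes the argument.

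The step that will require the most care is confirming that the value $5.6$ is genuinely delivered by the combination of Kadiri's region and the Mossinghoff--Trudgian refinement across the whole range $q^*|\gamma|>10^8$---in particular in the transition regime where $q^*$ is close to $10^5$, so that $|\gamma|$ need only exceed $10^3$; this is where the best available asymptotic zero-free regions are least comfortable and where the lower-order terms matter most. Should a few small conductors near that boundary fall outside the stated range of validity of the cited theorems, one could dispose of them by computing the relevant low-lying zeros directly with Rubinstein's \lcalc\ program~\cite{Rlcalc}; but in fact the bound $q^*|\gamma|>10^8$ places every zero not already covered by Platt comfortably inside the classical zero-free region, so no such supplementary computation should be needed.
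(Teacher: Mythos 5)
Your proposal is correct and follows essentially the same route as the paper: Platt's computations dispose of the zeros of height at most $10^8/q$ (indeed up to $10^8/q^*$), while Kadiri's explicit zero-free region for moduli $\ge 3$ and the Mossinghoff--Trudgian region for $\zeta(s)$ (covering the principal/conductor-one case) handle the remaining zeros, the exceptional-zero caveat being vacuous because such a zero would be real and is already excluded by Platt's verification. Your finer split on $q^*|\gamma|$ versus $10^8$ and the explicit check in the intermediate range $10^8/q<|\gamma|\le 10^8/q^*$ are harmless refinements of what the paper's proof states in two sentences.
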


\begin{proof}
By Definition~\ref{hypothesis z} we need to confirm, for every Dirichlet $L$-function modulo~$q$, that every nontrivial zero $\beta+i\gamma$  with $|\gamma| \le 10^8/q$ satisfies $\beta=\tfrac12$, and that every nontrivial zero with $|\gamma| > 10^8/q$ satisfies $\beta \le 1 - 1/{5.6\log(q|\gamma|)}$. For the values of $q$ under consideration, the first assertion was shown by Platt~\cite[Theorem~7.1]{Pla2}, while the second assertion was shown by Kadiri~\cite[Theorem~1.1]{Ka} for $q \geq 3$ and by Mossinghoff and Trudgian \cite{MoTr} for $q \in \{ 1, 2 \}$.
\end{proof}

\begin{proof}[Proof of Theorem~\ref{main psi theorem} for small moduli]
For any $3\le q\le 10^5$,
by Theorem~\ref{code this funky theorem} we obtain an admissible
value for $c_\psi(q)$ by computing $D_{q,m,R}(x_2;H_0,H,H_2)$ for any appropriate values of $m$, $R$, $x_2$, $H_0$, $H$ and $H_2$. We always choose $m\in\{6,7,8,9\}$ and  $R=5.6$, where the latter choice is valid by Proposition~\ref{R value prop}. Then we choose $x_2 = x_2(q)$ as in Definition~\ref{x2 definition} (this satisfies $x_2(q) \ge 10^{11} > e^{22} \geq e^{2m+2}$ as required).

We take $H_2(d)$ to be as large as possible, subject to having verified GRH up to that height for all primitive characters with conductor $d$. By~\cite{Pla3} and \cite{Pla2}, we set
  \[
    H_2(d) = \begin{cases}
      30,610,046,000, & \text{ if $d=1$,} \\
      10^8/d, & \text{if $1<d\leq 10^5$}.
    \end{cases}
    \]
That is, we take $H_2(d)=h_3(d)$ as per Definition~\ref{h3}. We optimize over $m\in\{6,7,8,9\}$ and $H \in [H_1(m),H_2(q)]$, and set $H_0$ according to $H$: for $1\leq d\leq 12$, we choose $H_0(d)$ to be the largest among $10^2,10^3,10^4$ that is smaller than $H$, for $12 < d \leq 1000$, $H_0(d)$ is the larger of $10^2,10^3$ that is smaller than $H$, for $1000 < d \leq 2500$ we take$H_0(d)=100$, for $2500<d\leq 10000$,  $H_0(d) = 10$, and, finally, for $10000<d<100000$ we choose $H_0(d)=0$.




These evaluations establish the inequality~\eqref{ultimate psi bound} for $x \ge x_2(q)$ or $x\ge x_2(\frac q2)$, respectively; we then compute by brute force the smallest positive real number $x_\psi(q)$ such that the inequality~\eqref{ultimate psi bound} holds for all $x \ge x_\psi(q)$ and all $\gcd (a,q)=1$.
See Appendix~\ref{ssec xpsithetapi} for a discussion of these computations.  With these values of $c_\psi$ and
$x_\psi(q)$ in hand, we verify the asserted inequalities $c_\psi(q) < c_0(q)$ and $x_\psi(q) <
x_0(q)$, where $c_0(q), x_0(q)$ are defined in equations~\eqref{c_0(q) definition} and~\eqref{x_0(q) definition}
respectively.
\end{proof}

\section{Deduction of the upper bounds upon $|\theta(x;q,a) - x/\phi(q)|$ and $|\pi(x;q,a) - \Li(x)/\phi(q)|$, for $q\le 10^5$} \label{Sec5}

In this section, we will focus upon obtaining bounds for $|\theta(x;q,a) - x/\phi(q)|$ and $|\pi(x;q,a) - \Li(x)/\phi(q)|$, for small values of $q$, given the bounds for $|\psi(x;q,a) - x/\phi(q)|$ derived in the preceding sections. We also define a variant $\theta_\#(x;q,a)$ of $\theta(x;q,a)$ (see equation~\eqref{thetaH def} below) and establish similar bounds for its error term.

\subsection{Conversion of bounds for $\psi(x;q,a)-x/\phi(q)$ to bounds for $\theta(x;q,a) - x/\phi(q)$}  \label{psi to theta section}

The difference between $\psi(x;q,a)$ and $\theta(x;q,a)$ is, of course, the contribution from the squares of primes, cubes of primes, and so on in the residue class $a\mod q$. We use standard estimates to bound these contributions, and assemble them into the function $\Delta(x;q)$ which we now define. As always, we adopt the viewpoint that any upper bound that can be easily programmed is sufficient for our purposes.

\begin{definition}  \label{xi defs}
Define $\xi_k(q)$ to be the number of $k$th roots of $1$ modulo~$q$. For fixed $k$, the function $\xi_k(q)$ is a multiplicative function of $q$, with values on prime powers given by certain greatest common divisors:
\[
\xi_k(p^r) = \begin{cases}
\gcd (k,p^{r-1}(p-1)), &\text{if $p$ is odd}, \\
\gcd (k,2)  \gcd (k,2^{r-2}), &\text{if $p=2$ and $r\ge2$}, \\
1, &\text{if $p^r=2^1$}.
\end{cases}
\]
Further, define $\xi_k(q,a)$ to be the number of $k$th roots of $a$ modulo~$q$, and note that for $\gcd (a,q)=1$, the quantity $\xi_k(q,a)$ equals either $\xi_k(q)$ or $0$ according to whether $a$ has $k$th roots modulo~$q$ or not.

Then, for real numbers $x>1$, define the functions
\[
\Delta_k(x;q) = \begin{cases}
\displaystyle \min\bigg\{ \frac{2\xi_k(q)}{\phi(q)} \bigg( 1 + \frac{\log (q^k)}{\log(x/q^k)} \bigg), 1 + \frac{k}{2\log x} \bigg\}, &\text{if } x>q^k, \\
\displaystyle1 + \frac{k}{2\log x}, &\text{if } 1<x\le q^k
\end{cases}
\]
and
\[
\Delta(x;q) = \sum_{k=2}^{\lfloor \log x/\log 2 \rfloor} \frac{\log x}{x^{1-1/k}} \Delta_k(x;q).
\]
The graph of $\Delta(x;3)$ is shown in Figure~\ref{fig:Delta(x;3)}. (The jump discontinuities occur each time $x$ passes a power of $2$, which is when the number of summands in the definition of $\Delta(x;q)$ increases.)
\end{definition}

\begin{figure}[bht]
\begin{center}
\begin{picture}(300,100)
\includegraphics[width=4in]{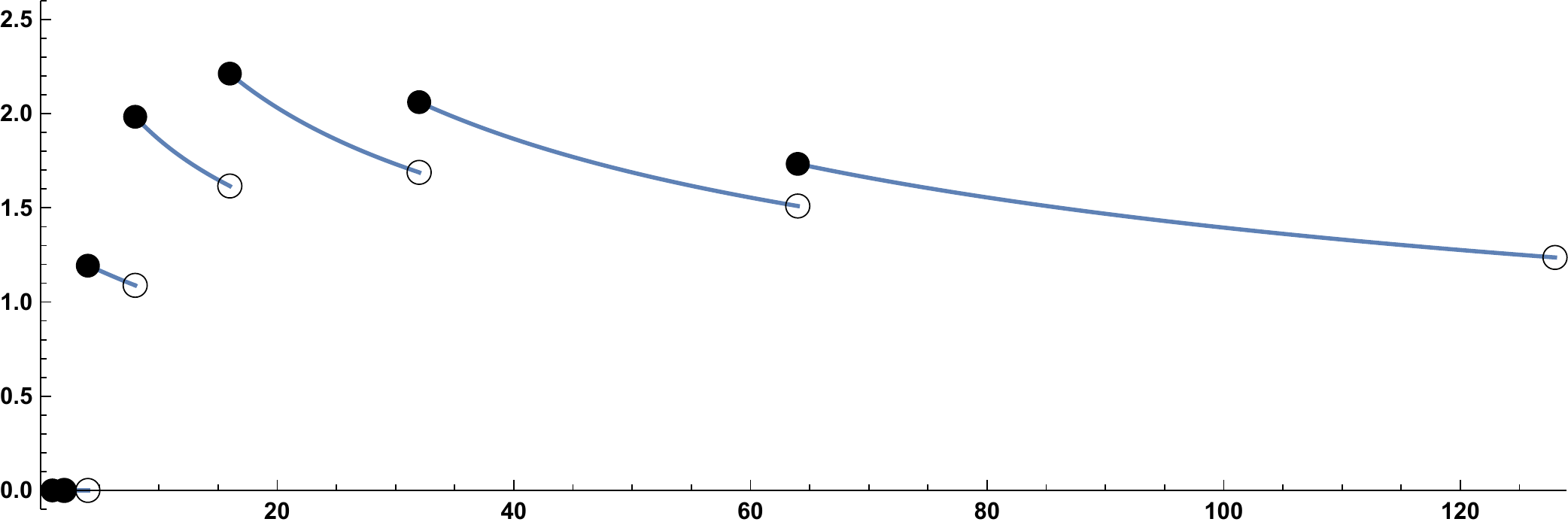}
\end{picture}\end{center}
\caption{$\Delta(x;3)$\label{fig:Delta(x;3)}}
\end{figure}

The following lemma makes it clear why we have defined these quantities.
\begin{lemma} \label{grody psi theta lemma1}
Let $q\ge3$ and let $\gcd (a,q)=1$. For all $x>1$,
\[
0 \le \frac{\psi(x;q,a) - \theta(x;q,a)}{x/\log x} \le \Delta(x;q).
\]
\end{lemma}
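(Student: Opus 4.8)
The plan is to unwind the definitions so that the difference $\psi(x;q,a)-\theta(x;q,a)$ is expressed as a sum over proper prime powers $p^k\le x$ (with $k\ge2$) with $p^k\equiv a\mod q$, and then to bound the contribution of each fixed exponent $k$ separately, exploiting the fact that the number of solutions of $y^k\equiv a\mod q$ is exactly $\xi_k(q,a)\in\{0,\xi_k(q)\}$. First I would note that $\psi(x;q,a)-\theta(x;q,a) = \sum_{k\ge2}\sum_{\substack{p^k\le x\\ p^k\equiv a\,(q)}}\log p = \sum_{k=2}^{\lfloor\log x/\log 2\rfloor}\vartheta_k(x;q,a)$, where $\vartheta_k(x;q,a) := \sum_{\substack{p\le x^{1/k}\\ p^k\equiv a\,(q)}}\log p$; the lower bound $\psi-\theta\ge0$ is then immediate since each summand is nonnegative, so the whole content is the upper bound.

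For the upper bound I would establish, for each $k$ with $2\le k\le\lfloor\log x/\log2\rfloor$, the inequality $\vartheta_k(x;q,a)\le \frac{x^{1/k}}{?}\,\Delta_k(x;q)\cdot(\text{appropriate normalization})$; more precisely the target is $\sum_k \vartheta_k(x;q,a)\le \frac x{\log x}\Delta(x;q) = \sum_k x^{1/k}\Delta_k(x;q)$, so it suffices to show $\vartheta_k(x;q,a)\le x^{1/k}\Delta_k(x;q)$ for each $k$. There are two competing bounds encoded in the $\min$ defining $\Delta_k$. The crude bound $\vartheta_k(x;q,a)\le \theta(x^{1/k})\le \psi(x^{1/k})$, combined with the elementary Chebyshev-type estimate $\psi(t)\le t(1+\tfrac1{2\log t})$ applied at $t=x^{1/k}$ (valid for $t>1$; this is one of the standard Rosser--Schoenfeld-type inequalities, and at worst one invokes $\psi(t)\le ct$ with the stated explicit shape), yields $\vartheta_k(x;q,a)\le x^{1/k}\bigl(1+\tfrac k{2\log x}\bigr)$, which matches the second entry of the $\min$. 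The sharper bound, relevant when $x>q^k$ so that $x^{1/k}>q$, restricts the sum to primes $p$ lying in the $\xi_k(q,a)$ residue classes mod $q$ consisting of $k$th roots of $a$: using $\vartheta_k(x;q,a)=\sum_{b}\theta(x^{1/k};q,b)$ over those (at most $\xi_k(q)$) residues $b$, and a suitable explicit upper bound $\theta(t;q,b)\le \tfrac{2t}{\phi(q)}\bigl(1+\tfrac{\log q}{\log(t/q)}\bigr)$ for $t>q$ (a Montgomery--Vaughan / Chebyshev-in-progressions estimate), gives $\vartheta_k(x;q,a)\le \tfrac{2\xi_k(q)}{\phi(q)}x^{1/k}\bigl(1+\tfrac{\log q^k}{\log(x/q^k)}\bigr)$, since $\log(x^{1/k}/q)=\tfrac1k\log(x/q^k)$, matching the first entry of the $\min$. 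Taking the minimum of the two bounds for each $k$ and summing over $k$ produces exactly $\frac x{\log x}\Delta(x;q)$.

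I would then assemble these: $\frac{\psi(x;q,a)-\theta(x;q,a)}{x/\log x} = \frac{\log x}{x}\sum_k \vartheta_k(x;q,a) \le \frac{\log x}{x}\sum_k x^{1/k}\Delta_k(x;q) = \Delta(x;q)$, which is the claim. Care is needed with the boundary cases: when $1<x\le q^k$ only the second branch of $\Delta_k$ is available (and indeed $x^{1/k}\le q$ makes the progression-restricted bound vacuous or ill-defined because $\log(x/q^k)\le0$), and one should check the easy case $x\le 4$ where the sum over $k$ may be empty or reduce to $k=2$ alone, as well as confirm that $x^{1/k}>1$ so the elementary $\psi$-bound is applicable.

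The main obstacle I anticipate is not conceptual but bookkeeping: pinning down which explicit Chebyshev-type upper bounds for $\theta(t)$ and for $\theta(t;q,b)$ are being invoked (the paper's ``any easily programmed bound suffices'' philosophy suggests these are cited from, e.g., Rosser--Schoenfeld and Montgomery--Vaughan, possibly with the specific constants appearing in $\Delta_k$), and verifying that the inequality $\vartheta_k(x;q,a)\le x^{1/k}\Delta_k(x;q)$ genuinely holds termwise for \emph{all} $x>1$ and all admissible $k$, including when $t=x^{1/k}$ is small (say $t<7$ or so) where crude explicit bounds on $\psi$ and $\theta$ need a separate check. I expect this to be a short but slightly fiddly verification rather than a deep difficulty.
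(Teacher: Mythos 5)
Your proposal is correct and takes essentially the same route as the paper's proof: decompose $\psi-\theta$ by exponent $k$, bound the $k$th term by the minimum of the Brun--Titchmarsh (Montgomery--Vaughan) estimate in progressions when $x>q^k$ and the Rosser--Schoenfeld bound otherwise, and sum over $k$ to obtain $\frac{x}{\log x}\Delta(x;q)$. The only cosmetic difference is that the paper bounds the $k$th term by $\theta(x^{1/k})<x^{1/k}\bigl(1+\frac{k}{2\log x}\bigr)$ directly from Rosser--Schoenfeld's bound on $\theta$, so your detour through $\psi(t)\le t\bigl(1+\frac1{2\log t}\bigr)$ is unnecessary.
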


\begin{proof}
From their definitions, we have the exact formula
\begin{align*}
0 \le \psi(x;q,a) - \theta(x;q,a) &= \sum_{k=2}^{\lfloor \log x/\log 2 \rfloor} \sum_{\substack{b\mod q \\ b^k \equiv a\mod q}} \theta(x^{1/k};q,b).
\end{align*}
The number of terms in the inner sum is either $0$ or $\xi_k(q)$. Appealing to the Brun--Titchmarsh theorem~\cite[Theorem 2]{MV2},
\begin{flalign*}
\theta(x^{1/k};q,b) & \le \log(x^{1/k}) \, \pi(x^{1/k};q,b) \\
& < \log(x^{1/k}) \frac{2x^{1/k}}{\phi(q)\log(x^{1/k}/q)} = \frac{2x^{1/k}}{\phi(q)} \bigg( 1 + \frac{\log q^k}{\log(x/q^k)} \bigg), \\
\end{flalign*}
and therefore
\[
\sum_{\substack{b\mod q \\ b^k \equiv a\mod q}} \theta(x^{1/k};q,b) < x^{1/k} \cdot \frac{2\xi_k(q)}{\phi(q)} \bigg( 1 + \frac{\log q^k}{\log(x/q^k)} \bigg).
\]
Moreover, for $x>1$,
\[
\sum_{\substack{b\mod q \\ b^k \equiv a\mod q}} \theta(x^{1/k};q,b) \le \theta(x^{1/k}) < x^{1/k} + \frac{x^{1/k}}{2\log(x^{1/k})} = x^{1/k} \bigg( 1 + \frac{k}{2\log x} \bigg),
\]
where the second inequality was given by Rosser and Schoenfeld~\cite[Theorem 4, page 70]{RS}. We thus have, for $x>1$,
$$
\sum_{\substack{b\mod q \\ b^k \equiv a\mod q}} \theta(x^{1/k};q,b) \leq x^{1/k} \Delta_k(x;q).
$$
It follows that
\begin{align*}
0 \le \psi(x;q,a) - \theta(x;q,a)
   &= \sum_{\substack{p^k \leq x \\ p^k \equiv a \mod q \\ k\geq 2}} \log p = \sum_{k=2}^{\lfloor \log x/\log 2\rfloor} \sum_{\substack{p\leq x^{1/k} \\ p^k \equiv a \mod q}} \log p \\
   &= \sum_{k=2}^{\lfloor \log x/\log 2\rfloor} \sum_{\substack{b\mod q \\ b^k \equiv a \mod q}} \sum_{\substack{p\leq x^{1/k} \\ p \equiv b \mod q}} \log p \\
   &= \sum_{k=2}^{\lfloor \log x/\log 2\rfloor} \sum_{\substack{b\mod q \\ b^k \equiv a \mod q}} \theta(x^{1/k};q,b) \\
   &\leq \sum_{k=2}^{\lfloor \log x/\log 2\rfloor} x^{1/k} \Delta_k(x;q) =\frac{x}{\log x} \Delta(x;q),
\end{align*}
which is equivalent to the statement of the lemma.
\end{proof}

When examining the fine-scale distribution of prime counting functions such as $\theta(x;q,a)$, one often considers the limiting (logarithmic) distribution of the normalized error term $(\theta(x;q,a)-x/\phi(q))/\sqrt x$. It is known that this distribution is symmetric, but not necessarily around $0$; rather, it is symmetric around $-\xi_2(q,a)/\phi(q)$, where $\xi_2(q,a)$ is the number of square roots of $a$ modulo~$q$ as in Definition~\ref{xi defs}. There is consequently some interest in the variant error term
\[
\bigg| \theta(x;q,a)- \bigg( \frac x{\phi(q)} - \frac{\xi_2(q,a)\sqrt x}{\phi(q)} \bigg) \bigg|.
\]
For this reason, we define the slightly artificial function
\begin{equation}  \label{thetaH def}
\theta_\#(x;q,a) = \theta(x;q,a) + \frac{\xi_2(q,a)\sqrt x}{\phi(q)}
\end{equation}
and, where the effort involved is modest, establish our error bounds for $|\theta_\#(x;q,a) - x/\phi(q)|$ alongside those for $|\theta(x;q,a) - x/\phi(q)|$.

\begin{lemma} \label{grody psi theta lemma}
Let $q\ge3$ and let $\gcd (a,q)=1$. For all $x\geq 4$,
\[
\bigg| \frac{\psi(x;q,a) - \theta_\#(x;q,a)}{x/\log x} \bigg| \le \Delta(x;q).
\]
\end{lemma}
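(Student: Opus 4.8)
The plan is to split
$\psi(x;q,a) - \theta_\#(x;q,a) = \bigl(\psi(x;q,a)-\theta(x;q,a)\bigr) - \xi_2(q,a)\sqrt x/\phi(q)$
and bound this quantity above and below separately, leaning on Lemma~\ref{grody psi theta lemma1} together with the trivial fact that the shift $\xi_2(q,a)\sqrt x/\phi(q)$ is nonnegative. For the upper bound there is essentially nothing to do: subtracting a nonnegative quantity only decreases $\psi(x;q,a)-\theta(x;q,a)$, so $\psi(x;q,a)-\theta_\#(x;q,a) \le \psi(x;q,a)-\theta(x;q,a) \le (x/\log x)\,\Delta(x;q)$ directly from Lemma~\ref{grody psi theta lemma1}.

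For the lower bound I would again invoke Lemma~\ref{grody psi theta lemma1}, this time its left inequality $\psi(x;q,a)-\theta(x;q,a) \ge 0$, to obtain $\theta_\#(x;q,a) - \psi(x;q,a) \le \xi_2(q,a)\sqrt x/\phi(q)$; it then suffices to show $\xi_2(q,a)\sqrt x/\phi(q) \le (x/\log x)\,\Delta(x;q)$. This is where the hypothesis $x\ge4$ is used: it forces $\lfloor\log x/\log2\rfloor\ge2$, so the (termwise nonnegative) sum defining $\Delta(x;q)$ contains its $k=2$ term, giving $(x/\log x)\,\Delta(x;q)\ge(x/\log x)\cdot(\log x/\sqrt x)\,\Delta_2(x;q)=\sqrt x\,\Delta_2(x;q)$. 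Hence it remains only to check $\xi_2(q,a)/\phi(q)\le\Delta_2(x;q)$, which I would dispatch by a short case analysis against Definition~\ref{xi defs}. By that definition $\xi_2(q,a)$ is either $0$ or $\xi_2(q)$, and since the square roots of $1$ modulo $q$ lie inside $(\mathbb Z/q\mathbb Z)^\times$ we have $\xi_2(q)\le\phi(q)$, whence $\xi_2(q,a)/\phi(q)\le1$. On the other hand $\Delta_2(x;q)=1+1/\log x>1$ when $1<x\le q^2$, while when $x>q^2$ it is the minimum of $1+1/\log x>1$ and $\bigl(2\xi_2(q)/\phi(q)\bigr)\bigl(1+\log(q^2)/\log(x/q^2)\bigr)\ge 2\xi_2(q)/\phi(q)\ge\xi_2(q,a)/\phi(q)$, so in every case $\Delta_2(x;q)\ge\xi_2(q,a)/\phi(q)$.

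Combining the two estimates yields $\bigl|\psi(x;q,a)-\theta_\#(x;q,a)\bigr|\le(x/\log x)\,\Delta(x;q)$, as required. There is no real obstacle here: the only points needing any attention are the bookkeeping remark that $x\ge4$ keeps the $k=2$ summand present in $\Delta(x;q)$ — precisely the term engineered to absorb a $\sqrt x$-sized shift — and the tiny verification that $\xi_2(q,a)/\phi(q)$ does not exceed $\Delta_2(x;q)$. Everything else is a direct transfer from Lemma~\ref{grody psi theta lemma1}, so this lemma is really just a routine variant of that one.
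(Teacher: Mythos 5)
Your proposal is correct and follows essentially the same route as the paper: the upper bound is inherited directly from Lemma~\ref{grody psi theta lemma1}, and the lower bound reduces to showing that the shift $\xi_2(q,a)\sqrt x/\phi(q)$ is absorbed by the $k=2$ summand $\frac{\log x}{\sqrt x}\Delta_2(x;q)$ of $\Delta(x;q)$, which you verify (as the paper does) by checking both branches of the minimum defining $\Delta_2$ together with $\xi_2(q,a)\le\xi_2(q)\le\phi(q)$. The only cosmetic difference is your normalization (comparing $\xi_2(q,a)/\phi(q)$ to $\Delta_2(x;q)$ rather than carrying the factor $\log x/\sqrt x$ through), which changes nothing of substance.
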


\begin{proof}
The upper bound on the quantity inside the absolute value follows immediately from Lemma~{\ref{grody psi theta lemma1}}. As for the lower bound, since $\psi(x;q,a) \ge \theta(x;q,a)$ we have
$$
-\frac{\psi(x;q,a) - \theta_\#(x;q,a)}{x/\log x} = \frac{\big( \theta(x;q,a) + \xi_2(q,a)\sqrt x/\phi(q) \big) - \psi(x;q,a)}{x/\log x}
$$
and hence
$$
-\frac{\psi(x;q,a) - \theta_\#(x;q,a)}{x/\log x} \le \frac{\xi_2(q,a)\sqrt x/\phi(q)}{x/\log x} \le \frac{\xi_2(q)\log x}{\phi(q)\sqrt x}.
$$
Observe that
$$
{\xi_2(q)\log x}/{(\phi(q)\sqrt x)}< \left(1+\frac{k}{2\log x}\right) \frac{\log x}{\sqrt{x}}
$$
as $\xi_2(q)\leq \phi(q)$, and for $x>q^k$ trivially
  \[\frac{\xi_2(q)\log x}{\phi(q)\sqrt x} < \frac{2\xi_k(q)}{\phi(q)} \bigg( 1 + \frac{\log (q^k)}{\log(x/q^k)} \bigg) \cdot  \frac{\log x}{\sqrt{x}}. \]
Thus,
  \[\frac{\xi_2(q)\log x}{\phi(q)\sqrt x} \leq  \frac{\log x}{\sqrt{x}} \Delta_2(x;q),\]
and as $x\geq 4$, we have
$$
\frac{\log x}{x^{1-1/2}} \Delta_2(x;q) \le \Delta_2(x;q) \le \Delta(x;q).
$$
\end{proof}

We cannot quite say that $\Delta(x;q)$ is a decreasing function of $x$ due to its jump discontinuities (as we can see for $q=3$ in Figure~\ref{fig:Delta(x;3)}). However, the maximum effect of these discontinuities is quite small, and the following lemma will suffice for our purposes. Thereafter we will establish an analogue of Theorem~\ref{code this funky theorem} for $\theta(x;q,a)$, which enable us to complete the proof of Theorem~\ref{main theta theorem}.

\begin{lemma}  \label{Delta almost increasing lemma}
Let $q\ge3$ be an integer and $x_2 > e^2$. For $x>x_2$,
\[
\Delta(x;q) < \Delta(x_2;q) + \frac{6\log x_2}{x_2}.
\]
\end{lemma}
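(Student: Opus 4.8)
The plan is to split $\Delta(x;q)$ according to whether $x$ has crossed new powers of $2$ since $x_2$, isolating the ``old'' summands (those with $x_2>q^k$, so that both $\Delta_k(x;q)$ and $\Delta_k(x_2;q)$ are genuinely defined by the same formula) from the ``new'' summands (indices $k$ with $\lfloor\log x_2/\log 2\rfloor < k \le \lfloor \log x/\log 2\rfloor$). For each fixed $k$, I would show that the summand $\frac{\log x}{x^{1-1/k}}\Delta_k(x;q)$ is a decreasing function of $x$ on $x>e^2$: the factor $\frac{\log x}{x^{1-1/k}}$ is decreasing for $x$ exceeding $e^{k/(k-1)}\le e^2$, and each of the two expressions inside the $\min$ defining $\Delta_k(x;q)$ --- namely $\frac{2\xi_k(q)}{\phi(q)}\big(1+\frac{\log q^k}{\log(x/q^k)}\big)$ and $1+\frac{k}{2\log x}$ --- is itself decreasing in $x$, and the minimum of two decreasing functions is decreasing. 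Hence each old summand at $x$ is at most its value at $x_2$, so the old part of $\Delta(x;q)$ is bounded by $\Delta(x_2;q)$.

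It then remains to bound the contribution of the new summands, i.e.
\[
\sum_{k=\lfloor\log x_2/\log 2\rfloor+1}^{\lfloor \log x/\log 2\rfloor} \frac{\log x}{x^{1-1/k}} \Delta_k(x;q).
\]
Here I would use the crude bound $\Delta_k(x;q) \le 1+\frac{k}{2\log x}\le \frac{3}{2}k$ for $k\ge 1$ (valid since $\log x > 2$), together with $x^{1-1/k}\ge x^{1/2}\ge \sqrt{2^{k}}$ when $2^{k}\le x$, to get each new term at most $\frac{3}{2}k\,\frac{\log x}{2^{k/2}}$. Actually it is cleaner to note that for such $k$ we have $2^{k-1}\le x$, hence $k\le 1+\log x/\log 2$ and $x^{1-1/k}\ge x/2$ once $k\ge 2$; summing the resulting geometric-type series in $k$ and using $\log x > \log x_2 > 2$ should comfortably yield a bound of the form $\frac{6\log x_2}{x_2}$. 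The key elementary inequalities to lean on are that $\sum_{k\ge K} k\,2^{-k/2}$ decays geometrically and that, since at most finitely many new indices appear and the smallest is $\lfloor\log x_2/\log 2\rfloor+1 \ge \log x_2/\log 2$, every new summand already carries a factor $x^{-(1-1/k)}\le x_2^{-1/2}$ or better; combining with $\log x\le$ (something controlled relative to $\log x_2$) after first discarding the $x>x_2$ dependence via monotonicity of $\frac{\log x}{\sqrt x}$ on $x>e^2$ gives the stated constant.

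The main obstacle I anticipate is purely bookkeeping: making the passage from ``$\log x$'' in the numerator of the new summands to ``$\log x_2$'' in the final bound rigorous, since $x$ can be arbitrarily larger than $x_2$. The resolution is that for any index $k$ appearing in the new range we have $x\ge 2^{k}\ge 2^{\lfloor\log x_2/\log2\rfloor+1} > x_2$, so the factor $\frac{\log x}{x^{1-1/k}}$, being decreasing in $x$ for $x>e^2$, is at most its value at $x=2^{k}$, namely $\frac{k\log 2}{2^{k-1}}$; this removes all dependence on the upper variable $x$ and leaves a convergent numerical series $\sum_{k> \log x_2/\log 2} \frac{3}{2}k\cdot\frac{k\log 2}{2^{k-1}}\Delta_k$-type bound that one evaluates to be below $\frac{6\log x_2}{x_2}$ using $2^{\lfloor \log x_2/\log 2\rfloor}\le x_2$. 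Everything else is routine single-variable calculus of the kind already invoked repeatedly in this section.
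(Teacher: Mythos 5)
Your overall strategy is the same as the paper's: each summand with index $k\le\lfloor\log x_2/\log2\rfloor$ is a decreasing function of $x$ for $x>e^2$ (so the ``old'' part is at most $\Delta(x_2;q)$), while each ``new'' summand, which appears once $x\ge 2^k$, is bounded by its value at $x=2^k$, leaving a numerical series over $k>\log x_2/\log 2$; the paper phrases this as summing the jump discontinuities of $\Delta(\,\cdot\,;q)$ at the powers of $2$ beyond $x_2$, which is the identical computation. (Your parenthetical describing the old summands as those with $x_2>q^k$ is a slip---the split is by $k\le\lfloor\log x_2/\log2\rfloor$---but it does not affect the argument.) The genuine gap is in the final evaluation of the new part. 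With your bound $\Delta_k(x;q)\le\tfrac32k$, each new term is at most $\tfrac32k\cdot\tfrac{k\log 2}{2^{k-1}}=3\log2\,\tfrac{k^2}{2^k}$, and summing from $d=\lfloor\log x_2/\log2\rfloor+1$ gives $3\log2\,(2d^2+4d+6)\,2^{-d}$. Since $d>\log x_2/\log 2$ and $2^{d-1}\le x_2<2^d$, this quantity is of order $(\log x_2)^2/x_2$ and in fact exceeds $6\log x_2/x_2$ for every admissible $x_2$ (the ratio is at least $(2d^2+4d+6)/(4(d-1))>1$), so the asserted conclusion that the series ``evaluates to be below $\frac{6\log x_2}{x_2}$'' fails as written: the extra factor of $k\approx\log_2 x_2$ coming from $\Delta_k\le\tfrac32k$ destroys the order of the bound.

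The missing observation is that on the new range one automatically has $\log x\ge k\log 2$, so $\Delta_k(x;q)\le 1+\frac{k}{2\log x}\le 1+\frac{1}{2\log 2}<1.73$, an absolute constant; the relevant series is then $\sum_{k\ge d}\frac{k\log2}{2^{k-1}}=\frac{(d+1)\log2}{2^{d-2}}$, which has the correct order $\log x_2/x_2$. This is precisely what the paper does by evaluating each new summand at its introduction point $x=2^j$, where $\Delta_j(2^j;q)=1+\frac{j}{2\log(2^j)}=1+\frac1{2\log2}$. Two further cautions: your intermediate claim that $x^{1-1/k}\ge x/2$ for the new indices is backwards (for $k\le\log x/\log2$ one has $x^{1/k}\ge2$, hence $x^{1-1/k}\le x/2$); you abandon that route, but it should not survive into a final write-up. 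And even with the constant bound on $\Delta_k$, the margin for the specific constant $6$ is thin---the straightforward estimate gives roughly $\bigl(1+\tfrac1{2\log2}\bigr)\tfrac{4\log(2x_2)}{x_2}$---so the final comparison with $\frac{6\log x_2}{x_2}$ must be carried out explicitly (using the monotonicity of $d/2^d$ to replace $d$ by $\log x_2/\log2$, and, if needed, a little extra slack such as the decrease of $\Delta$ between consecutive jumps), rather than being dismissed as ``comfortably'' true.
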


\begin{proof}
From Definition~\ref{xi defs}, we see that for a given $q$ and $k\ge2$, the function $\Delta_k(x;q)$ is a decreasing function of $x$. Since $(\log x)/x^{1-1/k}$ is decreasing for $x>e^{k/(k-1)}$ and hence certainly for $x>e^2$, the function $\Delta(x;q)$, shown with $q=3$ in Figure~\ref{fig:Delta(x;3)},  is decreasing in $x$, except that it has positive jump discontinuities every time a new summand is introduced. So although we cannot say simply that $\Delta(x;q) \le \Delta(x_2;q)$, we can say that $\Delta(x;q)$ is at most $\Delta(x_2;q)$ plus the sum of all the jump discontinuities at values greater than $x_2$. It remains to show that this sum of jump discontinuities is less than $(6\log x_2)/x_2$.

The summand $k=j$ is introduced at $x=2^j$, and its value is
\[
\frac{\log(2^j)}{(2^j)^{1-1/j}} \Delta_j(2^j,q) = \frac{\log(2^j)}{(2^j)^{1-1/j}}  \bigg( 1 + \frac1{2j\log(2^j)} \bigg) = \frac{j\log2}{2^{j-1}} + \frac1{j2^j},
\]
since $2^j < q^j$. Note that for any $d\ge1$,
\[
\sum_{j=d}^\infty \frac{j\log2}{2^{j-1}} = \frac{(d+1)\log2}{2^{d-2}} \quad\text{and}\quad \sum_{j=d}^\infty \frac1{j2^j} < \frac1d \sum_{j=d}^\infty \frac1{2^j} = \frac1{d2^{d-1}}.
\]
For a given $x_2$, the first jump discontinuity lies at an integer $d$ such that $2^d > x_2$, which means that the corresponding sum of jump discontinuities can be estimated by
\begin{equation} \label{fished}
\frac{(d+1)\log2}{2^{d-2}} + \frac1{d2^{d-1}} < \frac{(\frac{\log x_2}{\log 2}+1)\log 2}{x_2/4} + \frac1{\frac{\log x_2}{\log 2} x_2/2}.
\end{equation}
This last quantity is just
\begin{equation} \label{fished2}
\frac{4\log(2x_2)+(2\log 2)/\log x_2}{x_2} < \frac{6\log x_2}{x_2}.
\end{equation}
Here, the inequality in (\ref{fished}) holds because $\frac d{2^d}$ is a decreasing function of $d$ for $2^d>e$; inequality (\ref{fished2}), which is valid already when $x_2=e^2$, holds because the ratio of the two sides is a decreasing function of $x_2$.
\end{proof}

\begin{theorem}  \label{code this funky theta}
Let $3\le q\le 10^5$ be an integer, and let $a$ be an integer that is coprime to~$q$.
Let $3\le m\le 25$ be an integer, and let $x_2\ge e^{2m+2}$, $H\ge H_1(m)$ and $R\ge0.435$ be real numbers.
Let $H_0$ be a function on the characters modulo $q$ with $0\leq H_0(\chi) \leq H$ for every such character.
Let $H_2$ be a function on the divisors of $q$ with $H_2(d) \ge \max\{H,10^8/d\}$ for all $d\mid q$, such that every character $\chi$ with modulus $q$ satisfies Hypothesis Z$(H_2(q^*),R)$, where $q^*$ is the conductor of~$\chi$.
Then for all $x\ge x_2$,
\[
\bigg| \theta(x;q,a) - \frac x{\phi(q)} \bigg| \,\bigg/\! \frac x{\log x} \le D_{q,m,R}(x_2;H_0,H,H_2) + \Delta(x_2;q) + \frac{6\log x_2}{x_2},
\]
where $D_{q,m,R}(x_2;H_0,H,H_2)$ is defined in Definition~\ref{D def} and $\Delta(x_2;q)$ is defined in Definition~\ref{xi defs}.
The same upper bound holds for
\begin{equation}  \label{theta hash object}
\bigg| \theta(x;q,a) - \frac{x-\xi_2(q,a) \sqrt x}{\phi(q)} \bigg| \,\bigg/\! \frac x{\log x} = \bigg| \theta_\#(x;q,a) - \frac x{\phi(q)} \bigg| \,\bigg/\! \frac x{\log x},
\end{equation}
where $\xi_2(q,a)$ is as in Definition~\ref{xi defs} and $\theta_\#(x;q,a)$ is as in equation~\eqref{thetaH def}.
\end{theorem}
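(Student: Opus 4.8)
The plan is to derive Theorem~\ref{code this funky theta} from the $\psi$-bound by a direct triangle-inequality argument, using the elementary control on the prime-power contribution $\psi(x;q,a)-\theta(x;q,a)$ developed in Lemmas~\ref{grody psi theta lemma1}, \ref{grody psi theta lemma}, and~\ref{Delta almost increasing lemma}. First I would fix $x\ge x_2$ and write
\[
\bigg| \theta(x;q,a) - \frac x{\phi(q)} \bigg| \le \bigg| \psi(x;q,a) - \frac x{\phi(q)} \bigg| + \big( \psi(x;q,a) - \theta(x;q,a) \big),
\]
where the last term needs no absolute value because $\psi(x;q,a)\ge\theta(x;q,a)$ by Lemma~\ref{grody psi theta lemma1}. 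Dividing by $x/\log x$, the hypotheses assumed here are precisely those of Theorem~\ref{code this funky theorem}, so the first term on the right is at most $D_{q,m,R}(x_2;H_0,H,H_2)$, while Lemma~\ref{grody psi theta lemma1} bounds the second term by $\Delta(x;q)$.

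Next I would eliminate the remaining dependence on $x$ in the term $\Delta(x;q)$. Since $m\ge3$ forces $x_2\ge e^{2m+2}\ge e^{8}>e^{2}$, Lemma~\ref{Delta almost increasing lemma} applies and gives $\Delta(x;q)<\Delta(x_2;q)+6\log x_2/x_2$ for all $x>x_2$; for the boundary case $x=x_2$ this inequality holds trivially since $6\log x_2/x_2>0$. Assembling these estimates yields
\[
\bigg| \theta(x;q,a) - \frac x{\phi(q)} \bigg| \bigg/ \frac x{\log x} \le D_{q,m,R}(x_2;H_0,H,H_2) + \Delta(x_2;q) + \frac{6\log x_2}{x_2}
\]
for every $x\ge x_2$, which is the first assertion.

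For the variant $\theta_\#(x;q,a)$ I would run the identical argument, replacing Lemma~\ref{grody psi theta lemma1} by Lemma~\ref{grody psi theta lemma}: the latter gives $|\psi(x;q,a)-\theta_\#(x;q,a)|\le (x/\log x)\,\Delta(x;q)$ for all $x\ge4$, and $x\ge x_2\ge e^{8}>4$, so its hypothesis is met. The triangle inequality
\[
\bigg| \theta_\#(x;q,a) - \frac x{\phi(q)} \bigg| \le \bigg| \psi(x;q,a) - \frac x{\phi(q)} \bigg| + \big| \psi(x;q,a) - \theta_\#(x;q,a) \big|
\]
then produces the same upper bound, and the equality of the two quotients in~\eqref{theta hash object} is immediate from the definition~\eqref{thetaH def} of $\theta_\#$. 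There is no genuine obstacle in this proof; the only point requiring a moment's care is the jump discontinuities of $\Delta(\cdot;q)$, which is exactly what Lemma~\ref{Delta almost increasing lemma} is designed to absorb, together with the trivial verification that the auxiliary hypotheses $x>e^{2}$ and $x\ge4$ follow from $x\ge x_2\ge e^{2m+2}$.
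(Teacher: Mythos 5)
Your argument is correct and follows exactly the paper's route: the triangle inequality splits off the $\psi$-error (bounded by Theorem~\ref{code this funky theorem}), Lemma~\ref{grody psi theta lemma1} controls $\psi-\theta$, Lemma~\ref{Delta almost increasing lemma} removes the dependence on $x$ in $\Delta(x;q)$, and Lemma~\ref{grody psi theta lemma} substitutes in for the $\theta_\#$ variant. The additional checks of the auxiliary hypotheses ($x_2>e^2$, $x\ge4$) are a harmless elaboration of what the paper leaves implicit.
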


\begin{proof}
Since $\big| \theta(x;q,a) - \frac x{\phi(q)} \big| \le \big| \psi(x;q,a) - \frac x{\phi(q)} \big| + \big| \psi(x;q,a) - \theta(x;q,a) \big|$, it suffices to combine Theorem~\ref{code this funky theorem} with Lemmas~\ref{grody psi theta lemma1} and~\ref{Delta almost increasing lemma}. To establish the inequality~\eqref{theta hash object}, we simply replace Lemma~\ref{grody psi theta lemma1} with Lemma~\ref{grody psi theta lemma}.
\end{proof}

%

\begin{proof}[Proof of Theorem~\ref{main theta theorem} for small moduli]
The remaining argument is essentially the same as the proof of Theorem~\ref{main psi theorem} (which appears at the end of Section~\ref{assembly section}), but using Theorem~\ref{code this funky theta}
instead of Theorem~\ref{code this funky theorem}.
\end{proof}

\subsection{Conversion of estimates for $\theta(x;q,a)$ to estimates for $\pi(x;q,a)$ and for \\ $p_n(q,a)$}  \label{theta to pi section}

There is a natural partial summation argument that derives information for $\pi(x;q,a)$ from information for $\theta(x;q,a)$. Two terms arise while integrating by parts in such an argument: a main term, which is a small multiple of the hypothesized error bound for $\theta(x;q,a)$; and several boundary terms, one of which is guaranteed to be negative. To obtain a simple upper bound of the type that appears in Theorem~\ref{main pi theorem}, we define a function that collects most of these boundary terms together, and work under an otherwise artificial assumption (see equation~\eqref{last two terms} below) that this function is smaller than the remaining negative boundary term.

\begin{definition}  \label{double error def}
Given a positive integer $q$, an integer $a$ that is relatively prime to~$q$ and a real number $u$, define
\[
E(u;q,a) = \pi(u;q,a) - \frac{\Li(u)}{\phi(q)} - \frac1{\log u} \bigg( \theta(u;q,a) - \frac{u}{\phi(q)} \bigg).
\]
\end{definition}

\begin{prop}  \label{first theta to pi prop}
Let $q$ be a positive integer, and let $a$ be an integer that is relatively prime to~$q$. Let $\kappa$ and $x_3$ be positive real numbers (which may depend on $q$ and~$a$).
Suppose we have an estimate of the form
\begin{equation}  \label{hypothesized theta estimate}
\bigg| \theta(x;q,a) - \frac x{\phi(q)} \bigg| \le \frac{\kappa x}{\log x} \quad\text{for } x\ge x_3,
\end{equation}
and also that the inequality
\begin{equation}  \label{last two terms}
|E(x_3;q,a)| \le \frac{\kappa x_3}{(\log x_3-2)\log^2x_3}
\end{equation}
is satisfied. Then
\begin{equation*}
\bigg| \pi(x;q,a) - \frac{\Li(x)}{\phi(q)} \bigg| \le \frac{\kappa(\log x_3-1)}{\log x_3-2} \frac x{\log^2x} \quad\text{for } x\ge x_3.
\end{equation*}
\end{prop}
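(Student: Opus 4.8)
The proof is a partial-summation argument converting the hypothesized bound \eqref{hypothesized theta estimate} for $\theta(x;q,a)$ into a bound for $\pi(x;q,a)$. The starting point is the exact identity
\[
\pi(x;q,a) = \frac{\theta(x;q,a)}{\log x} + \int_{x_3}^x \frac{\theta(t;q,a)}{t\log^2 t}\,dt + \pi(x_3;q,a) - \frac{\theta(x_3;q,a)}{\log x_3},
\]
obtained by writing $\pi(x;q,a) - \pi(x_3;q,a) = \int_{x_3}^x \frac{d\theta(t;q,a)}{\log t}$ and integrating by parts. First I would split each appearance of $\theta$ as $\frac{t}{\phi(q)}$ plus an error term controlled by \eqref{hypothesized theta estimate}. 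The main terms combine, via the analogous identity for $\Li(x) = \int_2^x dt/\log t$ (integrated by parts the same way), to produce $\frac{\Li(x)}{\phi(q)}$ together with the boundary contributions at $x_3$; those boundary pieces are exactly what the function $E(x_3;q,a)$ from Definition~\ref{double error def} is designed to collect.

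\textbf{Key steps.} After rearranging, I expect to arrive at
\[
\pi(x;q,a) - \frac{\Li(x)}{\phi(q)} = \frac{1}{\log x}\Big(\theta(x;q,a) - \frac{x}{\phi(q)}\Big) + \int_{x_3}^x \frac{1}{t\log^2 t}\Big(\theta(t;q,a) - \frac{t}{\phi(q)}\Big)dt + E(x_3;q,a),
\]
where the telescoping of the $x_3$-boundary terms against the corresponding terms in the $\Li$ identity is precisely the definition of $E(x_3;q,a)$ (with the lower limit $2$ in $\Li$ contributing a constant that is absorbed; here one uses that $E$ is defined with $\Li(u) = \int_2^u dt/\log t$ so the constant cancels cleanly). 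Then I would bound the three pieces in absolute value: the first by $\frac{\kappa x}{\log^2 x}$ using \eqref{hypothesized theta estimate}; the third by $\frac{\kappa x_3}{(\log x_3 - 2)\log^2 x_3}$ using the hypothesis \eqref{last two terms}; and the middle integral by $\kappa \int_{x_3}^x \frac{dt}{\log^3 t}$. For the integral I would use the elementary estimate $\int_{x_3}^x \frac{dt}{\log^3 t} \le \frac{x}{\log^3 x} \cdot \frac{1}{1 - 2/\log x_3}$ — more precisely, one checks that $\frac{d}{dt}\big(\frac{t}{\log^2 t}\big) = \frac{1}{\log^2 t} - \frac{2}{\log^3 t} = \frac{1}{\log^3 t}(\log t - 2)$, so $\int_{x_3}^x \frac{\log t - 2}{\log^3 t}\,dt = \frac{x}{\log^2 x} - \frac{x_3}{\log^2 x_3}$, whence $\int_{x_3}^x \frac{dt}{\log^3 t} \le \frac{1}{\log x_3 - 2}\big(\frac{x}{\log^2 x} - \frac{x_3}{\log^2 x_3}\big) \le \frac{x}{(\log x_3 - 2)\log^2 x}$.

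\textbf{Assembling the bound.} Combining the three estimates gives
\[
\bigg|\pi(x;q,a) - \frac{\Li(x)}{\phi(q)}\bigg| \le \frac{\kappa x}{\log^2 x} + \frac{\kappa x}{(\log x_3 - 2)\log^2 x} + \frac{\kappa x_3}{(\log x_3 - 2)\log^2 x_3},
\]
and since $x \ge x_3$ the last term is at most $\frac{\kappa x}{(\log x_3 - 2)\log^2 x}$ (as $t \mapsto t/\log^2 t$ is increasing for $t > e^2$, which holds here because $\log x_3 > 2$ is forced by the denominators). Hence the right-hand side is at most
\[
\frac{\kappa x}{\log^2 x}\Big(1 + \frac{2}{\log x_3 - 2}\Big) = \frac{\kappa x}{\log^2 x}\cdot\frac{\log x_3}{\log x_3 - 2},
\]
which is slightly weaker than the claimed bound $\frac{\kappa(\log x_3 - 1)}{\log x_3 - 2}\cdot\frac{x}{\log^2 x}$; to recover the sharper constant one must be more careful, keeping the negative boundary term $-\frac{x_3}{(\log x_3-2)\log^2 x_3}$ from the integral evaluation rather than discarding it, and using that \eqref{last two terms} bounds $|E(x_3;q,a)|$ by exactly the quantity that the hypothesis permits. \textbf{The main obstacle} I anticipate is precisely this bookkeeping: tracking which boundary terms are genuinely negative (and hence may be dropped to our advantage) versus which must be bounded in absolute value, so that the artificial hypothesis \eqref{last two terms} is exactly strong enough to yield the stated constant $\frac{\log x_3 - 1}{\log x_3 - 2}$ rather than $\frac{\log x_3}{\log x_3 - 2}$. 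This is where the factor of $1$ versus $0$ in the numerator is won, and it requires combining the middle-integral evaluation and the $E(x_3;q,a)$ term together rather than bounding them separately.
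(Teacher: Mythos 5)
Your proposal follows exactly the paper's argument: the same partial-summation identity with $E(x_3;q,a)$ collecting the boundary terms, the same bounds on the three pieces, and the same antiderivative computation $\int_{x_3}^x \frac{\log t-2}{\log^3 t}\,dt = \frac{x}{\log^2 x} - \frac{x_3}{\log^2 x_3}$; the refinement you describe at the end --- keeping the negative term $-\frac{\kappa x_3}{(\log x_3-2)\log^2 x_3}$ and cancelling it against $|E(x_3;q,a)|$ via \eqref{last two terms} rather than bounding the two separately --- is precisely how the paper wins the constant $\frac{\log x_3-1}{\log x_3-2}$ instead of $\frac{\log x_3}{\log x_3-2}$. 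The proof is correct and essentially identical to the paper's.
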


\begin{proof}
By partial summation,
\begin{align}
\pi(x;q,a) - \frac{\Li(x)}{\phi(q)} = \pi(x_3;q,a) - \frac{\Li(x_3)}{\phi(q)} + \int_{x_3}^x \frac1{\log t} \,d\bigg( \theta(x;q,a) - \frac x{\phi(q)} \bigg) \notag \\
= \pi(x_3;q,a) - \frac{\Li(x_3)}{\phi(q)} + \frac{\theta(x;q,a) - x/\phi(q)}{\log x} - \frac{\theta(x_3;q,a) - x_3/\phi(q)}{\log x_3} \notag \\
\qquad{} + \int_{x_3}^x \bigg( \theta(x;q,a) - \frac x{\phi(q)} \bigg) \frac{dt}{t\log^2t} \notag \\
= E(x_3;q,a) + \frac{\theta(x;q,a) - x/\phi(q)}{\log x} + \int_{x_3}^x \bigg( \theta(x;q,a) - \frac x{\phi(q)} \bigg) \frac{dt}{t\log^2t}.  \label{theta to pi PS}
\end{align}
Using the hypothesized bound~\eqref{hypothesized theta estimate} and the triangle inequality, we see that
\begin{align*}
\bigg| \pi(x;q,a) - \frac{\Li(x)}{\phi(q)} \bigg| &\le |E(x_3;q,a)| + \frac{\kappa x}{\log^2 x} + \int_{x_3}^x \frac\kappa{\log^3t} \,dt \\
&\le |E(x_3;q,a)| + \frac{\kappa x}{\log^2 x} + \frac\kappa{\log x_3-2} \int_{x_3}^x \frac{\log t-2}{\log^3t} \,dt \\
&= |E(x_3;q,a)| + \frac{\kappa x}{\log^2 x} + \frac\kappa{\log x_3-2} \frac t{\log^2t} \bigg|_{x_3}^x \\
&= |E(x_3;q,a)| + \frac{\kappa(\log x_3-1)}{\log x_3-2} \frac x{\log^2x} - \frac{\kappa x_3}{(\log x_3-2)\log^2x_3} \\
&\le \frac{\kappa(\log x_3-1)}{\log x_3-2} \frac x{\log^2x},
\end{align*}
where the last step used the inequality~\eqref{last two terms}.
\end{proof}

\begin{proof}[Proof of Theorem~\ref{main pi theorem} for small moduli]
For any $3\le q\le10^5$, Theorem~\ref{main theta theorem} gives the hypothesis~\eqref{hypothesized theta estimate} with
$\kappa = c_\theta(q)$ and any $x_3\ge x_\theta(q)$. The results of our calculations of the quantities
$x_\theta(q)$ satisfy
\begin{equation*} \label{xtheta upper bound}
  x_\theta(q) \le x_\theta(3) = 7{,}932{,}309{,}757 < 10^{11} \text{ for all } 3\le q\le10^5
\end{equation*}
(links to the the full table of $x_\theta(q)$ can be found in Appendix~\ref{ssec xpsithetapi}), and therefore we may choose
$x_3=10^{11}$. We then computationally verify the inequality~\eqref{last two terms} for $\kappa = c_\theta(q)$ and
$x_3=10^{11}$.
By Proposition~\ref{first theta to pi prop}, we set
$$
c_\pi(q) = {c_\theta(q)(\log(10^{11})-1)}/(\log(10^{11})-2)
$$
and
verify the inequality~$c_\pi(q)<c_0(q)$. See Appendix~\ref{ssec cpsithetapi} for the details of the computations
involved.

This argument establishes the inequality~\eqref{ultimate pi bound} for all $x\ge 10^{11}$. By exhaustive computation
of $\pi(x;q,a)$ for small~$x$, we find the smallest positive real number $x_\pi(q)$ such that the
inequality~\eqref{ultimate psi bound} holds for all $x \ge x_\pi(q)$, and verify the inequality $x_\pi(q) < x_0(q)$.
See Appendix~\ref{ssec xpsithetapi} for details of the computations involved.
\end{proof}

If we prefer to compare $\pi(x;q,a)$ to $x/\log x$ (as in Theorem~\ref{pi nice lower bound theorem}) rather than to $\Li(x)$ (as in Theorem~\ref{main pi theorem}), we may do so after establishing the following two routine bounds upon $\Li(x)$.

\begin{lemma}  \label{Li ineq 4 terms lemma}
We have $\displaystyle\Li(x) > \frac x{\log x}+\frac x{\log^2x} + \frac{2x}{\log^3x} + \frac{6x}{\log^4x}$ for all $x\ge190$.
\end{lemma}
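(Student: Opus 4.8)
The plan is to reduce the claimed inequality to a single numerical check at $x=190$ by writing the difference as an increasing function. Set
\[
g(x) = \Li(x) - \frac x{\log x} - \frac x{\log^2x} - \frac{2x}{\log^3x} - \frac{6x}{\log^4x},
\]
so that the lemma asserts $g(x)>0$ for all $x\ge190$.

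The key step is the derivative computation. Since $\Li'(x)=1/\log x$ and $\frac{d}{dx}\bigl(x(\log x)^{-n}\bigr)=(\log x)^{-n}-n(\log x)^{-n-1}$ for every $n\ge1$, all of the intermediate powers of $1/\log x$ cancel:
\begin{align*}
g'(x) &= \frac1{\log x} - \Bigl(\frac1{\log x} - \frac1{\log^2x}\Bigr) - \Bigl(\frac1{\log^2x} - \frac2{\log^3x}\Bigr) \\
&\qquad{}- \Bigl(\frac2{\log^3x} - \frac6{\log^4x}\Bigr) - \Bigl(\frac6{\log^4x} - \frac{24}{\log^5x}\Bigr) \\
&= \frac{24}{\log^5x}.
\end{align*}
(The coefficients $1,1,2,6$ are exactly those that force this telescoping — this is nothing but the tail structure of the asymptotic expansion of $\Li$.) In particular $g'(x)>0$ for all $x>1$, so $g$ is strictly increasing on $(1,\infty)$, and it suffices to verify $g(190)>0$, i.e.
\[
\Li(190) > \frac{190}{\log190} + \frac{190}{\log^2190} + \frac{380}{\log^3190} + \frac{1140}{\log^4190}.
\]

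It remains to check this base case. The right-hand side equals $47.2469\ldots$, while $\Li(190)=47.2509\ldots$, so the inequality does hold, though only by about $4\times10^{-3}$. To certify the lower bound on $\Li(190)$ one can use that $t\mapsto1/\log t$ is convex on $(1,\infty)$ — indeed $(1/\log t)''=(\log t+2)/(t^2\log^3t)>0$ there — so the midpoint rule underestimates $\int_2^{190}dt/\log t$; a sufficiently fine partition of $[2,190]$ (or a reliable high-precision evaluation of $\Li(190)$) then yields a guaranteed value exceeding $47.2469$. The only delicacy here is the small margin at $x=190$, which forces the numerical verification at that single point to be done with adequate precision; the monotonicity step, which carries all the conceptual content, is entirely elementary.
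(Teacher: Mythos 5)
Your proof is correct and is essentially the paper's argument in differential form: the paper obtains the same conclusion by repeated integration by parts, writing $\Li(x)$ as the four stated terms plus a remainder whose derivative is $24/\log^5x$, hence increasing, and then checking positivity at $x=190$, exactly as you do with $g$. The only difference is presentational (telescoping the derivative rather than integrating by parts), and your attention to the tight numerical margin at $x=190$ is a reasonable addition.
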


\begin{proof}
Repeated integration by parts gives from
$$
\Li(x) = \int_0^x \frac{dt}{\log t} - \int_0^2 \frac{dt}{\log t}
$$
 the identity
\[
\Li(x) =  \frac x{\log x}+\frac x{\log^2x} + \frac{2x}{\log^3x}
+ \frac{6x}{\log^4x} + \bigg( \int_0^x \frac{24\,dt}{\log^5 t} - \int_0^2 \frac{dt}{\log t} \bigg).
\]
The last term (the difference of integrals) is an increasing function of $x$ for $x>1$, and direct calculation shows
that it is positive for $x=190$.
\end{proof}

\begin{lemma}  \label{Li upper bound lemma}
We have $\displaystyle \Li(x) < \frac x{\log x} + \frac{3x}{2\log^2x}$ for all $x\ge1865$.
\end{lemma}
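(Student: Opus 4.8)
The plan is to prove this by a monotonicity argument, parallel to the proof of Lemma~\ref{Li ineq 4 terms lemma}. Set
\[
g(x) = \frac x{\log x} + \frac{3x}{2\log^2 x} - \Li(x),
\]
so that the assertion of the lemma is equivalent to $g(x) > 0$ for all $x \ge 1865$. Since $\Li'(x) = 1/\log x$, differentiating term by term gives
\[
g'(x) = \Big( \frac1{\log x} - \frac1{\log^2 x} \Big) + \Big( \frac3{2\log^2 x} - \frac3{\log^3 x} \Big) - \frac1{\log x} = \frac1{2\log^2 x}\Big( 1 - \frac6{\log x} \Big).
\]

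The right-hand side is positive precisely when $\log x > 6$, i.e.\ for $x > e^6 \approx 403.43$; in particular $g$ is strictly increasing on the interval $[1865,\infty)$, since $1865 > e^6$. It therefore remains only to verify the single numerical inequality $g(1865) > 0$, which is to say
\[
\Li(1865) < \frac{1865}{\log 1865} + \frac{3\cdot 1865}{2\log^2 1865}.
\]
With $\log 1865 \approx 7.531$, the right-hand side is approximately $296.97$, while $\Li(1865) \approx 295.96$, so the inequality holds (with a margin of about $1$). Combining this one-point check with the monotonicity of $g$ on $[1865,\infty)$ gives $g(x) > 0$ for all $x \ge 1865$, as claimed.

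I do not anticipate any genuine obstacle: the only substantive ingredients are the elementary sign analysis of $g'$ and the single numerical evaluation, the latter being comfortably within the reach of direct computation. (Alternatively, one could start from the integration-by-parts identity $\Li(x) = \frac x{\log x} + \frac x{\log^2 x} + \int_0^x \tfrac{2\,dt}{\log^3 t} - \int_0^2 \tfrac{dt}{\log t}$ used in the proof of Lemma~\ref{Li ineq 4 terms lemma} and estimate the remaining integral by splitting its range of integration, but the monotonicity argument is shorter and closely mirrors the preceding lemma.)
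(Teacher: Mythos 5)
Your proof is correct, and it takes a mildly but genuinely different route from the paper's. The paper normalizes first: it sets $f(x) = \big( \Li(x) - \frac x{\log x} \big) \big/ \frac x{\log^2 x}$ and shows $f$ is decreasing for $x\ge 404$; but since $f'$ involves $\Li(x)$ itself, the sign analysis there needs the lower bound on $\Li$ supplied by Lemma~\ref{Li ineq 4 terms lemma}, after which the single evaluation $f(1865)<\frac32$ finishes the argument. You instead differentiate the plain difference $g(x) = \frac x{\log x} + \frac{3x}{2\log^2 x} - \Li(x)$, where $\Li'(x)=1/\log x$ cancels exactly and leaves the explicit elementary expression $g'(x) = \frac1{2\log^2 x}\big(1-\frac6{\log x}\big)$, positive for $x>e^6$; so your argument is self-contained and does not invoke Lemma~\ref{Li ineq 4 terms lemma} at all. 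Both proofs terminate in literally the same numerical fact, since $g(1865)>0$ is equivalent to $f(1865)<\frac32$, and your figures ($\Li(1865)\approx 296$ against a right-hand side of about $296.97$, a margin of roughly $1$) are accurate. What the paper's version buys is uniformity with the technique of the preceding lemma (monotonicity of the normalized error term); what yours buys is independence from that lemma and a cleaner derivative computation. Either is perfectly adequate, and the interval-of-validity threshold $x\ge1865$ plays the same role in both.
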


\begin{proof}
Define $f(x) = \big( \Li(x) - \frac x{\log x} \big) \big/ \frac x{\log^2x}$.
Since $x\ge 190$, Lemma~\ref{Li ineq 4 terms lemma} implies
\begin{align*}
{x^2}f'(x) = & {x (\log x-1)-\Li(x) (\log x-2) \log x} \\
&< x (\log x-1)- \bigg( \frac x{\log x}+\frac x{\log^2x} + \frac{2x}{\log^3x} + \frac{6x}{\log^4x} \bigg) (\log x-2)
\log x \\
&= \frac{2 x (6-\log x)}{\log ^3x},
\end{align*}
which is clearly negative for $x\ge404>e^6$. In particular, $f'(x)<0$ for $x\ge404$, whereby $f(x)$ is decreasing for such $x$. The desired result  follows
from directly calculating that $f(1865) < \frac32$.
\end{proof}

\begin{proof}[Proof of Theorem~\ref{pi nice lower bound theorem}]
From Theorem~\ref{main pi theorem}, we know that for $x>x_\pi(q)$,
\[
\pi(x;q,a) > \frac{\Li(x)}{\phi(q)} - c_\pi(q) \frac{x}{\log^2x}.
\]
The results of our calculations of the quantities $x_\pi(q)$ (see Appendix~\ref{ssec xpsithetapi} for details) satisfy
\begin{equation} \label{xpi lower bound}
  x_\pi(q) \geq x_\pi(99{,}989) = 14{,}735 \text{ for all } 3\le q\le10^5.
\end{equation}
In particular, $x_\pi(q) > 190$, and thus Lemma~\ref{Li ineq 4 terms lemma} implies that $\Li(x) > \frac{x}{\log x} +
\frac{x}{\log^2x}$. Hence
\[
\pi(x;q,a) > \frac{x}{\phi(q)\log x} \bigg( 1 + (1 - c_\pi(q) \phi(q) ) \frac{1}{\log x} \bigg),
\]
and the right-hand side exceeds $\frac{x}{\phi(q)\log x}$ under the hypothesis $c_\pi(q)\phi(q) < 1$. The fact that this
hypothesis holds for $q\le1200$ follows from direct calculation (see Appendix~\ref{ssec cpsithetapi} for details).

Similarly, combining Theorem~\ref{main pi theorem} and Lemma~\ref{Li upper bound lemma} gives us
\[
\pi(x;q,a) < \frac{x}{\phi(q)\log x} \bigg( 1 + (3 + 2c_\pi(q) \phi(q) ) \frac{1}{2 \log x} \bigg).
\]
The assumption that $c_\pi(q) \phi(q) < 1$ yields the desired result.
\end{proof}

Upper bounds for $\pi(x;q,a)$ are equivalent to lower bounds for $p_n(q,a)$, the $n$th smallest prime that is congruent to $a\mod q$, and vice versa; the following two proofs provide the details.

\begin{proof}[Proof of the upper bound in Theorem~\ref{pnqa bounds theorem}]
To simplify notation, we abbreviate the term $p_n(q,a)$ by $p_n$ during this proof.
If $p_n \le x_\pi(q)$ then there is nothing to prove, so we may assume that $p_n > x_\pi(q)$.
From Theorem~\ref{pi nice lower bound theorem} with $x=p_n$,
\[
n = \pi(p_n;q,a) > \frac{p_n}{\phi(q)\log p_n},
\]
and therefore
\begin{equation} \label{nphi inequality}
n\phi(q) > \frac{p_n}{\log p_n}.
\end{equation}
Taking logarithms of inequality~\eqref{nphi inequality},
\[
\log \big( n\phi(q) \big) > \log \bigg( \frac{p_n}{\log p_n} \bigg) = \log p_n \cdot \bigg( 1 - \frac{\log\log p_n}{\log p_n} \bigg),
\]
which implies
\[
\log \big( n\phi(q) \big) \bigg( 1 + \frac{4\log\log p_n}{3\log p_n} \bigg) > \log p_n \cdot \bigg( 1 - \frac{\log\log p_n}{\log p_n} \bigg) \bigg( 1 + \frac{4\log\log p_n}{3\log p_n} \bigg).
\]
The function $(1-t)(1+\frac43t)$ is greater than $1$ for $0<t<\frac14$, and $0<\frac{\log\log p}{\log p}<\frac14$ for
all $p\ge6000$. Since~\eqref{xpi lower bound} implies that $x_\pi(q) > 6000$, the previous inequality
thus gives
\[
\log \big( n\phi(q) \big) \bigg( 1 + \frac{4\log\log p_n}{3\log p_n} \bigg) > \log p_n.
\]
Furthermore, the function $\frac{\log\log t}{\log t}$ is decreasing for $t\ge16>e^e$. If $p_n \le n\phi(q)$ then the desired upper bound is satisfied (other than the trivial case $n\phi(q)=2$, for which $p_n \le 7 < x_\pi(q)$ is easily checked by hand), so we may also assume that $p_n > n\phi(q)$. It follows that
\[
\log \big( n\phi(q) \big) \bigg( 1 + \frac{4\log\log (n\phi(q))}{3\log (n\phi(q))} \bigg) > \log \big( n\phi(q) \big) \bigg( 1 + \frac{4\log\log p_n}{3\log p_n} \bigg) > \log p_n.
\]
Using this upper bound in inequality~\eqref{nphi inequality} yields
\begin{equation} \label{goofy}
n\phi(q) \log \big( n\phi(q) \big) \bigg( 1 + \frac{4\log\log (n\phi(q))}{3\log (n\phi(q))} \bigg) > n\phi(q)\log p_n > p_n,
\end{equation}
which is the desired inequality.
\end{proof}

\begin{proof}[Proof of the lower bound in Theorem~\ref{pnqa
bounds theorem}]
We again abbreviate $p_n(q,a)$ as $p_n$ during this proof.
If $p_n \le x_\pi(q)$ then there is nothing to prove, so we may assume that $p_n > x_\pi(q)$; in particular, $p_n >
14{,}735$ by equation~\eqref{xpi lower bound}.
In this case, we know from equation~\eqref{goofy} that
\begin{equation}  \label{sneaky nphi ineq}
f \big(\log(n\phi) \big) = n\phi(q) \big( \log(n\phi(q)) + \tfrac43\log\log(n\phi(q)) \big) > p_n > 14{,}735,
\end{equation}
where $f(t) = e^t (t+\frac{4}{3}\log t)$ is increasing for all $t>0$. Since $f(7.2) < 14{,}735$, we see that
the inequality~\eqref{sneaky nphi ineq} implies that $\log(n\phi(q)) > 7.2$.

Now, suppose for the sake of contradiction that $p_n(q,a) \le n\phi(q) \log(n\phi(q))$. In particular,
\begin{align}
n = \pi(p_n;q,a) &\le \pi\big( n\phi(q) \log(n\phi(q)) ; q,a \big) \notag \\
&\le \frac{\Li\big( n\phi(q) \log(n\phi(q)) ; q,a \big)}{\phi(q)} + c_\pi(q) \frac{n\phi(q) \log(n\phi(q))}{\log^2\big( n\phi(q) \log(n\phi(q)) \big)} \notag \\
&< \frac{n \log(n\phi(q))}{\log \big( n\phi(q) \log(n\phi(q)) \big)}
+ \frac{5 n \log(n\phi(q))}{2\log^2\big( n\phi(q) \log(n\phi(q)) \big)}, \label{loggy inequality}
\end{align}
where the middle inequality used Theorem~\ref{main pi theorem} and the assumptions
$$
n\phi(q) \log(n\phi(q)) \ge p_n > x_\pi(q),
$$
and the last inequality used Lemma~\ref{Li upper bound lemma} and the assumptions
$$
n\phi(q) \log(n\phi(q))  \ge p_n > x_\pi(q) > 430.
$$

Define the function
\[
g(t) = \frac t{t+\log t} + \frac{5t}{2 (t+\log t)^2},
\]
so that the inequality~\eqref{loggy inequality} is equivalent to the statement that $g\big( \!\log(n \phi(q)) \big)>1$.
On the other hand, $g(t)$ is decreasing for $t < t_0 \approx 21.8$ and then strictly increasing for all $t>t_0$. Since
$\lim_{t \to \infty} g(t) =1$ and $g(7.2)<1$, it follows that $g(t)<1$ for all $t>7.2$, a contradiction.
\end{proof}

For moduli $q$ that are not too large, our calculations of the constants $c_\pi(q)$ allow us to establish clean and explicit versions of Theorems~\ref{pi nice lower bound theorem} and~\ref{pnqa bounds theorem} with a bit of additional computation.

\begin{proof}[Proof of Corollary~\ref{cor nice pi pn bounds}]
For $q=1$ and $q=2$, we may quote results of Rosser and Schoenfeld: the bounds on $\pi(x;q,a)$ follow from~\cite[Theorem
1 and Corollary 1]{RS}, while the bounds on $p_n(q,a)$ follow from~\cite[Theorem 3 and its corollary]{RS}. For $3\le
q\le 1200$, we verify from the results of our calculation of the constants $c_\pi(q)$ that $c_\pi(q)\phi(q)<1$ (see
Appendix~\ref{ssec cpsithetapi} for details), which establishes the corollary in the weaker ranges $x>x_\pi(q)$ and
$p_n(q,a) > x_\pi(q)$. For each of these moduli, an explicit computation for $x$ up to $x_\pi(q)$ confirms that the
asserted inequalities in fact hold once $x\ge 50q^2$ and $p_n(q,a) \ge 22q^2$, as required. See Appendix~\ref{ssec pipn
comp} for details of these last computations.
\end{proof}

We remark that our methods for large moduli (consider for example Proposition~\ref{gory pi inequality prop} below with
$Z=3$) would allow us to obtain the inequalities in Corollary~\ref{cor nice pi pn bounds} for $q>10^5$; by altering the
constants in our arguments in Section~\ref{Sec6}, we could in fact deduce those inequalities for all moduli $q>1200$.
The established range of validity of those inequalities, however, would be substantially worse than the lower bounds
$50q^2$ and $22q^2$ given in Corollary~\ref{cor nice pi pn bounds}: they would instead take the form
$\exp\big(\kappa\sqrt q(\log q)^3\big)$ for some absolute constant~$\kappa$.

\section{Estimation of $|\psi(x;q,a) - x/\phi(q)|$, $|\theta(x;q,a) - x/\phi(q)|$, and $|\pi(x;q,a) - \Li(x)/\phi(q)|$, for $q\ge10^5$} \label{Sec6}

In this section, we will derive bounds upon our various prime counting functions for large values of the modulus $q$, specifically for $q \geq 10^5$. In this situation, our methods allow us to prove inequalities of comparable strength to those for small $q$ (and indeed even stronger inequalities), but only when the parameter $x$ is extremely large: one requires a lower bound for $x$ of the shape $\log x \gg \sqrt{q} \log^3 q$, which is well beyond computational limits. Because of this limitation, we have opted for clean statements over minimized constants.

The reason that the parameter $x$ must be extremely large in such results, as is well known, is that we must take into account the possibility of ``exceptional zeros'' extremely close to $s=1$. We use the following explicit definition of exceptional zero in this paper.

\begin{definition}  \label{R1 exceptional def}
Define $R_1 = 9.645908801$.
We define an \emph{exceptional} zero of $L(s,\chi)$ to be a real zero $\beta$ of $L(s,\chi)$ with $\beta \ge 1 - \frac{1}{R_1\log q}$. By work of McCurley~\cite[Theorem 1]{Mc1}, we know that Hypothesis~Z${}_1(9.645908801)$ holds for the relevant moduli $q\ge10^5$ (as per Definition~\ref{hypothesis z}), and therefore there can be at most one exceptional zero among all of the Dirichlet $L$-functions to a given modulus~$q$.
\end{definition}

The first goal of this section is a variant of Proposition \ref{quoted prop ABC}, which is essentially Theorem 3.6 of McCurley \cite{Mc1} but where we relax the assumption that the $L$-functions involved satisfy GRH(1):

\begin{prop}  \label{quoted prop ABC2}
Let $x>2$ and $H \geq 1$ be real numbers, let $q\ge10^5$ and $m\ge1$ be integers, and let $0<\delta<\frac{x-2}{mx}$ be a real number. Then for every integer $a$ with $\gcd (a,q)=1$,
\begin{equation}  \label{yikes}
\frac{\phi(q)}x \bigg| \psi(x;q,a) - \frac x{\phi(q)} \bigg| <
\funcU{x} + \frac{m\delta}2 + \funcV{x} + \epsilon_1,
\end{equation}
where $\funcU{x}$ and $\funcV{x}$ are as defined in equations~\eqref{U def} and~\eqref{V def} and
$$
\epsilon_1 < \frac{\phi (q)}{x} \left( \frac{\log q \cdot \log x}{\log 2} + 0.2516 q \log q \right).
$$
\end{prop}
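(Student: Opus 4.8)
The plan is to adapt the proof of McCurley's Theorem 3.6~\cite{Mc1}, whose derivation is the source of Proposition~\ref{quoted prop ABC}, but without discarding the contribution of zeros in a possible exceptional region. First I would recall the classical smoothed explicit formula for $\psi(x;q,a)$: starting from the orthogonality relation $\psi(x;q,a) = \frac1{\phi(q)}\sum_{\chi\bmod q}\bar\chi(a)\psi(x,\chi)$ and the Mellin--Perron representation, one performs the $m$-fold integration over an interval of length $\delta x$ near $x$. This produces the familiar expansion in which the difference $\psi(x;q,a) - x/\phi(q)$ is written as the negative of a sum over zeros $\rho\in\Zchi$ of $\bar\chi(a)x^{\rho-1}/(\rho(\rho+1)\cdots(\rho+m))$ (weighted by the combinatorial factor $A_m(\delta)$ on one side, after splitting at height $H$), together with the $\frac{m\delta}2$ term from recovering the unsmoothed difference and the lower-order terms involving $\log q$, $\log x$, and a constant. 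The only place where GRH(1) was used in deriving Proposition~\ref{quoted prop ABC} is in bounding the trivial factor $x^{\beta-1}$ by $x^{-1/2}$; here we must instead keep $x^{\beta-1}$ for each zero, which is exactly what the definitions of $\funcU{x}$ and $\funcV{x}$ in equations~\eqref{U def} and~\eqref{V def} already allow (those definitions carry $x^{\beta-1}$ explicitly, not $x^{-1/2}$).

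Next I would isolate the genuinely new contribution. Since by Definition~\ref{R1 exceptional def} Hypothesis Z${}_1(R_1)$ holds for $q\ge10^5$, there is at most one exceptional zero $\beta_1 \ge 1 - 1/(R_1\log q)$ (necessarily real and associated to a quadratic character $\chi_1$, with $\bar\chi_1(a)=\pm1$). Every non-exceptional zero satisfies $\beta \le 1 - 1/(R_1\log(q\max\{1,|\gamma|\}))$; in particular such zeros with $|\gamma|\le 1$ have $\beta \le 1 - 1/(R_1\log q)$, hence their net effect beyond what $\funcU{x}+\funcV{x}$ already captures is negligible since those sums already contain the $x^{\beta-1}$ weights. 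The point is that in McCurley's formulation the ``GRH(1)'' assumption was used to say $\funcV{x}$ (the small-zero sum) behaves like $x^{-1/2}$ times a convergent sum over heights; without it, the small-zero sum is still bounded by the same expression because it still carries the $x^{\beta-1}\le x^{-1/2}$ bound for all non-exceptional zeros, and the single exceptional zero, if present, contributes a term that must be pulled out separately. That exceptional term, of size roughly $A_m(\delta)\,x^{\beta_1-1}/|\text{denominator}|$, is what must be absorbed into $\epsilon_1$. Using $x^{\beta_1 - 1} \le x^{-1/(R_1\log q)}$ together with the elementary bound $x^{-1/(R_1\log q)}\le 1$ and estimating the denominator $|\rho(\rho+1)\cdots(\rho+m)|\ge \rho_1\cdot 1\cdot 2\cdots m$ crudely, one gets a contribution controlled by a constant; then one uses $\log x/\log 2$ to dominate the logarithmic pieces and $0.2516\,q\log q$ (via the bound on $b(\chi)$, or directly via McCurley's constants) to dominate the remaining $q$-dependent constants, yielding the stated form of $\epsilon_1$.

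The main obstacle will be bookkeeping: tracking exactly which terms in McCurley's expansion depend on GRH(1) versus which are unconditional, and making sure that after removing that hypothesis the only surviving ``bad'' term is the single exceptional-zero contribution (and not, say, a divergent tail of near-$s=1$ zeros, which Hypothesis Z${}_1(R_1)$ rules out). Concretely, one must verify that the sum over \emph{non-exceptional} zeros with small imaginary part is still subsumed by $\funcV{x}$ as defined, and that the combinatorial weight $A_m(\delta)$ applied to the exceptional zero — which could in principle be large since $A_m(\delta)\sim(2/\delta)^m$ — does not blow up. This is handled by noting that $\delta$ will ultimately be chosen bounded below by a negative power of $x$, and more importantly that the exceptional zero lies \emph{below} height $H$, so it falls in the $\funcV{x}$-type sum which carries the harmless factor $(1+\frac{m\delta}2)$ rather than $A_m(\delta)$; thus no large combinatorial factor multiplies it. I would also double-check the numerical constant: McCurley's Theorem~1~\cite{Mc1} gives $R_1 = 9.645908801$ and the constant $0.2516$ should be traceable to his explicit estimate for the constant term in the partial fraction expansion of $\frac{L'}{L}(s,\chi)$ (compare Proposition~\ref{bchi prop}, which gives $0.2515\,q\log q$), with a slight loss to accommodate the coarser estimates used here. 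Once these points are pinned down, inequality~\eqref{yikes} follows by collecting the explicit-formula terms exactly as in Proposition~\ref{quoted prop ABC}, with $\funcW{x}$ replaced by the weaker but unconditional $\epsilon_1$.
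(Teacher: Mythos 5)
There is a genuine gap, and it sits exactly where you try to handle the exceptional zero. In the statement of Proposition~\ref{quoted prop ABC2}, the quantities $\funcU{x}$ and $\funcV{x}$ are literally the sums of equations~\eqref{U def} and~\eqref{V def}, taken over \emph{all} zeros in $\Zchi$; a possible exceptional zero $\beta_0$ has $\gamma=0\le H$ and therefore simply stays inside $\funcV{x}$ --- nothing about it is pulled out at this stage (its separate treatment happens only later, in Lemma~\ref{FLM lemma}, where it reappears as the $x^{\beta_0}$ term). Your plan to absorb its contribution into $\epsilon_1$ cannot work quantitatively: the asserted bound forces $\epsilon_1 \ll \phi(q)\big(q\log q+\log q\log x\big)/x$, which decays like $1/x$, whereas the exceptional zero contributes roughly $x^{\beta_0-1}/\beta_0 \ge x^{-1/(R_1\log q)}$ to the normalized left-hand side of~\eqref{yikes}, which decays far more slowly; a ``contribution controlled by a constant'' is therefore not absorbable into $\epsilon_1$. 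Relatedly, your diagnosis that \GRH{1} enters Proposition~\ref{quoted prop ABC} only through replacing $x^{\beta-1}$ by $x^{-1/2}$ is off: no such replacement occurs in that proposition (both \eqref{U def} and \eqref{V def} keep $x^{\beta-1}$); that reduction is made later, in Lemma~\ref{lemma: V bound}, under \GRH{H}.

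What the proposition actually requires is control of the $s=0$ data in McCurley's explicit formula for (imprimitive) characters. Following McCurley's proof of his Theorem 3.6 without \GRH{1}, one obtains \eqref{yikes} with $\epsilon_1 < \frac{\phi(q)}{x}\big(\frac{\log 2}{2}+|d_2|\log(2x)+|d_1+d_2|\big)$, where $d_1=\frac1{\phi(q)}\sum_{\chi}\overline\chi(a)\big(m(\chi)-b(\chi)\big)$ and $d_2=-\frac1{\phi(q)}\sum_{\chi}\overline\chi(a)m(\chi)$, with $m(\chi)$ and $b(\chi)$ as in Definition~\ref{mchi bchi def}. The two pieces of the stated bound then come from $|d_2|\le\omega(q)\le\frac{\log q}{\log 2}$ (whence the $\frac{\log q\cdot\log x}{\log 2}$ term) and from $|d_1+d_2|\le\max_\chi|b(\chi)|\le 0.2515\,q\log q$, i.e.\ Proposition~\ref{bchi prop}. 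The removal of \GRH{1} is felt precisely there: McCurley's bound~\eqref{bee-chi} for $|b(\chi)|$ contains $\sum_{\rho\in\Zchi}2/|\rho(2-\rho)|$, which a zero near $s=1$ could inflate, and this is the only place where the repulsion estimate of Proposition~\ref{FLM beta prop} is invoked, contributing the $\frac{\sqrt q\log^2 q}{40}$ term of Lemma~\ref{isolate zeros below 1 lemma}, which for $q\ge10^5$ is swallowed by the margin between the imprimitivity term $\frac{q\log q}{4}$ and $0.2515\,q\log q$. You do gesture at Proposition~\ref{bchi prop} when tracing the constant $0.2516$, which is the right instinct, but your argument locates the difficulty in the zero sums $\funcU{x}$ and $\funcV{x}$ --- where nothing needs to change --- rather than in $d_1$ and $d_2$, so as written it does not produce the stated $\epsilon_1$.
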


\noindent This statement is extremely close to that of Proposition~\ref{quoted prop ABC}, with the term $\funcW{x}$ of that result replaced by a (potentially) larger quantity $\epsilon_1$. (Indeed, an easy calculation shows that the statement actually follows from Proposition~\ref{quoted prop ABC} for $29\le q\le4\cdot 10^5$, upon noting that the computations of Platt~\cite{Pla2} confirm that all Dirichlet $L$-functions to these moduli satisfy \GRH{1}.) We prove Proposition~\ref{quoted prop ABC2} at the end of Section~\ref{Sec6.2}; we remark that our argument is similar to one of Ford, Luca, and Moree~\cite[Lemma 9]{FLM}. Once this proposition is established, we will use it to deduce our upper bounds on the error terms for our prime counting functions for these large moduli, thus completing the proof of Theorems~\ref{main psi theorem}--\ref{main pi theorem}.

\subsection{Explicit upper bound for exceptional zeros of quadratic Dirichlet $L$-functions}  \label{Sec6.1}

To proceed without the assumption of GRH(1), we need to derive estimates for zeros of $L$-functions that would potentially violate this hypothesis. Motivated by the computations of Platt~\cite{Pla2}, we will prove our results for $q \geq 4 \cdot 10^5$ though,
by direct computation, we can extend these to smaller values of $q$.

\begin{lemma}  \label{L1chi lemma}
If $\chi^*$ is a primitive quadratic character with modulus $q \geq 4 \cdot 10^5$, then
\begin{multline*}
  L(1,\chi^*) \ge  \min \left\{ 46 \pi, \max \big\{  \log \big( \tfrac{\sqrt{q+4} + \sqrt{q}}{2} \big), 12 \big\} \right\} q^{-1/2} \\
  = \begin{cases}
12q^{-1/2}, &\text{if } 4 \cdot 10^5 \le q < e^{24}-2, \\
\frac12 q^{-1/2}\log q, &\text{if } e^{24}-2 < q < e^{92\pi}-2, \\
46\pi q^{-1/2}, &\text{if } q > e^{92\pi}-2.
\end{cases}
\end{multline*}
\end{lemma}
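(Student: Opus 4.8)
The plan is to read the bound off Dirichlet's analytic class number formula for the quadratic field attached to $\chi^*$, treating the even and odd cases separately. Let $D$ be the fundamental discriminant with $|D|=q$, so that $\chi^*$ is the Kronecker symbol $(\tfrac D\cdot)$; then $\chi^*$ is odd exactly when $D<0$, and since $q=|D|>4$ the field $\Q(\sqrt D)$ has only $\pm1$ as roots of unity. Writing $h(D)$ for its class number and $\eta_q>1$ for the fundamental unit of $\Q(\sqrt q)$ in the real case, Dirichlet's formula reads
\[
L(1,\chi^*)=\frac{\pi\,h(D)}{\sqrt q}\quad(D<0),\qquad
L(1,\chi^*)=\frac{2\,h(D)\log\eta_q}{\sqrt q}\quad(D>0).
\]
In every case Proposition~\ref{L1chi prop} already supplies $L(1,\chi^*)>12\,q^{-1/2}$ (since $q>6677$), which accounts for the ``$12$'' inside the maximum; it then remains to produce one further lower bound in each parity class, after which the displayed $\min$--$\max$ inequality follows by comparing the three numbers $46\pi$, $\log\!\big(\tfrac{\sqrt{q+4}+\sqrt q}{2}\big)$, and $12$.

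For $\chi^*$ even I would bound $\log\eta_q$ below. Whatever the sign of its norm, $\eta_q^2$ is a unit of norm $+1$, hence equals either $\tfrac12(t+u\sqrt q)$ with $t^2-qu^2=4$ or $t+u\sqrt q$ with $t^2-qu^2=1$, for some positive integers $t,u$; in either case $u\ge1$ forces $t\ge\sqrt{q+4}$, so $\eta_q^2\ge\tfrac12(\sqrt{q+4}+\sqrt q)$. Therefore $2\log\eta_q=\log\eta_q^2\ge\log\!\big(\tfrac{\sqrt{q+4}+\sqrt q}{2}\big)$, and with $h(D)\ge1$ this yields $L(1,\chi^*)\ge\log\!\big(\tfrac{\sqrt{q+4}+\sqrt q}{2}\big)\,q^{-1/2}$; combining this with Proposition~\ref{L1chi prop} gives $L(1,\chi^*)\ge\max\{\log(\tfrac{\sqrt{q+4}+\sqrt q}{2}),12\}\,q^{-1/2}$, which in particular dominates the asserted bound (and, since $\log(\tfrac{\sqrt{q+4}+\sqrt q}{2})$ is unbounded in $q$, it exceeds $46\pi\,q^{-1/2}$ once $q$ is large).

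For $\chi^*$ odd I would instead bound $h(D)$ below: by the (effective) complete classification of imaginary quadratic fields of small class number, every such field of discriminant $|D|\ge4\cdot10^5$ has $h(D)\ge46$, so $L(1,\chi^*)=\pi h(D)\,q^{-1/2}\ge46\pi\,q^{-1/2}$, again dominating the asserted bound. The piecewise reformulation is then pure bookkeeping: $q\mapsto\log(\tfrac{\sqrt{q+4}+\sqrt q}{2})$ is increasing, equal to $12$ at $q=e^{24}-2+e^{-24}$ and to $46\pi$ at $q=e^{92\pi}-2+e^{-92\pi}$, so the three stated regimes correspond to $\log(\cdot)<12$, $12\le\log(\cdot)<46\pi$, and $\log(\cdot)\ge46\pi$, and in the middle regime one uses $\tfrac{\sqrt{q+4}+\sqrt q}2\ge\sqrt q$ to weaken the bound to the cleaner $\tfrac12\,q^{-1/2}\log q$. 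The content here is entirely classical, so I expect no real obstacle; the point needing the most care is the odd case, which hinges on having the correct explicit discriminant cutoff in the small-class-number classification (the sole place the argument relies on deep input), while the even case needs only the minor case distinction on the norm of $\eta_q$ used to obtain $\eta_q^2\ge\tfrac12(\sqrt{q+4}+\sqrt q)$, and tracking the constants $46\pi$, $e^{24}$, and $e^{92\pi}$ through the final comparison is the remaining routine computation.
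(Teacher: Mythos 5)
Your framework (Dirichlet's class number formula, $h\ge 46$ from the classification of imaginary quadratic fields with small class number, and $h\ge1$ together with $\eta_q^2\ge\tfrac12(\sqrt{q+4}+\sqrt q)$ in the real case) is the same as the paper's, and your imaginary case and final bookkeeping are fine. The gap is in how you obtain the constant $12$: you invoke Proposition~\ref{L1chi prop} to supply $L(1,\chi^*)>12\,q^{-1/2}$, but in this paper that proposition is \emph{deduced from} the present lemma (see the sentence immediately following the lemma's statement and Section~\ref{magma sec}), so as written your argument is circular.

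Independently of the circularity, the place where the $12$ is actually needed is the real-character case with $4\cdot10^5\le q< e^{24}-2$: there $\log\big(\tfrac{\sqrt{q+4}+\sqrt q}{2}\big)<12$, so the asserted lower bound is $12\,q^{-1/2}$, and the trivial inputs $h\ge1$ and $\eta_q^2\ge\tfrac12(\sqrt{q+4}+\sqrt q)$ yield only the weaker bound $\log\big(\tfrac{\sqrt{q+4}+\sqrt q}{2}\big)q^{-1/2}$. The paper closes exactly this gap by a finite computation (described in Section~\ref{magma sec}): it verifies $h(\sqrt d)\log\eta_d>12$ for all real fundamental discriminants $d$ with $4\cdot10^5\le d<2.65\cdot10^{10}\approx e^{24}-2$, splitting into the ranges $d\le10^7$ and $10^7<d$ with $du_0^2<2.65\cdot10^{10}$. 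So your remark that the even case needs ``only the minor case distinction on the norm of $\eta_q$'' understates what is required; to repair the proof you must either carry out this computation (or an equivalent lower bound on $h(\sqrt d)\log\eta_d$ in that range) or find a genuinely independent proof of Proposition~\ref{L1chi prop} that does not pass through this lemma.
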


\noindent Proposition~\ref{L1chi prop} is an easy consequence of this lemma; see Section~\ref{magma sec} for the details of that deduction.

\begin{proof}
As the asserted equality is elementary, we focus upon the asserted inequality. We use the fact~\cite[Theorem 9.13]{MV} that every primitive quadratic character can be expressed, using the Kronecker symbol, in the form $\chi^*(n) = \chi_d(n) = ( \frac dn )$ for some fundamental discriminant $d$, and such a character is a primitive character\mod{q} for $q=|d|$.

First, we consider negative values of $d$, so that $d\le-400000$.
For these characters, Dirichlet's class number formula~\cite[equation (4.36)]{MV} gives
\[
L(1,\chi_d) = \frac{2\pi h(\sqrt d)}{w_d\sqrt{|d|}},
\]
where $h(\sqrt d)$ is the class number of $\mathbb{Q}(\sqrt d)$, while $w_d$ is the number of roots of unity in $\mathbb{Q}(\sqrt d)$; as is well-known, we have $w_d=2$ for $d<-3$.
Appealing to Watkins \cite[Table 4]{Wat}, since $|d|=q > 319867$,
we may conclude that $h(\sqrt{-q}) \geq 46$, and hence that
$$
L(1,\chi^*) = \frac{2\pi h(\sqrt d)}{w_d\sqrt{|d|}} \ge  46 \pi q^{-1/2}.
$$

Now, we consider $d>0$.
For these characters, Dirichlet's class number formula~\cite[equation (4.35)]{MV} gives
\[
L(1,\chi_d) = \frac{h(\sqrt d)\log \eta_d}{\sqrt d},
\]
where $h(\sqrt d)$ is the class number as above; here
$\eta_d=(v_0+u_0 \sqrt{d})/2$, where $v_0$ and $u_0$ are the minimal positive integers satisfying $v_0^2-du_0^2=4$.
Since $h(\sqrt d)\ge1$ and
$$
\eta_d = \frac{v_0 + u_0 \sqrt{d}}{2}  \geq \frac{\sqrt{d+4} + \sqrt{d} }{2},
$$
we thus have that
$$
L(1,\chi^*) \ge \log \left( \tfrac{\sqrt{q+4} + \sqrt{q}}{2} \right) q^{-1/2}.
$$

It only remains to show that $L(1,\chi^*)\geq 12 q^{-1/2}$, assuming $q=d \geq 4\cdot 10^5$. As
$ \log \big( \tfrac{\sqrt{q+4} + \sqrt{q}}{2} \big) \geq 12$
for $q \geq 2.65 \cdot 10^{10}>e^{24}-2$, we may further assume that $4 \cdot 10^5 \leq q < 2.65 \cdot 10^{10}$.
In this range, we can verify the inequality
$$
h(\sqrt d)\log \eta_d > 12
$$
computationally (see Section~\ref{magma sec} for the details), which completes the proof of the lemma.
\end{proof}

It is worth noting that work of Oesterl\'e~\cite{Oes}, making explicit an argument of Goldfeld~\cite{Gol}, provides a lower bound upon class numbers of imaginary quadratic fields, which can be used to improve the order of magnitude of our lower bound for $L(1,\chi^*)$ in Lemma \ref{L1chi lemma}.
Tracing the argument through explicitly, for $d<0$ a fundamental discriminant, we could show that
\begin{equation}  \label{oe o1}
h(\sqrt{d}) > \log |d| \exp \bigg( {-} 10.4 \sqrt{\frac{\log\log|d|}{\log\log\log|d|}} \bigg),
\end{equation}
leading to an improvement in the lower bound of Lemma~\ref{L1chi lemma} of order $(\log q)^{1-o(1)}$ for large~$q$. Unfortunately, such an improvement would not ultimately lead to a more accessible range of $x$ in Theorems~\ref{main psi theorem}--\ref{main pi theorem} for large moduli.

\begin{lemma}  \label{FLM Lprime lemma with y}
Let $q\ge3$ be an integer, and let $\chi^*$ be a primitive character with modulus $q$. Then for any real number $\sigma$ satisfying $1-\frac1{4\sqrt q} \le \sigma \le 1$ and any $y>4$,
\begin{equation}  \label{first L1chi' bound}
|L'(\sigma,\chi^*)| \le y^{1-\sigma} \bigg( \frac{\log^2y}2 + \frac{1}{10} \bigg) + \frac{2\sqrt q}\pi \log\frac{4q}\pi \cdot \frac{\log y}{y^\sigma}.
\end{equation}
\end{lemma}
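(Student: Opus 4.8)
The plan is to bound $|L'(\sigma,\chi^*)|$ by splitting the Dirichlet series $L'(s,\chi^*) = -\sum_{n\ge 1} \chi^*(n)\frac{\log n}{n^s}$ into the range $n\le y$ and the range $n>y$, treating the first range trivially and the tail by partial summation using the Pólya--Vinogradov inequality. For the head, I would write
\[
\Big| \sum_{n\le y} \chi^*(n)\frac{\log n}{n^\sigma} \Big| \le \sum_{n\le y} \frac{\log n}{n^\sigma},
\]
and since $\sigma\le 1$ this is at most $\frac1{y^{\sigma-1}}\sum_{n\le y}\frac{\log n}{n}$ (pulling out $n^{1-\sigma}\le y^{1-\sigma}$); comparing the sum $\sum_{n\le y}\frac{\log n}{n}$ to the integral $\int_1^y \frac{\log t}{t}\,dt = \frac{\log^2 y}{2}$ and controlling the Euler--Maclaurin error by an explicit constant (this is where the $\frac1{10}$ comes from, valid for $y>4$) yields the first term $y^{1-\sigma}\big(\frac{\log^2 y}{2}+\frac1{10}\big)$.

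For the tail $\sum_{n>y}\chi^*(n)\frac{\log n}{n^\sigma}$, I would use partial summation with $S(t)=\sum_{n\le t}\chi^*(n)$, which by the Pólya--Vinogradov inequality satisfies $|S(t)|\le \sqrt q\,\log q$ (or a slightly sharper explicit form such as $|S(t)|\le \frac{2\sqrt q}{\pi^2}\log q + \dots$; one would pick whichever explicit constant produces the stated $\frac{2\sqrt q}{\pi}\log\frac{4q}{\pi}$ after the integration). Writing
\[
\sum_{n>y}\chi^*(n)\frac{\log n}{n^\sigma} = -S(y)\frac{\log y}{y^\sigma} - \int_y^\infty S(t)\,d\!\left(\frac{\log t}{t^\sigma}\right),
\]
one checks that $\frac{\log t}{t^\sigma}$ is decreasing for $t>e^{1/\sigma}$, which holds throughout the tail since $y>4>e$ and $\sigma\ge 1-\frac1{4\sqrt q}>\frac12$; hence $d\big(\frac{\log t}{t^\sigma}\big)\le 0$ and the total variation of $\frac{\log t}{t^\sigma}$ on $[y,\infty)$ equals its value $\frac{\log y}{y^\sigma}$ at the left endpoint. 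Therefore the tail is bounded by $2|S|\cdot\frac{\log y}{y^\sigma} \le \big(\text{Pólya--Vinogradov constant}\big)\cdot\frac{\log y}{y^\sigma}$, and one arranges the constant to be exactly $\frac{2\sqrt q}{\pi}\log\frac{4q}{\pi}$. Adding the head and tail bounds gives the claimed inequality.

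The main obstacle is bookkeeping the explicit constants so that they collapse into precisely the shape $\frac{\log^2 y}{2}+\frac1{10}$ and $\frac{2\sqrt q}{\pi}\log\frac{4q}{\pi}$: one must use a sufficiently sharp explicit Pólya--Vinogradov bound (the clean $\sqrt q\log q$ form may be just slightly too weak, so the standard refinement with the $\frac2\pi$ or $\frac1\pi$ factor, valid for primitive $\chi^*$ modulo $q\ge 3$, is what feeds the stated coefficient), and one must verify that the Euler--Maclaurin remainder in $\big|\sum_{n\le y}\frac{\log n}{n} - \frac{\log^2 y}{2}\big|$ is below $\frac1{10}$ for all $y>4$ (the worst case is near $y=4$, handled by a direct check, with monotonicity of the remainder's envelope thereafter). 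The hypothesis $1-\frac1{4\sqrt q}\le\sigma\le 1$ is used only to guarantee $\sigma$ is close enough to $1$ that $n^{1-\sigma}\le y^{1-\sigma}$ in the head and that $\frac{\log t}{t^\sigma}$ is monotone in the tail; since these hold for any $\sigma\in(\frac12,1]$, the stated range is comfortably sufficient, and no delicate argument near $\sigma=1-\frac1{4\sqrt q}$ is needed.
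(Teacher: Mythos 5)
Your overall route --- splitting the Dirichlet series for $L'(\sigma,\chi^*)$ at $y$, bounding the head trivially after pulling out $y^{1-\sigma}$ and comparing $\sum_{n\le y}(\log n)/n$ with $\int (\log t)/t\,dt$, and treating the tail by partial summation against a P\'olya--Vinogradov-type bound --- is exactly the paper's. The head is fine: only the one-sided bound $\sum_{n\le y}\frac{\log n}{n}\le\frac{\log^2 y}{2}+\frac1{10}$ is needed (your two-sided ``remainder below $1/10$'' claim is actually false, e.g.\ just to the left of an integer the difference drops well below $-1/10$), and the one-sided bound does hold, with worst case $\log 2+\frac{\log 3}{3}-\frac{\log^2 4}{2}\approx 0.0985$ as $y\to 4^+$; the paper obtains it by taking $n=2,3,4$ explicitly and comparing the terms $n\ge5$ with $\int_4^y\frac{\log z}{z}\,dz$.

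The genuine gap is in the tail. You sum by parts against the complete sums $S(t)=\sum_{n\le t}\chi^*(n)$ and then bound the boundary term $|S(y)|\frac{\log y}{y^\sigma}$ and the integral $\int_y^\infty |S(t)|\,\bigl|\frac{d}{dt}\frac{\log t}{t^\sigma}\bigr|\,dt$ separately, which yields $2\sup_t|S(t)|\cdot\frac{\log y}{y^\sigma}$. To land on the stated constant you would therefore need $\sup_t|S(t)|\le\frac{\sqrt q}{\pi}\log\frac{4q}{\pi}$ for every primitive $\chi^*$ modulo $q\ge3$, and no such bound is available: it already fails at $q=3$, where $|S(1)|=1>\frac{\sqrt3}{\pi}\log\frac{12}{\pi}\approx0.74$, and the sharper explicit complete-sum bounds (Pomerance, Frolenkov--Soundararajan) carry secondary terms of size $\sqrt q$ that destroy the constant for small $q$. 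The fix is one step away from your own algebra: sum by parts against the incomplete sum $f_{\chi^*}(y,t)=\sum_{y<n\le t}\chi^*(n)=S(t)-S(y)$ instead. Then the boundary term at $t=y$ vanishes (and the one at infinity vanishes since $(\log t)/t^\sigma\to0$), so the tail is at most $\sup_{t>y}|f_{\chi^*}(y,t)|\cdot\int_y^\infty\bigl|\frac{d}{dt}\frac{\log t}{t^\sigma}\bigr|\,dt=\sup_{t>y}|f_{\chi^*}(y,t)|\cdot\frac{\log y}{y^\sigma}$, with only one P\'olya--Vinogradov factor; the Montgomery--Vaughan bound for character sums over an arbitrary interval, $|f_{\chi^*}(y,t)|\le\frac2{\sqrt q}\sum_{a=1}^{(q-1)/2}\frac1{\sin(\pi a/q)}\le\frac{2\sqrt q}{\pi}\log\cot\frac{\pi}{4q}<\frac{2\sqrt q}{\pi}\log\frac{4q}{\pi}$ (using convexity of $1/\sin$ to pass to an integral), then gives exactly the stated coefficient. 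This is precisely what the paper does; your monotonicity check $e^{1/\sigma}<4<y$ for the total-variation step is correct and matches the paper's.
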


\begin{proof}
We proceed as in the proof of ~\cite[Lemma 3]{FLM}. We start by considering the incomplete character sum $f_{\chi^*}(u,v) = \sum_{u<n\le v} \chi^*(n)$, which can be bounded~\cite[Section 9.4, p. 307]{MV} by
\[
f_{\chi^*}(u,v) \le \frac2{\sqrt q} \sum_{a=1}^{(q-1)/2} \frac1{\sin\pi a/q}.
\]
Since the function $1/{\sin (\pi z/q)}$ is convex for $0\le z\le \frac q2$,
$$
\frac1{\sin\pi a/q} < \int_{a-1/2}^{a+1/2} \frac{dz}{\sin\pi z/q}
$$
for each $1\le a\le(q-1)/2$, and therefore
\[
f_{\chi^*}(u,v) \le \frac2{\sqrt q} \int_{1/2}^{q/2} \frac{dz}{\sin\pi z/q} = \frac{2\sqrt q}\pi \log\cot\frac\pi{4q} < \frac{2\sqrt q}\pi \log\frac{4q}\pi,
\]
since $\tan z>z$ for $0<z<\frac\pi2$. We note that while this simple bound (an explicit version of the P\'olya--Vinogradov inequality) is sufficient for our purposes, it is possible to
sharpen it further (see~\cite{Pom,FS}).


Now for any $y>4$,
\begin{align}
|L'(\sigma,\chi^*)| & = \bigg| {-} \sum_{n\le y} \frac{\chi(n)\log n}{n^\sigma} - \sum_{n>y} \frac{\chi(n)\log n}{n^\sigma} \bigg| \notag \\
&\le \sum_{n\le y} \frac{\log n}{n^\sigma} + \bigg| \sum_{n>y} \frac{\chi(n)\log n}{n^\sigma} \bigg| \notag \\
  & \le y^{1-\sigma} \sum_{n\le y} \frac{\log n}n + \bigg| \int_y^\infty \frac{\log z}{z^\sigma} \,df_{\chi^*}(y,z) \bigg|.
\label{Lprime split}
\end{align}
Since $\frac{\log z}z$ is decreasing for $z\ge 4$, the first term in expression (\ref{Lprime split})  can be bounded by
\begin{align}
y^{1-\sigma} \sum_{n\le y} \frac{\log n}n &\le y^{1-\sigma} \bigg( \frac{\log2}2 + \frac{\log3}3
+ \frac{\log 4}{4}+ \int_4^y \frac{\log z}z\,dz \bigg) \notag \\
&= y^{1-\sigma} \bigg( \log 2 + \frac{\log3}3 + \frac{\log^2y}2 - \frac{\log^24}2 \bigg) \notag \\
& < y^{1-\sigma} \bigg( \frac{\log^2y}2 + \frac{1}{10} \bigg).
\label{Lprime head of sum}
\end{align}
The second term in expression (\ref{Lprime split}), after integrating by parts (and noting that both boundary terms vanish), becomes
\begin{align*}
\bigg| \int_y^\infty \frac{\log z}{z^\sigma} \,df_{\chi^*}(y,z) \bigg| &= \bigg| {-} \int_y^\infty f_{\chi^*}(y,z) \bigg( \frac d{dz} \frac{\log z}{z^\sigma} \bigg) \,dz \bigg| \\
&\le \frac{2\sqrt q}\pi \log\frac{4q}\pi \int_y^\infty \bigg| \frac d{dz} \frac{\log z}{z^\sigma} \bigg| \,dz = \frac{2\sqrt q}\pi \log\frac{4q}\pi \cdot \frac{\log y}{y^\sigma},
\end{align*}
since $\frac{\log z}{z^\sigma}$ is a decreasing function of $z$ for $z > e^{1/\sigma}$ and since
$$
e^{1/(1-1/4\sqrt q)} < e^{\frac{4 \sqrt{3}}{4 \sqrt{3}-1}} < 4 < y.
$$
Combining this with inequalities  \eqref{Lprime split} and \eqref{Lprime head of sum} establishes the lemma.
\end{proof}

\begin{lemma}  \label{FLM Lprime lemma}
Let $q\ge4\cdot10^5$ be an integer and let $\chi^*$ be a primitive character with modulus $q$. Then, for any real number $\sigma$ satisfying $1-\frac1{4 \sqrt q} \le \sigma \le 1$,
\[
|L'(\sigma,\chi^*)| < 0.27356 \log^2 q.
\]
\end{lemma}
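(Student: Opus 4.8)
The plan is to apply Lemma~\ref{FLM Lprime lemma with y} with a carefully chosen value of $y=y(q)$ and then reduce the resulting estimate to an inequality involving only~$q$. Since both terms on the right-hand side of that lemma increase as $\sigma$ decreases, it suffices to treat the extreme case $\sigma = 1-\tfrac1{4\sqrt q}$; in particular $y^{1-\sigma}\le y^{1/(4\sqrt q)} = \exp\bigl(\tfrac{\log y}{4\sqrt q}\bigr)$ and $y^{-\sigma}\le y^{-1}\exp\bigl(\tfrac{\log y}{4\sqrt q}\bigr)$, so that
\[
|L'(\sigma,\chi^*)| \le \exp\Bigl(\tfrac{\log y}{4\sqrt q}\Bigr)\Bigl(\tfrac{\log^2 y}{2} + \tfrac1{10} + \tfrac{2\sqrt q}{\pi}\log\tfrac{4q}{\pi}\cdot\tfrac{\log y}{y}\Bigr).
\]
One checks along the way that the chosen $y$ exceeds $4$ for every $q\ge4\cdot10^5$, so the hypothesis of Lemma~\ref{FLM Lprime lemma with y} is satisfied and the restriction $1-\tfrac1{4\sqrt q}\le\sigma\le1$ is exactly what is used.

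Next I would choose $y$ to minimize, or very nearly minimize, the quantity in parentheses. The natural near-optimal choice is $y$ of size roughly $\frac{2\sqrt q}{\pi}\log\frac{4q}{\pi}$, for which the last summand $\frac{2\sqrt q}{\pi}\log\frac{4q}{\pi}\cdot\frac{\log y}{y}$ is comparable to $\log y$ and the whole bound collapses to an explicit function $G(q)$ roughly of the shape $\exp\bigl(\tfrac{\log y}{4\sqrt q}\bigr)\bigl(\tfrac12\log^2 y + \log y + \tfrac1{10}\bigr)$ with $\log y = \tfrac12\log q + O(\log\log q)$. Hence $G(q) = \bigl(\tfrac18+o(1)\bigr)\log^2 q$ as $q\to\infty$, comfortably below $0.27356\log^2 q$ for large~$q$. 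To finish, I would show that $G(q)/\log^2 q$ is a decreasing function of $q$ on $[4\cdot10^5,\infty)$ — which, since $\log y$ is an explicit elementary function of $q$, is a matter of calculus (differentiating directly, or splitting off a tail $q\ge q_1$ where monotonicity is transparent and handling the bounded interval $[4\cdot10^5,q_1]$ by monotonicity of the pieces) — and then verify the single numerical inequality $G(4\cdot10^5) < 0.27356\,\bigl(\log(4\cdot10^5)\bigr)^2$.

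The main obstacle is that the target inequality is essentially tight at the left endpoint $q=4\cdot10^5$: the value $0.27356$ is, to the displayed precision, the actual maximum of $G(q)/\log^2 q$, attained there under the optimal choice of~$y$. Consequently one cannot afford to be wasteful — the minimizing $y$ must be taken essentially optimally (its defining relation $y\log y \approx \frac{2\sqrt q}{\pi}\log\frac{4q}{\pi}\,(\log y-1)$ is transcendental, so some care is required either to solve it or to exhibit an explicit admissible $y$ that is good enough), and the $\exp\bigl(\tfrac{\log y}{4\sqrt q}\bigr)$ correction arising from $\sigma$ near $1-\tfrac1{4\sqrt q}$, together with the $\log\log q$ terms hidden inside $\log y$, must be tracked precisely. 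Establishing the monotonicity of $G(q)/\log^2 q$ cleanly, rather than by brute interval arithmetic over a large finite range, is the most delicate remaining point.
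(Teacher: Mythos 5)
Your skeleton is the same as the paper's: apply Lemma~\ref{FLM Lprime lemma with y}, observe that the bound is worst at $\sigma=1-\frac1{4\sqrt q}$, pick $y=y(q)$, show the resulting ratio to $\log^2q$ is decreasing in $q$, and check it numerically at $q=4\cdot10^5$. The difference is how $y$ is chosen, and that is exactly where your plan has a gap. Your one explicit candidate, $y\approx\frac{2\sqrt q}{\pi}\log\frac{4q}{\pi}$, does not work: at $q=4\cdot10^5$ (with $\sigma=1-\frac1{4\sqrt q}$) it yields a bound of about $45.58$, i.e.\ roughly $0.274\log^2q$, whereas $0.27356\log^2q\approx45.52$; the lemma's constant leaves a relative margin of order $10^{-5}$, so this ``near-optimal'' choice overshoots. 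You do acknowledge that one must take $y$ essentially at the minimizer of $\frac{\log^2y}{2}+\frac1{10}+\frac{2\sqrt q}{\pi}\log\frac{4q}{\pi}\cdot\frac{\log y}{y}$, whose defining relation $y\log y=\frac{2\sqrt q}{\pi}\log\frac{4q}{\pi}\,(\log y-1)$ is transcendental, but you never exhibit an admissible explicit $y(q)$, and the monotonicity of the resulting implicitly defined $G(q)/\log^2q$ --- which you yourself flag as the most delicate point --- is left open. As written, then, the argument is incomplete, and its only concrete instantiation fails at the critical modulus.

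The paper resolves both difficulties with a single device you should adopt: set $y=q^\alpha$ with a numerically tuned exponent, $\alpha=0.655$. The bound divided by $\log^2q$ then becomes the explicit elementary function $q^{\alpha/(4\sqrt q)}\bigl(\frac{\alpha^2}{2}+\frac{2\alpha\log(4q/\pi)}{\pi q^{\alpha-1/2}\log q}+\frac1{10\log^2q}\bigr)$, which for $\alpha=0.655$ is decreasing in $q\ge3$ by routine calculus, so only its value at $q=4\cdot10^5$ must be computed; it is about $0.27355<0.27356$. The point is that $q^{0.655}\approx4.67\cdot10^3$ essentially coincides with the true minimizer at $q=4\cdot10^5$, so nothing is lost where the inequality is tight, while for larger $q$ the choice drifts from the optimum but the leading contribution $\alpha^2/2=0.2145$ stays safely below $0.27356$ --- consistent with your $(\tfrac18+o(1))\log^2q$ heuristic. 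If you insist on the implicit optimizer instead, you must both control the transcendental equation uniformly in $q$ and prove monotonicity of the resulting envelope; the parametric family $y=q^\alpha$ is precisely what makes the proof short.
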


\begin{proof}
The upper bound on $|L'(\sigma,\chi^*)|$ in Lemma~\ref{FLM Lprime lemma with y} has a factor of $\frac{1}{y^\sigma}$ and otherwise does not depend on $\sigma$, so it suffices to establish the lemma for $\sigma=1-\frac1{4 \sqrt q}$ itself. Setting $y=q^\alpha$ with $\alpha$ to be determined numerically later, the bound~\eqref{first L1chi' bound} becomes
\begin{equation}  \label{2nd L1chi' bound}
\frac{|L'(\sigma,\chi^*)|}{\log^2 q} \leq q^{\left(\tfrac{\alpha}{4 \sqrt{q}}\right)} \cdot
         \left( \frac{\alpha^2}{2}+\frac{2 \alpha \log (4q/\pi)}{\pi  q^{\alpha - \frac 12 }  \log q}+\frac{1}{10 \log ^2q} \right),
\end{equation}
which for every fixed $\alpha>1/2$ is a decreasing function for sufficiently large~$q$. After some numerical experimentation we choose $\alpha=0.655$, for which the right-hand side of equation~\eqref{2nd L1chi' bound} is decreasing for $q\ge3$ (as is straightforward to check using calculus) and evaluates to less than $0.27356$ at $q= 4 \cdot 10^5$.
\end{proof}

\begin{proof}[Proof of Proposition~\ref{FLM beta prop}]
If $q\le4\cdot10^5$, Platt's computations confirm that no quadratic character modulo $q$ has a nontrivial real zero, and so the lemma is vacuously true for these moduli~$q$. Assume now that $q>4\cdot10^5$ and that $0<\beta<1$ is a nontrivial real zero.

We first establish the result under the additional assumption that $\chi$ is a primitive character. Since
$$
\min \left\{ 46 \pi, \max \left\{  \log \left( \frac{1}{2} \left( \sqrt{q+4} + \sqrt{q} \right) \right), 12 \right\} \right\} \geq 12,
$$
 and $q > 4 \cdot 10^5$,  Lemma~\ref{L1chi lemma} implies that
\begin{equation}  \label{L and L'}
12 q^{-1/2} < L(1,\chi) = L(1,\chi)-L(\beta,\chi) = (1-\beta)L'(\sigma,\chi)
\end{equation}
for some $\beta\le\sigma\le1$ by the Mean Value Theorem. If $\beta < 1-\frac1{4\sqrt q}$, then the bound $q\ge4\cdot10^5$ implies that $\beta \le 1 - \frac{40}{\sqrt q\log^2 q}$ as well. On the other hand, if $\beta \ge 1-\frac1{4\sqrt q}$, then Lemma~\ref{FLM Lprime lemma} and equation~\eqref{L and L'} imply
$$
1-\beta \ge \frac{12 q^{-1/2}}{L'(\sigma,\chi)} \ge \frac{12 q^{-1/2}}{0.27356\log^2q}
> \frac{40}{\sqrt{q} \log^2 q}.
$$

This argument establishes the proposition when $\chi$ is primitive. However, if $\chi\mod q$ is induced by some quadratic character $\chi^*\mod{q^*}$, then the primitive case already established yields
\[
\beta \le 1 - \frac{40}{\sqrt {q^*}\log^2 q^*} < 1 - \frac{40}{\sqrt q\log^2 q},
\]
as required.
\end{proof}

Note that an appeal to Oesterl\'e's work~\cite{Oes}, as discussed before equation~\eqref{oe o1}, would enable us to improve the denominator on the right-hand side of our upper bound for $\beta$ in Proposition~\ref{FLM beta prop} from
$\sqrt q\log^2 q$ to a complicated (yet still explicit) function of the form $\sqrt q (\log q)^{1+o(1)}$.
The strongest such theoretical bound known, due to Haneke~\cite{Han}, would have $\sqrt q\log q$ in the denominator.

\subsection{An upper bound for $|\psi(x;q,a) - x/\phi(q)|$, including the contribution from a possible exceptional zero}  \label{Sec6.2}

Now that we have an explicit upper bound for possible exceptional zeros, we can modify McCurley's arguments from~\cite{Mc1} to obtain the upper bound for $|\psi(x;q,a) - x/\phi(q)|$ asserted in Proposition~\ref{quoted prop ABC2}. In what follows, we will assume that $q \geq 10^5$; our methods would allow us to relax this assumption, if desired, with a change in the constants we obtain but no significant difficulties.

\begin{definition}  \label{mchi bchi def}
Let us define, as in \cite[page 271, lines 9--11]{Mc1},  $b(\chi)$ to be the constant term in the Laurent expansion of $\frac{L'}L(s,\chi)$ at $s=0$ and $m(\chi)$ (a nonnegative integer) to be the order of the zero of $L(s,\chi)$ at $s=0$, so that $\frac{L'}L(s,\chi) = \frac{m(\chi)}s + b(\chi) + O(|s|)$ near $s=0$.

If $\chi$ is principal, then $L(s,\chi) = \zeta(s) \prod_{p\mid q} (1-p^{-s})$, where the first factor $\zeta(s)$ is nonzero at $s=0$ while each factor in the product has a simple zero there; the multiplicity of the zero at $s=0$ is therefore $\omega(q)$, the number of distinct primes dividing~$q$. On the other hand, if $\chi$ is nonprincipal, then it is induced by some primitive character $\chi^*\mod{q^*}$ with $q^*>1$, and
\[
L(s,\chi) = L(s,\chi^*) \prod_{\substack{p\mid q \\ p\nmid q^*}} (1-\chi^*(p)p^{-s}),
\]
where the first factor $L(s,\chi^*)$ has at most a simple zero at $s=0$ while each factor in the product has a simple zero there; the multiplicity of the zero at $s=0$ is therefore at most $1+\omega(q)-\omega(q^*) \le \omega(q)$. In either case, we see that the order of the zero of $L(s,\chi)$ at $s=0$ is at most $\omega(q)$, and therefore
\begin{equation}  \label{mchi omegaq}
m(\chi)\le \omega(q)
\end{equation}
by the properties of logarithmic derivatives.
\end{definition}

Our immediate goal is to establish the upper bound for $|b(\chi)|$ asserted in Proposition~\ref{bchi prop}; we do so by adapting a method of McCurley to address the possible existence of exceptional zeros. Afterwards, we will be able to establish Proposition~\ref{quoted prop ABC2}.

\begin{lemma}  \label{isolate zeros below 1 lemma}
For any positive integer $q$ and any Dirichlet character $\chi\mod q$,
\begin{equation} \label{equ:rho exceptional}
\sum_{\substack{\rho \in \Zchi \\ |\gamma| \leq 1}} \frac{2}{|\rho(2-\rho)|} < \frac{\sqrt q \log^2 q}{40}+ 3.4596 \log ^2 q +12.938 \log q+7.3912.
\end{equation}
\end{lemma}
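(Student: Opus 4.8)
The plan is to exploit the partial-fraction identity $\frac{2}{\rho(2-\rho)}=\frac1\rho+\frac1{2-\rho}$, which yields the pointwise bound $\frac{2}{|\rho(2-\rho)|}\le\frac1{|\rho|}+\frac1{|2-\rho|}<\frac1{|\rho|}+1$ for every zero $\rho=\beta+i\gamma$ with $0<\beta<1$, since $|2-\rho|\ge 2-\beta>1$. It is essential to retain the factor $\frac1{|2-\rho|}$ rather than bounding $\frac{2}{|\rho(2-\rho)|}$ by $\frac{2}{|\rho|}$: for the reflection of an exceptional zero that factor is almost $\frac12$, and discarding it would cost a factor $2$ in the main term, giving $\frac{\sqrt q\log^2 q}{20}$ instead of $\frac{\sqrt q\log^2 q}{40}$. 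First I would reduce to $\chi$ primitive, say with conductor $q^*\ge3$: we have $\Zchi=\Zchistar$, the right-hand side of~\eqref{equ:rho exceptional} is increasing in $q$, and if $q^*=1$ then $\chi^*$ is principal and $\Zchi$ has no zero with $|\gamma|\le 1$, so the sum is empty (and one may assume throughout that it is nonempty, the claim being trivial otherwise). Hence $\sum_{|\gamma|\le 1}\frac{2}{|\rho(2-\rho)|}\le\sum_{|\gamma|\le 1}\frac1{|\rho|}+N(1,\chi^*)$, where $N(1,\chi^*)$ is controlled by Proposition~\ref{quoting Trudgian}.

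The heart of the argument is the sum $\sum_{|\gamma|\le 1}\frac1{|\rho|}$, where the only danger is a zero with $|\rho|$ extremely small, i.e.\ with $\beta$ and $|\gamma|$ both small. I would establish that \emph{every} zero $\rho=\beta+i\gamma\in\Zchistar$ with $|\gamma|\le 1$ satisfies $|\rho|\ge\frac1{R_1\log q^*}$, with at most one exception. For $q^*\le 10^5$ this is immediate from Platt's verification (Proposition~\ref{GRH}), which forces $\beta=\tfrac12$, so $|\rho|\ge\tfrac12>\frac1{R_1\log q^*}$. For $q^*>10^5$, Hypothesis~Z${}_1(R_1)$ holds (Definition~\ref{R1 exceptional def}): any zero with $|\gamma|\le 1$ other than the unique exceptional zero satisfies $\beta\le 1-\frac1{R_1\log q^*}$, and since its functional-equation partner $1-\beta+i\gamma$ is also such a zero, it too obeys $1-\beta\le 1-\frac1{R_1\log q^*}$, forcing $\beta\ge\frac1{R_1\log q^*}$ and hence $|\rho|\ge\frac1{R_1\log q^*}$. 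The only possible exception is the reflection $1-\beta_0$ of an exceptional zero $\beta_0$; then $\chi^*$ is a primitive quadratic character, and Proposition~\ref{FLM beta prop} gives $1-\beta_0\ge\frac{40}{\sqrt{q^*}\log^2 q^*}$, so this single zero contributes $\frac{2}{(1-\beta_0)(1+\beta_0)}=\frac1{1-\beta_0}+\frac1{1+\beta_0}<\frac{\sqrt{q^*}\log^2 q^*}{40}+1\le\frac{\sqrt q\log^2 q}{40}+1$, which is exactly the main term of~\eqref{equ:rho exceptional}. (Note that $\beta_0$ itself has $\beta_0\ge\tfrac12$ and so lies in the well-behaved range.)

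For the remaining zeros, each with $|\rho|\ge\frac1{R_1\log q^*}$, I would recover the extra factor of $2$ by pairing: under the involution $\rho\mapsto 1-\bar\rho$ the zeros with $|\gamma|\le 1$ split into fixed points (those with $\beta=\tfrac12$, where $|\rho|\ge\tfrac12$) and two-element orbits $\{\beta+i\gamma,\,1-\beta+i\gamma\}$, and in each orbit the member with real part at least $\tfrac12$ again has $|\rho|\ge\tfrac12$. Thus among these zeros at most $\tfrac12 N(1,\chi^*)$ contribute as much as $\frac1{|\rho|}\le R_1\log q^*$, while every one contributes at most $2$; summing these contributions together with the $N(1,\chi^*)$ coming from the $\frac1{|2-\rho|}$ terms, and inserting the explicit bound $N(1,\chi^*)<\frac1\pi\log\frac{q^*}{2\pi e}+C_1\log q^*+C_2$ from Proposition~\ref{quoting Trudgian}, produces a quantity that is $O\big((\log q^*)^2\big)$ with leading coefficient $\tfrac12 R_1\big(\tfrac1\pi+C_1\big)=3.4595\ldots$, just below the stated $3.4596$. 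The main obstacle is not conceptual but the bookkeeping: one must track all the lower-order contributions (the stray $1$'s from $\frac1{|2-\rho|}$, the $2$'s from the large members of orbits, the exceptional zero $\beta_0$ itself, and the negative constant in the $N(1,\chi^*)$ bound) precisely enough to land on the asserted coefficients $12.938$ and $7.3912$, and one must take some care that the zero-free-region (respectively GRH) input is available for \emph{every} conductor $q^*\ge3$, which is exactly why the argument splits at $q^*=10^5$.
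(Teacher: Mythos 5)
Your outline follows the same strategy as the paper's proof: split off the reflection $1-\beta_0$ of a possible exceptional zero and bound its contribution by $\frac{\sqrt q\log^2q}{40}$ via Proposition~\ref{FLM beta prop}, handle every other zero with $|\gamma|\le1$ through the pairing $\rho\leftrightarrow 1-\bar\rho$ and the zero-free region of Hypothesis~Z${}_1(R_1)$, and finish with Trudgian's bound on $N(1,\chi)$. The conceptual content is all correct (including the observation that the only zero that can escape the lower bound $|\rho|\ge 1/(R_1\log q)$ is the partner of the exceptional zero, and the reduction to conductors, with Platt covering $q^*\le10^5$).

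The gap is quantitative, and it is exactly the bookkeeping you defer: with the estimates you actually state, the constants $12.938$ and $7.3912$ are not reachable. Splitting $\frac{2}{\rho(2-\rho)}=\frac1\rho+\frac1{2-\rho}$ and then using $\frac1{|2-\rho|}<1$ together with $\frac1{|\rho|}\le2$ for the member of each pair with $\beta\ge\frac12$ costs $R_1\log q+4$ per two-element orbit, i.e.\ $\frac{R_1}2\log q+2$ per zero; feeding $N(1,\chi)<0.71731\log q+4.4347$ into that accounting (and $+1$ rather than a sharper constant for $1/(1+\beta_0)$) yields roughly $\frac{\sqrt q\log^2q}{40}+3.4596\log^2q+13.18\log q+8.87$, which is strictly weaker than the asserted inequality for every $q$ of interest, so the lemma as stated does not follow. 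The repair is to keep the product rather than the crude split: for a zero with $\beta\ge\frac12$ use $\frac{2}{|\rho(2-\rho)|}\le\frac{2}{\beta(2-\beta)}\le\frac83$, and for its reflected partner note $\frac{2}{(1-\beta)(1+\beta)}=\frac1{1-\beta}+\frac1{1+\beta}$ with $\frac1{1+\beta}<\frac23$, so each orbit costs at most $R_1\log q+\frac{10}3$ (i.e.\ $\frac{R_1}2\log q+\frac53$ per zero), and likewise the exceptional pair costs at most $\frac{\sqrt q\log^2q}{40}+\frac{10}3$. That extra $\frac13$ per zero and $\frac23$ on the exceptional pair is precisely what makes the coefficients come out to $12.938\log q+7.3912$; this is the computation the paper carries out.
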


\begin{proof}
Since $|\rho|\ge\beta$ and $|2-\rho|\ge2-\beta$, it suffices to show that
\[
\sum_{\substack{\rho \in \Zchi \\ |\gamma| \leq 1}} \frac{2}{\beta(2-\beta)} < \frac{\sqrt q \log^2 q}{40}+ 3.4596 \log ^2 q +12.938 \log q+7.3912.
\]
We recall that Hypothesis~Z${}_1(9.645908801)$ is true~\cite[Theorem 1]{Mc1}, and therefore every zero $\rho$ being counted by the sum on the right-hand side, except possibly for a single exceptional zero $\beta_0$ and its companion $1-\beta_0$, satisfies
$$
\frac{1}{R_1 \log q} < \beta <1-\frac{1}{R_1 \log q}
$$
by Definition~\ref{hypothesis z} (where the lower bound holds by symmetry---see the remarks following equation~\eqref{Zchi def}). We will argue separately according to whether or not there are any exceptional zeros of $L(s,\chi)$, as per Definition~\ref{R1 exceptional def}.

We first assume that there is no such exceptional zero.
If $\beta=1/2$, then we have that ${2}/{\beta(2-\beta)} = 8/3$.
If $\beta\neq 1/2$, then we pair the two zeros $\rho_1 = \beta+i\gamma$ and $\rho_2=1-\beta+i\gamma$. Clearly one of $\beta$ and $1-\beta$ is less than 1/2 and the other greater, say $1-\beta<1/2<\beta$, whence
\begin{align}
  \frac{2}{\beta  (2-\beta ) }+ \frac{2}{(1-\beta)(2-(1-\beta))} = & \frac1{1-\beta} + \frac1{1+\beta} + \frac2{\beta(2-\beta)} \nonumber \\
  & < \frac{1}{1-\beta}+\frac23+\frac83   \label{10/3 eqn} \\ & < R_1\log q + \frac{10}3. \nonumber  \\ \nonumber
  \end{align}
In particular, the average contribution per zero is at most $\frac12
R_1\log q+\frac53$, whether the zero has real part $1/2$ or not
(recall that $R_1\approx 9.6$ and $q\geq
10^5$); thus
  \begin{equation}\label{equ:smallrho}
    \sum_{\substack{\rho\in\Zchi \\|\gamma| \leq 1}} \frac{2}{\beta(2-\beta)} \le \bigg(\frac{R_1}2 \log q+\frac53\bigg) N(1,\chi)
  \end{equation}
when there is no exceptional zero.

If, on the other hand, $L(s,\chi)$ has an exceptional zero $\beta_0$, then by definition
  \[0<1-\beta_0 \le \frac{1}{R_1\log q} <\frac 12 < 1- \frac{1}{R_1\log q}\le \beta_0<1;\]
furthermore, by Proposition~\ref{FLM beta prop},
  \[ \frac{40}{\sqrt q \log^2 q} \le 1-\beta_0.\]
By the same initial computation as in equation~\eqref{10/3 eqn},
  \[\frac{2}{\beta_0(2-\beta_0)} + \frac{2}{(1-\beta_0)(1+\beta_0)} < \frac{1}{1-\beta_0} + \frac{10}{3} \le \frac{\sqrt q \log^2 q}{40} + \frac{10}{3},\]
so that
  \begin{equation}\label{N-2 eqn}
    \sum_{\substack{\rho \in \Zchi \\ |\gamma| \leq 1}} \frac{2}{|\rho (2-\rho)|}
    < \frac{\sqrt q \log^2 q}{40}+ \frac{10}{3} + \left(\frac{R_1}2 \log q + \frac{5}{3}\right) (N(1,\chi)-2)
  \end{equation}
when there is an exceptional zero.
Proposition~\ref{quoting Trudgian} tells us that
 \begin{equation} \label{pickles}
 N(1,\chi) =  N(1,\chi^*) < \frac{1}{\pi}\log \frac{q^*}{2\pi e} + C_1 \log q^* + C_2 < 0.71731 \log q+4.4347
 \end{equation}
(since $q^*\le q$), and therefore the right-hand side of the inequality~\eqref{N-2 eqn} is larger than that of the inequality~\eqref{equ:smallrho}. The lemma now follows upon combining the inequalities~\eqref{N-2 eqn} and~\eqref{pickles} and rounding the constants upward.
\end{proof}

We remark that this proof shows that the first term on the right-hand side of the inequality~\eqref{equ:rho exceptional} can be replaced by the much smaller $2(0.71731 \log q+4.4347)$ if $L(s,\chi)$ has no exceptional zero.

\begin{proof}[Proof of Proposition~\ref{bchi prop}]
Our starting point is an inequality of McCurley~\cite[equation (3.16)]{Mc1}:
\begin{equation} \label{bee-chi}
 |b(\chi)| \leq \left| \frac{\zeta'(2)}{\zeta(2)} \right| + 1 + \sum_{\rho\in\Zchi} \frac{2}{|\rho (2-\rho)|} + \frac{q \log q}{4},
\end{equation}
where the sum runs over zeros of $L(s,\chi)$ in the critical strip. (We remark that an examination of McCurley's proof shows that the term $(q \log q)/{4}$ can be omitted if $\chi$ is primitive, as noted by Ramar\'e and Rumely~\cite[page 415]{RR}.)

For the zeros satisfying $|\gamma|>1$, McCurley \cite[page 275]{Mc1} finds that
\begin{equation}  \label{gamma>1 bound}
\sum_{\substack{\rho \in \Zchi \\ |\gamma| > 1}} \frac{2}{|\rho (2-\rho)|} < 4 \int_1^\infty \frac{N(t,\chi)}{t^3} dt.
\end{equation}
Since Proposition~\ref{quoting Trudgian} implies the inequality
\[
   N(t,\chi) < \frac{t}{\pi} \log \frac{q^* t}{2\pi e} +C_1 \log q^*t + C_2 \le \frac{t}{\pi} \log \frac{q t}{2\pi e} + C_1 \log qt + C_2,
\]
the bound~\eqref{gamma>1 bound} becomes
  \begin{align*}
    \sum_{\substack{\rho \in \Zchi \\ |\gamma| > 1}} \frac{2}{|\rho (2-\rho)|}
    &<4 \int_1^\infty \left(\frac{t}{\pi} \log \frac{q t}{2\pi e} + C_1 \log qt + C_2 \right) t^{-3} \,dt \notag \\
    &=4 \left( \frac{\log q-\log 2\pi }{\pi }+ C_1 \cdot \frac{ 2 \log q+1}{4}+C_2 \cdot \frac 12 \right) \\
    &< 2.0713 \log q+8.735.
  \end{align*}
Combining this bound with Lemma~\ref{isolate zeros below 1 lemma} yields
\begin{equation} \label{referee's comment?}
\sum_{\rho\in\Zchi} \frac{2}{|\rho(2-\rho)|} \leq \frac{\sqrt{q} \log^2 q}{40} + 3.4596 \log ^2 q+15.01 \log q+16.126.
\end{equation}
From equation~\eqref{bee-chi}, it follows that
$$
  |b(\chi)| \leq\left| \frac{\zeta'(2)}{\zeta(2)} \right| +  \frac{\sqrt{q} \log^2 q}{40} + 3.4596 \log ^2 q+15.01 \log q+17.126 + \frac{q \log q}{4}
$$
and hence
$$
|b(\chi)| <  0.2515 q \log q,
$$
where the last inequality is a consequence of the assumption that $q \geq 10^5$.
\end{proof}

\begin{proof}[Proof of Proposition~\ref{quoted prop ABC2}]
Arguing as in the proof of Theorem 3.6 of McCurley \cite{Mc1}, but without the assumption of GRH(1), one obtains the inequality~\eqref{yikes} with
\begin{equation} \label{wicked}
\epsilon_1 < \frac{\phi (q)}{x} \left( \frac{\log 2}{2} + |d_2| \log (2x) + |d_1+d_2| \right),
\end{equation}
where (as in McCurley~\cite[equations (3.4) and (3.5)]{Mc1})
\begin{align*}
d_1 = \frac1{\phi(q)} \sum_{\chi\mod q} \overline\chi(a) \big( m(\chi) - b(\chi) \big) \quad\text{and}\quad d_2 = -\frac1{\phi(q)} \sum_{\chi\mod q} \overline\chi(a) m(\chi),
\end{align*}
with $m(\chi)$ and $b(\chi)$ as in Definition~\ref{mchi bchi def}.
It follows that
\begin{equation}  \label{mchi average}
|d_2| \leq \frac1{\phi(q)} \sum_{\chi\mod q}  m(\chi)  \leq \frac1{\phi(q)} \sum_{\chi\mod q}  \omega (q) = \omega(q) \leq \frac{\log q}{\log 2}
\end{equation}
by equation~\eqref{mchi omegaq} and
\begin{equation}  \label{bchi average}
|d_1 + d_2| = \bigg| \frac1{\phi(q)} \sum_{\chi\mod q} \overline\chi(a) b(\chi) \bigg|
\leq  \frac1{\phi(q)} \sum_{\chi\mod q}  |b(\chi)| < 0.2515 q \log q
\end{equation}
by Proposition~\ref{bchi prop}.
Inserting the inequalities~\eqref{mchi average} and~\eqref{bchi average} into the upper bound~\eqref{wicked} results in
 $$
 \epsilon_1 < \frac{\phi (q)}{x} \left( \frac{\log 2}{2} + \frac{\log q}{\log 2} \log (2x) + 0.2515 q \log q \right).
 $$
It is easy to check that the assumption $q \ge 10^5$ implies
$$
\frac{\log 2}{2} + \frac{\log q}{\log 2} \log (2x) + 0.2515 q \log q < \frac{\log q}{\log 2} \log x + 0.2516 q \log q,
$$
which completes the proof of the proposition.
\end{proof}

\subsection{Explicit upper bounds for $|\psi(x;q,a) - x/\phi(q)|$ and $|\theta(x;q,a) - x/\phi(q)|$}  \label{sec53}

To apply Proposition \ref{quoted prop ABC2} for $q \geq 10^5$, we could argue carefully as in Sections \ref{Sec2} and \ref{Sec4} to bound the various quantities on the right-hand side of equation~\eqref{yikes}. Our inability to rule out the existence of possible exceptional zeros for $L$-functions of large modulus $q$ forces us to assume that the parameter $x$ is exceptionally large, however, making such a refined analysis somewhat unnecessary. Instead, we will simply set $m=2$ in Proposition \ref{quoted prop ABC2}, to take advantage of existing inequalities, and proceed from there over the next three lemmas to obtain an explicit upper bound for $|\psi(x;q,a) - x/\phi(q)|$. Afterwards, we will convert that upper bound to a simpler error estimate (for both $\psi(x;q,a)$ and $\theta(x;q,a)$) that is a multiple of $x/(\log x)^Z$ for an arbitrary $Z>0$.

Define the quantities
\begin{equation}  \label{XalphaH def}
X = \sqrt{\frac{\log x}{R_1}}, \quad \alpha = \frac X{\log q}-1, \quad \mbox{ and } H = q^{\alpha} = \frac{e^X}q,
\end{equation}
and recall that $R_1 = 9.645908801$ as in Definition~\ref{R1 exceptional def}.

\begin{lemma}  \label{one rho down lemma}
Let $q\ge10^5$ be an integer, and let $\chi$ be a character\mod q. For $x\ge e^{4  R_1\log^2 q}$,
\[
\sum_{\substack{\rho\in\Zchi \\ \rho \neq \beta_0\\ |\gamma|\le H} \\ } \frac{x^{\beta-1}}{|\rho|} < 0.5001 X e^{-X},
\]
where the index of summation means that an exceptional zero $\beta_0$ for $L(s,\chi)$, if it exists, is excluded.
\end{lemma}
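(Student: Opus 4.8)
The plan is to bound the sum $\sum_{\rho\neq\beta_0,\,|\gamma|\le H}\frac{x^{\beta-1}}{|\rho|}$ by combining the zero-free region Hypothesis~Z${}_1(R_1)$ (valid for $q\ge10^5$ by Definition~\ref{R1 exceptional def}) with the zero-counting estimate of Proposition~\ref{quoting Trudgian}. First I would note that every zero $\rho=\beta+i\gamma\in\Zchi$ being summed, other than the excluded exceptional zero $\beta_0$ and its companion $1-\beta_0$, satisfies $\beta\le 1-\tfrac1{R_1\log(q\max\{1,|\gamma|\})}$, so that $x^{\beta-1}\le x^{-1/(R_1\log(q|\gamma|))}$ when $|\gamma|\ge1$ and $x^{\beta-1}\le x^{-1/(R_1\log q)}=e^{-X}$ when $|\gamma|<1$ (here $X=\sqrt{(\log x)/R_1}$ from \eqref{XalphaH def}). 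The companion zero $1-\beta_0$ of an exceptional zero has real part $1-\beta_0<\tfrac12$, so it contributes at most $x^{-1/2}/|1-\beta_0|$, which I would absorb into the error budget since $1-\beta_0\ge 40/(\sqrt q\log^2q)$ by Proposition~\ref{FLM beta prop}; alternatively one checks this term is dwarfed by $Xe^{-X}$ in the stated range of $x$. For the bulk of the zeros with $1\le|\gamma|\le H$, I would use $|\rho|\ge|\gamma|$ and bound $\sum \frac{x^{-1/(R_1\log q|\gamma|)}}{|\gamma|}$ by a Riemann--Stieltjes integral against $N(t,\chi)$.

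The key computation is then a partial summation: writing $f(t)=\tfrac1t\exp(-\tfrac{\log x}{R_1\log qt})$, we have $\sum_{1<|\gamma|\le H}f(|\gamma|)=\int_1^H f(t)\,dN(t,\chi)$, which after integration by parts and the substitution of the upper bound $N(t,\chi)<\tfrac t\pi\log\tfrac{qt}{2\pi e}+C_1\log qt+C_2$ from Proposition~\ref{quoting Trudgian} becomes an explicit elementary integral. The natural change of variables is $t=e^{v}/q$ (equivalently $v=\log qt$ runs over $[\log q,\,\log qH]=[\log q,\,X]$, using $H=e^X/q$), under which $f(t)\,dt$ transforms into something like $\tfrac1\pi(v-1)\exp(-\tfrac{\log x}{R_1 v})\,dv$ up to lower-order pieces from the $C_1,C_2$ terms. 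The integrand $\exp(v-\tfrac{\log x}{R_1 v})$ is maximized at $v=\sqrt{(\log x)/R_1}=X$, i.e.\ exactly at the upper endpoint of integration, which is why the whole sum ends up being of size comparable to (endpoint value)$\times$(effective width) $\approx X\cdot e^{-X}$. I would carry out this integral explicitly, extract the leading term $\frac1\pi X e^{-X}$, note $\frac1\pi<0.3184$, and collect all remaining contributions (the $<1$ zeros, the $C_1$ and $C_2$ terms, the $\log\tfrac{1}{2\pi e}$ shift, and the exceptional companion) into a small correction.

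The final step is to verify that the total is $<0.5001\,Xe^{-X}$ for $x\ge e^{4R_1\log^2q}$. The point of the hypothesis $x\ge e^{4R_1\log^2q}$ is precisely that it forces $X=\sqrt{(\log x)/R_1}\ge 2\log q$, hence $\alpha=X/\log q-1\ge1$ and $H=q^\alpha\ge q$, so the zero-free region genuinely contributes (the interval of integration is nonempty and reaches up to the critical width $v=X$), and moreover $X$ is large enough that all the lower-order terms—each of which is $O(\log q)$ or $O(e^{-X})$ relative to the main term $\frac1\pi Xe^{-X}$—are comfortably smaller than the slack $(0.5001-\tfrac1\pi)Xe^{-X}$. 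I would make this quantitative by using $e^{-X}\le e^{-2\log q}=q^{-2}$ and $X\ge2\log q\ge2\log(10^5)$ to bound each error term's ratio to $Xe^{-X}$ by an explicit constant summing to less than $0.5001-1/\pi\approx0.1817$. The main obstacle is the bookkeeping in the partial-summation integral: there are several terms (from the $\tfrac t\pi\log\tfrac{qt}{2\pi e}$ main part of $N$, from $C_1\log qt$, from $C_2$, and from the boundary terms of integration by parts), and one has to track how each transforms under the change of variables and confirm that none of them, nor the exceptional-companion term, spoils the constant $0.5001$—but each individual estimate is routine calculus once the substitution $v=\log qt$ is made.
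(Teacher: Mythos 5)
Your proposal is correct in substance and would reach the stated bound, but by a somewhat different route than the paper. The paper does not redo the partial summation from scratch: it imports the decomposition from the proof of McCurley's Lemma 3.7 (combined with Proposition~\ref{quoting Trudgian}), writing the sum with both $\beta_0$ and $1-\beta_0$ removed as $\epsilon_2+\epsilon_3+\epsilon_4$, where the dominant piece is $\epsilon_4=\tfrac12\int_1^{q^\alpha}t^{-1}e^{-X^2/\log (qt)}\log(qt/2\pi)\,dt$; the same change of variables $u=X^2/\log(qt)$ that you propose then gives $\epsilon_4<\tfrac12Xe^{-X}$, so the paper's constant $0.5001$ is essentially forced by McCurley's factor $\tfrac12$, and the companion zero $1-\beta_0$ is handled exactly as you suggest, via Proposition~\ref{FLM beta prop}. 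You instead integrate directly against $N(t,\chi)$ with the density $\tfrac1\pi\log\tfrac{qt}{2\pi}$ coming from Proposition~\ref{quoting Trudgian}, which makes the main term about $\tfrac1\pi Xe^{-X}\approx0.32\,Xe^{-X}$ and leaves a generous budget for the secondary terms (the $C_1,C_2$ pieces, the $|\gamma|<1$ zeros, the companion zero); each of these is indeed smaller than the main term by factors like $1/q$ or $e^{-X}\le q^{-2}$, so your self-contained argument is sound and would in fact yield a sharper constant than the paper's.

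Two details need care to make it rigorous. First, the weight $g(t)=t^{-1}\exp\bigl(-X^2/\log qt\bigr)$ is \emph{increasing} on $[1,H]$, with maximum the endpoint value $e^{-X}/H$; so in $\int_1^Hg\,dN=g(H)N(H)-g(1)N(1)-\int_1^Hg'N\,dt$ you may not substitute the \emph{upper} bound for $N$ inside the last (negative) integral. The cleanest fix is to bound every term by the endpoint value: the sum over $1\le|\gamma|\le H$ is at most $g(H)N(H,\chi)\le\tfrac1\pi(X-\log 2\pi e)e^{-X}+(C_1X+C_2)e^{-X}/H$, which is precisely your ``endpoint value times width'' heuristic (note also that the transformed integrand is $(v-\log 2\pi)e^{-X^2/v}$, not $e^{v-X^2/v}$: the $e^{v}$ from the zero count is cancelled by the $1/|\gamma|$). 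Second, for zeros with $|\gamma|<1$ you also need a lower bound on $|\rho|$, not only the bound $x^{\beta-1}\le x^{-1/(R_1\log q)}$ (which, incidentally, equals $e^{-X^2/\log q}\le e^{-2X}$, not $e^{-X}$); this follows because for every such zero except $1-\beta_0$ the partner $1-\beta+i\gamma\in\Zchi$ is non-exceptional, so Hypothesis~Z${}_1(R_1)$ gives $\beta\ge1/(R_1\log q)$ and hence $1/|\rho|\le R_1\log q$, while $1-\beta_0$ is the term you already treat separately. With these repairs your argument closes comfortably under $0.5001\,Xe^{-X}$.
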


\begin{proof}
We first compute the given sum with the symmetric zero $1-\beta_0$ also excluded.
Combining the proof of \cite[Lemma 3.7]{Mc1} with Proposition \ref{quoting Trudgian}, for each character $\chi$ modulo $q$ we have
$$
\sum_{\substack{\rho\in\Zchi \\ \rho \notin \{\beta_0,1-\beta_0\} \\ |\gamma|\le H} \\ } \frac{x^{\beta-1}}{|\rho|} < \epsilon_2+\epsilon_3+\epsilon_4,
$$
where
\begin{align*}
\epsilon_2 &< \frac{q \log q + \alpha \log^2 q}{x} + \frac{1}{2\sqrt{x}} \left( \frac{1+4 \alpha+\alpha^2}{2 \pi} \log^2 q + \frac{2+\alpha}{\pi} \log q \right.\\
& \; \; \; \; \; \; \; \hskip16ex  \left.  + \frac{C_1  (\alpha+1) \log(q) +C_2}{q^\alpha} + 0.798 \log(q)+11.075 \right) \\
\end{align*}
and
\begin{align*}
\epsilon_3 &= \frac{C_1 X + C_2}{q^\alpha} e^{-X} \\
\epsilon_4 &= \frac{1}{2} \int_1^{q^\alpha} t^{-1} e^{-\frac{\log x}{R_1 \log (qt)}} \log (qt/2\pi) \,dt = \frac{1}{2} \int_1^{q^\alpha} t^{-1} e^{-X^2/\log (qt)} \log (qt/2\pi) \,dt.
\end{align*}
Since $x = e^{(1+\alpha)^2 R_1 \log^2 q}$, $q \geq 10^5$ and $\alpha \geq 1$, straightforward calculus exercises yield
$$
\epsilon_2 < 10^{-1000} X e^{-X} \; \; \mbox{ and } \; \; \epsilon_3 < 10^{-5} X e^{-X},
$$
while the change of variables $u = -X^2/\log (qt)$ (as in~\cite[page 1473]{FLM}) gives an upper bound upon $\epsilon_4$ of the shape
$$
\frac{1}{2} \int_1^{q^\alpha} e^{-X^2/\log (qt)} \log qt \,\frac{dt}t = \frac{X^4}{2} \int_{X}^{X^2/\log q} \frac{e^{-u}}{u^3} \, du < \frac {X^4}2 \int_{X}^\infty \frac{e^{-u}}{X^3} \, du = \frac{Xe^{-X}}2.
$$
We thus have
$$
\epsilon_2+\epsilon_3+\epsilon_4 < 0.50005 X e^{-X}.
$$

As for the special zero $1-\beta_0$ (when it exists), the bounds
$$
\beta_0\ge 1-1/R_1\log q \ge 0.99
$$
 from Definition~\ref{R1 exceptional def} and $q\ge10^5$ and $\beta_0 \le 1 - 40/\sqrt q\log^3 q$ from Proposition~\ref{FLM beta prop}, together with the hypothesis $x \ge e^{4R_1\log^2q}$ which is equivalent to $\log q \le X/2$, imply
\[
\frac{x^{(1-\beta_0)-1}}{1-\beta_0} \le \frac{\sqrt q\log^3 q}{40} x^{-0.99} \le \frac{X^3e^{X/4}}{320x^{0.99}} < 10^{-1000} X e^{-X}
\]
via another straightforward calculus exercise. Therefore the entire sum is at most $0.50005 X e^{-X}+10^{-1000} X e^{-X}<0.5001 X e^{-X}$ as required.
\end{proof}

\begin{lemma}  \label{three rhos down lemma}
Let $q\ge10^5$ be an integer, and let $\chi$ be a character\mod q. For $x\ge e^{4  R_1\log^2 q}$,
\[
\sum_{\substack{\rho\in\Zchi \\ |\gamma|>H}} \frac{x^{\beta-1}}{|\rho(\rho+1)(\rho+2)|} < 0.511 X e^{-X} q^{-2 \alpha}
\]
where $H$, $X$, and $\alpha$ are defined in equation~\eqref{XalphaH def}.
\end{lemma}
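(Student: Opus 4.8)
The plan is to bound the sum termwise using Hypothesis~Z${}_1(R_1)$ and then reduce it to an elementary integral by partial summation, following the template of Proposition~\ref{integral bound for S tilde prop}. Since $x\ge e^{4R_1\log^2 q}$ we have $X=\sqrt{\log x/R_1}\ge2\log q$, hence $e^X\ge q^2$, so $H=e^X/q\ge q\ge10^5\ge15$; in particular every zero $\rho=\beta+i\gamma$ appearing in the sum has $|\gamma|>H\ge1$, so it is neither the exceptional zero $\beta_0$ (which is real) nor its companion. For such a zero, Hypothesis~Z${}_1(R_1)$ (valid for $q\ge10^5$ by Definition~\ref{R1 exceptional def}) gives $\beta\le1-\tfrac1{R_1\log(q|\gamma|)}$, whence $x^{\beta-1}\le\exp\!\big({-}X^2/\log(q|\gamma|)\big)$ since $\log x=R_1X^2$; combined with $|\rho(\rho+1)(\rho+2)|\ge|\gamma|^3$ this yields
\[
\sum_{\substack{\rho\in\Zchi\\|\gamma|>H}}\frac{x^{\beta-1}}{|\rho(\rho+1)(\rho+2)|}\le\sum_{\substack{\rho\in\Zchi\\|\gamma|>H}}Y(|\gamma|),\qquad Y(u)=u^{-3}\exp\!\big({-}X^2/\log(qu)\big).
\]

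Next I would record that $Y$ is decreasing on $[H,\infty)$: this is exactly the function of Definition~\ref{phim def} with $m=2$, $d=q$, $R=R_1$, whose relevant threshold $q^{-1}\exp\!\big(\sqrt{\log x/(3R_1)}\big)=q^{-1}e^{X/\sqrt3}$ is smaller than $H=e^X/q$, so the remark in Definition~\ref{H1 and H2 def} applies (equivalently, $\tfrac{d}{du}\log Y(u)=u^{-1}(X^2/\log^2(qu)-3)<0$ once $\log(qu)>X/\sqrt3$, which holds for $u\ge H$). Then two applications of partial summation — exactly as in the proof of Proposition~\ref{integral bound for S tilde prop}, using $N(u,\chi)-N(H,\chi)<M_{q^*}(H,u)$ from Lemma~\ref{N less than L} and the vanishing of all boundary terms (because $Y(u)\ll u^{-3}$ while $N(u,\chi)-N(H,\chi)\ll u\log u$) — give
\[
\sum_{\substack{\rho\in\Zchi\\|\gamma|>H}}Y(|\gamma|)<M_{q^*}(H,H)\,Y(H)+\int_H^\infty\!\Big(\tfrac{\partial}{\partial u}M_{q^*}(H,u)\Big)Y(u)\,du.
\]
Using $q^*\le q$ together with Definition~\ref{LtqH def} and equation~\eqref{ninja}, the boundary term is at most $(2C_1X+2C_2)q^3e^{-4X}$ and the integrand is at most $\big(\tfrac1\pi\log(qu/2\pi)+C_1/u\big)Y(u)$.

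The substitution $w=\log(qu)$ then converts the integral into $\tfrac{q^2}\pi\int_X^\infty(w-\log2\pi)e^{-2w-X^2/w}\,dw$ plus $C_1q^3\int_X^\infty e^{-3w-X^2/w}\,dw$. The one elementary estimate needed is $w+X^2/w\ge2X$ for $w>0$ (i.e.\ $(\sqrt w-X/\sqrt w)^2\ge0$), which gives $e^{-2w-X^2/w}\le e^{-2X-w}$ and $e^{-3w-X^2/w}\le e^{-2X-2w}$; hence $\int_X^\infty we^{-2w-X^2/w}\,dw\le e^{-2X}(X+1)e^{-X}=(X+1)e^{-3X}$ and $\int_X^\infty e^{-3w-X^2/w}\,dw\le\tfrac12e^{-4X}$. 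Assembling these, and using $q^{-2\alpha}=e^{-2(X-\log q)}=q^2e^{-2X}$ so that the target bound is $0.511\,Xq^2e^{-3X}$, one arrives at
\[
\sum_{\substack{\rho\in\Zchi\\|\gamma|>H}}Y(|\gamma|)<q^2e^{-3X}\Big(\tfrac{X+1}\pi+\big(2C_1X+2C_2+\tfrac12C_1\big)qe^{-X}\Big).
\]
Finally I would close the estimate numerically: $qe^{-X}\le q^{-1}\le10^{-5}$ and $X\ge2\log(10^5)>23$ force the parenthesised factor below $0.333\,X<0.511\,X$. I do not expect a genuine obstacle here; the only mildly delicate point is retaining the $\exp(-X^2/w)$ factor through the change of variables — dropping it (e.g.\ via $\exp(-X^2/w)\le1$ on $[X,\infty)$) would lose a fatal factor $e^X$ — and the bound $w+X^2/w\ge2X$ is precisely what keeps it under control.
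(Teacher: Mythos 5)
Your argument is correct, and it reaches the stated bound (indeed a slightly stronger one, with $0.333\,X$ in place of $0.511\,X$) by a genuinely different route from the paper. The paper does not redo the partial summation: it imports the decomposition from the proof of McCurley's Lemma 3.8 (updated with Proposition~\ref{quoting Trudgian}), splitting the sum into three quantities $\epsilon_5,\epsilon_6,\epsilon_7$, and then handles the dominant piece $\epsilon_6$ by recognizing it as $\tfrac12 I_{2,2}\big((1+\alpha)^2\log^2 q,\,q;\,q^\alpha\big)$, converting to the incomplete Bessel function $K_2$ via Lemma~\ref{I to K cov lemma}, and quoting Rosser--Schoenfeld's bounds for $K_2$, which yield $\epsilon_6<\tfrac12 q^2(X+\tfrac12)e^{-3X}\approx 0.511\,Xe^{-X}q^{-2\alpha}$; the small pieces $\epsilon_5,\epsilon_7$ are dispatched by calculus. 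You instead work termwise: since all zeros in the sum have $|\gamma|>H\ge q$, none is the (real) exceptional zero, so Hypothesis~Z${}_1(R_1)$ applies as a one-sided bound on $\beta$ to every zero (no symmetrization $\beta\leftrightarrow 1-\beta$ needed), and the whole sum is dominated by $\sum Y(|\gamma|)$ with $Y(u)=u^{-3}e^{-X^2/\log(qu)}$, which you then estimate by the same two-step partial summation as Proposition~\ref{integral bound for S tilde prop} (monotonicity of $Y$ on $[H,\infty)$, Lemma~\ref{N less than L}, vanishing boundary terms), a change of variables $w=\log(qu)$, and the elementary inequality $w+X^2/w\ge 2X$ in place of the Bessel machinery. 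What each buys: your route is self-contained, avoids citing McCurley's lemma and the Rosser--Schoenfeld $K_2$ estimates, and keeps the explicit $\tfrac1\pi$ from the zero-density, which is why your constant comes out smaller; the paper's route is shorter on the page because it offloads the analytic work to existing results, and its $K_2$ framework is the one reused elsewhere (Section~\ref{Sec4}), so it requires no new estimates. All the delicate points you flag are handled correctly: retaining $e^{-X^2/w}$ through the substitution, checking $Y$ is decreasing past $H$ (the maximizer $q^{-1}e^{X/\sqrt3}$ lies below $H=e^X/q$), monotonicity in the conductor when replacing $q^*$ by $q$, and the final numerics using $qe^{-X}\le q^{-1}\le 10^{-5}$ and $X\ge 2\log(10^5)$.
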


\begin{proof}
As in the proof of Lemma~\ref{one rho down lemma}, we combine Proposition~\ref{quoting Trudgian} with the proof of \cite[Lemma 3.8]{Mc1}; for each character $\chi$ modulo $q$ we have
$$
\sum_{\substack{\rho\in\Zchi \\ |\gamma|>H}} \frac{x^{\beta-1}}{|\rho(\rho+1)(\rho+2)|} <
\epsilon_5 + \epsilon_6+\epsilon_7,
$$
where
\begin{align*}
\epsilon_5 &< \frac{1}{2 q^{3 \alpha} \sqrt{x}} \left( \frac{q^\alpha}{2 \pi} (1+\alpha) \log q + 0.798  (\alpha+1) \log(q) +10.809 \right)
+ \frac{4 \log q}{x q^{2 \alpha}} \\
\epsilon_6 &= \frac{C_1}{2} \int_{q^\alpha}^{\infty}  t^{-4} e^{-\frac{\log x}{R_1 \log (qt)}} \,dt
+ \frac{1}{2} \int_{q^\alpha}^{\infty}  t^{-3} e^{-\frac{\log x}{R_1 \log (qt)}} \log (qt/2\pi) \,dt \\
\epsilon_7 &= \frac{C_1 X + C_2}{q^{3 \alpha}} e^{-X}.
\end{align*}
Again via calculus, it is routine to show that
$$
\epsilon_5 < 10^{-1000} X e^{-X} q^{-2 \alpha} \; \;
\mbox{ and } \; \;
\epsilon_7 < 0.00001 X e^{-X} q^{-2 \alpha}.
$$
To estimate $\epsilon_6$, note that
$$
\epsilon_6 <  \frac{1}{2} \int_{q^\alpha}^{\infty}  t^{-3} e^{-\frac{\log x}{R_1 \log (qt)}} \log (qt) dt = \frac{1}{2} I_{2,2} \left( (1+\alpha)^2 \log^2 q, q; q^\alpha \right),
$$
in the notation of Definition \ref{Inuk def}. Applying Lemma \ref{I to K cov lemma}, we have
$$
I_{2,2} \left( (1+\alpha)^2 \log^2 q, q; q^\alpha \right) = (1+\alpha)^2 q^2 (\log q)^2 K_2 \left( 2 \sqrt{2} (1+ \alpha) \log q; \sqrt{2} \right)
$$
and so
$$
\epsilon_6 <  \frac{1}{2} (1+\alpha)^2 q^2 (\log q)^2 K_2 \left( 2 \sqrt{2} (1+ \alpha) \log q; \sqrt{2} \right).
$$
Work of Rosser--Schoenfeld \cite[Lemmas~4 and~5]{RS2} yields
$$
\epsilon_6 <  \frac{1}{2} q^2 \left( X + \frac{1}{2} \right) e^{-3X} =  \frac{1}{2} \left( 1 + \frac{1}{2X} \right) \left( X e^{-X} q^{-2 \alpha} \right)
<  0.5109 X e^{-X} q^{-2 \alpha}.
$$
It follows that
$\epsilon_5+\epsilon_6+\epsilon_7 < 0.511 X e^{-X} q^{-2 \alpha}$ as required.
\end{proof}

\begin{lemma}  \label{FLM lemma}
For $q\ge10^5$ and $x\ge e^{4  R_1\log^2 q}$,
\[
\bigg| \psi(x;q,a) - \frac x{\phi(q)} \bigg| \le \frac{1.012}{\phi(q)} x^{\beta_0} + 1.4579 x \sqrt{\frac{\log x}{R_1}} \exp \bigg( {-} \sqrt{\frac{\log x}{R_1}} \bigg),
\]
where the first term on the right-hand side is present only if some Dirichlet $L$-function\mod q has an exceptional zero~$\beta_0$ (in the sense of Definition~\ref{R1 exceptional def}).
\end{lemma}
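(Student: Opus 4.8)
The plan is to apply Proposition~\ref{quoted prop ABC2} with $m=2$ and a parameter $\delta$ to be chosen, and then to estimate the three pieces $\funcU{x}$, $\funcV{x}$, and $\epsilon_1$ on the right-hand side of~\eqref{yikes} by the per-character bounds of Lemmas~\ref{one rho down lemma} and~\ref{three rhos down lemma}, summed over the $\phi(q)$ Dirichlet characters modulo~$q$. First I would observe that the hypothesis $x \ge e^{4R_1\log^2 q}$ is precisely the condition $\alpha\ge1$ for the quantities $X$, $\alpha$, $H=q^\alpha=e^X/q$ of~\eqref{XalphaH def}, so the height $H$ in Proposition~\ref{quoted prop ABC2} coincides with the one used in those two lemmas, and $q^{-2\alpha}=H^{-2}=q^2e^{-2X}$. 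Computing $A_2(\delta)$ explicitly from~\eqref{A def} gives $A_2(\delta)=4/\delta^2+12/\delta+18+10\delta$.

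Since $\funcU{x}=A_2(\delta)\sum_{\chi\bmod q}\sum_{|\gamma|>H}x^{\beta-1}/|\rho(\rho+1)(\rho+2)|$, and the $|\gamma|>H$ sum automatically contains no exceptional zero (the latter having $\gamma=0$), Lemma~\ref{three rhos down lemma} yields $\funcU{x}<A_2(\delta)\cdot 0.511\,\phi(q)\,Xe^{-X}H^{-2}$. For $\funcV{x}$ I would isolate the possible exceptional zero $\beta_0$, which is real and hence belongs to a single quadratic character, and whose symmetric companion $1-\beta_0$ is already absorbed into the statement of Lemma~\ref{one rho down lemma}; summing that lemma over all characters gives
\[
\funcV{x} < (1+\delta)\bigl(0.5001\,\phi(q)\,Xe^{-X} + \tfrac{x^{\beta_0-1}}{\beta_0}\bigr),
\]
where the last summand is present only when an exceptional zero exists. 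Multiplying~\eqref{yikes} through by $x/\phi(q)$ converts these into bounds on $\bigl|\psi(x;q,a)-x/\phi(q)\bigr|$, with the exceptional contribution equal to $(1+\delta)x^{\beta_0}/(\phi(q)\beta_0)$; since $q\ge10^5$ forces $\beta_0\ge 1-1/(R_1\log q)>0.99$ by Definition~\ref{R1 exceptional def}, and since the $\delta$ I will choose is minuscule, $(1+\delta)/\beta_0<1.012$, which accounts for the first term of the claimed inequality.

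It then remains to choose $\delta$ and to bound the non-exceptional contributions by $1.4579\,xXe^{-X}$. The natural choice is $\delta=c/H$ for a constant $c$, since then $A_2(\delta)H^{-2}=4/c^2+O(1/H)$ with the error term negligible as $H\ge q\ge10^5$, and $0<\delta<\frac{x-2}{2x}$ holds trivially. With this choice the smoothing term contributes $\frac{x}{\phi(q)}\cdot\delta=\frac{cq}{\phi(q)}xe^{-X}$; using $X\ge2\log q$ and the classical fact that $q/\phi(q)$ grows more slowly than $\log q$ (the supremum of $q/(\phi(q)\log q)$ over $q\ge10^5$ being an explicit constant below $\tfrac14$), this is at most a modest multiple of $xXe^{-X}$. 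Collecting everything, the non-exceptional part is bounded by $\bigl(2.044/c^2+0.5001+\kappa c+o(1)\bigr)xXe^{-X}$ for an explicit $\kappa<\tfrac14$, while the remaining corrections---namely $\frac{x}{\phi(q)}\epsilon_1<\frac{\log q\log x}{\log 2}+0.2516\,q\log q$, the $1-\beta_0$ term, and the lower-order pieces of $A_2(\delta)H^{-2}$---are super-polynomially smaller than $xXe^{-X}$, since $x=e^{R_1X^2}$ while $q\le e^{X/2}$. A one-variable optimization over $c$ then drives the leading coefficient below $1.4579$, completing the proof.

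The only real difficulty I anticipate is the explicit bookkeeping of constants: one must verify, uniformly for $q\ge10^5$ and $x\ge e^{4R_1\log^2 q}$, that the genuinely $q$-sensitive term $\frac{cq}{\phi(q)}xe^{-X}$ together with all the negligible-looking corrections still leaves enough room for the optimized leading coefficient to stay under $1.4579$. This reduces to a handful of elementary inequalities in $X$ and $q$ that follow from $\log q\le X/2$, the rapid decay of $e^{-X}$, and standard bounds on $q/\phi(q)$, but they do require a careful accounting.
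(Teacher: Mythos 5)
Your proposal is correct and is essentially the paper's own proof: Proposition~\ref{quoted prop ABC2} with $m=2$ and $\delta$ proportional to $1/H$ where $H=e^X/q$, Lemmas~\ref{one rho down lemma} and~\ref{three rhos down lemma} for the two zero sums, the exceptional zero isolated to give $(1+\delta)x^{\beta_0}/(\phi(q)\beta_0)<1.012\,x^{\beta_0}/\phi(q)$, and a worst-case check of the remaining coefficient, which is dominated by the $q/\phi(q)$ term at primorial-like moduli near $10^5$ (the paper fixes $\delta=2/H$, obtaining exactly $1.4579$ at $q=120120$ via direct computation for $10^5\le q<3\cdot10^5$ and Rosser--Schoenfeld beyond, whereas you keep $\delta=c/H$ and optimize, which only gains headroom). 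One small correction to your bookkeeping: the supremum of $q/(\phi(q)\log q)$ over $q\ge10^5$ is about $0.446$ (attained near $q=120120$), not below $\tfrac14$; the inequality your argument actually needs, and which does hold, is $q/(\phi(q)X)\le q/(2\phi(q)\log q)<\tfrac14$, using $X\ge2\log q$.
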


\begin{proof}
Recall the definitions of $\alpha$, $H$, and $X$ in equation~\eqref{XalphaH def}, and note that $\alpha \geq 1$ due to our hypothesis on~$x$. Applying Proposition \ref{quoted prop ABC2} with $m=2$ and $\delta = \frac2H \leq 2 \cdot 10^{-5}$, we
have an upper bound for $\big| \psi(x;q,a) - \frac x{\phi(q)} \big|$ of the shape
\begin{equation} \label{cliff}
\frac{x}{\phi (q)} \left( U_{q,2} \left( x; \frac{2}{q^\alpha}, q^\alpha \right) + V_{q,2} \left( x; \frac{2}{q^\alpha}, q^\alpha \right) +  \frac{2}{q^\alpha} \right) + \frac{\log q \log x}{\log 2} + 0.2516 q \log q.
\end{equation}
Here,
\begin{align*}
U_{q,2} \left( x; \frac{2}{q^\alpha}, q^\alpha \right) &= A_2(\delta) \sum_{\chi\mod q} \sum_{\substack{\rho\in\Zchi \\ |\gamma|>H}} \frac{x^{\beta-1}}{|\rho(\rho+1)(\rho+2)|} \\
&= \bigg( H^2 + 6H + 18 + \frac{20}H \bigg) \sum_{\chi\mod q} \sum_{\substack{\rho\in\Zchi \\ |\gamma|>H}} \frac{x^{\beta-1}}{|\rho(\rho+1)(\rho+2)|} \\
&< 1.001 q^{2 \alpha} \phi(q) \cdot 0.511 X e^{-X} q^{-2 \alpha} < 0.512 \phi(q) X e^{-X}
\end{align*}
by Lemma~\ref{three rhos down lemma} and a simple calculation,
while
$$
V_{q,2} \left( x; \frac{2}{q^\alpha}, q^\alpha \right)= \bigg(1+\frac2H \bigg) \sum_{\chi\mod q} \sum_{\substack{\rho\in\Zchi \\ |\gamma|\le H}} \frac{x^{\beta-1}}{|\rho|}.
$$
It follows that
$$
V_{q,2} \left( x; \frac{2}{q^\alpha}, q^\alpha \right) \le \frac{(1+2 q^{-\alpha})x^{\beta_0-1} }{\beta_0} + (1+2 q^{-\alpha})\phi(q) \cdot 0.5001 X e^{-X}
$$
by Lemma~\ref{one rho down lemma}, where the first term is present only if some Dirichlet $L$-function\mod q has an exceptional zero.

We may thus conclude from expression~\eqref{cliff} that $\big| \psi(x;q,a) - \frac x{\phi(q)} \big|$ is bounded above by
\begin{flalign*}
& \frac{(1+2 q^{-\alpha})x^{\beta_0} }{\phi (q) \beta_0}  +0.5001 x (1+2 q^{-\alpha}) X e^{-X} \\
& + 0.512 x X e^{-X}  + \frac{2x}{\phi (q) q^\alpha} + \frac{\log q \log x}{\log 2} + 0.2516 q \log q,
\end{flalign*}
where we may omit the first term if no exceptional zero $\beta_0$ exists. From $x = e^{(1+\alpha)^2 R_1 \log^2 q}$ and   $\alpha \geq 1$, we may verify by explicit computation for $10^5 \leq q < 3 \cdot 10^5$ that
\begin{equation} \label{whacha doin}
0.5001(1+2 q^{-\alpha}) + 0.512 + \frac{2e^X}{\phi (q) q^\alpha X} + \frac{e^X \log q \log x}x X {\log 2} + \frac{0.2516 e^X q \log q}{xX} ,
\end{equation}
is at most $1.4579$ (and in fact
maximal for $\alpha=1$ and $q=120120$). For $q \geq 3 \cdot 10^5$, we appeal to
(\cite[Theorem 15]{RS}) which provides the inequality
$$
\frac{n}{\phi (n)} < e^\gamma \log \log n + \frac{2.50637}{\log \log n},
$$
and again conclude that inequality~\eqref{whacha doin} obtains.
Since $\beta_0 \ge 1 - {1}/{R_1\log q}$,
it thus follows that
$$
\bigg| \psi(x;q,a) - \frac x{\phi(q)} \bigg| < 1.012 \frac{x^{\beta_0} }{\phi (q)} + 1.4579 x X e^{-X},
$$
as desired.
\end{proof}

The next two easy lemmas will help us prepare the upper bound just established for simplication to the form we eventually want.

\begin{lemma} \label{psi-theta lemma}
Let $a$ and $q$ be integers with $q\ge3$ and  $\gcd (a,q)=1$.
Then, if $x\ge10^{500}$,
$$
|\psi(x;q,a) - \theta(x;q,a)| < 1.001\sqrt x \quad\text{and}\quad |\psi(x;q,a) - \theta_\#(x;q,a)| < 1.001\sqrt x,
$$
where $\theta_\#(x;q,a)$ is defined in equation~\eqref{thetaH def}.
\end{lemma}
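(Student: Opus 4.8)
The plan is to reduce both inequalities to a single estimate on $\Delta(x;q)$ and then bound that sum termwise. By Lemma~\ref{grody psi theta lemma1} we have $0\le \psi(x;q,a)-\theta(x;q,a)\le \frac{x}{\log x}\Delta(x;q)$, and by Lemma~\ref{grody psi theta lemma} (applicable since $x\ge 10^{500}>4$) we have $\bigl|\psi(x;q,a)-\theta_\#(x;q,a)\bigr|\le \frac{x}{\log x}\Delta(x;q)$; so it suffices to prove
\[
\Delta(x;q) < 1.001\,\frac{\log x}{\sqrt x}\qquad\text{for all }q\ge3\text{ and }x\ge 10^{500}.
\]

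First I would separate off the $k=2$ term of the sum defining $\Delta(x;q)$ in Definition~\ref{xi defs}. Its coefficient $(\log x)/x^{1-1/2}$ equals $(\log x)/\sqrt x$ exactly, and $\Delta_2(x;q)\le 1+\tfrac1{\log x}$ holds universally, since the quantity $1+\tfrac{k}{2\log x}$ always appears in the defining minimum and takes this value at $k=2$. Hence the $k=2$ term is at most $\frac{\log x}{\sqrt x}\bigl(1+\tfrac1{\log x}\bigr)$, and since $\log x\ge 500\log 10>1151$ when $x\ge 10^{500}$, this contributes at most $1.00087\,\frac{\log x}{\sqrt x}$.

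Next I would bound the tail $\sum_{k=3}^{\lfloor \log x/\log 2\rfloor}\frac{\log x}{x^{1-1/k}}\Delta_k(x;q)$. For each $k$ in this range, $\Delta_k(x;q)\le 1+\tfrac{k}{2\log x}\le 1+\tfrac1{2\log2}<2$, while $x^{1-1/k}\ge x^{2/3}$ and there are at most $\log x/\log 2$ summands; thus the tail is at most $\frac{2\log^2 x}{\log2}\,x^{-2/3}$, which for $x\ge 10^{500}$ is smaller than $\frac{\log x}{\sqrt x}$ by a factor exceeding $10^{79}$ and so is entirely absorbed by the slack between $1.00087$ and $1.001$. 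Combining the two pieces gives $\Delta(x;q)<1.001\,\frac{\log x}{\sqrt x}$, and multiplying through by $x/\log x$ yields both claimed bounds via Lemmas~\ref{grody psi theta lemma1} and~\ref{grody psi theta lemma}. There is no genuine obstacle here; the only place demanding any care is confirming that the $k=2$ term stays below the constant $1.001$, for which the very generous hypothesis $x\ge 10^{500}$ leaves ample room.
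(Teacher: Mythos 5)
Your proof is correct, but it takes a different route from the paper's. The paper proves this lemma directly and self-containedly: it bounds $\psi(x;q,a)-\theta(x;q,a)$ by $\theta(x^{1/2}) + \theta(x^{1/3})\frac{\log x}{\log 2}$ (pretending every proper prime power lies in the progression), applies the Rosser--Schoenfeld bound $\theta(y) < y + \frac{y}{2\log y}$, and handles $\theta_\#$ by the separate observation $\xi_2(q,a)\sqrt x/\phi(q) \le \sqrt x$, checking at the end that the resulting function $f(x)/\sqrt x$ is below $1.001$ for $x \ge 10^{500}$. You instead reuse the Section~5 machinery: Lemmas~\ref{grody psi theta lemma1} and~\ref{grody psi theta lemma} reduce both inequalities at once to the single estimate $\Delta(x;q) < 1.001\,\frac{\log x}{\sqrt x}$, which you verify by isolating the $k=2$ term (using $\Delta_2 \le 1 + \frac1{\log x}$, giving the dominant factor $1+\frac1{500\log 10} < 1.00087$) and crushing the $k\ge3$ tail by $\frac{2\log^2 x}{\log 2}x^{-2/3}$. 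Both arguments rest on essentially the same inputs (Rosser--Schoenfeld, plus Brun--Titchmarsh inside $\Delta_k$, though you only ever use the unconditional branch $1+\frac{k}{2\log x}$ of the minimum), and there is no circularity since Lemmas~\ref{grody psi theta lemma1} and~\ref{grody psi theta lemma} hold for all $q\ge3$ and $x\ge4$. What your route buys is economy and uniformity --- the $\theta_\#$ case, including its lower bound, comes for free from Lemma~\ref{grody psi theta lemma} rather than needing the separate $\xi_2(q,a)\sqrt x/\phi(q)\le\sqrt x$ step; what the paper's route buys is independence from the $\Delta(x;q)$ apparatus, keeping Section~6 readable on its own and making the numerical verification a one-line evaluation of a fixed elementary function. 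Your numerical margins ($1.00087$ plus a tail smaller by a factor exceeding $10^{79}$) check out, so the strict inequality $<1.001\sqrt x$ follows as claimed.
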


\begin{proof}
We will use Rosser-Schoenfeld~\cite[Theorem 4, page 70]{RS}: for all $y > 1$,
\[
\theta(y) < y + \frac y{2\log y}.
\]
Define $f(x) = x^{1/2} + \frac{x^{1/2}}{\log x} + \frac{x^{1/3}\log x}{\log 2} + \frac{3x^{1/3}}{2\log 2}$.
Even if we pretend that every proper prime power is congruent to $a\mod q$, we have
\begin{align*}
0 \le \psi(x;q,a) - \theta(x;q,a) &\le \sum_{k=2}^{\lfloor \log x/\log 2 \rfloor} \theta(x^{1/k}) \\
&\le \theta(x^{1/2}) + \theta(x^{1/3}) \frac{\log x}{\log 2} \\
&\le x^{1/2} + \frac{x^{1/2}}{\log x} + \bigg( x^{1/3} + \frac{3x^{1/3}}{2\log x} \bigg) \frac{\log x}{\log 2} = f(x).
\end{align*}
Recall that $\xi_2(q,a)$ is defined in Definition~\ref{xi defs}; trivially from this definition, we have the inequality $\xi_2(q,a)\leq \phi(q)$, and therefore $\xi_2(q,a)\sqrt x/\phi(q)\le \sqrt x$. Therefore
\[
-f(x) < -\sqrt x \le \psi(x;q,a) - \bigg( \theta(x;q,a) + \frac{\xi_2(q,a) \sqrt x}{\phi(q)} \bigg) \le f(x).
\]
It follows that both $|\psi(x;q,a) - \theta(x;q,a)|$ and $|\psi(x;q,a) - \psi(x;q,a) - \theta_\#(x;q,a) |$ are bounded by $f(x)$. It is easily checked that the decreasing function $f(x)/\sqrt x$ is less than $1.001$ when $x\ge10^{500}$.
\end{proof}

\begin{lemma}  \label{two-prong lemma}
For $q\ge10^5$ and $x\ge e^{4R_1\log^2 q}$,
$$
\bigg| \psi(x;q,a) - \frac x{\phi(q)} \bigg| \le \frac{1.012}{\phi(q)} x^{1-40/(\sqrt q\log^2 q)} + 1.4579 x \sqrt{\frac{\log x}{R_1}} \exp \bigg( {-} \sqrt{\frac{\log x}{R_1}} \bigg)
$$
and
\begin{flalign*}
\bigg| \theta(x;q,a) - \frac x{\phi(q)} \bigg| & \le \frac{1.012}{\phi(q)} x^{1-40/(\sqrt q\log^2 q)} \\
& + 1.4579 x \sqrt{\frac{\log x}{R_1}} \exp \bigg( {-} \sqrt{\frac{\log x}{R_1}} \bigg) + 1.001\sqrt x, \\
\end{flalign*}
where the first term on each right-hand side is present only if an exceptional zero exists for a quadratic $L$-function with conductor~$q$.
\end{lemma}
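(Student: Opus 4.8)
The plan is to obtain Lemma~\ref{two-prong lemma} by assembly from three results already in hand: Lemma~\ref{FLM lemma} (the explicit upper bound for $|\psi(x;q,a)-x/\phi(q)|$ in terms of a possible exceptional zero $\beta_0$), Proposition~\ref{FLM beta prop} (the explicit upper bound $\beta_0\le 1-40/(\sqrt q\log^2 q)$ for real zeros of quadratic $L$-functions), and Lemma~\ref{psi-theta lemma} (the prime-power correction relating $\psi$ to $\theta$). No new estimate is needed; the content is simply to replace the opaque exponent $\beta_0$ by an explicit quantity and then to pass from $\psi$ to $\theta$.

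First I would invoke Lemma~\ref{FLM lemma}: under the hypotheses $q\ge10^5$ and $x\ge e^{4R_1\log^2 q}$,
\[
\bigg| \psi(x;q,a) - \frac x{\phi(q)} \bigg| \le \frac{1.012}{\phi(q)} x^{\beta_0} + 1.4579\, x \sqrt{\tfrac{\log x}{R_1}} \exp\bigl( {-}\sqrt{\tfrac{\log x}{R_1}} \bigr),
\]
where the first term is present only when some $L(s,\chi)$ with $\chi$ modulo $q$ has an exceptional zero $\beta_0$ in the sense of Definition~\ref{R1 exceptional def}. In that case $\beta_0$ is a real zero, hence—as recalled in Definition~\ref{R1 exceptional def}, citing \cite[Sections 11.1--11.2]{MV}—it belongs to a quadratic character; replacing $\chi$ by the primitive character inducing it (of conductor $q^*\mid q$) does not change the zeros in the critical strip, so Proposition~\ref{FLM beta prop} applied with modulus $q^*$, together with the fact that $t\mapsto 40/(\sqrt t\log^2 t)$ is decreasing, gives $\beta_0 \le 1-40/(\sqrt{q^*}\log^2 q^*) \le 1-40/(\sqrt q\log^2 q)$. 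Since $x>1$ this yields $x^{\beta_0}\le x^{1-40/(\sqrt q\log^2 q)}$, which proves the first displayed inequality of the lemma.

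For the $\theta$-bound I would use the triangle inequality
\[
\bigg| \theta(x;q,a) - \frac x{\phi(q)} \bigg| \le \bigg| \psi(x;q,a) - \frac x{\phi(q)} \bigg| + \bigl| \psi(x;q,a) - \theta(x;q,a) \bigr|,
\]
bounding the first summand by the part just established and the second by Lemma~\ref{psi-theta lemma}. To apply the latter one needs $x\ge 10^{500}$, which follows from the hypothesis since $x\ge e^{4R_1\log^2 q}\ge e^{4R_1\log^2(10^5)}>10^{500}$ (indeed $4R_1\log^2(10^5)$ is on the order of $5\cdot10^3$, comfortably larger than $500\log 10$); thus $|\psi(x;q,a)-\theta(x;q,a)|<1.001\sqrt x$, and adding this gives the second displayed inequality.

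There is no real obstacle here; the proof is a short assembly step. The only points requiring a moment's care are (i) confirming that the possibly imprimitive character to which Lemma~\ref{FLM lemma} attaches $\beta_0$ is quadratic, so that Proposition~\ref{FLM beta prop} applies—handled by the standard reduction to the primitive inducing character plus the monotonicity remark above—and (ii) the elementary numerical check $e^{4R_1\log^2 q}>10^{500}$ for $q\ge10^5$ that licenses the use of Lemma~\ref{psi-theta lemma}.
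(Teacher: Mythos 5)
Your proposal is correct and matches the paper's own proof, which likewise just combines Lemma~\ref{FLM lemma}, Proposition~\ref{FLM beta prop}, and Lemma~\ref{psi-theta lemma} after noting $e^{4R_1(\log 10^5)^2}>10^{500}$. Your extra reduction to the primitive inducing character is harmless but unnecessary, since Proposition~\ref{FLM beta prop} is stated for quadratic characters modulo $q$, primitive or not.
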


\begin{proof}
We simply combine Proposition~\ref{FLM beta prop} with Lemmas~\ref{FLM lemma} and~\ref{psi-theta lemma} (and note that $e^{{4 R_1}(\log 10^5)^2} > 10^{500}$).
\end{proof}

The bounds of Lemma~\ref{two-prong lemma} are both $O \big({x}/{(\log x)^Z} \big)$ for every fixed real number~$Z$. The purpose of this subsection is to provide several explicit versions of this observation. The first summand in the bounds, with its unfortunate dependence on $q$, is the one that really drives the growth. For that term, we need to take $x$ extremely large before the asymptotic behavior is seen, rendering the resulting bounds on $\psi(x;q,a)$, $\theta(x;q,a)$, and $\pi(x;q,a)$ impractical, although explicit. Consequently, we bound all three summands rather carelessly.

\begin{lemma}  \label{first part of three lemma}
Let $q\geq 10^5$ be an integer and $Z$ a real number, and let $\kappa_1\ge 0.0132$ be a real number satisfying
\[
\frac{460.516 \kappa_1}{\log \kappa_1+13.087} \ge Z.
\]
Then for all $x\ge\exp\big(\kappa_1\sqrt q\log^3q\big)$,
\[
\frac{1.012}{\phi(q)} x^{1-40/(\sqrt q\log^2 q)} \le 10^{-4} \frac x{(\log x)^Z}.
\]
\end{lemma}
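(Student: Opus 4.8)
The plan is to take logarithms and reduce the claimed inequality to a one-variable calculus estimate. Since $\log x\ge 1$ on the whole range $x\ge\exp(\kappa_1\sqrt q\log^3q)$ (because $q\ge10^5$), replacing $Z$ by $\max\{Z,0\}$ only enlarges the right-hand side $10^{-4}x/(\log x)^Z$ while keeping the hypothesis valid, so I may assume $Z\ge0$. Writing $L=\log x$ and $\theta=40/(\sqrt q\log^2q)$, the desired inequality is equivalent to $1.012\,L^{Z}e^{-\theta L}\le 10^{-4}\phi(q)$. For fixed $q$ the function $L\mapsto L^{Z}e^{-\theta L}$ is increasing for $L<Z/\theta$ and decreasing for $L>Z/\theta$; and since $\log q\ge\log10^5$ and $\log\kappa_1+13.087\ge\log(0.0132)+13.087>8.7$, the hypothesis forces $Z<460.516\kappa_1<40\kappa_1\log q$, which is precisely the statement $\kappa_1\sqrt q\log^3q>Z/\theta$. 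Hence it suffices to verify the inequality at the single point $x=\exp(\kappa_1\sqrt q\log^3q)$, where $x^{\theta}=q^{40\kappa_1}$ and $\log\log x=\log\kappa_1+\tfrac12\log q+3\log\log q$; after taking logarithms, the claim becomes
\[
\log1.012+Z\bigl(\log\kappa_1+\tfrac12\log q+3\log\log q\bigr)\le -4\log10+\log\phi(q)+40\kappa_1\log q .
\]

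The essential input is a lower bound for $\phi(q)$ that grows like $\log q$: the crude choices $\phi(q)\ge1$ and even $\phi(q)\ge\sqrt q$ fall short by an additive constant of a couple of units. I would instead use $\phi(q)>q/\bigl(e^{\gamma}\log\log q+2.50637/\log\log q\bigr)$, which follows from \cite[Theorem~15]{RS} (the same estimate already invoked in the proof of Lemma~\ref{FLM lemma}). Substituting $s=\log q$, so $s\ge5\log10$ for $q\ge10^5$, the inequality to be proved takes the form $G(s)\ge0$ with
\[
G(s)=\Bigl(40\kappa_1+1-\tfrac12Z\Bigr)s-\bigl(\log1.012+4\log10\bigr)-Z\log\kappa_1-3Z\log s-\log\!\Bigl(e^{\gamma}\log s+\tfrac{2.50637}{\log s}\Bigr).
\]
Using the elementary bound $\log\bigl(e^{\gamma}\log s+2.50637/\log s\bigr)\le \tfrac{3}{20}s$ for $s\ge5\log10$ (the left side grows like $\log\log s$), it is enough to prove $\bigl(40\kappa_1+\tfrac{17}{20}-\tfrac12Z\bigr)s\ge \log1.012+4\log10+Z\log\kappa_1+3Z\log s$.

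Next I would check that the left side minus the right side is nondecreasing on $[5\log10,\infty)$: its leading coefficient $40\kappa_1+\tfrac{17}{20}-\tfrac12Z$ is positive since the hypothesis gives $Z\le52.6\kappa_1<80\kappa_1+\tfrac{17}{10}$, and the $s$-derivative, $40\kappa_1+\tfrac{17}{20}-\tfrac12Z-3Z/s$, is already positive at $s=5\log10$ (again by the hypothesis, since $Z<52.6\kappa_1$) and increases thereafter. It then remains only to verify the inequality at $s_0=5\log10=\log10^5$. Collecting constants there, one finds
\[
G\text{-inequality at }s_0 \iff 200(\log10)\,\kappa_1 + c \ \ge\ Z\bigl(\log\kappa_1+c'\bigr),
\]
where $c'=\tfrac52\log10+3\log(5\log10)\approx13.0869<13.087$ and $c=\tfrac{17}{20}\cdot5\log10-\log1.012-4\log10\approx0.56>0$. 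Since $c'<13.087$ and $\log\kappa_1+c'>0$, the hypothesis $Z(\log\kappa_1+13.087)\le460.516\kappa_1$ together with $Z\ge0$ yields $Z(\log\kappa_1+c')\le460.516\kappa_1$, and as $200\log10>460.516$ we conclude $200(\log10)\kappa_1+c\ge(200\log10-460.516)\kappa_1+c>0$, finishing the proof.

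The step I expect to require the most care is the choice of lower bound for $\phi(q)$: the values $460.516=40\log10^5$ and $13.087\approx\tfrac52\log10+3\log\log10^5$ in the hypothesis are pinned down precisely so that the Rosser--Schoenfeld bound for $q/\phi(q)$---but essentially nothing weaker---makes the final numerical inequality go through, with only a slim positive margin. Everything else, namely the two monotonicity reductions and the bookkeeping of constants, is routine.
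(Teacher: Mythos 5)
Your proof is correct, and it bottoms out at the same worst case as the paper's argument (the corner $q=10^5$ with $x$ at the left endpoint: the constants $460.516=40\log 10^5$ and $13.087\approx\tfrac52\log10+3\log\log 10^5$ are reverse-engineered from exactly that point), but the organization is genuinely different. The paper never reduces to the endpoint in $x$: it shows that $\kappa_1\log q/\log(\kappa_1\sqrt q\log^3q)$ is increasing in $q$ and that $(\log x)/\log\log x$ is increasing in $x$, concludes that $40\log x/(\sqrt q\log^2q)\ge Z\log\log x$ on the whole range, and then needs only the flat bound $\phi(q)\ge 20736>1.012\cdot10^4$ (obtained from \cite[Theorem 15]{RS} for $q\ge1.2\cdot10^5$ plus a direct totient check down to $10^5$). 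You instead reduce to $x=\exp(\kappa_1\sqrt q\log^3q)$ by unimodality of $L^Ze^{-\theta L}$, keep the Rosser--Schoenfeld bound for $\phi(q)$ in its original form, and run a one-variable monotonicity argument in $s=\log q$, letting $\log\phi(q)\approx s$ absorb part of $Z\log\log x$. The trade-off: your route avoids the paper's small numerical verification of $\phi(q)\ge 20736$ on $[10^5,1.2\cdot10^5]$ at the cost of a little more calculus (the $\tfrac3{20}s$ estimate and the derivative check), while the paper's route is leaner on the analysis side. Your aside that ``essentially nothing weaker'' than Rosser--Schoenfeld can work is overstated: with the paper's decomposition, any input giving $\phi(q)\ge 1.012\cdot10^4$ for $q\ge10^5$ suffices.

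Three small repairs, none a real gap. (i) The wording of the reduction to $Z\ge0$ is backwards: replacing a negative $Z$ by $0$ \emph{shrinks} $10^{-4}x/(\log x)^Z$ (since $\log x\ge1$), which is precisely the direction you need. (ii) In checking that $40\kappa_1+\tfrac{17}{20}-\tfrac12Z-3Z/s\ge0$ at $s_0=5\log10$, the rounded bound $Z\le 52.6\kappa_1$ is not quite enough, because $(\tfrac12+\tfrac3{s_0})\cdot 52.6>40$ and the lemma places no upper bound on $\kappa_1$; using the unrounded $Z\le 460.516\kappa_1/8.759<52.58\kappa_1$, for which $(\tfrac12+\tfrac3{s_0})\cdot 52.58<40$, closes the step. (iii) \cite[Theorem 15]{RS} has one exceptional modulus, $n=223092870$; it is harmless here ($\phi(n)\approx 3.6\times10^7$, and at $s=\log n\approx 19.2$ your inequality holds with several units to spare), but it should be acknowledged when you invoke the theorem in the form $\phi(q)>q/\bigl(e^\gamma\log\log q+2.50637/\log\log q\bigr)$.
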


\begin{proof}
By taking logarithmic derivatives, it is easy to show that the quotient
\[
\frac{\kappa_1 \log q}{\log(\kappa_1\sqrt q\log^3 q)}
\]
is an increasing function of $q$ for $q\ge \exp(e/\kappa_1^{1/3})$; in particular, since $\kappa_1\ge0.0132$, it is an increasing function for $q>10^5$. Therefore
\[
\frac{\kappa_1 \log q}{\log(\kappa_1\sqrt q\log^3 q)} \ge \frac{\kappa_1 \log 10^5}{\log\Big( \kappa_1\sqrt {10^5} \log^3(10^5) \Big)}
\]
and thus
\begin{align*}
\frac{\kappa_1 \sqrt q\log^3 q}{\log(\kappa_1\sqrt q\log^3 q)} & \ge \frac{5\kappa_1 \log 10}{\log \kappa_1+\log(10^{5/2}(\log10^5)^3)} \sqrt q\log^2q \\
& > \frac{11.5129\kappa_1}{\log \kappa_1+13.087} \sqrt q\log^2q, \\
\end{align*}
for all $q\geq 10^5$. The function $(\log x)/\log\log x$ is increasing for $\log x\ge e$; since the hypotheses of the lemma imply 
$$
\log x \ge \kappa_1\sqrt q\log^3q \ge 0.0132\sqrt{10^5}\log^3(10^5) > e, 
$$
we conclude that
\[
\frac{\log x}{\log\log x} \ge \frac{11.5129\kappa_1}{\log \kappa_1+13.087} \sqrt q\log^2q,
\]
and in particular
\[
\frac{40\log x}{\sqrt q\log^2q} \ge Z\log\log x
\]
given the assumption on~$Z$ (noting that $40\cdot11.5129 = 460.516$).
By \cite[Theorem 15]{RS}, for $q\geq 1.2\cdot10^5$, we have $\phi(q) \geq 20736$, and by direct computation of $\phi$ we extend this bound down to $q\geq 10^5$.
This implies that
$$
\frac{40\log x}{\sqrt q\log^2q} + \log \phi(q) \ge \log 20736 + Z \log\log x,
$$
$$
\phi(q)x^{40/(\sqrt q\log^2q)} \ge 20736 (\log x)^Z
$$
and
$$
\frac x{(\log x)^Z} \ge \frac{20736}{\phi(q)} x^{1-40/(\sqrt q\log^2q)} \ge 10^4 \frac{1.012}{\phi(q)} x^{1-40/(\sqrt q\log^2q)},
$$
as desired.
\end{proof}

\begin{lemma}  \label{second part of three lemma}
Suppose that $R, \kappa_2$ and $Z$ are real numbers with $1 \leq R \leq 10$,  $\kappa_2 > 1$ and
$$
Z \leq \frac{\sqrt{\kappa_2/R} +\log \left( {\sqrt{R_1}}/{7.2895} \right)}{\log \kappa_2} - \frac 12.
$$
Then for all $x \geq e^{\kappa_2}$,
$$
1.4579 x \sqrt{\frac{\log x}{R}} \exp \bigg( {-} \sqrt{\frac{\log x}{R}} \bigg) \le \frac1{5} \frac x{(\log x)^Z}.
$$
\end{lemma}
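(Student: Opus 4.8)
The plan is to clear the common factor of $x$ and reduce to a one‑variable inequality. Dividing the claimed bound by $x$ and multiplying by $5(\log x)^{Z}$, and using $5\cdot 1.4579=7.2895$, the assertion becomes
\[
7.2895\,(\log x)^{Z}\,\sqrt{\tfrac{\log x}{R}}\,\exp\!\Bigl({-}\sqrt{\tfrac{\log x}{R}}\,\Bigr)\le 1
\qquad\text{for all }x\ge e^{\kappa_2}.
\]
I would then substitute $t=\sqrt{(\log x)/R}$, so that $\log x=Rt^{2}$ and the range $x\ge e^{\kappa_2}$ turns into $t\ge t_{0}:=\sqrt{\kappa_2/R}$; the inequality is equivalent to $7.2895\,R^{Z}\,t^{\,2Z+1}e^{-t}\le 1$, i.e., after taking logarithms,
\[
\varphi(t):=(2Z+1)\log t-t\ \le\ -\log 7.2895-Z\log R
\qquad\text{for all }t\ge t_{0}.
\]

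Next I would analyze $\varphi$ by elementary calculus — or by invoking Lemma~\ref{simple calculus lemma} with $u=e^{t}$, $c_1=7.2895\,R^{Z}$, $c_2=1$, $\lambda=1$, $\mu=2Z+1$ (the case $2Z+1\le 0$ being handled separately, since then $\varphi$ is decreasing on $(0,\infty)$) — to conclude that $\varphi$ increases on $(0,2Z+1)$ and decreases on $(2Z+1,\infty)$. The crucial step is to show that $2Z+1\le t_0$, so that $\varphi$ is decreasing on all of $[t_0,\infty)$ and it suffices to verify the inequality at the single point $t=t_0$. This is immediate from the hypothesis when $\log\kappa_2\ge 2$, because the hypothesis gives $2Z+1\le\bigl(2t_0+2\log(\sqrt{R_1}/7.2895)\bigr)/\log\kappa_2$ and $\log(\sqrt{R_1}/7.2895)<0$; the residual range $1<\kappa_2<e^{2}$ has to be dispatched by hand, bounding $\varphi$ by its global maximum $\varphi(2Z+1)=(2Z+1)\bigl(\log(2Z+1)-1\bigr)$ and using that the hypothesis keeps $Z$ small in that range. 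I expect this case analysis, together with pinning down the numerical constants precisely, to be the main nuisance of the proof.

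Finally, once the problem is reduced to checking $\varphi(t_0)\le -\log 7.2895-Z\log R$, I would simply compute: since $\log t_0=\tfrac12(\log\kappa_2-\log R)$, this inequality is
\[
(Z+\tfrac12)\log\kappa_2-\sqrt{\kappa_2/R}\ \le\ -\log 7.2895+\tfrac12\log R=\log\!\bigl(\sqrt{R}/7.2895\bigr),
\]
which rearranges to $Z\le\bigl(\sqrt{\kappa_2/R}+\log(\sqrt{R}/7.2895)\bigr)/\log\kappa_2-\tfrac12$; after comparing the constant $\sqrt{R}/7.2895$ with the constant $\sqrt{R_1}/7.2895$ appearing in the hypothesis (in the relevant range $1\le R\le 10$, and with $R=R_1$ in the application to Lemma~\ref{FLM lemma}), this is exactly the assumed bound on $Z$. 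This completes the proof.
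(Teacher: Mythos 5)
Your reduction to the one-variable inequality $7.2895\,R^{Z}t^{2Z+1}e^{-t}\le 1$ for $t\ge t_{0}=\sqrt{\kappa_2/R}$, and your endpoint computation showing that the inequality at $t=t_{0}$ is exactly $Z\le\bigl(\sqrt{\kappa_2/R}+\log(\sqrt R/7.2895)\bigr)/\log\kappa_2-\tfrac12$, are correct and in the same spirit as the paper. The structural difference is that the paper does not fix $Z$ and study $\varphi(t)=(2Z+1)\log t-t$: it shows instead that the largest admissible exponent $f(u)=\log\bigl(e^{u}/(7.2895u)\bigr)/\log(Ru^{2})$ is increasing for $u>1/\sqrt R$ (this is the only place $1\le R\le 10$ enters), so the constraint at $u=t_{0}$ is automatically the binding one and no case split on $\kappa_2$ is needed. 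Your route requires $2Z+1\le t_{0}$, which you obtain only for $\log\kappa_2\ge 2$, and your proposed patch for $1<\kappa_2<e^{2}$ (bounding $\varphi$ by its global maximum $(2Z+1)(\log(2Z+1)-1)$) does not suffice as stated: for $R=1$ and $\kappa_2$ near $e^{2}$ the hypothesis permits $Z\approx 0.43$, so the global maximum is about $-0.70$, while the target is $-\log 7.2895\approx-1.99$. (With $R=R_{1}$, the only value actually used, the hypothesis in that range forces $Z+\tfrac12<0.012$, so $2Z+1\le t_{0}$ holds there as well; but that is an argument you would still need to supply.)

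The more serious gap is your last sentence: the comparison of constants goes the wrong way. The endpoint check needs the constant $\log(\sqrt R/7.2895)$, while the hypothesis contains $\log(\sqrt{R_{1}}/7.2895)$ with $R_{1}\approx 9.646$; for every $R<R_{1}$ one has $\log(\sqrt R/7.2895)<\log(\sqrt{R_{1}}/7.2895)$, so the stated hypothesis is strictly weaker than what your endpoint check requires and does not imply it. This cannot be repaired within the statement as given: with $R=1$, $\kappa_2=e^{2}$, $Z=0.43$ the hypothesis holds ($0.43\le 0.4325\ldots$), yet at $x=e^{\kappa_2}$ the left-hand side is about $0.2615\,x$ and the right-hand side about $0.0846\,x$, so the conclusion fails; the argument is sound only for $R\ge R_{1}$, in particular for the application $R=R_{1}$, where your phrase ``this is exactly the assumed bound'' is literally true. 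In fairness, the paper's own proof makes the same silent leap (``by our hypothesis on $Z$ we have $Z\le f(\sqrt{\kappa_2/R})$'', which is valid only when $R\ge R_{1}$), but as written your proposal asserts the comparison works ``in the relevant range $1\le R\le 10$'', which it does not, so both the residual $\kappa_2$ case and this final identification are genuine gaps in your write-up.
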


\begin{proof}
Consider for $u>1/\sqrt{R}$ the function
 $$
    f(u) = \frac{\log \left( {e^u}/{7.2895 u} \right) }{\log (Ru^2)},
 $$
 whose derivative satisfies
 $$
 \frac{df}{du} = \frac{(u-1) \log (R u^2) -2 \log \left( {e^u}/{7.2895 u} \right)}{u \log^2 (R u^2) }.
$$
The denominator of the derivative is clearly positive, and its numerator is continuous, goes to $\infty$ with $u$, has derivative $\log Ru^2>0$, and is positive for $u=1/\sqrt{R}$ (using that $1\leq R \leq 10$). Therefore, $f(u)$ is increasing.

By our hypothesis on $Z$, we have that $Z \leq f(\sqrt{\kappa_2/R})$. As $f$ is increasing, it follows that $Z\leq f(\sqrt{\log(x)/R})$ provided  $\log x \geq \kappa_2$ and $\sqrt{\log(x)/R} > 1/\sqrt{R}$, whence our hypotheses that $\kappa_2 > 1$ and $x\geq e^{\kappa_2}$. But $Z\leq f(u)$ is equivalent to
  \[\frac 15 \cdot \frac{1}{(Ru^2)^Z} \geq 1.4579 \frac{u}{e^u}.\]
The lemma follows upon setting $u=\sqrt{\log(x)/R}$ and multiplying both sides by $x$.
\end{proof}

\begin{lemma}  \label{third part of three lemma}
Let $\kappa_3$ and $Z$ be real numbers with $\kappa_3>1$ and
$$
Z \leq \frac{\kappa_3-6.44}{2\log \kappa_3}.
$$
Then for all $x\geq e^{\kappa_3}$,
\[
1.001\sqrt x \le \frac1{25} \frac x{(\log x)^Z}.
\]
\end{lemma}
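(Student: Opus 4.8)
\emph{Proof sketch.} The plan is to reduce the claimed inequality to an elementary one‑variable statement via the substitution $t=\log x$, and then to finish by a monotonicity argument. For $x\ge e^{\kappa_3}$ all quantities involved are positive, so dividing $1.001\sqrt x\le\frac1{25}\,x/(\log x)^Z$ by $\sqrt x$ and taking logarithms shows it is equivalent to
\[
\log(25.025)+Z\log\log x\le\tfrac12\log x .
\]
Setting $t=\log x$, so that $t\ge\kappa_3>1$ and hence $\log t>0$, this is in turn equivalent to
\[
Z\le\frac{t-2\log(25.025)}{2\log t}.
\]

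Next I would record the numerical fact that $2\log(25.025)=\log\bigl((1.001\cdot25)^2\bigr)=\log(626.250625)<6.44$, so that it suffices to prove the stronger bound $Z\le g(t)$, where $g(t)=\dfrac{t-6.44}{2\log t}$. By hypothesis $Z\le g(\kappa_3)$, so the lemma follows once I show that $g$ is nondecreasing on $(1,\infty)$: indeed then $g(\kappa_3)\le g(t)$ for every $t\ge\kappa_3$, and the chain
\[
Z\le g(\kappa_3)\le g(t)\le\frac{t-2\log(25.025)}{2\log t}
\]
gives exactly the displayed inequality, and hence the lemma.

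The monotonicity of $g$ is a short calculus check: $g'(t)$ has the same sign as $h(t)=\log t-1+6.44/t$, and $h(t)>0$ for all $t>1$. (One has $h(1)=5.44>0$; moreover $h'(t)=(t-6.44)/t^2$, so $h$ decreases on $(1,6.44)$ and increases thereafter, with minimum value $h(6.44)=\log 6.44>0$.) Thus $g$ is increasing on $(1,\infty)$, completing the argument. There is no genuine obstacle here; the only point requiring a little care is the tight numerical comparison $2\log(25.025)<6.44$, which holds with a margin of only a few thousandths but is valid.
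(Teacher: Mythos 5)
Your proof is correct and follows essentially the same route as the paper: both reduce the claim to the monotonicity of $g(t)=(t-6.44)/(2\log t)$ on $(1,\infty)$ together with the tight numerical fact $1.001\cdot 25 < e^{3.22}$ (which you phrase as $2\log(25.025)<6.44$). The only difference is that you spell out the calculus behind the monotonicity, which the paper asserts as "clearly"; your verification of it is accurate.
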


\begin{proof}
Consider $f(u)=\frac{u-6.44}{2\log u}$ for $u>1$. Clearly $f$ is increasing and our hypothesis on $Z$ is that $Z\leq f(\kappa_3)$. Thus $Z\leq f(u)$ for all $u\geq \kappa_3$, and in particular $Z\leq f(\log x)$. But this is equivalent to
  \[1.001\sqrt x \leq \frac{1.001}{e^{3.22}} \frac{x}{(\log x)^Z},\]
and $1.001/e^{3.22} < 1/25$.
\end{proof}

With these three lemmas in place, we may now convert Lemma~\ref{two-prong lemma} into an explicit upper bound for the error terms related to $\psi(x;q,a)$ and $\theta(x;q,a)$.

\begin{prop}  \label{gory inequality prop}
Let $q\geq 10^5$ be an integer and $Z, \kappa_1 \ge 0.0132, \kappa_2>1, \kappa_3>1$ be real numbers satisfying
\begin{equation} \label{licorice}
Z\leq \min \left\{ \frac{460.516  \kappa_1}{\log \kappa_1+13.087}, \;  \frac{\sqrt{\kappa_2/{R_1}} -0.85317}{\log \kappa_2} - \frac12, \; \frac{\kappa_3-6.44}{2\log\kappa_3} \right\},
\end{equation}
for $R_1$ as defined in Definition \ref{R1 exceptional def}.
Then for all $x\ge \exp \big( \max\{\kappa_1\sqrt q\log^3q, \kappa_2, \kappa_3\} \big)$,
$$
\bigg|\psi(x;q,a) - \frac x{\phi(q)} \bigg| \le \frac 14 \frac x{(\log x)^Z}
\quad\text{and}\quad
\bigg|\theta(x;q,a) - \frac x{\phi(q)} \bigg| \le \frac 14 \frac x{(\log x)^Z}.
$$
\end{prop}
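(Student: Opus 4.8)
The plan is to reduce the proposition directly to the three preparatory lemmas just established—Lemmas~\ref{first part of three lemma}, \ref{second part of three lemma}, and \ref{third part of three lemma}—together with Lemma~\ref{two-prong lemma}, which provides the basic two- (or three-) term bound for both $|\psi(x;q,a)-x/\phi(q)|$ and $|\theta(x;q,a)-x/\phi(q)|$. First I would observe that the hypothesis~\eqref{licorice} on $Z$ is precisely engineered so that the three conditions appearing in the hypotheses of Lemmas~\ref{first part of three lemma}--\ref{third part of three lemma} are each satisfied: the first bound $Z \le 460.516\kappa_1/(\log\kappa_1+13.087)$ is the hypothesis of Lemma~\ref{first part of three lemma}; the second bound matches the hypothesis of Lemma~\ref{second part of three lemma} applied with $R=R_1$, after noting that $\log(\sqrt{R_1}/7.2895) = -0.85317\ldots$ (one checks $\sqrt{9.645908801}/7.2895 \approx 0.4256$, whose logarithm is about $-0.854$); and the third bound is exactly the hypothesis of Lemma~\ref{third part of three lemma}. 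I would also note that $1 \le R_1 \le 10$, so Lemma~\ref{second part of three lemma} does apply with this choice of $R$.

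Next I would invoke Lemma~\ref{two-prong lemma}, which is legitimate since the stated lower bound $x \ge \exp\big(\max\{\kappa_1\sqrt q\log^3q,\kappa_2,\kappa_3\}\big)$ forces in particular $\log x \ge \kappa_1\sqrt q\log^3q$, and since $\kappa_1 \ge 0.0132$ and $q \ge 10^5$ one checks $\kappa_1\sqrt q\log^3 q \ge 4R_1\log^2 q$ (because $\kappa_1\sqrt q \ge 4R_1/\log q$ holds comfortably for $q \ge 10^5$, as $\sqrt{10^5}\cdot0.0132 \approx 4.17$ while $4R_1/\log(10^5) \approx 3.35$, and the left side grows like $\sqrt q$). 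Thus both error terms are bounded by
$$
\frac{1.012}{\phi(q)} x^{1-40/(\sqrt q\log^2 q)} + 1.4579\, x\sqrt{\tfrac{\log x}{R_1}}\exp\!\Big({-}\sqrt{\tfrac{\log x}{R_1}}\Big) + 1.001\sqrt x,
$$
where the first term appears only if an exceptional zero exists and the $1.001\sqrt x$ term is only needed for the $\theta$ bound. Now I would bound each summand separately: Lemma~\ref{first part of three lemma} (applicable since $x \ge \exp(\kappa_1\sqrt q\log^3 q)$ and the hypothesis on $\kappa_1$ relative to $Z$ holds) gives the first term is at most $10^{-4}\, x/(\log x)^Z$; Lemma~\ref{second part of three lemma} with $R=R_1$ (applicable since $x \ge e^{\kappa_2}$) gives the second term is at most $\frac15\, x/(\log x)^Z$; and Lemma~\ref{third part of three lemma} (applicable since $x \ge e^{\kappa_3}$) gives the third term is at most $\frac1{25}\, x/(\log x)^Z$. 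Summing, $10^{-4} + \frac15 + \frac1{25} = 0.2401 < \frac14$, which yields the claimed bound $\frac14\, x/(\log x)^Z$ for both $\psi$ and $\theta$.

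I do not anticipate a genuine obstacle here: the proof is essentially bookkeeping, verifying that hypothesis~\eqref{licorice} unpacks into the three separate conditions and that the numerical constants add up to less than $\frac14$. The one point requiring a little care is confirming the identification $\log(\sqrt{R_1}/7.2895) = -0.85317$ so that the second clause of~\eqref{licorice} genuinely matches the hypothesis of Lemma~\ref{second part of three lemma}, and checking that the lower bound on $x$ in this proposition is strong enough to apply Lemma~\ref{two-prong lemma} (i.e. that $\kappa_1\sqrt q\log^3 q \ge 4R_1\log^2 q$ for all $q \ge 10^5$). Both are routine numerical verifications. Accordingly the proof will simply consist of: (i) checking $x$ is large enough for Lemma~\ref{two-prong lemma}; (ii) applying it; (iii) bounding the three resulting summands via Lemmas~\ref{first part of three lemma}, \ref{second part of three lemma}, \ref{third part of three lemma} respectively; and (iv) adding $10^{-4} + \tfrac15 + \tfrac1{25} < \tfrac14$.
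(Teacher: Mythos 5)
Your proposal is correct and follows essentially the same route as the paper: invoke Lemma~\ref{two-prong lemma} (valid since $\kappa_1\sqrt q\log^3 q \ge 4R_1\log^2 q$ for $q\ge 10^5$), bound the three summands via Lemmas~\ref{first part of three lemma}, \ref{second part of three lemma} (with $R=R_1$), and \ref{third part of three lemma}, and add $10^{-4}+\frac15+\frac1{25}=0.2401\le\frac14$. One numerical caveat: the argument needs $\log\big(\sqrt{R_1}/7.2895\big) > -0.85317$ (the true value is $\approx -0.85316$), so your rough estimate $-0.854$ lies on the wrong side of $-0.85317$ and, taken literally, would reverse the comparison between~\eqref{licorice} and the hypothesis of Lemma~\ref{second part of three lemma}; with the correct value the verification goes through exactly as you outline.
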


\begin{proof}
To apply Lemma~\ref{two-prong lemma}, we need $x\geq 4 R_1\log^2 q$, and here we have the stronger assumptions that $q\geq 10^5$ and $x \geq \kappa_1\sqrt q\log^3q$. Now, using Lemmas~\ref{first part of three lemma}--\ref{third part of three lemma} (choosing $R=R_1$ in Lemma~\ref{second part of three lemma}, and using the fact that
$\log \left( {\sqrt{R_1}}/{7.2895} \right) > -0.85317$) shows that
\begin{equation}  \label{2401}
\max \left\{ \bigg|\psi(x;q,a) - \frac x{\phi(q)} \bigg|, \bigg|\theta(x;q,a) - \frac x{\phi(q)} \bigg| \right\} \le \bigg( \frac15+\frac1{25}+10^{-4} \bigg) \frac x{(\log x)^Z},
\end{equation}
which suffices to establish the proposition.
\end{proof}

The following corollary completes the proof of Theorems~\ref{main psi theorem} and~\ref{main theta theorem} for large moduli $q > 10^5$, with $c_\psi(q) = c_\theta(q) = \frac1{160}$ and
$$
x_\psi(q) = x_\theta(q) = \exp \big( 0.03 \sqrt q \log^3q \big)
$$
(upon taking $A=1$).

\begin{cor}  \label{same result as for small q cor}
Let $q \geq 10^5$ be an integer and let $A$ be any real number with $1 \leq A \leq 8$. If $x$ is a real number satisfying $x\ge \exp \big( 0.03 A \sqrt q \log^3q \big)$, then
$$
\bigg|\psi(x;q,a) - \frac x{\phi(q)} \bigg| \le \frac1{160} \frac x{(\log x)^A}
\quad\text{and}\quad
\bigg|\theta(x;q,a) - \frac x{\phi(q)} \bigg| \le \frac1{160}  \frac x{(\log x)^A}.
$$
\end{cor}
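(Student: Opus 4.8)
The plan is to derive Corollary~\ref{same result as for small q cor} directly from Proposition~\ref{gory inequality prop}, applied not with exponent $Z=A$ but with a slightly inflated exponent $Z=A+\delta$ for a small fixed $\delta>0$; I will take $\delta=0.4$. Proposition~\ref{gory inequality prop} only delivers the constant $\tfrac14$, whereas the corollary asks for $\tfrac1{160}$, so the missing factor of $40$ must be manufactured out of the extra power $(\log x)^{-\delta}$ — which is harmless because the hypothesis $x\ge\exp(0.03A\sqrt q\log^3 q)$ makes $\log x$ astronomically large.

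Fix $q\ge10^5$ and $A\in[1,8]$, set $Z=A+\delta$, and choose the free parameters of Proposition~\ref{gory inequality prop} to be $\kappa_1=0.03A$ and $\kappa_2=\kappa_3=0.03A\sqrt q\log^3 q$. Then $\kappa_1\sqrt q\log^3 q=\kappa_2=\kappa_3=0.03A\sqrt q\log^3 q$, so the hypothesis $x\ge\exp(\max\{\kappa_1\sqrt q\log^3q,\kappa_2,\kappa_3\})$ of that proposition is exactly the hypothesis of the corollary, and it remains to check the three inequalities of~\eqref{licorice}. For the first, substituting $\kappa_1=0.03A$ and using $460.516\cdot0.03-13.087=0.72848$ turns $\tfrac{460.516\kappa_1}{\log\kappa_1+13.087}\ge Z$ into
\[
A\cdot\frac{0.72848-\log(0.03A)}{\log(0.03A)+13.087}\ \ge\ \delta;
\]
the left-hand side is increasing in $A$ on $[1,8]$ and equals about $0.442$ at $A=1$, so this holds with $\delta=0.4$ (and $\kappa_1=0.03A\ge0.03\ge0.0132$ as required).

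For the other two inequalities of~\eqref{licorice}, note that $\kappa_2=\kappa_3=0.03A\sqrt q\log^3 q$ is increasing in $q$, hence $\kappa_2\ge0.03A\sqrt{10^5}\log^3(10^5)>14476\,A$; since $t\mapsto\tfrac{\sqrt{t/R_1}-0.85317}{\log t}-\tfrac12$ and $t\mapsto\tfrac{t-6.44}{2\log t}$ are both increasing for $t$ this large, it suffices to verify them at $q=10^5$, where they reduce to elementary inequalities in $A$ alone that one confirms throughout $[1,8]$ (the tightest being the middle one at $A=8$, where the left side is about $8.8$, comfortably above $Z=8.4$). Granting~\eqref{licorice}, Proposition~\ref{gory inequality prop} gives, for all $x\ge\exp(0.03A\sqrt q\log^3 q)$,
\[
\max\left\{\bigg|\psi(x;q,a)-\frac x{\phi(q)}\bigg|,\ \bigg|\theta(x;q,a)-\frac x{\phi(q)}\bigg|\right\}\ \le\ \frac14\cdot\frac{x}{(\log x)^{A+\delta}}\ =\ \frac1{4(\log x)^{\delta}}\cdot\frac{x}{(\log x)^{A}}.
\]
In this range $\log x\ge0.03A\sqrt q\log^3 q\ge0.03\sqrt{10^5}\log^3(10^5)>14476$, so $4(\log x)^{0.4}>4\cdot14476^{0.4}>160$, and the right-hand side is at most $\tfrac1{160}\cdot\tfrac{x}{(\log x)^{A}}$, which is the assertion; taking $A=1$ recovers the values of $c_\psi(q),c_\theta(q),x_\psi(q),x_\theta(q)$ quoted for $q>10^5$ in Theorems~\ref{main psi theorem} and~\ref{main theta theorem}.

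The step I expect to need the most care is the verification of the middle inequality of~\eqref{licorice}: it is the binding constraint and is essentially what pins the ceiling $A\le8$ in the statement, and one must make sure it survives both the substitution $\kappa_2=0.03A\sqrt q\log^3 q$ and the shift from $A$ to $A+\delta$. The monotonicity in $\kappa_2$ collapses it to a single numerical check at $q=10^5$, but one should confirm that the margins there — roughly $8.8$ versus $8.4$ for the second inequality at $A=8$, and $0.442$ versus $\delta=0.4$ for the first inequality at $A=1$ — are genuinely positive; the remaining monotonicity claims and the final size estimate are routine calculus.
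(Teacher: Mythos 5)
Your proposal is correct and follows essentially the same route as the paper: the paper also takes $Z=A+0.4$, $\kappa_1=0.03A$, applies Proposition~\ref{gory inequality prop}, and converts $\tfrac14$ into $\tfrac1{160}$ using $\log x>14400A$ together with $14400^{0.4}>40$. The only cosmetic difference is that you take $\kappa_2=\kappa_3=0.03A\sqrt q\log^3q$ and reduce to $q=10^5$ by monotonicity, while the paper fixes $\kappa_2=\kappa_3=14400A$ and notes $\kappa_1\sqrt q\log^3q>14400A$ — numerically the same verification, with your explicit checks of~\eqref{licorice} filling in the paper's ``by calculus.''
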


It is worth observing that, appealing to the previously mentioned work of Oesterl\'e \cite{Oes}, we could improve the lower bound on $x$ here to $x \ge \exp(\kappa'\sqrt q(\log q)^{2+o(1)})$ for some $\kappa'>0$, where the $o(1)$ can be made explicit as in equation~\eqref{oe o1}.

\begin{proof}
Set $\kappa_1=0.03 A$, $\kappa_2=\kappa_3=14400 A$ and  $Z=A+0.4$. By calculus, the hypotheses of Proposition~\ref{gory inequality prop} are satisfied, for  $1 \leq A \leq 8$. Moreover, as $q\geq 10^5$,
$$
\kappa_1\sqrt q\log^3q \ge 0.03 A \sqrt{10^5}(\log10^5)^3 > 14400 A = \max\{\kappa_2, \kappa_3\},
$$
and therefore the conclusion of Proposition~\ref{gory inequality prop} holds for $x \ge \exp(\kappa_1\sqrt q\log^3q)$. Since $\log x > 14400 A$ in this range, we conclude that
$$
\frac 14 \frac{x}{(\log x)^Z} = \frac{1}{4} \frac{x}{(\log x)^A} \frac{1}{(\log x)^{Z-A}} < \frac{1}{4} \frac{1}{14400^{0.4}}  \frac{x}{(\log x)^A} < \frac{1}{160}  \frac{x}{(\log x)^A}.
$$
\end{proof}

Observe here that we were able to obtain a ``small'' constant factor of $1/160$ in Corollary \ref{same result as for small q cor},  by starting with a higher power of $\log x$ in the denominator of our error term than we ultimately desired. Arguing similarly, we can replace the constant $1/160$ with a function of the parameter $q$ that decreases to $0$ as $q$ increases, by starting again with extraneous powers of $\log x$ in the denominator of our error term, and using our assumption that $\log x \ge \kappa_1 \sqrt{q} \log^3 q$.

In a recent preprint of  Yamada~\cite[Theorem 1.2]{Yam}, one finds similar results of the shape
$$
\bigg|\psi(x;q,a) - \frac x{\phi(q)} \bigg| = O \left( \frac x{(\log x)^{A}} \right),
$$
for integers $1 \leq A \leq 10$, valid also for $\log x \gg \sqrt{q} \log^3 q$. Corollary \ref{same result as for small q cor}  is not directly comparable to Yamada's results, as the latter contain estimates that have been normalized to contain factors of the shape $\phi (q)$ in their denominators. One may, however, readily appeal to Proposition \ref{gory inequality prop} to sharpen \cite[Theorem 1.2]{Yam} for $q > 10^5$, as described in the previous paragraph.

If $q$ is a modulus for which the corresponding quadratic $L$-functions have no exceptional zero, all these results hold with a much weaker condition on the size of~$x$. In particular, this is the case, via Platt~\cite{Pla2}, for $10^5<q\le4\cdot10^5$.

\begin{prop}  \label{no exceptional zero prop}
Let $q \geq 10^5$ be an integer and suppose that no quadratic Dirichlet $L$-function with conductor $q$ has a real zero exceeding $1-R_1/\log q$. Let $\kappa_2$ and $Z$ be real numbers with $\kappa_2 > 1$ and
$$
Z\leq \frac{\sqrt{\kappa_2/{R_1}} -0.85317}{\log \kappa_2} - \frac12 .
$$
Then for all $x\ge \exp \big( \max\{\kappa_2,4 R_1\log^2 q\} \big)$,
$$
\bigg|\psi(x;q,a) - \frac x{\phi(q)} \bigg| \le \frac14 \frac x{(\log x)^Z}
\quad\text{and}\quad
\bigg|\theta(x;q,a) - \frac x{\phi(q)} \bigg| \le \frac14 \frac x{(\log x)^Z}.
$$
\end{prop}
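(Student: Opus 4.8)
The plan is to follow the proof of Proposition~\ref{gory inequality prop} almost verbatim; the only structural change is that the standing hypothesis---which rules out exceptional zeros for quadratic Dirichlet $L$-functions of conductor $q$ (in the sense of Definition~\ref{R1 exceptional def})---lets us delete the term $\tfrac{1.012}{\phi(q)}x^{1-40/(\sqrt q\log^2 q)}$ from both inequalities in Lemma~\ref{two-prong lemma}. Thus, for $x \ge e^{4R_1\log^2 q}$ that lemma will give directly
\[
\bigg| \psi(x;q,a) - \frac x{\phi(q)} \bigg| \le 1.4579\, x \sqrt{\frac{\log x}{R_1}} \exp\bigg( {-}\sqrt{\frac{\log x}{R_1}} \bigg)
\]
and
\[
\bigg| \theta(x;q,a) - \frac x{\phi(q)} \bigg| \le 1.4579\, x \sqrt{\frac{\log x}{R_1}} \exp\bigg( {-}\sqrt{\frac{\log x}{R_1}} \bigg) + 1.001\sqrt x .
\]
Because the offending first summand is absent, Lemma~\ref{first part of three lemma} and the parameter $\kappa_1$ are no longer needed, which is exactly why the admissible range of $x$ collapses from $\exp(\kappa_1\sqrt q\log^3q)$ to the far smaller $\exp(4R_1\log^2 q)$.

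First I would dispose of the exponential term by applying Lemma~\ref{second part of three lemma} with $R=R_1$ (which lies in the admissible range $1\le R\le 10$). Exactly as in the proof of Proposition~\ref{gory inequality prop}, one has $\log\big(\sqrt{R_1}/7.2895\big) > -0.85317$, so the present proposition's hypothesis $Z \le \big(\sqrt{\kappa_2/R_1}-0.85317\big)/\log\kappa_2 - \tfrac12$ implies the hypothesis of Lemma~\ref{second part of three lemma}, which then gives
\[
1.4579\, x \sqrt{\frac{\log x}{R_1}} \exp\bigg( {-}\sqrt{\frac{\log x}{R_1}} \bigg) \le \frac15\,\frac{x}{(\log x)^Z}\qquad(x\ge e^{\kappa_2}).
\]
Since $\tfrac15<\tfrac14$, this already settles the bound for $|\psi(x;q,a)-x/\phi(q)|$ on the range $x \ge \exp\big(\max\{\kappa_2, 4R_1\log^2 q\}\big)$.

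For the $\theta$-bound it remains to absorb the extra $1.001\sqrt x$. I would do this by invoking Lemma~\ref{third part of three lemma} with the choice $\kappa_3 = \max\{\kappa_2,\, 4R_1\log^2 q\}$: the standing hypothesis already furnishes $x \ge e^{\kappa_3}$, and $q \ge 10^5$ forces $\kappa_3 \ge 4R_1(\log 10^5)^2 > 5000$. The step I expect to require the most care---though it is routine---is verifying that the hypothesis on $Z$ entails the hypothesis $Z \le (\kappa_3 - 6.44)/(2\log\kappa_3)$ of Lemma~\ref{third part of three lemma}: this is immediate when $Z\le 0$ (the right-hand side being positive), and when $Z>0$ one notes that the hypothesis forces $Z < \sqrt{\kappa_2/R_1}/\log\kappa_2$, that $Z>0$ forces $\kappa_2$ well above $e^2$ so that $t\mapsto\sqrt t/\log t$ is increasing on $[\kappa_2,\kappa_3]$ (giving $\sqrt{\kappa_2/R_1}/\log\kappa_2 \le \sqrt{\kappa_3/R_1}/\log\kappa_3$), and that $2\sqrt{\kappa_3/R_1} < \kappa_3-6.44$ since $\kappa_3 > 5000$. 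Lemma~\ref{third part of three lemma} then yields $1.001\sqrt x \le \tfrac1{25}\,x/(\log x)^Z$ for $x\ge e^{\kappa_3}$, and combining the two contributions gives
\[
\bigg| \theta(x;q,a) - \frac x{\phi(q)} \bigg| \le \Big(\tfrac15+\tfrac1{25}\Big)\frac{x}{(\log x)^Z} < \frac14\,\frac{x}{(\log x)^Z}
\]
for all $x \ge \exp\big(\max\{\kappa_2, 4R_1\log^2 q\}\big)$, which completes the argument.
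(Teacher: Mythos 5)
Your proposal is correct and takes essentially the same route as the paper, whose proof likewise deduces the $\psi$-bound from Lemma~\ref{FLM lemma} (equivalently, Lemma~\ref{two-prong lemma} with the exceptional-zero term absent) plus Lemma~\ref{second part of three lemma}, and the $\theta$-bound from Lemma~\ref{two-prong lemma} plus Lemmas~\ref{second part of three lemma} and~\ref{third part of three lemma}. Your explicit check that the hypothesis on $Z$ implies the hypothesis of Lemma~\ref{third part of three lemma} with $\kappa_3=\max\{\kappa_2,4R_1\log^2q\}$ is a detail the paper leaves implicit, and it is carried out correctly.
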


\begin{proof}
The first assertion, for $\psi(x;q,a)$, follows immediately from Lemma~\ref{FLM lemma} (in the case where no exceptional zero is present) and Lemma~\ref{second part of three lemma}. The second assertion, for $\theta(x;q,a)$, follows from Lemma~\ref{two-prong lemma}, together with Lemmas~\ref{second part of three lemma} and~\ref{third part of three lemma}.
\end{proof}

\subsection{Conversion of estimates for $\theta(x;q,a)$ to estimates for $\pi(x;q,a)$}  \label{sec54}

Our final task is to convert our upper bounds for $|\theta(x;q,a) - x/\phi(q)|$ for large $q$ to upper bounds for $|\pi(x;q,a) - \Li(x)/\phi(q)|$. We do so using the same standard partial summation relationship that we exploited in Proposition~\ref{first theta to pi prop} for smaller~$q$; the proof is complicated slightly by our desire to achieve a savings of an arbitrary power of $\log x$ in the error term.

\begin{prop}  \label{gory pi inequality prop}
Let $q\geq 10^5$ be an integer and let $Z>0$, $\kappa_1 \ge 0.0132$, $\kappa_2>1$, and $\kappa_3>1$ be real numbers satisfying
the inequality~\eqref{licorice}.
Then if $x$ is a real number for which
$$
x/(\log x)^{Z+1}\ge 2000\exp \big( \max\{\kappa_1\sqrt q\log^3q, \kappa_2,\kappa_3,Z+28\} \big),
$$
it follows that
\begin{equation}  \label{pi Li upper bound large q}
\bigg|\pi(x;q,a) - \frac{\Li(x)}{\phi(q)} \bigg| \le \frac14 \frac x{(\log x)^{Z+1}}.
\end{equation}
\end{prop}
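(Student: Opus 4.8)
The plan is to run the same passage from $\theta$ to $\pi$ used in the proof of Proposition~\ref{first theta to pi prop}, but retaining the full power $(\log x)^{Z+1}$ and estimating the boundary term by brute force. Put $x_0=\exp\!\big(\max\{\kappa_1\sqrt q\log^3q,\kappa_2,\kappa_3\}\big)$; since $q\ge10^5$ one checks that $x_0\ge e^{4R_1\log^2q}$, so Lemma~\ref{two-prong lemma} is available for all $t\ge x_0$, and likewise $x_0$ dominates the thresholds in Lemmas~\ref{first part of three lemma}--\ref{third part of three lemma}. The stated hypothesis forces $x\ge 2000\,x_0$ and $\log x> Z+35$. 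Starting from equation~\eqref{theta to pi PS} with $x_3=x_0$, for $x\ge x_0$ one has
\[
\pi(x;q,a)-\frac{\Li(x)}{\phi(q)} = E(x_0;q,a) + \frac{\theta(x;q,a)-x/\phi(q)}{\log x} + \int_{x_0}^{x}\Big(\theta(t;q,a)-\frac t{\phi(q)}\Big)\frac{dt}{t\log^2 t},
\]
with $E(\,\cdot\,;q,a)$ as in Definition~\ref{double error def}.

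First I would dispatch the boundary term: Chebyshev-type bounds (for instance $\pi(y)\ll y/\log y$, $\theta(y)\ll y$, $\Li(y)<y$) give $|E(x_0;q,a)|< 5x_0/\log x_0$, and since $x_0\le \tfrac1{2000}x/(\log x)^{Z+1}$ with $\log x_0\ge\kappa_1\sqrt q\log^3q> 6\cdot10^3$, this is $< 10^{-6}\,x/(\log x)^{Z+1}$, hence negligible. For the other two terms, apply Lemma~\ref{two-prong lemma} to write $|\theta(t;q,a)-t/\phi(q)|\le g_1(t)+g_2(t)+g_3(t)$ for $t\ge x_0$ (the first summand $g_1$, coming from a potential exceptional zero, being kept only if one exists), and then invoke Lemma~\ref{first part of three lemma}, Lemma~\ref{second part of three lemma} (with $R=R_1$, legitimate because $\log(\sqrt{R_1}/7.2895)>-0.85317$, so the $Z$-hypothesis there follows from~\eqref{licorice}), and Lemma~\ref{third part of three lemma}, obtaining $g_i(t)\le c_i\,t/(\log t)^Z$ for $t\ge x_0$ with $(c_1,c_2,c_3)=(10^{-4},\tfrac15,\tfrac1{25})$. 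Consequently $g_i(x)/\log x\le c_i\,x/(\log x)^{Z+1}$, while
\[
\int_{x_0}^{x}\frac{g_i(t)}{t\log^2 t}\,dt \le c_i\int_{x_0}^{x}\frac{dt}{(\log t)^{Z+2}} \le \frac{c_i}{1-(Z+2)/\log x_0}\cdot\frac{x}{(\log x)^{Z+2}} < \frac{1.001\,c_i}{\log x}\cdot\frac{x}{(\log x)^{Z+1}},
\]
where the last step uses that $(Z+2)/\log x_0$ is minuscule (the $\kappa_1$-clause of~\eqref{licorice} forces $Z\ll\log x_0$). Summing over $i$ and adding the boundary term gives
\[
\Big|\pi(x;q,a)-\frac{\Li(x)}{\phi(q)}\Big| < \Big(0.2401\big(1+\tfrac{1.001}{\log x}\big)+10^{-6}\Big)\frac{x}{(\log x)^{Z+1}},
\]
which is $<\tfrac14\,x/(\log x)^{Z+1}$ because $\log x>35$, as desired.

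The one subtle point---and the reason the hypothesis carries the clumsy factor $2000$ and the extra slot $Z+28$---is that one cannot simply cite Proposition~\ref{gory inequality prop} as a black box with its rounded constant $\tfrac14$ on the $\theta$-side: partial summation produces both the integral $\int_{x_0}^x(\cdots)\,dt$ and the boundary term $E(x_0;q,a)$, so $\tfrac14(1+o(1))$ would overshoot $\tfrac14$. The remedy is to go back to the three-way split behind Proposition~\ref{gory inequality prop}, whose true combined constant is $10^{-4}+\tfrac15+\tfrac1{25}=0.2401$; the gap between $0.2401$ and $\tfrac14$ is exactly the room needed to absorb the $1/\log x$ correction and the $E(x_0;q,a)$ leftover, and the factor $2000$ (making $x_0$ tiny next to $x/(\log x)^{Z+1}$) together with $\log x>Z+35$ (from the $Z+28$ slot) is what guarantees those leftovers fit.
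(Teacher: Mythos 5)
Your argument is correct and follows essentially the same route as the paper's proof: partial summation from a base point of size $\exp(\max\{\kappa_1\sqrt q\log^3q,\kappa_2,\kappa_3\})$, the unrounded $0.2401$ bound on the $\theta$-error coming from the three-lemma decomposition behind Proposition~\ref{gory inequality prop}, a comparison estimate for the resulting integral, and absorption of the boundary term $E$ using the factor $2000$ and the $Z+28$ slot. The only differences are minor bookkeeping (the paper folds $Z+28$ into the base point $x_4$, bounds $|E(x_4;q,a)|\le 2x_4$ trivially, and evaluates the integral constant as $0.2401(\log x_4-Z)/(\log x_4-(Z+1))<0.249$), which do not change the substance.
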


\begin{proof}
Define $x_4 = \exp \big( \max\{\kappa_1\sqrt q\log^3q, \kappa_2,\kappa_3,Z+28\} \big)$. The function $f(x) = x/(\log x)^{Z+1}$ is increasing for $x>e^{Z+1}$ and hence increasing for $x\ge x_4$; its value $f(x_4)$ is certainly less than $2000x_4$. Therefore the equation $f(x) = 2000x_4$ has a unique solution greater than $x_4$, which we call~$x_5$, so that the proposition asserts the upper bound~\eqref{pi Li upper bound large q} for $x\ge x_5$.
Start at equation~\eqref{theta to pi PS} (note Definition~\ref{double error def} for $E(x;q,a)$):
\begin{align*}
\pi(x;q,a) - \frac{\Li(x)}{\phi(q)} & = E(x_4;q,a) + \frac{\theta(x;q,a) - x/\phi(q)}{\log x} \\
&+ \int_{x_4}^x \bigg( \theta(x;q,a) - \frac x{\phi(q)} \bigg) \frac{dt}{t\log^2t}. \\
\end{align*}
So by the upper bound~\eqref{2401} and the fact that $\log x_4 \ge Z+28 > Z+1$,

\begin{flalign*}
&\bigg| \pi(x;q,a) - \frac{\Li(x)}{\phi(q)} \bigg| \le \big| E(x_4;q,a) \big| + 0.2401 \frac x{(\log x)^{Z+1}} + 0.2401 \int_{x_4}^x \frac{dt}{(\log t)^{Z+2}} \\
&\le \big| E(x_4;q,a) \big| + 0.2401 \frac x{(\log x)^{Z+1}} + \frac{0.2401}{(\log x_4 - (Z+1))} \int_{x_4}^x \frac{\log t - (Z+1)}{(\log t)^{Z+2}}\, dt \\
&= \big| E(x_4;q,a) \big| + 0.2401 \frac x{(\log x)^{Z+1}} + \frac{0.2401}{(\log x_4 - (Z+1))} \frac t{(\log t)^{Z+1}}\bigg|_{x_4}^x \\
&\le \big| E(x_4;q,a) \big| + \frac{0.2401(\log x_4 - Z)}{\log x_4 - (Z+1)} \frac x{(\log x)^{Z+1}} - \frac{0.2401}{(\log x_4 - (Z+1))} \frac{x_4}{(\log x_4)^{Z+1}} \\
&\le \big| E(x_4;q,a) \big| + \frac{0.2401(\log x_4 - Z)}{\log x_4 - (Z+1)} \frac x{(\log x)^{Z+1}}.
\end{flalign*}
A trivial upper bound  for $| E(u;q,a) |$ is, for $u > 3$, simply $2u$. To see this, note that, from Definition~\ref{double error def},
$$
\left|  E(u;q,a) \right| \leq \max \left\{ \pi(u;q,a) + \frac{u}{\phi (q) \log u},  \frac{\Li(u)}{\phi(q)} + \frac{\theta(u;q,a)}{\log u}  \right\}
$$
whereby, replacing $\pi(u;q,a)$ by $\pi (u)$ and ${\theta(u;q,a)}$ by $\theta(u)$, and appealing to bounds of Rosser-Schoenfeld \cite{RS} leads to the desired conclusion.
It follows that, for $x\ge x_4$,
\begin{align*}
\bigg| \pi(x;q,a) - \frac{\Li(x)}{\phi(q)} \bigg| &\le 2x_4 + \frac{0.2401(\log x_4 - Z)}{\log x_4 - (Z+1)} \frac x{(\log x)^{Z+1}} \\
&= \frac x{(\log x)^{Z+1}} \bigg( \frac{0.2401(\log x_4 - Z)}{\log x_4 - (Z+1)} + \frac{2x_4 (\log x)^{Z+1}}x \bigg).
\end{align*}
Note that $\frac{(\log x)^{Z+1}}x$ is decreasing for $x>e^{Z+1}$; since
$$
\log x_5 > \log x_4 \ge Z+28 > Z+1,
$$
we see that for $x\ge x_5$,
\begin{align*}
\bigg| \pi(x;q,a) - \frac{\Li(x)}{\phi(q)} \bigg| &= \frac x{(\log x)^{Z+1}} \bigg( \frac{0.2401(\log x_4 - Z)}{\log x_4 - (Z+1)} + \frac{2x_4 (\log x_5)^{Z+1}}{x_5} \bigg) \\
&= \frac x{(\log x)^{Z+1}} \bigg( \frac{0.2401(\log x_4 - Z)}{\log x_4 - (Z+1)} + \frac1{1000} \bigg)
\end{align*}
by the definition of~$x_5$.
The first summand in parentheses is a decreasing function of $\log x_4$ (when $\log x_4>Z+1$), and its value when we replace $\log x_4$ with the smaller quantity $Z+28$ is less than $0.249$, which completes the proof.
\end{proof}

\begin{cor}  \label{same result as for small q for pi cor}
For all $q>10^5$ and $x\ge\exp\big(0.03 \sqrt q\log^3q\big)$,
\begin{equation*}
\bigg|\pi(x;q,a) - \frac{\Li(x)}{\phi(q)} \bigg| \le \frac1{160}  \frac x{\log^2 x}.
\end{equation*}
\end{cor}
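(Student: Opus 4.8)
The plan is to obtain Corollary~\ref{same result as for small q for pi cor} from Proposition~\ref{gory pi inequality prop} by exactly the device used to pass from Proposition~\ref{gory inequality prop} to Corollary~\ref{same result as for small q cor}: aim first for an error term that is a multiple of $x/(\log x)^{Z+1}$ with $Z+1$ slightly larger than $2$, and then trade the surplus power of $\log x$ for the small constant $\tfrac1{160}$. Explicitly, I would take $Z=\tfrac75$, so that $Z+1=\tfrac{12}{5}$, together with $\kappa_2=\kappa_3=14400$ and a value of $\kappa_1$ strictly below $0.03$, say $\kappa_1=0.0295$.

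The first step is to check that these parameters satisfy the inequality~\eqref{licorice}. This is a finite numerical verification: with $\kappa_1=0.0295$ one computes $\tfrac{460.516\,\kappa_1}{\log\kappa_1+13.087}>\tfrac75=Z$, and since $\kappa_2=\kappa_3=14400$ is large, the quantities $\tfrac{\sqrt{\kappa_2/R_1}-0.85317}{\log\kappa_2}-\tfrac12$ and $\tfrac{\kappa_3-6.44}{2\log\kappa_3}$ both exceed $3$, hence exceed $Z$. The second step is to verify the hypothesis on $x$ in Proposition~\ref{gory pi inequality prop}, namely that
$$x/(\log x)^{Z+1}\ge 2000\exp\big(\max\{\kappa_1\sqrt q\log^3q,\kappa_2,\kappa_3,Z+28\}\big)$$
for every $x\ge\exp\big(0.03\sqrt q\log^3q\big)$ with $q>10^5$. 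As noted in the proof of Corollary~\ref{same result as for small q cor}, for such $q$ one has $0.03\sqrt q\log^3q>14400>Z+28$, so the maximum equals $\kappa_1\sqrt q\log^3q$; and since $\log x\ge 0.03\sqrt q\log^3q$ while $\kappa_1<0.03$, the factor $\exp\big((0.03-\kappa_1)\sqrt q\log^3q\big)$ (which is at least $\exp(240)$ in the relevant range) swamps the factor $2000(\log x)^{Z+1}$. This is the one spot where a little care is required: one cannot take $\kappa_1=0.03$, because the left-hand side of the displayed inequality carries the extra factor $(\log x)^{-(Z+1)}$; the admissible window is roughly $0.0295\le\kappa_1<0.03$, and it must still be wide enough to keep the first term of~\eqref{licorice} above $Z=\tfrac75$.

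With both hypotheses confirmed, Proposition~\ref{gory pi inequality prop} yields $\big|\pi(x;q,a)-\Li(x)/\phi(q)\big|\le\tfrac14\,x/(\log x)^{12/5}$ for all $x\ge\exp\big(0.03\sqrt q\log^3q\big)$. To conclude, I would use that in this range $\log x\ge 0.03\sqrt q\log^3q>14400$, whence $(\log x)^{2/5}>14400^{2/5}>40$, so that
$$\frac14\,\frac{x}{(\log x)^{12/5}}=\frac14\,\frac1{(\log x)^{2/5}}\,\frac{x}{\log^2 x}<\frac14\cdot\frac1{40}\,\frac{x}{\log^2 x}=\frac1{160}\,\frac{x}{\log^2 x},$$
which is precisely the asserted inequality. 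I do not anticipate any serious obstacle: every step is either an invocation of Proposition~\ref{gory pi inequality prop} or an elementary numerical/calculus check, and the only subtlety is the constraint $\kappa_1<0.03$ discussed above, which one must reconcile with the lower bound on $\kappa_1$ imposed by~\eqref{licorice}.
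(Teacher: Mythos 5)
Your proposal is, in substance, the paper's own proof: the paper also deduces the corollary from Proposition~\ref{gory pi inequality prop} with $Z=1.4$ and $\kappa_1=0.0295$, and finishes by trading the surplus factor $(\log x)^{Z-1}$ (using $\log x>14400$) for the constant $\tfrac1{160}$, exactly as in your last display. Two small slips in your write-up are worth fixing. First, with your choice $\kappa_2=\kappa_3=14400$ the claim that the maximum in the hypothesis equals $\kappa_1\sqrt q\log^3q$ is false for $q$ just above $10^5$: one has $0.0295\sqrt{10^5}\,\log^3(10^5)\approx 14236<14400$, so in the range $10^5<q\lesssim 1.02\cdot10^5$ the maximum is $\kappa_2$. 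The conclusion still holds there (at $q=10^5$ one has $0.03\sqrt q\log^3q-14400\approx 77$, and $e^{77}$ comfortably exceeds $2000\,(0.03\sqrt q\log^3q)^{2.4}$), but either you must treat that range separately or, as the paper does, simply take $\kappa_2=\kappa_3=14200<14236$ so that the maximum really is $\kappa_1\sqrt q\log^3q$ for all $q\ge10^5$. Second, the assertion that $\exp\big((0.03-\kappa_1)\sqrt q\log^3q\big)$ ``swamps the factor $2000(\log x)^{Z+1}$'' is not literally true uniformly in $x$ (for $x$ far above the threshold, $(\log x)^{Z+1}$ exceeds any fixed constant); the correct reduction is to note that $x/(\log x)^{Z+1}$ is increasing, so it suffices to verify the hypothesis at $x=\exp\big(0.03\sqrt q\log^3q\big)$, where it becomes $\exp\big(0.0005\sqrt q\log^3q\big)>2000\,\big(0.03\sqrt q\log^3q\big)^{2.4}$, checked at $q=10^5$ and propagated to all larger $q$ by a monotonicity (logarithmic-derivative) argument—this is precisely the paper's route. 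With these repairs your argument coincides with the published one.
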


\begin{proof}
Set $Z=1.4$, $\kappa_1=0.0295$ and $\kappa_2=\kappa_3=14200$. By direct calculation, the hypotheses of Proposition~\ref{gory pi inequality prop} are satisfied. Moreover, as $q\geq 10^5$,
\[
\kappa_1\sqrt q\log^3q \ge \kappa_1\sqrt{10^5}(\log10^5)^3 > 14200 \geq \max\{\kappa_2,\kappa_3,Z+28\},
\]
and therefore the conclusion of Proposition~\ref{gory pi inequality prop} holds as long as we have
$$
x/(\log x)^{Z+1} \ge 2000\exp(\kappa_1\sqrt q\log^3q).
$$

Since we assume that $x\ge \exp\big(0.03 \sqrt q\log^3q\big)$,
$$
\frac{x}{(\log x)^{2.4}} \geq \frac{ \exp\big(0.03 \sqrt q\log^3q\big)}{(0.03 \sqrt q \log^3q)^{2.4}}
$$
and hence it remains to show that
$$
\exp(0.0005 \sqrt q\log^3q) > 2000 (0.03 \sqrt q \log^3q)^{2.4}.
$$
Since $q \geq 10^5$, we may verify that this inequality is satisfied for $q=10^5$ and then check that the quotient of the left-hand side and the right-hand side is increasing by taking its logarithmic derivative.
We may thus apply Proposition~\ref{gory pi inequality prop} to conclude that
$$
\bigg|\pi(x;q,a) - \frac{\Li(x)}{\phi(q)} \bigg| \le \frac 14 \frac{x}{(\log x)^{Z+1}} = \frac{1}{4}  \frac{x}{\log ^2x} \frac{1}{(\log x)^{Z-1}}
$$
and hence that
$$
\bigg|\pi(x;q,a) - \frac{\Li(x)}{\phi(q)} \bigg|
 < \frac{1}{4} \frac{1}{14400^{0.4}}  \frac{x}{\log^2 x} < \frac{1}{160} \frac{x}{\log^2 x}.
$$
\end{proof}

\setcounter{section}{0}
\renewcommand{\thesection}{\Alph{section}}

\section{Appendix: Computational details}  \label{computational appendix}
Many of the proofs in this paper required considerable computations, which we carried out using a variety of \texttt{C++},
\texttt{Perl}, \texttt{Python}, and \texttt{Sage} code. The resulting data files were manipulated using standard Unix
tools such as \texttt{awk}, \texttt{grep}, and \texttt{sort}. The smallest of the required computations were easily
performed on a laptop in a few seconds, while the largest required thousands of hours of CPU time on a computing
cluster. In the appendices below we give explanations of the computations and also links to the computer code and
resulting data. The interested reader can find a summary of the available files at the following webpage:
\begin{center}
\texttt{\href{http://www.nt.math.ubc.ca/BeMaObRe/}{\url{
http://www.nt.math.ubc.ca/BeMaObRe/}}}
\end{center}

\subsection{Verification of bound on $N(T,\chi_0)$ for principal characters $\chi_0$ and the computation of $\nu_2(x)$}
 \label{zeta comp sec}
In order to complete the proof of Proposition~\ref{quoting Trudgian}, we need to verify the asserted bound for $\chi$ principal and $1 \leq T \leq
1014$. This can be done quite directly by comparing the bound against a table of zeta function zeros. Such data is
available from websites such as the $L$-functions and Modular Forms Database~\cite{LMFDB} or other computer algebra software (such as Sage). At the
$k$th zero of the zeta function, which is of the form $\frac{1}{2} + i\gamma_k$, we compute the upper and lower
bounds implicit in the statement of the bound at $t=\gamma_k$, remembering that when we take limits from left and right
the quantity $N(T,\chi_0)$ is set to $2(k-1)$ and $2k$ respectively. We give Sage code to perform this verification and
its output in the
\begin{center}
\texttt{\href{http://www.nt.math.ubc.ca/BeMaObRe/prop2.6/}{BeMaObRe/c-psi-theta-pi/prop2.6/}}
\end{center}
subdirectory.

\subsection{Using {\bf \lcalc} to compute $\nu_2$}
We make use of  Rubinstein's \lcalc\ program to compute zeros of $L$-functions. For the sake of interfacing with \lcalc, we compute $\nu_2$ in the following way. While Definition~\ref{Have you heard the good nus?} allows for more general  $H_0(\chi)$, we only use functions $H_0$ that are constant on characters with the same conductor. Letting $H_0(d)$ be that constant, we have
  \[
  \nu_2(q,H_0) = \sum_{\chi \mod q} \nu_1(\chi,H_0(\chi))
  = \sum_{d | q} \sum_{\substack{\chi \mod q \\ q^* = d}} \nu_1(\chi,H_0(d)).
  \]
Further, the functions we use for $H_0$ take on the value 0 (no {\tt lcalc} data) or are at least 10.

If $H_0(d)=0$,  i.e., if we have made no calculations with {\tt lcalc} for characters with conductor $d$, we have
  \begin{align*}
  \sum_{\substack{\chi \mod q \\ q^* = d}}\nu_1(\chi,H_0(d))
   &= \sum_{\substack{\chi \mod q \\ q^* = d}} \left( -\Theta(d,1)+ \left\lfloor \frac 1\pi \log \frac d{2\pi e} + C_1 \log d + C_2 \right\rfloor \right) \\
   &= -\phi^*(d) \Theta(d,1) + \phi^*(d) \left\lfloor \frac 1\pi \log \frac d{2\pi e} + C_1 \log d + C_2 \right\rfloor \\
   &= \nu_0(d,0) - \overline{\nu_0}(d,0),
  \end{align*}
where we set
  \begin{align*}
  \nu_0(d,0) &= 0 \\
   \overline{\nu_0}(d,0) &= \phi^*(d) \Theta(d,1) - \phi^*(d) \left\lfloor \frac 1\pi \log \frac d{2\pi e} + C_1 \log d + C_2 \right\rfloor
  \end{align*}

If $H_0(d)\geq 1$, we must address some peculiarities of {\tt lcalc}. For real characters, {\tt lcalc} only gives the zeros with positive imaginary part, and for each complex-conjugate pair of nonreal characters, {\tt lcalc} returns the zeros of only one of the pair. Let $N'(h,\chi)$ be the number of zeros of $L(s,\chi)$ with imaginary part in $[0,h]$ if $\chi$ is real, and $N'(h,\chi)=N(h,\chi)$ if $\chi$ is nonreal.
We define, for real $h\geq1$,
  \[\overline{\nu_0}(d,h) = \phi^*(d)\Theta(d,h) + \frac{2}{h} \sideset{}{'}\sum_{\substack{\chi \mod q \\ q^*=d}} N'(h,\chi), \]
where $\sum^\prime$ indicates that the sum includes only one of each pair of complex conjugate characters.
We have (saving the definition of $\nu_0(d,h)$ for $h=H_0(d)\geq 1$ until after its use):
  \begin{align*}
  \sum_{\substack{\chi \mod q \\ q^* = d}} \nu_1(\chi, & H_0(d))
    = \sum_{\substack{\chi \mod q \\ q^* = d}} \left( -\Theta(d,h) - \frac{N(h,\chi)}{h}
          + \sum_{\substack{\rho \in \Zchistar \\ |\gamma| \leq h}} \frac{1}{\sqrt{\gamma^2+1/4}} \right) \\
    &=-\phi^*(d)\Theta(d,h)
           - \sum_{\substack{\chi\\ q^*=d}} \frac{N(h,\chi)}{h}
           +  \sum_{\substack{\chi \\ q^* = d}} \sum_{\substack{\rho \in \Zchistar \\ |\gamma| \leq h}}
                          \frac{1}{\sqrt{\gamma^2+1/4}}\\
    &= \nu_0(d,h) - \overline{\nu_0}(d,h).
  \end{align*}
The definition of $\nu_0(d,h)$ for $h\geq 1$ is then forced to be
  \begin{align*}
    \nu_0(d,h) &= \sum_{\substack{\chi\mod q \\ q^*=d}} \sum_{\substack{\rho \in \Zchi \\ |\gamma| \leq h}} \frac{1}{\sqrt{\gamma^2+1/4}} \\
    &=2\bigg( \sum_{\substack{\chi \text{ real}\\ q^*=d}} \sum_{\substack{\rho \in \Zchi \\ 0<\gamma<\leq h}} \frac{1}{\sqrt{\gamma^2+1/4}}
    + \sideset{}{'}\sum_{\substack{\chi \text{ not real} \\ q^*=d}} \sum_{\substack{\rho \in \Zchi \\ 0<\gamma<\leq h}} \frac{1}{\sqrt{\gamma^2+1/4}} \bigg).
  \end{align*}

With these definitions, we have
  \[\nu_2(q,H_0)  = \sum_{d|q} \left( \nu_0(d,H_0(d)) -  \overline{\nu_0}(d,H_0(d)) \right).\]

  We used $H_0(d) = 10^4$ for $d\leq 12$, $H_0(d)= 10^3$ for $d\leq 1000$, $H_0(d)=10^2$ for $d\leq 2500$, and $H_0(d) = 10$ for $d\leq 10^4$. Then, for a given choice of $H$, we use the largest value of $H_0(d)$ that is less than $H$. For example, with $H=120$, we use:
  \[H_0(d) = \begin{cases} 100, & \text{if $d\leq 2500$,} \\
      10, &\text{if $2500<d\leq 10^4$,} \\
      0, &\text{if $d>10^4$.}
    \end{cases}\]

\subsection{Computations of worst-case error bounds for $q\le 10^5$ and for $x\le x_2(q)$}
\label{ssec comp bpsi}
All our computations were split according to the modulus $q$. For each $q$, we
generated the sequence of primes using the \texttt{primesieve} library for
\texttt{C++} \cite{Wal}. This implements a very highly optimized sieve of Eratosthenes with wheel
factorisation. We experimented with storing the primes in a file on disc, but found that it was faster to
generate them each time using \texttt{primesieve}. As each
prime was generated, its residue was computed and the three functions
$$
  \pi(x;q,a) = \!\! \sum_{\substack{p\le x\\ p\equiv a\mod q}} \!\! 1, \;
  \theta(x;q,a) = \!\! \sum_{\substack{p\le x\\ p\equiv a\mod q}} \!\! \log p, \;
  \psi(x;q,a) = \!\! \sum_{\substack{p^n\le x\\ p^n\equiv a\mod q}} \!\!\log p
$$
were updated.

The function $\pi(x;q,a)$ is straightforward,  simply requiring integer arithmetic.
However the functions $\theta(x;q,a)$ and $\psi(x;q,a)$ involve summing anywhere up to
$10^{12}$ floating point numbers. In such computations considerable rounding error can
occur. To deal with these errors, we used interval arithmetic to keep track of upper
and lower bounds on $\theta$ and $\psi$.

As we computed $\psi$, $\theta$ and $\pi$ for increasing $x$, we also stored data about the functions
$$
  \frac{1}{\sqrt{x}}\!\left(\psi(x;q,a)- \frac{x}{\phi(q)}\right), \;
  \frac{1}{\sqrt{x}}\!\left(\theta(x;q,a)- \frac{x}{\phi(q)}\right), \;
  \frac{\log x}{\sqrt{x}}\!\left(\pi(x;q,a)- \frac{\Li(x)}{\phi(q)}\right),
$$
as well as the variant
$$
  \frac{1}{\sqrt{x}}\left(\theta_\#(x;q,a)- \frac{x}{\phi(q)}\right) =
  \frac{1}{\sqrt{x}}\left(\theta(x;q,a)- \frac{x-\xi_2(q,a)\sqrt{x}}{\phi(q)}\right)
$$
as defined in equation~\eqref{thetaH def}. Each of these expressions is monotone
decreasing between jumps at primes and prime powers. Hence to keep track of the maximum
value of each on a given interval, it suffices to check their left and right limits at each
prime power (including the primes themselves) and at the ends of each interval. A running maximum was kept for each
function and was dumped to a file at each change. For $2\leq x \leq 10^{11}$, for example,
each modulus took approximately 1 hour on a single core on the WestGrid
computing cluster. Spread over the cluster, which is shared with other users, the whole computation took about
a month of real time.

As part of these computations, we needed to be able to evaluate the logarithmic integral $\Li(z)$ quickly. We exploited the exponential integral $\Ei(u) = -\int_{-u}^\infty \frac{e^{-t}}t\,dt$ via the formula $\Li(z) = \Ei(\log z) - \Ei(\log 2)$.
Initially, we computed $\Ei(u)$ using the
series~\cite[equation 5.1.10]{AS}
$$
    \Ei(u) = C_0 + \log|u| + \sum_{k=1}^\infty \frac{u^k}{k \cdot k!};
$$
in practice, however, this turned out to be too slow for our purposes. Instead we pre-computed
$\Ei(u)$ using the above series at $33\cdot1000$ equally spaced points $u$ over the range $0 \leq u \leq 33$
(corresponding to $1 \leq z \leq e^{33} \approx 2\cdot10^{14}$). Then, in order to compute $\Ei(u)$ away
from those points, we precomputed the Taylor expansion of $\Ei(u)$ at each of those
$33\cdot1000$ points, namely
\begin{align} \label{ei}
  \Ei(u) &= \Ei(v) + e^v \left(
  \frac{1}{v}(u-v) + \frac{v-1}{2v^2}(u-v)^2 + \frac{v^2-2v+2}{6v^3}(u-v)^3 + \cdots
\right).
\end{align}
We found that the error in this approach was sufficiently small when we truncated
the Taylor expansion~\eqref{ei} at the cubic term. We could then build the error in Taylor approximation
into our interval arithmetic via the Lagrange remainder theorem.

For $1\le x\le x_2(q)$, where $x_2(q)$ is defined in (\ref{x2 definition}), for example, we computed that for all $q$ with $3 \leq q \leq 10^5$ and $q \not\equiv 2 \mod{4}$,
\begin{equation}  \label{sup bs}
\begin{split}
  \frac{1}{\sqrt{x}}\left|\psi(x;q,a)- \frac{x}{\phi(q)}\right|
 &\leq 1.118034 \; \; (\text{supremum achieved at }q=4,\, x=5^-) \\
  \frac{1}{\sqrt{x}}\left|\theta(x;q,a)- \frac{x}{\phi(q)}\right|  &\leq
1.817557 \; \; (\text{supremum achieved at }q=8,\, x=11257^-) \\
  \frac{1}{\sqrt{x}}\left|\theta_\#(x;q,a)- \frac{x}{\phi(q)}\right|  &\leq
1.053542 \; \; (\text{supremum achieved at }q=3,\, x=227^-) \\
  \frac{\log x}{\sqrt{x}}\left|\pi(x;q,a)- \frac{\Li(x)}{\phi(q)}\right|
&\leq 2.253192 \; \; (\text{supremum achieved at }q=4,\, x=229^-).
\end{split}
\end{equation}
Indeed, our computations gave corresponding constants $b_\psi(q)$, $b_\theta(q)$, $b_{\theta\#}(q)$, and
$b_\pi(q)$ for each modulus $q$ under discussion, which are the smallest constants such that the inequalities
\begin{equation}  \label{b const defs}
\begin{split}
\left|\psi(x;q,a)- \frac{x}{\phi(q)}\right| &\le b_\psi(q) \sqrt x \\
\left|\theta(x;q,a)- \frac{x}{\phi(q)}\right| &\le b_\theta(q) \sqrt x \\
\left|\theta_\#(x;q,a)- \frac{x}{\phi(q)}\right| &\le b_{\theta\#}(q) \sqrt x \\
\left|\pi(x;q,a)- \frac{\Li(x)}{\phi(q)}\right| &\le b_\pi(q) \frac{\sqrt x}{\log x}
\end{split}
\end{equation}
are satisfied for $1\le x\le x_2(q)$. A number of these are given in the following table, rounded up in the last decimal place; notice the four constants in equation~\eqref{sup bs} appearing in the rows corresponding to $q=3$, $4$, and~$8$.
\begin{align*}
 \begin{array}{|c|c||c|c|c|c|}
 \hline
  q & x_2(q) & b_\psi(q) & b_\theta(q) & b_{\theta\#}(q)  & b_\pi(q)
  \\
  \hline \hline
  3 & 4\cdot10^{13} & 1.070833 &1.798158 &1.053542  & 2.186908\\
  4 & 4\cdot10^{13} & 1.118034 &1.780719 &1.034832  & 2.253192\\
  5 & 4\cdot10^{13} & 0.886346 &1.412480 &0.912480  & 1.862036\\
  \hline
  7 &10^{13} & 0.782579 &1.116838 &0.829249  & 1.260651 \\
  8 &10^{13} & 0.926535 &1.817557 &0.887952  & 2.213119\\
  9 &10^{13} & 0.788900 &1.108042 &0.899812  & 1.229315 \\
  11 &10^{13} & 0.878823 & 0.976421 & 0.885771& 1.103821\\
  12 &10^{13} & 0.906786 & 1.735501 & 0.906786 & 2.001350 \\
  \vdots & \vdots & \vdots & \vdots & \vdots & \vdots \\
  \hline
  101 & 10^{12} & 0.709028 & 0.709028 & 0.717402 & 0.777577 \\
  \vdots & \vdots & \vdots & \vdots & \vdots & \vdots \\
  \hline
  10001 & 10^{11} & 0.735215 & 0.735215 & 0.735215 & 0.735207 \\
  \vdots & \vdots & \vdots & \vdots & \vdots & \vdots \\
  10^5 &10^{11} & 0.735419 & 0.735419 & 0.735419 & 0.735417 \\
  \hline
 \end{array}
\end{align*}

(Similar data for $x$ in the (smaller) range $1 \leq x \leq 10^{10}$  can be found in~\cite[Table~2]{RR}. Historically, computations of this type have been viewed as evidence supporting the Generalized Riemann Hypothesis, since these error terms would grow like a larger power of $x$ should GRH be false.) Note that we have skipped the moduli $q\equiv2 \mod{4}$, since the distribution of prime powers in arithmetic progessions modulo such $q$ is essentially equivalent to the distribution of prime powers modulo $\frac{q}{2}$; see Lemma~\ref{2mod4 lemma} below.

In the course of running these computations, we chose a computational-time trade-off between large values of $x_2(q)$ for fewer smaller moduli and lesser values of $x_2(q)$ for the entire range of moduli.
The total time for the $x_2(q)= 10^{12}$ run (for $q$ with $100 <  q \leq 10^4$) was similar to the initial $10^{11}$ run (to $q=10^5$), while the $10^{13}$ and $4\cdot
10^{13}$ runs (to $q=5$ and $q=100$, respectively) took approximately 2 weeks of real time. The data for all of these computations can be found in the
\begin{center}
\texttt{\href{http://www.nt.math.ubc.ca/BeMaObRe/b-psi-theta-pi/}{BeMaObRe/b-psi-theta-pi/}}
\end{center}
subdirectory and are described in the associated \texttt{readme} file.

As has been observed before with similar computations, most of the entries in this table (particularly for large $q$) are extremely close to $(\log7)/\sqrt7 \approx 0.735485$. For the relatively small values of $x$ under consideration, the maximum value of (for example) $|\theta(x;q,a) - x/\phi(q)|/\sqrt x$ occurs at the first prime $p$ congruent to $a\mod q$, leading to the value $|\log p - p/\phi(q)/\sqrt p$ which, for $q$ large, is very close to $(\log p)/\sqrt p$; and the function $(\log x)/\sqrt x$ is maximized at $x=e^2$, to which $p=7$ is the closest prime. If one were to continue these calculations for larger and larger $x$, we would see these values $b_\psi(q)$, $b_{\theta}(q)$, and $b_{\theta\#}(q)$ increase irregularly to infinity.

We also observe, for the small moduli $q$ where the single prime $7$ is not dictating the values of the constants $b_\theta(q)$ and $b_{\theta\#}(q)$, that the latter constants are significantly smaller than the former; this observation reflects the fact that the distribution of $\big(\theta_\#(x;q,a) - x/\phi(q)\big) / \sqrt x$ is centered around $0$ (which is the precise reason for the definition~\eqref{thetaH def} of $\theta_\#(x;q,a)$ in the first place), unlike the distribution of $\big(\theta(x;q,a) - x/\phi(q)\big) / \sqrt x$.

If $q$ is twice an odd number, then the distribution of prime powers in arithmetic progressions modulo $q$ is almost completely equivalent to the distribution of prime powers modulo~$q/2$ (the powers of $2$ are the only ones that are counted differently).

\begin{lemma}  \label{2mod4 lemma}
Let $k\ge3$ be an odd integer, and let $a$ be an odd integer that is coprime to~$k$. Then for all $x\ge2$,
\begin{align*}
\big| \psi(x;2k,a) - \psi(x;k,a) \big| &\le \bigg( 1 + \frac{\log(x/2)}{\log(k+1)} \bigg)\log 2 \le \log x, \\
\big| \theta(x;2k,a) - \psi(x;k,a) \big| &\le \log 2 < 1, \\
\big| \pi(x;2k,a) - \pi(x;k,a) \big| &\le 1.
\end{align*}
\end{lemma}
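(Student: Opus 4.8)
The plan is to determine exactly which prime powers (for $\psi$) and which primes (for $\theta$ and $\pi$) are counted by the function modulo $2k$ but not by the one modulo $k$, using the factorisation $2k=2\cdot k$ together with the hypotheses that $k$ is odd and $a$ is odd. First I would record the elementary consequence of the Chinese Remainder Theorem: since $\gcd(2,k)=1$, an integer $n$ satisfies $n\equiv a\pmod{2k}$ if and only if $n\equiv a\pmod k$ and $n\equiv a\pmod 2$, and because $a$ is odd the second congruence simply says that $n$ is odd. Hence every \emph{odd} prime power $p^j$ with $p^j\equiv a\pmod k$ automatically satisfies $p^j\equiv a\pmod{2k}$, whereas no power of $2$ can satisfy $2^j\equiv a\pmod{2k}$ (the left-hand side is even, the right-hand side odd). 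Together with the implication $n\equiv a\pmod{2k}\Rightarrow n\equiv a\pmod k$, this gives $\psi(x;2k,a)\le\psi(x;k,a)$ with
\[
\psi(x;k,a)-\psi(x;2k,a)=\log 2\cdot\#\{\,j\ge1:2^j\le x,\ 2^j\equiv a\pmod k\,\},
\]
while $\theta(x;k,a)$ and $\theta(x;2k,a)$ (respectively $\pi(x;k,a)$ and $\pi(x;2k,a)$) can differ only according to whether the single prime $2$ is counted by the modulus-$k$ function, which happens precisely when $2\le x$ and $2\equiv a\pmod k$.

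This already settles the bounds for $\theta$ and $\pi$: the discrepancy is at most the contribution of the prime $2$, namely $\log 2<1$ and $1$ respectively. For $\psi$ it remains to bound $N(x):=\#\{\,j\ge1:2^j\le x,\ 2^j\equiv a\pmod k\,\}$. I would let $d$ denote the multiplicative order of $2$ modulo $k$; if the set in question is nonempty it is a single residue class $j\equiv j_0\pmod d$ with $j_0\ge1$, so $N(x)\le 1+(\log x/\log 2-j_0)/d\le 1+(\log x/\log 2-1)/d$. Since $2^d\equiv1\pmod k$ and $2^d\ge2$, the positive integer $2^d-1$ is divisible by $k$, hence $2^d\ge k+1$ and $d\ge\log(k+1)/\log 2$. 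Substituting yields $N(x)\le 1+\log(x/2)/\log(k+1)$, and multiplying by $\log 2$ gives the first displayed inequality of the lemma.

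Finally the comparison $\bigl(1+\log(x/2)/\log(k+1)\bigr)\log 2\le\log x$ is immediate: from $k\ge3$ we get $\log(k+1)\ge\log 4=2\log 2$, so $\log 2/\log(k+1)\le\tfrac12$, and from $x\ge2$ we get $\log(x/2)\ge0$; hence $\bigl(\log 2/\log(k+1)\bigr)\log(x/2)\le\log(x/2)$ and the left-hand side is at most $\log 2+\log(x/2)=\log x$. There is no real obstacle in this argument; the only place requiring a little care is the estimate for $N(x)$, where one must note both that the least admissible exponent $j_0$ is at least $1$ and that the order $d$ of $2$ modulo $k$ satisfies $2^d\ge k+1$, so as to recover exactly the constant $\log(k+1)$ occurring in the statement.
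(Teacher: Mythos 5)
Your proof is correct and follows essentially the same route as the paper's: both isolate the discrepancy as exactly the powers of $2$ that are congruent to $a \bmod k$ (the paper via the identity $\psi(x;k,a)=\psi(x;2k,a)+\psi(x;2k,a+k)$, you via the Chinese Remainder Theorem) and bound their number using that the order $d$ of $2$ modulo $k$ satisfies $2^d\ge k+1$, so $d\ge\log(k+1)/\log 2$. One remark: the second displayed inequality in the statement has a typo ($\psi(x;k,a)$ should read $\theta(x;k,a)$), and your reading coincides with what the paper actually intends and proves.
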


\begin{proof}
We note that $\psi(x;k,a) = \psi(x;2k,a) + \psi(x;2k,a+k)$ exactly. On the other hand, every integer that is congruent to $a+k\mod{2k}$ is even, so the only prime powers that could be counted by $\psi(x;2k,a+k)$ are powers of~$2$; and note that a power of $2$ is congruent to $a+k\mod{2k}$ if and only if it is congruent to $a\mod k$. If such a power exists, let $2^m$ be the smallest prime power congruent to $a\mod k$, and let $n$ be the order of $2$ modulo $k$, so that the powers of $2$ that are congruent to $a\mod k$ are precisely $2^m,2^{m+n},2^{m+2n},\dots$. The number of such powers of $2$ not exceeding $x$ is exactly
\[
1 + \bigg\lfloor \frac{\log(x/2^m)}{\log(2^n)} \bigg\rfloor \le 1 + \frac{\log(x/2^m)}{\log(2^n)} \le 1 + \frac{\log(x/2)}{\log(k+1)},
\]
where the last inequality is due to $m\ge1$ and the fact that $2^n>1$ is congruent to $1\mod k$ and therefore must be at least $k+1$. The first inequality asserted in the statement of the lemma follows from the fact that each such power of $2$ contributes $\log 2$ to $\psi(x;2k,a+k) = \psi(x;k,a) - \psi(x;2k,a)$. The second and third asserted inequalities have similar proofs (easier, in fact, since those two functions count only primes and not prime powers).
\end{proof}

\subsection{Computations of the leading constants $c_\psi$, $c_\theta$, and $c_\pi$ for $q\le 10^5$}
\label{ssec cpsithetapi}

The constants $c_\psi(q)$ and $c_\theta(q)$ were computed using Theorem~\ref{code this funky theorem}
and Theorem~\ref{code this funky theta}, after which the constants $c_\pi(q)$ were computed using Proposition~\ref{first theta
to pi prop}. While the expressions in Theorem~\ref{code this funky theorem}
and Theorem~\ref{code this funky theta} are cumbersome, evaluating them is actually a straightforward (if ugly) computation using
\texttt{C++}. To simplify our code we precomputed data for some of the auxillary functions (the totient function $\phi(q)$ and the
factorisations involved in the function $\Delta(x;q)$ from Definition~\ref{xi defs}) using the \texttt{Sage} computer algebra
system. We also verified our
$c_\psi(q)$, $c_\theta(q)$, and $c_\pi(q)$ values using the \texttt{Mathematica} computer algebra system.

The resulting code is quite fast, and all of these constants can be computed for $q \leq 10^5$ and a given $m, H$ and $x_2$ in only a few seconds. For a given choice of $q$ and $x_2$, we computed the constants for $4 \leq m \leq 12$ and computed the minimum value over \(H_1(m) \leq H \leq 10^9\); it turned out that $m \in \{6,7,8,9\}$ gave the best bound in every case. Our results are given  in the
\begin{center}
\texttt{\href{http://www.nt.math.ubc.ca/BeMaObRe/c-psi-theta-pi/}{BeMaObRe/c-psi-theta-pi/}}
\end{center}
subdirectory and described in the corresponding \texttt{readme} file. By way of example, we have
\begin{align*}
 \begin{array}{|c||c|c|c|}
 \hline
 q & c_\psi(q) & c_\theta(q) & c_\pi(q)
 \\
 \hline \hline
 3  &  0.0003964   &    0.0004015   &    0.0004187 \\
 4  &  0.0004770   &    0.0004822   &    0.0005028 \\
 5  &  0.0003665   &    0.0003716   &    0.0003876 \\
 6  &  0.0003964   &    0.0004015   &    0.0004187 \\
 7  &  0.0004584   &    0.0004657   &    0.0004857 \\
 8  &  0.0005742   &    0.0005840   &    0.0006091 \\
 9  &  0.0005048   &    0.0005122   &    0.0005342 \\
 10 &  0.0003665   &    0.0003716   &    0.0003876 \\
 11 &  0.0004508   &    0.0004553   &    0.0004748 \\
 12 &  0.0006730   &    0.0006829   &    0.0007121 \\
 \vdots & \vdots & \vdots & \vdots  \\
 101 & 0.0008443   &    0.0008460   &    0.0008822 \\
 \vdots & \vdots & \vdots & \vdots \\
10001& 0.0034386   &    0.0034403    &   0.0035878 \\
 \vdots & \vdots & \vdots & \vdots  \\
 10^5  & 0.0051178 & 0.0051196 & 0.0053391  \\
 \hline
 \end{array}
\end{align*}

Note that  in order to compute $c_\pi(q)$ from $c_\theta(q)$ using Proposition~\ref{first theta to pi prop}, we must verify the hypothesis~\eqref{last two terms} of that proposition. To avoid having to
explicitly check inequality~\eqref{last two terms} for $x>10^{11}$, we examined $x_1(q)$ (see Appendix~\ref{ssec xpsithetapi}) and confirmed  that $x_1(q) < 10^{11}$. Hence it sufficed to evaluate $E(x_3;q,a)$ at $x_3=10^{11}$. To do this, we computed $\max_{\gcd(a,q)=1} |E(10^{11},q,a)|$ (using code similar to that used to compute the constants $b_\theta(q)$ and $b_\pi(q)$) for each modulus~$q$ and verified inequality~\eqref{last two terms}. This computation took about 1 hour for each modulus and so approximately 1 month of real time. The data from this computation can be found in the
\begin{center}
\texttt{\href{http://www.nt.math.ubc.ca/BeMaObRe/c-psi-theta-pi/E-bound/}{BeMaObRe/c-psi-theta-pi/E-bound/}}
\end{center}
subdirectory.

\subsection{Dominant contributions to $c_\psi(q)$, $c_\theta(q)$, and $c_\pi(q)$ for $q\le 10^5$}  \label{analysis of constants section}

Let us recall the function $D_{q,m,R}(x_2;H_0,H,H_2)$ from Definition~\ref{D def}, certain values of which are exactly equal to
$c_\psi(q)$.
While $D_{q,m,R}(x_2;H_0,H,H_2)$ is programmable and hence suffices for our numerical results, it would be helpful to
have some intuition about which terms in the expression contribute the most to its value. Here we report on numerical
investigations into the relative sizes of the constituent expressions, for the relevant ranges of parameters ($3\le
q\le10^5$, $10^{11}\le x_2\le4\cdot10^{13}$, $R=5.6$, $3\le m\le 12$, and various choices for $H, H_0$ and $H_2$).

After running our various computations and analyzing the resulting data, our conclusions are as follows; recall that the quantities $T_1, T_2, T_3$ and $T_4$ are defined in Definition~\ref{D def} and satisfy
$$
D_{q,m,R}(x_2;H_0,H,H_2) =\frac{1}{\phi(q)} \left( T_1 + T_2 + T_3 + T_4 \right).
$$

\begin{itemize}
\item As noted previously, the optimal value for $m$ is always in $\{6,7,8,9\}$, a fact for which we have no explanation.
\item The optimal value for $H$ quickly becomes small, hitting our floor of $H_1(q)$ around $q=5000$. The parameter $H$ controls the zeros which get smoothed, and larger $q$, which have more low-height zeros, benefit more from this.
\item The term $T_4$ is negligible, always several orders of magnitude smaller than the other terms. The term $T_3$ is nearly always negligble, accounting for less than $2\%$ of the total.
\item The term $T_1$, where low-height zeros hold sway, accounts for 20\%-50\% of the total for $q\leq 100$, and growing to around 60\% for $q$ near $10^5$. Note that for large $q$, we don't compute these zeros, instead relying on Trudgian's bound.
\item The term $T_2$, where zeros potentially close to $\sigma=1$ have their influence, accounts for 50\%-80\% of the total for smaller $q$, and about 40\% for larger $q$.
\item The balance between $T_1$ and $T_2$ depends heavily on the zeros of extremely low height, and so bounces around considerably for small $q$. For $q$ near $10^5$, for which we do not calculate any zeros, the balance is consistently about $59.5\%$ for $T_1$, about $39.5\%$ for $T_2$, and about $1\%$ for $T_3$.
\end{itemize}

\begin{figure}[H]
\[
  \begin{array}{|c|ccccc|}\hline
 q & \text{factorization of $q$} &m & x_2(q) & H & c_\psi(q)  \\ \hline
 3 & 3 & 8 & 4\cdot 10^{13} & 492130 & 0.0003964  \\
 4 & 2^2 & 7 & 4\cdot 10^{13} & 337539 & 0.0004770  \\
 5 & 5 & 8 & 4\cdot 10^{13} & 276297 & 0.0003665  \\
 101 & 101 & 6 & 10^{12} & 7484 & 0.0008443 \\
 5040 & 2^4 \cdot 3^2 \cdot 5\cdot 7& 6 & 10^{12} & 262 & 0.0011204  \\
 55440 & 2^4 \cdot 3^2 \cdot 5\cdot 7 \cdot 11& 7 & 10^{11} & 137 & 0.0034065  \\
 55441 & 55441 & 8 & 10^{11} & 120 & 0.0048288  \\
 99991 & 99991 & 8 & 10^{11} & 120 & 0.0058889  \\
 100000 & 2^5 \cdot 5^5 & 8 & 10^{11} & 120 & 0.0051178 \\ \hline
\end{array}
\]
\end{figure}
\begin{figure}[H]
\[
  \begin{array}{|c|ccc|}\hline
 q  & T_1 & T_2 &T_3  \\ \hline
 3 & 27.73 \% & 72.27 \% & 0 \% \\
 4 & 22.18 \% & 77.82 \% & 0 \% \\
 5 &  30.39 \% & 69.61 \% & 0 \% \\
 101 & 69.27 \% & 30.71 \% & 0.02 \% \\
 5040 &  37.58 \% & 61.54 \% & 0.88 \%  \\
 55440 &  62.09 \% & 37.30 \% & 0.61 \% \\
 55441 &  69.93 \% & 29.40 \% & 0.67 \%  \\
 99991 &  59.14 \% & 39.87 \% & 0.99 \% \\
 100000 &  58.63 \% & 40.44 \% & 0.94 \%  \\ \hline
\end{array}
\]
\caption{A sampling of $q$ values, with $x_2(q)$, the optimal choices for $m$ and $H$, and corresponding $c_\psi(q)$. The second table lists the percentage of the bound on $c_\psi(q)$ that comes from each of $T_1, T_2$ and $T_3$; in each case $T_4$ contributes essentially $0 \%$.}
\end{figure}


\subsection{Computations of $x_\psi(q)$, $x_\theta(q)$, $x_{\theta\#}(q)$, $x_\pi(q)$, and $x_0(q)$ for $q\le 10^5$} \label{ssec xpsithetapi}

The computation of $x_0(q)$ was a three-step process. For the purposes of describing this process, we focus on
$\theta(x;q,a)$ since the approach for the other functions is very similar.

In brief, we start by calculating a crude upper bound on
$x_\theta(q)$ which we call $x_1(\theta;q)$, which is easily computed from our $b_\theta(q)$ and $c_\theta(q)$ data (see Appendices~\ref{ssec comp bpsi} and~\ref{ssec cpsithetapi}); typically $x_1(\theta;q)$ is significantly smaller than
$x_2(q)$. Now to compute
$x_\theta(q)$ we need only examine $x\le x_1(\theta;q)$, a much smaller range than $x\le x_2(q)$, which saves us considerable
computer time. Finally, from the accumulated data we found a simple upper bound $x_0(q)$  on our
more precise constants~$x_\theta(q)$.

We now discuss each of these steps in more detail (still concentrating on $\theta(x;q,a)$).  We
wish to find the smallest value of $x_\theta(q)$ so that for all $x \geq x_\theta(q)$ and all integers $a$ coprime to~$q$,
\begin{equation} \label{theta ctheta}
  \left|\theta(x;q,a) - \frac{x}{\phi(q)} \right| < c_\theta(q) \frac{x}{\log x}.
\end{equation}
We have already verified, for $x \leq x_2(q)$, that
\begin{align*}
  \left|\theta(x;q,a)- \frac{x}{\phi(q)}\right| < b_\theta(q) \sqrt{x}
\end{align*}
using the exhaustive computations described in Appendix~\ref{ssec comp bpsi} above. Accordingly we compute
$x_1=x_1(\theta;q)$ so that
\begin{align*}
  c_\theta(q) \frac{x_1}{\log x_1} = b_\theta(q) \sqrt{x_1},
\end{align*}
using a simple \texttt{Python} script and a bisection solver from the \texttt{scipy} library for \texttt{Python}, and then rounded up that value. From this argument we know that we will be able to take $x_\theta(q) \leq x_1(\theta;q)$.
Since we did not compute $b_\theta(q)$ for $q\equiv 2\mod{4}$, we instead make use of Lemma~\ref{2mod4
lemma} to infer that
\begin{align*}
  \left|\theta(x;q,a)- \frac{x}{\phi(q)}\right| < b_\theta(\tfrac q2) \sqrt{x} + 1;
\end{align*}
thus to compute $x_1(\theta;q)$ for $q\equiv 2\mod{4}$ we instead solve the slightly different equation
\begin{align*}
  c_\theta(q) \frac{x_1}{\log x_1} = b_\theta(q) \sqrt{x_1}+1.
\end{align*}

The process for calculating $x_1(\psi;q)$, $x_1(\theta_\#;q)$, and $x_1(\pi;q)$ is very similar: when $q\not\equiv2\mod4$ they are the positive solutions $x_1$ to the equations
$$
  c_\psi(q) \frac{x_1}{\log x_1} = b_\psi(\tfrac q2) \sqrt{x_1},  \; \; \;
  c_{\theta\#}(q) \frac{x_1}{\log x_1} = b_{\theta\#}(\tfrac q2) \sqrt{x_1}
  $$
  and
  $$
  c_\pi(q) \frac{x_1}{\log^2 x_1} = b_\pi(\tfrac q2)\frac{\sqrt{x_1}}{\log x_1},
$$
respectively, while when $q\equiv2\mod{4}$ they are the solutions to
$$
  c_\psi(q) \frac{x_1}{\log x_1} = b_\psi(\tfrac q2) \sqrt{x_1}+\log x_1,  \; \; \;
  c_{\theta\#}(q) \frac{x_1}{\log x_1} = b_{\theta\#}(\tfrac q2) \sqrt{x_1}+1
$$
and
$$
  c_\pi(q) \frac{x_1}{\log^2 x_1} = b_\pi(\tfrac q2)\frac{\sqrt{x_1}}{\log x_1} +1,
$$
respectively (using the results in Lemma~\ref{2mod4 lemma}).
The first few values for $x_1$ for the indicated functions are given below.
\begin{align*}
 \begin{array}{|c||c|c|c|c|}
 \hline
 q & x_1(\psi;q) & x_1(\theta;q) & x_1(\theta\#;q) & x_1(\pi;q) \\
 \hline \hline
 3   &  3.5290\cdot 10^{9}  & 1.0701\cdot 10^{10}  &  3.3100\cdot 10^{9}  &  1.4980\cdot 10^{10}  \\
 4   &  2.5810\cdot 10^{9}  & 7.0120\cdot 10^{9}  &  2.1260\cdot 10^{9}  &  1.0712\cdot 10^{10}  \\
 5   &  2.7660\cdot 10^{9}  & 7.4690\cdot 10^{9}  &  2.8590\cdot 10^{9}  &  1.2479\cdot 10^{10}  \\
 6   &  3.5320\cdot 10^{9}  & 1.0701\cdot 10^{10}  &  3.3100\cdot 10^{9}  &  1.4983\cdot 10^{10}  \\
 7   &  1.2830\cdot 10^{9}  & 2.7140\cdot 10^{9}  &  1.4080\cdot 10^{9}  &  3.2310\cdot 10^{9}  \\
 8   &  1.1320\cdot 10^{9}  & 4.8160\cdot 10^{9}  &  9.9300\cdot 10^{8}  &  6.7670\cdot 10^{9}  \\
 9   &  1.0550\cdot 10^{9}  & 2.1630\cdot 10^{9}  &  1.3660\cdot 10^{9}  &  2.4790\cdot 10^{9}  \\
 10  &  2.7680\cdot 10^{9}  & 7.4690\cdot 10^{9}  &  2.8600\cdot 10^{9}  &  1.2482\cdot 10^{10}  \\
 11  &  1.7200\cdot 10^{9}  & 2.1220\cdot 10^{9}  &  1.7120\cdot 10^{9}  &  2.5350\cdot 10^{9}  \\
 12  &  7.6000\cdot 10^{8}  & 3.0840\cdot 10^{9}  &  7.3600\cdot 10^{8}  &  3.8480\cdot 10^{9}  \\
\vdots & \vdots & \vdots & \vdots & \vdots \\
10^5 & 5.0\cdot 10^{6} & 5.0\cdot 10^6 & 5.0\cdot 10^6  & 5.0\cdot 10^6
 \\
 \hline
 \end{array}
\end{align*}
We give the full table of $x_1$ data in the
\begin{center}
\texttt{\href{http://www.nt.math.ubc.ca/BeMaObRe/x-psi-theta-pi/compute-x1/}{BeMaObRe/x-psi-theta-pi/compute-x1/}}
\end{center}
subdirectory.

We are now faced with the problem of determining the supremum $x_\theta(q)$ of those real numbers $x$ such that the inequality~\eqref{theta ctheta}
fails (again using $\theta(x;q,a)$ as the example for our discussion); from the previous calculation we know that this supremum is at most~$x_1(\theta;q)$.
In practice $x_1(\theta;q)$ is significantly smaller than $x_2(q)$, and so determining $x_\theta(q)$ from an exhaustive search over $x
\leq x_1(\theta;q)$ is substantially faster.
We again compute the left-hand side of the inequality~\eqref{theta ctheta} for $x$ equal to all primes and prime powers in the given range, using code similar to that
used to compute $b_\theta(q)$. For each residue class $a \mod{q}$ we record the largest prime or prime power $p^*(q;a)$ so
that
\begin{align*}
  \left|\theta(p^*(q,a);q,a)- \frac{p^*(q,a)}{\phi(q)}\right| > c_\theta(q) \cdot \frac{p^*(q,a)}{\log p^*(q,a)}.
\end{align*}
The procedure then breaks into two cases depending on the sign of $\big(\theta(p^*(q,a);q,a)-
\frac{p^*(q,a)}{\phi(q)}\big)$. Consider the figure below that gives a schematic comparison between
$\theta(x;q,a) - \frac{x}{\phi(q)}$ (the jagged paths denoting functions with jump discontinuities) and $\pm c_\theta(q) \frac{x}{\log x}$ (the curved lines).
\begin{center}
 \includegraphics[width=0.75\textwidth]{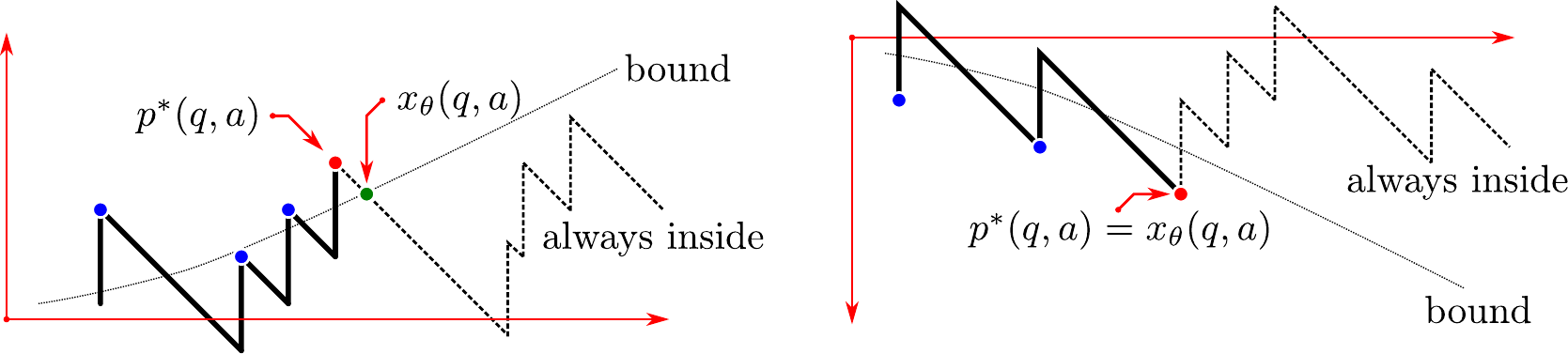}
\end{center}
\begin{itemize}
 \item If $\theta\big(p^*(q,a);q,a\big)- \dfrac{x}{\phi(q)} > 0$, then we use Newton's method or a bisection method to
solve
\begin{align*}
   \theta(p^*(q,a);q,a)- \frac{x}{\phi(q)} &= c_\theta \cdot \frac{x}{\log x}
\end{align*}
for $x= x_\theta(q,a)$ to the desired level of precision.

\item On the other hand, if $\theta\big(p^*(q,a);q,a\big)- \dfrac{x}{\phi(q)} < 0$ then simply $x_\theta(q,a) = p^*(q,a)$.
\end{itemize}
We then set $x_\theta(q) = \max_{\gcd(a,q)=1} x_\theta(q,a)$. We did analogous exhaustive computations to find $x_\psi(q)$,
$x_{\theta\#}(q)$, and $x_\pi(q)$; we give the first few values below (rounded up to the nearest integer).
\begin{align*}
 \begin{array}{|c||r|r|r|r|}
  \hline
  q & x_\psi(q) & x_\theta(q) & x_{\theta\#}(q) & x_\pi(q)
  \\
  \hline \hline
3  &  576{,}470{,}759  & 7{,}932{,}309{,}757 &  576{,}587{,}783  & 7{,}940{,}618{,}683 \\
4  &  952{,}930{,}663  & 4{,}800{,}162{,}889 &  952{,}941{,}971  & 5{,}438{,}260{,}589 \\
5  & 1{,}333{,}804{,}249 &  3{,}374{,}890{,}111 & 1{,}333{,}798{,}729 &  3{,}375{,}517{,}771 \\
6  &  576{,}470{,}831  & 7{,}932{,}309{,}757 &  576{,}587{,}783  & 7{,}940{,}618{,}683 \\
7  &  686{,}060{,}664  & 1{,}765{,}650{,}541 &  500{,}935{,}442  & 1{,}765{,}715{,}753 \\
8  &  603{,}874{,}695  & 2{,}261{,}078{,}657 &  603{,}453{,}377  & 2{,}265{,}738{,}169 \\
9  &  415{,}839{,}496  &  929{,}636{,}413  & 415{,}620{,}108  &     929{,}852{,}953  \\
10 & 1{,}333{,}804{,}249 &  3{,}374{,}890{,}111 & 1{,}333{,}798{,}729 &  3{,}375{,}517{,}771 \\
11 &  770{,}887{,}529  & 1{,}118{,}586{,}379 &  770{,}871{,}139  &     838{,}079{,}951  \\
12 &  501{,}271{,}535  & 1{,}305{,}214{,}597 &  501{,}062{,}258  & 1{,}970{,}827{,}897 \\
\vdots & \vdots\qquad & \vdots\qquad & \vdots\qquad & \vdots\qquad \\
10^5   &      17{,}876  &             17{,}870  &         17{,}931  &             16{,}871   \\
  \hline
\end{array}
\end{align*}
All of this data can be found in the
\begin{center}
\texttt{\href{http://www.nt.math.ubc.ca/BeMaObRe/x-psi-theta-pi/compute-x0/}{BeMaObRe/x-psi-theta-pi/compute-x0/}}
\end{center}
subdirectory.


\subsection{Computations of inequalities for $\pi(x;q,a)$ and $p_n(q,a)$, for $q\le 1200$ and very small $x$}
\label{ssec pipn comp}

To deduce Corollary~\ref{cor nice pi pn bounds} from Theorems~\ref{pi nice lower bound theorem} and \ref{pnqa bounds
theorem} for a particular modulus $3\le q\le 1200$,
we need to determine the largest $x$ at which each of the four inequalities
$$
 \pi(x;q,a)  >  \frac{x}{\phi(q) \log x}, \; \;   \pi(x;q,a)  <  \frac{x}{\phi(q) \log x} \left( 1 + \frac{5}{2 \log x}\right),
 $$
 $$
  x > \pi(x;q,a) \phi(q) \log(\pi(x;q,a) \phi(q)),
$$
and
$$
x  < \pi(x;q,a) \phi(q) \left( \log(\pi(x;q,a) \phi(q)) +\frac{4}{3} \log( \log(\pi(x;q,a) \phi(q)))\right)
$$
fails. (When $q=1$ and $q=2$, Corollary~\ref{cor nice pi pn bounds} follows from results of Rosser and Schoenfeld~\cite[equations (3.2), (3.5), (3.12), and (3.13)]{RS}.) More precisely, when $q\ge3$ we know that the inequalities hold for $x \geq x_0(q)$, so it suffices to check the inequalities for
$x<x_0(q)$.  Again, as was the case for calculating $b_\pi(q)$ in Appendix~\ref{ssec comp bpsi},
we compute $\pi(p;q,a)$ at each prime $p$ and then check the inequalities as $x$ approaches $p$ from the left and from
the right. Since $\pi(x;q,a)$ is an integer quantity, this can be done very efficiently with simple $\texttt{C++}$
code.

The data giving the last $x$ violating the inequalities is in the
\begin{center}
\texttt{\href{http://www.nt.math.ubc.ca/BeMaObRe/pi-pn-bounds/}{BeMaObRe/pi-pn-bounds/}}
\end{center}
subdirectory. Given this data, one can verify that the $x$ values are bounded by the simple quadratic functions of $q$
stated in Corollary~\ref{cor nice pi pn bounds}.

\subsection{Computations of error terms for $\psi (x; q, a)$, $\theta (x; q, a)$, and $\pi (x; q, a)$, for very small $x$}
\label{ssec comp tiny theta}

To prove Corollary~\ref{tiny-theta-cor} from Theorems~\ref{main psi theorem},~\ref{main theta theorem}, and~\ref{main
pi theorem} we found, for each $3\le q\le 10^5$, the largest values of

\begin{align} \label{goober}
  &\frac{\log x}{x} \left| \psi(x;q,a) - \frac{x}{\phi(q)} \right|,  \;
  \frac{\log x}{x} \left| \theta(x;q,a) - \frac{x}{\phi(q)} \right| \;  \\  \nonumber
 &  \text{ and } \; \; \frac{\log^2 x}{x} \left| \pi(x;q,a) - \frac{\Li(x)}{\phi(q)} \right|  \nonumber
\end{align}
\noindent for all $10^3 \leq x \leq \max\{x_\psi(q), x_\theta(q), x_\pi(q)\}$. Those largest values tend to occur quite close to $10^3$, as all three error terms are decaying roughly like $\log x/\sqrt x$. We confirmed that none of these maximal values exceeded $0.19$, $0.40$, and $0.59$, respectively. Since our main results ensure bounds for $x \geq x_\psi(q), x_\theta(q), x_\pi(q)$ (as required), it suffices to check that our computed values for $c_\psi(q)$, $c_\theta(q)$, and $c_\pi(q)$ (see Appendix~\ref{ssec cpsithetapi}) were also bounded by those three constants. The worst case bounds for $\psi(x;q,a), \theta(x;q,a)$ and $\pi(x;q,a)$ are achieved at $q=4, x=1423^-$, $q=4, x=1597^-$, and $q=3, x=1009^-$ (respectively), giving constants of $0.1659,  0.3126$ and $0.4236$ (respectively).

We then repeated this process for the range $10^6 \leq x \leq \max\{ x_\psi(q), x_\theta(q), x_\pi(q)\}$, comparing the results against the constants $0.011$, $0.024$, and $0.027$, respectively. In this case, the  worst case bounds for $\psi(x;q,a), \theta(x;q,a)$ and $\pi(x;q,a)$ are achieved at $q=46, x=1015853^-$, $q=4, x=100117^-$, and $q=4, x=1000117^-$ (respectively), giving constants of $0.0106, 0.0233$ and $0.0267$ (respectively).

While the methods in this paper work in theory for $q=1$ and $q=2$, we do use the assumption $q\ge3$ in many small ways to improve the constants in our intermediate arguments. We can, however, recover results for $q=1$ and $q=2$ from our existing results, by noting that (for example) every prime other than $3$ itself is counted by $\pi(x;3,1)+\pi(x;3,2)$.
In the case $q=2$, we observe that, for $x \geq 3$,
\begin{align*}
 \psi(x;2,1) &= \psi (x;3,1) + \psi(x;3,2) + \left\lfloor \frac{\log x}{\log 3} \right\rfloor \log 3 - \left\lfloor \frac{\log x}{\log 2} \right\rfloor \log 2, \\
 \theta (x; 2,1) &=  \theta (x;3,1) + \theta(x;3,2) + \log (3/2), \\
\pi(x;2,1) &= \pi (x;3,1) + \pi(x;3,2).
\end{align*}
Appealing to Theorems~\ref{main psi theorem}, \ref{main theta theorem}, and~\ref{main pi theorem}, and applying the triangle inequality, we thus have
\begin{align*}
\left| \psi (x; 2,1) - x \right| &< 2 c_{\psi}(3) \frac{x}{\log x} +1 \; \; \mbox{ for all } x \geq x_{\psi} (3), \\
\left| \theta (x; 2,1) - x \right| &< 2 c_{\theta}(3) \frac{x}{\log x}  + \log (3/2) \; \; \mbox{ for all } x \geq x_{\theta} (3), \\
\left| \pi (x; 2,1) -  \Li (x) \right| &< 2 c_{\pi}(3) \frac{x}{\log^2 x} \; \; \mbox{ for all } x \geq x_{\pi} (3).
\end{align*}
Similarly, in the case $q=1$, we find that
\begin{align}
\left| \psi (x) - x \right| &< 2 c_{\psi}(3) \frac{x}{\log x} +\log x \; \; \mbox{ for all } x \geq x_{\psi} (3), \notag \\
\left| \theta (x) - x \right| &< 2 c_{\theta}(3) \frac{x}{\log x}  + \log 3 \; \; \mbox{ for all } x \geq x_{\theta} (3), \notag \\
\left| \pi (x) -  \Li (x) \right| &< 2 c_{\pi}(3) \frac{x}{\log^2 x} + 1\; \; \mbox{ for all } x \geq x_{\pi} (3). \label{frodo}
\end{align}
Now
$$
c_{\psi}(3) = 0.0003964, \; \; c_{\theta}(3) = 0.0004015 \; \; \mbox{ and } \; \; c_{\pi}(3) = 0.0004187,
$$
and
$$
x_{\psi}(3) =  576{,}470{,}759, \; \; x_{\theta}(3) =  7{,}932{,}309{,}757\; \; \mbox{ and } \; \; x_{\pi}(3) = 7{,}940{,}618{,}683.
$$
It follows, after a short computation, that we have the desired proof of Corollary~\ref{tiny-theta-cor} for $q \in \{ 1, 2 \}$ and, crudely, $x \geq \max \{ x_{\psi} (3),  x_{\theta} (3), x_{\pi} (3) \} = 7{,}940{,}618{,}683$.  A final calculation, as in the cases $3 \leq q \leq 10^5$, completes the proof.

We now find that for \(1 \leq q \leq 10^5\) and \(x\geq 10^3\), the worst case bounds for $\psi(x;q,a), \theta(x;q,a)$ and $\pi(x;q,a)$ are achieved at $q=2, x=1423^-$, $q=2, x=1423^-$, and $q=2, x=1423^-$ (respectively), giving constants of $0.18997,  0.3987$ and $0.5261$ (respectively). Similarly, when we consider all \(1 \leq q \leq 10^5\) and \(x\geq 10^6\), the worst case bounds for $\psi(x;q,a), \theta(x;q,a)$ and $\pi(x;q,a)$ are achieved at $q=46, x=1015853^-$, $q=4, x=100117^-$, and $q=2, x=1090697^-$ (respectively), giving constants of $0.0106, 0.0233$ and $0.0269$ (respectively).

The upper bound  upon $\left| \pi (x) -  \Li (x) \right|$ given by (\ref{frodo}) implies that we have
$$
\left| \pi (x) -  \Li (x) \right| < 0.0008375 \frac{x}{\log^2 x} \; \; \mbox{ for all } x \geq 7{,}940{,}618{,}683.
$$
Explicitly checking this inequality for all $x < 7{,}940{,}618{,}683$ leads to the reported
inequality~\eqref{PiLi}.

The maximal values of the three quantities in equation~\eqref{goober} for $1\le q\le 10^5$ can be found in the
\begin{center}
\texttt{\href{http://www.nt.math.ubc.ca/BeMaObRe/cor1.7/}{BeMaObRe/cor1.7/}}
\end{center}
subdirectory. This computation strongly resembles the one undertaken to obtain the constants $b_\psi(q)$, $b_\theta(q)$, and
$b_\pi(q)$ (see Appendix~\ref{ssec comp bpsi}), and similar \texttt{C++} code was used.


\subsection{Computations of uniform range of validity for error terms for $\psi (x; q, a)$, $\theta (x; q, a)$, and $\pi (x; q, a)$}  \label{ssec verify cor18}

To establish Corollary~\ref{uniform-cor} from Theorems~\ref{main psi theorem},  \ref{main theta theorem},
and~\ref{main pi theorem}, it suffices to compute a constant $A \geq 0.03$ so that the inequalities
\begin{align*}
  x_\psi(q), x_\theta(q), x_{\theta\#}(q), x_\pi(q) \leq \exp(A \sqrt{q}\log^3q)
\end{align*}
hold for all $3 \leq q \leq 10^5$. Using the quantity
$$
x_m(q) = \max\{x_\psi(q), x_\theta(q), x_{\theta\#}(q), x_\pi(q)\},
$$
\begin{align*}
  \max_{3 \leq q \leq 10^5} \left\{ \frac{\log x_m(q)}{\sqrt{q} \log^3 q} \right\}.
\end{align*}
This maximum was a number close to $9.92545$, obtained at $q=3$, but the quantity under consideration decreases rapidly with $q$ (and is always at most $4.21$ for $q \geq 4$). For $q\ge74$ the maximum is in fact less than the constant $0.03$ from the definition~\eqref{x_0(q) definition} of $x_0(q)$.

Fixing now $q=3$, we verify by direct computation (assuming $x \leq x_m(3)$), that the conclusion of Corollary \ref{uniform-cor} holds for
$$
x \geq 16548949 \approx \exp (7.237439 \sqrt{3} \log^3 3).
$$
Arguing similarly for $3 \leq q \leq 73$, we again obtain the conclusions of Corollary \ref{uniform-cor}, under the weaker assumption that $x \geq \exp (0.03 \sqrt{q} \log^3 q)$, for all $q \geq 58$.

The code and data associated with this computation can be found in the
\begin{center}
\texttt{\href{http://www.nt.math.ubc.ca/BeMaObRe/cor1.8/}{BeMaObRe/cor1.8/}}
\end{center}
subdirectory.

\subsection{Computations of lower bounds for $L(1,\chi)$ for medium-sized moduli $q$ for Lemma~\ref{L1chi lemma} and
Proposition~\ref{L1chi prop}}  \label{magma sec}

We describe one final computation that was used at the end of the proof of Lemma~\ref{L1chi lemma} and the deduction therefrom of Proposition~\ref{L1chi prop}.
Explicit computation using Sage \cite{Sage}, over
fundamental discriminants $d$ with  $4 \cdot 10^5 \leq d \leq 10^7$, shows that the quantity $h(\sqrt d)\log \eta_d$ is minimal when $d=405{,}173$, where we find that $h(\sqrt d)=1$
and $\eta_d=(v_0+u_0 \sqrt{d})/2$ with
$$
v_0 =25{,}340{,}456{,}503{,}765{,}682{,}334{,}430{,}473{,}139{,}835{,}173
$$
and
$$
 u_0 =
39{,}810{,}184{,}088{,}138{,}779{,}581{,}856{,}559{,}421{,}585.
$$
It follows that $h(\sqrt d)\log \eta_d > 79.2177$ for all fundamental discriminants $d$ with $4 \cdot 10^5 \leq d \leq 10^7$.

For each pair of positive integers $(d,u_0)$ for which $d >10^7$ is a fundamental discriminant, $d u_0^2 < 2.65 \cdot 10^{10}$ and $du_0^2+4$ is square, we check via Sage \cite{Sage} that, in all cases,
$$
h(\sqrt d)\log \eta_d = h(\sqrt d)\log \bigg( \frac{\sqrt{du_0^2+4}+ u_0 \sqrt{d}}{2} \bigg) > 417;
$$
indeed, $h(\sqrt d)\log \eta_d$ is minimal in this range when $d=11{,}109{,}293$, for which we find that $h(\sqrt d)=36$ and $\eta = \frac{1}{2} ( 10991 + 33 \sqrt{d} )$.
We may therefore suppose that $d u_0^2 \ge 2.65 \cdot 10^{10}$, which then implies that
$$
\log \eta_d =\log \bigg( \frac{v_0 + u_0 \sqrt{d}}{2}  \bigg) > \log (u_0 \sqrt{d}) \geq  \frac{1}{2} \log ( 2.65 \cdot 10^{10} ) >12,
$$
and so $h(\sqrt d)\log \eta_d > 12$, as desired.
The Sage  \cite{Sage} code used for this computation and its output can be found in the \texttt{\href{http://www.nt.math.ubc.ca/BeMaObRe/lemma5.3/}{BeMaObRe/lemma5.3/}} subdirectory.

\subsection{Concluding remarks from a computational perspective}

From our code, it is relatively easy to examine the effect of sharpening various quantities upon our final constant $c_\psi(q)$ (and its relatives). A decrease of $10 \%$ in the value $R$ defining our zero-free region (from its current values of $5.6$) has a very small effect upon $c_\psi(q)$, leading to a decrease of much less than $1 \%$ in all cases (assuming we leave all other parameters unchanged). Doubling the value of $c_2(q)$, on the other hand, reduces $c_\psi(q)$ by, typically, $25 \%$ or more, for $q$ with $10^4 < q \leq 10^5$; a somewhat less substantial benefit would accrue from confirming GRH for all Dirichlet $L$-functions of conductor $q$, up to height, say, $2 \cdot 10^8/q$.

\section{Notation reference}  \label{reference section}

\begin{table}[H]
\centering
\caption {Notation reference : A to Q}
\begin{tabular}{|l|l|} \hline
$A_m(\delta)$ & equation~\eqref{A def} \\ \hline
$b(\chi)$ & Definition~\ref{mchi bchi def} \\ \hline
$b_\psi(q)$, $b_\theta(q)$, $b_{\theta\#}(q)$,  $b_\pi(q)$ & equation~\eqref{b const defs} \\ \hline
$\funcB$, $\funcBone{x}$, $\funcBtwo{x}$ & Definition~\ref{boundary functions to maximize} \\ \hline
$c_0(q)$ & equation~\eqref{c_0(q) definition} \\ \hline
$c_{\theta}(q), c_{\pi}(q), c_{\psi}(q)$ & Theorems \ref{main psi theorem}, \ref{main theta theorem}, \ref{main pi theorem} \\ \hline
$C_1, C_2$ & Definition~\ref{C1C2} \\ \hline
$D_{q,m,R}(x_2;H_0,H,H_2)$ & Definition~\ref{D def} \\ \hline
$E(u;q,a)$ & Definition~\ref{double error def} \\ \hline
$\erfc(u)$ & Definition~\ref{erfc def} \\ \hline
$\funcF[q=\chi]{x}$ & Definition~\ref{phim def} \\ \hline
$\funcF[q=d]{x}$ & Definition~\ref{def: G} \\ \hline
$g_{d,m}^{(1)}(H,H_2)$, $g_{d,m}^{(2)}(H,H_2)$, $g_{d,m,R}^{(3)}(x;H,H_2)$ & Definition~\ref{phim def} \\ \hline
$\funcG{x}$ & Definition~\ref{def: G} \\ \hline
$\funcG{x_2,r}$ & Definition~\ref{def:G with log} \\ \hline
$h_3(d)$ & Definition~\ref{h3} \\ \hline
$H_1(m)$ & Definition~\ref{H_0} \\ \hline
$\funcHone{x}$, $\funcHtwo{x}$ & Definition~\ref{H1 and H2 def} \\ \hline
Hypotheses Z$(H,R)$, Z${}_1(R)$  & Definition~\ref{hypothesis z} \\ \hline
$I_{n,m}(\alpha,\beta;\ell)$ & Definition~\ref{Inuk def} \\ \hline
$J_{1a}(z;y)$, $J_{1b}(x;y)$, $J_{2a}(z;y)$, $J_{2b}(z;y)$ & Definition~\ref{K pieces def} \\ \hline
$K_n(z;y)$ & Definition~\ref{Knu def} \\ \hline
$\Li (x)$ & equation~\eqref{Li def} \\ \hline
$M_d(\ell,u)$ & Definition~\ref{LtqH def} \\ \hline
$m(\chi)$ & Definition~\ref{mchi bchi def} \\ \hline
$N(T)$ & proof of Proposition \ref{quoting Trudgian principal} \\ \hline
$N(T,\chi)$ & Definition~\ref{NT def} \\ \hline
$P_{*}(x;m,r,\lambda,H,R)$ (various values of ${*}$) & Definition~\ref{J def} \\ \hline
$Q_{*}(m,r,\lambda,H,R)$ (various values of ${*}$) & Definition~\ref{M def} \\ \hline
\end{tabular}
\end{table}

\begin{table}[H]
\centering
\caption {Notation reference : R to $\omega$}
\begin{tabular}{|l|l|} \hline
$R_1$ & Definition~\ref{R1 exceptional def} \\ \hline
$S_{d,m,R}(r,H)$ & Definition~\ref{collect bounds def} \\ \hline
$S(T)$ & proof of Proposition \ref{quoting Trudgian principal} \\ \hline
$T_1, T_2, T_3, T_4$ & Definition~\ref{D def} \\ \hline
$\funcU{x}$ & equation~\eqref{U def} \\ \hline
$\funcV{x}$ & equation~\eqref{V def} \\ \hline
$\funcW{x}$ & equation~\eqref{W def} \\ \hline
$x_0(q)$ & equation~\eqref{x_0(q) definition} \\ \hline
$x_{\theta}(q), x_{\pi}(q), x_{\psi}(q)$ & Theorems \ref{main psi theorem}, \ref{main theta theorem}, \ref{main pi theorem} \\ \hline
$x_2(q)$ & equation (\ref{x2 definition}) \\ \hline
$x_3(m,q,H,R)$ & Definition~\ref{x3 def} \\ \hline
$\funcY{u}$ & Definition~\ref{phim def} \\ \hline
$y_{d,m,R}(x;H_2)$ & Definition~\ref{z and y def} \\ \hline
$z_{m,R}(x)$ & Definition~\ref{z and y def} \\ \hline
$\Zchi$ & Definition~\ref{Zchi def} \\ \hline
$\alpha_{m,k}$ & Definition~\ref{alpha def} \\ \hline
$\Delta_k(x;q)$, $\Delta(x;q)$ & Definition~\ref{xi defs} \\ \hline
$\theta(x;q,a)$ & equation~\eqref{theta xqa and psi xqa} \\ \hline
$\theta_\#(x;q,a)$ & equation~\eqref{thetaH def} \\ \hline
$\Theta(d,t)$ & equation~\eqref{Theta(d,t)} \\ \hline
$\nu(q,H_0,H)$ & Definition~\ref{Have you heard the good nus?} \\ \hline
$\nu_1(\chi,H_0)$ & Definition~\ref{Have you heard the good nus?} \\ \hline
$\nu_2(q,H_0)$ &  Definition~\ref{Have you heard the good nus?}  \\ \hline
$\nu_3(q,H)$ &  Definition~\ref{Have you heard the good nus?}  \\ \hline
$\xi_k(q)$, $\xi_k(q,a)$ & Definition~\ref{xi defs} \\ \hline
$\Xi_{m,\lambda,\mu,R}(x)$ & Definition~\ref{e def} \\ \hline
$\tau_m$ & Definition~\ref{tau and omega def} \\ \hline
$\pi(x;q,a)$ & equation~\eqref{pi xqa} \\ \hline
$\Upsilon_{q,m}(x;H)$ & Definition~\ref{Upsilon and Psi def} \\ \hline
$\phi^*(d)$ & Definition~\ref{def: phistar} \\ \hline
$\psi(x;q,a)$ & equation~\eqref{theta xqa and psi xqa} \\ \hline
$\Psi_{q,m,r}(x;H)$ & Definition~\ref{Upsilon and Psi def} \\ \hline
$\omega_m$ & Definition~\ref{tau and omega def} \\ \hline
\end{tabular}
\end{table}

\section*{Acknowledgments}

The first, second, and fourth authors were supported by NSERC Discovery Grants.
The authors also gratefully acknowledge the Banff International Research Station (BIRS) for providing the first three authors with a stimulating venue to begin this project, Westgrid and The Ha for computational support, Habiba Kadiri and Allysa Lumley for
providing access to their work in progress, Olivier Ramar\'e for helpful conversations, Kirsten Wilk for helpful comments on the manuscript, and the anonymous referees for their suggestions for improving this article.


\end{document}